\documentclass[aos,preprint]{imsart}

\RequirePackage[OT1]{fontenc}
\RequirePackage{amsthm,amsmath}
\RequirePackage[numbers]{natbib}
\RequirePackage[colorlinks,citecolor=blue,urlcolor=blue]{hyperref}

\arxiv{math.PR/0000000}

\usepackage{latexsym,amsmath,amsfonts}
\usepackage{algorithmicx,algpseudocode}
\usepackage{amssymb}
\usepackage{pstricks,pst-node,pst-tree}
\usepackage{thmtools,thm-restate}
\usepackage{graphicx,epsfig,epstopdf}
\usepackage{natbib}

\usepackage{multirow}

\startlocaldefs
\numberwithin{equation}{section}
\theoremstyle{plain}

\newtheorem{theorem}{Theorem}
\newtheorem{proposition}{Proposition}
\newtheorem{corollary}{Corollary}
\newtheorem{lemma}{Lemma}

\newtheorem{example}{Example}

\newtheorem{remark}{Remark}

\hyphenation{fa-vou-ra-bly}


\def\eeta{\bar h}





\def\wX{\widetilde{X}}
\def\wY{\widetilde{Y}}
\def\wW{\widetilde{W}}
\def\dZ{\dot{Z}}

\def\Ep{{\rm E}}
\def\En{{\mathbb{E}_n}}
\def\Gn{{\mathbb{G}}_n}

\def\P{{\rm P}}





\newcommand{\Var}[1]{\mathbb{V}\left(#1\right)} 





















\def\G{\mbox{\sf G}}




\endlocaldefs

\begin{document}

\begin{frontmatter}
\title{Subvector Inference in PI Models with Many Moment Inequalities}
\runtitle{Confidence Regions Partially Identified}
\thankstext{T1}{First Version: February 20, 2017; Current Version: \today. We thank comments and suggestions from the participants of the 2017 Triangle Econometric Conference, MMS2017, Spring School on Structured Inference in Spreewald, and Econometric Seminars at Yale University, Universit\'{e} de Montr\'{e}al, and Vanderbilt University. Of course, any and all errors are our own. The research of the second author was supported by NSF Grant SES-1729280.}

\begin{aug}
\author{A. Belloni, F. Bugni and V. Chernozhukov}




\end{aug}

\begin{abstract}
This paper considers inference for a function of a parameter vector in a partially identified model with {\it many moment inequalities}. This framework allows the number of moment conditions to grow with the sample size, possibly at exponential rates. Our main motivating application is subvector inference, i.e., inference on a single component of the partially identified parameter vector associated with a treatment effect or a policy variable of interest.

Our inference method compares a {\it MinMax} test statistic (minimum over parameters satisfying $H_0$ and maximum over moment inequalities) against critical values that are based on bootstrap approximations or analytical bounds. We show that this method controls asymptotic size uniformly over a large class of data generating processes despite the partially identified many moment inequality setting. The finite sample analysis allows us to obtain explicit rates of convergence on the size control. Our results are based on combining non-asymptotic approximations and new high-dimensional central limit theorems for the MinMax of the components of random matrices, which may be of independent interest. Unlike the previous literature on functional inference in partially identified models, our results do not rely on weak convergence results based on Donsker's class assumptions and, in fact, our test statistic may not even converge in distribution. Our bootstrap approximation requires the choice of a tuning parameter sequence that can avoid the excessive concentration of our test statistic. To this end, we propose an asymptotically valid data-driven method to select this tuning parameter sequence. This method generalizes the selection of tuning parameter sequences to problems outside the Donsker's class assumptions and may also be of independent interest. Our procedures based on self-normalized moderate deviation bounds are relatively more conservative but easier to implement.

\end{abstract}



\end{frontmatter}

\section{Introduction}

This paper contributes to the growing literature on inference in partially identified econometric models defined by a large number of moment inequalities. As discussed by \citet{tamer2010partial}, \citet{canay/shaikh:2017}, and \citet{ho/rosen:2017}, partially identified moment inequality models arise naturally in a large variety of economic problems and have been increasingly used in the empirical literature. As argued in \citet{chernozhukov2013testing}, the number of moment inequalities implied by many econometric applications is frequently very large relative to the sample size. Examples of this include \citet{bajari2007estimating}, \citet{ciliberto2009market}, \citet{pakes2013moment}, \citet{beresteanu2011sharp}, \citet{galichon2011set}, \citet{chesher2013instrumental}, and \citet{chesher2017generalized}.

There is a substantial literature on inference of the entire parameter vector in a partially identified moment inequality model. An earlier strand of this literature considered partially identified models defined by a finite number of unconditional moment inequalities\footnote{See \citet{chernozhukov2007estimation, andrews/berry/jiabarwick:2004, romano2008inference, rosen2008confidence, andrews2009validity, andrews2010inference, canay2010inference, bugni:2010, bugni2016comparison, andrews2012inference, romano2014practical, menzel2014consistent, bugni2015specification}, and \citet{pakes/porter/ho/ishii:2015}, among many others.} and conditional moment inequalities\footnote{See \citet{kim:2008, ponomareva:2010, andrews2013inference,chernozhukov2013intersection,lee2013testing,andrews2014nonparametric,armstrong2014weighted, armstrong2015asymptotically, armstrong2016multiscale, chetverikov2018adaptive}, and \citet{armstrong2018choice}, among many others.}. More recently, \citet{chernozhukov2013testing} study the problem of the entire parameter vector in partially identified models with {\it many moment inequalities}. According to this asymptotic framework, the number of moment inequalities is allowed to grow with the sample size, possibly at exponential rates. Furthermore, the moment inequalities in this framework are allowed to be {\it unstructured}, in the sense that no restrictions are imposed on their correlation structure.\footnote{This feature distinguishes their framework from a model with conditional moment inequalities. While conditional moment conditions can generate an uncountable set of unconditional moment inequalities, their covariance structure is restricted by the conditioning structure.} As \citet{chernozhukov2013testing} explain, the many moment inequalities framework substantially expands the scope of applications by allowing the number of unstructured moment inequalities to be much larger than the sample size.

The main contribution of this paper is to propose inference for a function of a parameter vector in a partially identified models with {\it many moment inequalities}. Our main motivating application is {\it subvector inference}, i.e., inference on a single component of the partially identified parameter vector that is associated with a treatment effect or a policy variable of interest. Our inference method is based on the {\it MinMax} test statistic, where the minimum is computed over the parameter values that satisfy the null hypothesis (i.e.\ profiling) and the maximum is computed over an index of the moment inequalities. We propose comparing the MinMax test statistic against critical values that can be based on bootstrap approximations or analytical bounds. We show that the resulting inference method with either of these critical values controls asymptotic size uniformly over a large class of data generating processes.

There is a recent literature on the problem of inference for a function of a partially identified parameter in a moment inequality model. This literature focuses on moment inequality models with finitely many moment conditions, which can be restrictive in certain applications. We now summarize this literature. \citet{andrews2009validity} and \citet{andrews2010inference} propose conducting projection-based inference, i.e., intersecting the confidence set for the entire parameter space with the subset of the parameter space that satisfies the null hypothesis. \citet{bugni2017inference} show that projection-based inference can result in power losses relative to profiling-based methods. \citet{romano2008inference} consider profiling-based inference with critical values constructed using subsampling. In turn, \citet{bugni2017inference} and \citet{kaido2016confidence} consider profiling-based methods with critical values constructed using the bootstrap.
\citet{gafarovinference} considers subvector inference in affine moment inequality models. As already mentioned, these references restrict attention to partially identified models with a finite number of moment conditions. This restriction becomes essential in proving the asymptotic validity of the proposed inference. For example, it implies that profiling-based test statistics, such as the MinMax statistic, converge in distribution to a function of a Gaussian process. In contrast, these statistics may not even converge in distribution in the many moment inequality setting considered in this paper.

More recently, \citet{chernozhukov2015constrained} consider inference for subvector on partially identified conditional moment restriction models with a conditioning covariate of restricted dimension. This restriction imposes enough structure on the problem to allow them to use Koltichinskii's Hungarian couplings and approximate the relevant empirical processes by the corresponding Gaussian processes. By contrast, our approach is designed to be valid in a framework with many moment inequalities that does not impose the structure produced by having a small number continuous conditional moment inequalities, enabling a much broader set of applications.

Our inference method compares the MinMax test statistic with critical values based on bootstrap approximations or analytical bounds. Both methods have their relative merits. The first set of main results pertains to bootstrap approximations that exploit correlation structures to increase power. These procedures were inspired by the ideas in \citet{bugni2017inference} which considered a finite number of inequalities but for a broader class of test statistics. By considering many moment inequalities to cover a wide range of applications, we develop new theoretical arguments to establish the validity of our bootstrap procedures. In particular, non-asymptotic bounds and new high-dimensional central limit theorems for the MinMax statistics were developed.\footnote{This is in sharp contrast to \citet{bugni2017inference}, who use Donsker's functional central limit theorem to derive a limiting distribution.} Our second set of main results is on the construction of analytical bounds based on self-normalized moderate deviation theory which are computationally easier but do not exploit the underlying correlation structure. We build upon arguments in \citet{chernozhukov2013testing} which considered the vector inference problem but also allowed for many moment inequalities.

As it is usual in this literature, our bootstrap approximation requires a threshold sequence $(\kappa_n)_{n=1}^{\infty}$ that determines whether each moment inequality is considered to be sufficiently close to binding or not. In models with finitely many moment inequalities, the threshold sequence is required to satisfy $\kappa_n \to \infty$ and $\kappa_n/\sqrt{n} \to 0$.
We show that the many moment inequality setting imposes additional requirements on the threshold sequence. In order to facilitate this choice for practitioners, we propose a data-driven method to select this threshold sequence in an asymptotically valid way.

In deriving our primary results, we also obtain several auxiliary findings that might be of independent interest. First, we derive a new non-asymptotic coupling result for the MinMax of an empirical process (see Theorem \ref{thm:clt:minmax:discrete}). A key ingredient to obtain such coupling is a novel use of a smooth approximation of the MinMax functional. Second, the data-driven procedure proposed to estimate the anti-concentration of the MinMax statistic seems to be widely applicable to other contexts, as it allows one to bypass the need to development of anti-concentration bounds that are only available to a limited class of statistics, such as the Max statistic. Both results are new and could be of independent interest beyond our contribution.

The paper is organized as follows. Section \ref{SEC:SETUP} formally describes the setting and problem. We provide an overview of some of our main results in Section \ref{Sec:Overview} where we discuss insights and key issues. Section \ref{SEC:INFERENCESUBVECTORPI} derives our main theoretical results for different procedures. We discuss in detail the anti-concentration properties associated with the MinMax statistics in Section \ref{SEC:ANTICONCENTRATION}. Section \ref{SEC:MinMaxCoupling} provides new coupling results for the MinMax of non-centered empirical processes and for the sum of independent random matrices which are of independent interest.

\section{Setup and Test Statistics}\label{SEC:SETUP}

Let $\left( S,\mathcal{S}\right) $ be a measurable space, and let $%
(W_{i})_{i=1}^{n}$ denote an i.i.d.\ sequence of random variables taking
values on $\left( S,\mathcal{S}\right) $ with common distribution $P\in
\mathcal{P}$, and let $\theta \in \Theta \subseteq \mathbb{R}^{d_{\theta }}$
denote the parameter of the model. The econometric model predicts that true
parameter value, $\theta ^{\ast }\in \Theta $, satisfies the following
collection of $p_{I}$ moment inequalities and $p_{E}$ moment equalities:
\begin{equation}\label{eq:MI}
\begin{array}{rl}
& \Ep[{m}_{I,j}(W,\theta )]~\leq ~0\text{ for }j=1,\ldots ,p_{I},  \\
& \Ep[{m}_{E,j}(W,\theta )]~=~0\text{ for }j=1,\ldots ,p_{E},
\end{array}
\end{equation}
If we let $m_{j}(W,\theta )\equiv m_{I,j}(W,\theta )$ for $j=1,\dots ,p_{I}$
and $m_{p_{I}+j}(W,\theta )\equiv m_{E,j}(W,\theta )$ for $j=1,\dots ,p_{E}$%
, and $m_{p_{I}+p_{E}+j}(W,\theta )\equiv -m_{E,j}(W,\theta )$ for $%
j=1,\dots ,p_{E}$, \eqref{eq:MI} can be equivalently re-expressed as
 moment inequality model with $p\equiv p_{I}+2p_{E}$
moment inequalities:
\begin{equation}\label{eq:MI2}
\Ep[m_{j}(W,\theta )]~\leq ~0\text{ for }j \in [p]\equiv\{1,\ldots ,p\},
\end{equation}%

We allow the econometric model to be partially identified, i.e., the moment
inequalities in \eqref{eq:MI2} do not necessarily restrict $\theta^{*}$
to a single value, but rather constrain it the identified set, given by:
\begin{align}
\Theta_{I}~\equiv ~\{ \theta \in \Theta ~:~ \Ep[m_{j}(W,\theta )]\leq 0%
\text{ for }j \in [p] ~\}.  \label{eq:IdSet}
\end{align}

We are implicitly allowing the distribution $\P$ of the data to change with the sample size. In particular, the dimensionality of $\Theta$, denoted by $d_\theta$, and the number of moment inequalities $p$ to depend on $n$. In
particular, we are primarily interested in the case in which $p=p_{n}\to
\infty$, but the subscripts are omitted to keep the notation simple. In
particular, $p$ can be much larger than the sample size $n$.

Let $h:\Theta \rightarrow \mathbb{R}^{d_{h}}$ be a known function, let $%
h[\Theta ]$ denote the image of $h$, and let $\eeta \in h[\Theta ]$ be an
arbitrary parameter value. Our goal is to conduct the
following hypothesis test:
\begin{equation}\label{eq:HypTest}
H_{0}:h(\theta ^{\ast })=\eeta ~~\text{ vs. }~~H_{1}:h(\theta ^{\ast })\neq
\eeta .
\end{equation}%
The main application of our test is the subvector inference
problem, i.e., testing whether a subset of the partially identified
parameter vector is equal to a particular value or not. For example, we
could test whether, say, the $s$'th coordinate of $\theta ^{\ast }$, $\theta
_{s}^{\ast }$, is equal to a particular value $\eeta \in \mathbb{R}$ or not,
i.e.,
\begin{equation}\label{eq:HypTest_subvector}
H_{0}:\theta _{s}^{\ast }=\eeta ~~\text{ vs. }~~H_{1}:\theta _{s}^{\ast }\neq
\eeta .
\end{equation}%
\eqref{eq:HypTest_subvector} is a particular example of
\eqref{eq:HypTest} with $h(\theta )=\theta _{s}$.

To test \eqref{eq:HypTest}, we propose the
\textquotedblleft MinMax\textquotedblright\ test statistic, defined as
follows:
\begin{equation}\label{eq:Tstat}
T_{n}(\eeta )~~\equiv ~~\inf_{\theta \in \Theta (\eeta )}\max_{j\in [p]}~\sqrt{n%
}\bar{m}_{\theta,j}/\hat{\sigma}_{\theta,j},
\end{equation}%
where $\Theta (\eeta )$ is the \textquotedblleft null set\textquotedblright\
associated to \eqref{eq:HypTest}, given by:
\begin{equation*}
\Theta (\eeta )~\equiv ~h^{-1}[\{\eeta \}]=\{~\theta \in \Theta ~:~h(\theta
)=\eeta ~\}
\end{equation*}%
and
\begin{align*}
\bar{m}_{\theta,j} & ~\equiv ~\frac{1}{n}\sum_{i=1}^{n}m_{j}(W_{i},\theta ), \ \ \ \ \ \hat{\sigma}_{\theta,j}^{2}  ~\equiv ~\frac{1}{n}%
\sum_{i=1}^{n}\{m_{j}(W_{i},\theta )-\bar{m}_{\theta,j}\}^{2}.
\end{align*}%
Finally, if $\hat{\sigma}_{\theta,j}=0$ in \eqref{eq:Tstat}, we
define $\bar{m}_{\theta,j}/\hat{\sigma}_{\theta,j}\equiv \infty \times
{\rm sign}(\bar{m}_{\theta,j})$.

Given the test statistic in \eqref{eq:Tstat} and nominal size $\alpha \in (0,1)$, our goal is to present critical values $c_{n}(\eeta ,\alpha)$ that can be associated with $ T_{n}(\eeta)$ to produce asymptotically valid inference for \eqref{eq:HypTest}. Specifically, we propose to reject $H_{0}$ in \eqref{eq:HypTest} if and only if
\begin{equation}\label{eq:HT}
T_{n}(\eeta)>c_{n}(\eeta ,\alpha ).
\end{equation}%
By exploiting the duality between hypothesis tests and confidence sets, we construct a confidence set for $h(\theta ^{\ast })$ by collecting all parameter values $\eeta $ for which we do not reject, i.e.,
\begin{equation}\label{eq:CS}
C_{n}(1-\alpha )~\equiv ~\{\eeta \in h[\Theta ]~:~T_{n}(\eeta )\leq c_{n}(\eeta
,\alpha )\}.
\end{equation}

Our formal results will have the following structure. Under $H_{0}$, we show that
\begin{equation}
\P\left(T_{n}(\eeta )>c_{n}(\eeta ,\alpha )\right)~~\leq ~~\alpha + Cn^{-c},
\label{eq:CSvalid}
\end{equation}%
where $c$ and $C$ are constants that depend only on other constants of the problem (i.e.\ they do not depend on $\eeta$, $\P$, or $n$). As a corollary of this, the convergence in \eqref{eq:CSvalid} occurs uniformly over a suitable class of probability distributions.

The main contribution of this paper is to propose methods
to approximate the critical value $c_{n}(\eeta ,\alpha )$ that satisfies \eqref{eq:CSvalid} in the many inequalities setting. To this end, we develop an approximation to the
asymptotic distribution of the MinMax test statistic $T_{n}(\eeta)$ based on suitably constructed bootstrap procedures and based on self-normalized moderate deviation theory.

%

\begin{remark}[Many Conditional moment inequalities]
The analysis developed here can be applied to models defined by many conditional moment inequalities/equalities where the number of moment restrictions can be unbounded, see e.g. \citet{chernozhukov2013intersection} and \citet{andrews2017inference}. Formally, we have
$$
\Theta_I = \left\{ \theta \in \Theta : \begin{array}{rl}
 \Ep[m_\tau(W,\theta)\mid X]\leq 0, \ \ \tau \in \mathcal{I},  \\ \Ep[m_\tau(W,\theta)\mid X]= 0, \ \ \tau \in \mathcal{E} \, \, \\
\end{array}\right\}
$$
where $\mathcal{I}$ is a set of indices for the moment inequalities and $\mathcal{E}$ for the moment equalities. As we show below, the results developed here are directly applicable to these models as well. We first reexpress the conditional moment conditions
into equivalent unconditional ones via instrumental functions $g \in \mathcal{G}$, given by
$$
\Theta_I = \bigcap_{g \in \mathcal{G}}\left\{\theta \in \Theta : \begin{array}{rl}
   \Ep[m_\tau(W,\theta)g(X)]\leq 0, \  \tau \in \mathcal{I}, \\  \Ep[m_\tau(W,\theta)g(X)]= 0, \ \ \tau \in \mathcal{E}
\end{array}\right\}
$$
\end{remark}

\section{Overview of Main Results}\label{Sec:Overview}

In this section we provide a simplified discussion of the issues and proposed methods to construct critical values that are asymptotically valid in the sense of \eqref{eq:CSvalid}. This section intends to motivate the key issues we face and more general results and other procedures are discussed in Section \ref{SEC:INFERENCESUBVECTORPI}.

An important feature of the MinMax statistic (\ref{eq:Tstat}) is that it is a functional of non-centered sums. Therefore it is convenient to highlight the centering and rewrite the MinMax statistic as
$$T_{n}(\eeta) = \inf_{\theta \in \Theta (\eeta) }\max_{j\in[p]} \left\{\hat v_{\theta,j} +\sqrt{n}\Ep[
m_{j}\left(W, \theta \right)] /\hat {\sigma}_{\theta,j}\right\},$$
where $\hat v_{\theta,j}$ is the centered empirical process indexed by $\theta \in \Theta$ and $j\in [p]$, i.e.,
$$\hat v_{\theta,j}~\equiv~ n^{-1/2}\sum_{i=1}^n \{ m_j(W_i,\theta)- \Ep[
m_{j}\left(W, \theta \right)] \}/\hat \sigma_{\theta,j}.$$

There are two main insights in the derivation of critical values for the MinMax statistics. Under $H_0$, $T_n(\eeta) \leq \max_{j\in[p]} \sqrt{n}\bar m_j(\theta^*)/\hat \sigma_{\theta^*,j} \leq \max_{j\in [p]} \hat v_{\theta^*,j} $. However, also under $H_0$, $h(\theta^*)$ is known and yet the parameter vector $\theta^*$ may not be known.
Therefore, in addition to considering which inequalities can be binding, it is important to consider which values of the parameter $\theta$ are candidates to be $\theta^*$ based on the moment inequalities and $h(\theta^*)=\eeta$. Note however that these two ``selection'' problems have very different consequences in the analysis of the size of the test. That is, overselecting inequalities leads to potentially conservative but valid asymptotic size control, while over selecting values of $\theta$, because of the minimum, can lead to a procedure that fails to control asymptotic size.
In particular, using a critical value based on bootstrapping $\min_{\theta \in \Theta(\eeta)}\max_{j\in [p]} \hat v_{\theta,j}$ will fail to control asymptotic size even if $p=2$ for simple data generating processes.\footnote{See Example \ref{Ex:Fail} in the Appendix. For the minimum of the sum of violations similar issues have been shown in \citet{bugni2017inference}.}

We begin with a bootstrap-based procedure for the construction of a critical value. Consider the critical value $c_{n}(\eeta ,\alpha )$ based on the penalized bootstrap procedure
\begin{equation}\label{def:CVoverview} c_{n}(\eeta ,\alpha ) = \mbox{conditional} \ \ (1-\alpha)\mbox{-quantile of} \ R_n^* \ \mbox{given} \  (W_i)_{i=1}^n \end{equation}
where the random variable $R_n^*$, conditional on the data, is defined as
\begin{equation}\label{Def:MPBa}R_n^* \equiv \inf_{\theta \in \Theta(\eeta) }\max_{j\in[p]}  \underbrace{\frac{1}{\sqrt{n}}\sum_{i=1}^n g_i\frac{m_j(W_i,\theta)-\bar m_{\theta,j}}{\hat \sigma_{\theta,j}}}_{\mbox{zero-mean Gaussian process}} +\kappa_n^{-1}\underbrace{\frac{\sqrt{n}\bar{m}_{\theta,j}}{\hat{\sigma}_{\theta,j}}}_{\mbox{centering}} \end{equation}
where $\kappa_n$ is a tuning parameter, and $(g_i)_{i=1}^n$ are i.i.d.\ standard Gaussian random variables. It has been noted in the literature that the centering term cannot be consistently estimated in a uniformly fashion. Therefore the use of $\kappa_n=1$ would not make (\ref{def:CVoverview}) a valid choice. In (\ref{def:CVoverview}) we set $\kappa_n$ to dominate the effective noise in the centering, namely  $$\bar w_n \geq \sup_{\theta\in \Theta(\eeta)}\max_{j\in[p]}|\sqrt{n}\hat\sigma^{-1}_{\theta,j}\{\bar m_{\theta,j}-\Ep[m_j(W,\theta)]\}|$$ with high probability. With that in mind we keep $\kappa_n$ as small as possible not to remove the true centering $\sqrt{n}\hat\sigma^{-1}_{\theta,j}\Ep[m_j(W,\theta)]$ completely (because the minimum over $\theta$ would lead to a too small critical value). We note that because of the many inequalities setting, we typically have $\bar w_n \to \infty$. \footnote{It follows we will be able to approximate $\bar w$ via a separate bootstrap procedure providing a data driven way to compute $\bar w$ that is theoretically valid.}

Moreover, the approximation errors of the new coupling results should not affect the size. Letting $\delta_n$ denote such coupling approximation error, we have that 
\begin{align*}
	\P(T_n(\eeta) \geq c_{n}(\eeta ,\alpha )) & \leq \P(R_n^{*}\geq c_{n}(\eeta ,\alpha ) - \delta_n )+ Cn^{-c}\\
	& \leq \alpha + \P( |R_n^{*} - c_{n}(\eeta ,\alpha )|\leq \delta_n  ) + Cn^{-c}.
\end{align*}
The validity of the bootstrap approximation is guaranteed by $\P( |R_n^{*} - c_{n}(\eeta ,\alpha )|\leq \delta_n )\to 0$. In other words, the distribution of $R_n^*$ should not concentrate too much mass around $c_{n}(\eeta ,\alpha )$ as $n$ diverges. It is not hard to show that this concentration is bounded by $\delta_n$ multiplied by the maximum value of the density function of $R_n^{*}$. In moment inequality models with fixed number of moment conditions, this does not pose any problems as the limit of the corresponding density function is typically bounded. However, in moment inequality models with many moment inequalities, these statistics can have very different behavior. For example, \citet{chernozhukov2013testing} show that the distribution of the Maximum statistic can concentrate but not too fast. Indeed it is shown that maximum density of the Maximum statistic is bounded by a logarithmic factor of the number of moment conditions $p$. In this paper, we are interested in the concentration properties associated to the MinMax statistic. Based on the previous derivation, we define unconditional and conditional  anti-concentration parameters associated with $R_n^*$ as follows:
\begin{equation}
	\label{eq:AC}
	\mathcal{A}_{n} ~\equiv~ \sup_{\epsilon \geq \delta_n} \frac{1}{\epsilon}\P( |R_n^{*} - c_{n}(\eeta ,\alpha )|\leq \epsilon ).
\end{equation}
\begin{equation}
	\label{eq:ACcond}
	\mathcal{A}_{n}(W) ~\equiv~ \sup_{\epsilon \geq \delta_n} \frac{1}{\epsilon}\P( |R_n^{*} - c_{n}(\eeta ,\alpha )|\leq \epsilon \mid (W_i)_{i=1}^n).
\end{equation}
Importantly, the anti-concentration property that is needed pertains to the bootstrap-based statistic $R_n^{*}$, and not the original statistics $T_n(\eeta)$. Because of this feature, we can investigate $\mathcal{A}_{n}$ via bootstrap of the quantity $\mathcal{A}_{n}(W)$ conditionally on the data. This approximation is described in detail in Section \ref{SEC:ANTICONCENTRATION}.

The corollary below shows that the choice of critical value (\ref{def:CVoverview}) effectively controls the size, in the sense of (\ref{eq:CSvalid}), uniformly over a set of data generating processes. The conditions are simple to allow easier interpretability but allows for the many inequality setting with potentially $p\gg n$. (Our results hold much more generally as stated in Theorems  \ref{thm:MSB:inferenceSubvector} and \ref{thm:PB:inferenceSubvector}.)

\begin{corollary}\label{cor:mainresult1}
Assume that (i) $m_j$ and its componentwise derivatives are uniformly bounded by $C_1$,\footnote{$\left|\partial m_j(W,\theta)/{\partial \theta_k}\right|\leq C_1$ and $\left|m_j(W,\theta)\right|\leq C_1$ for all $W$, $j\in [p]$ and $\theta \in \Theta(\eeta)$.} (ii) $\Theta(\eeta)$ is convex and uniformly bounded in $\ell_\infty$-norm,\footnote{$\sup_{\theta \in \Theta(\eeta)}\|\theta\|_\infty \leq C_1$.} (iii) $\inf_{j\in[p],\theta\in \Theta(\eeta)}{\rm Var}(m_j(W,\theta))\geq c_1$ and (iv) that a polynomially minorant condition holds with $\vartheta_n \geq c_1$ and $\delta \geq c_1$.\footnote{for any $\theta \in \Theta(\eeta)\setminus \Theta_I$,  ${\displaystyle \max_{j\in[p]}}  \Ep\left[
m_{j}\left( W,\theta\right) \right] /\sigma _{\theta
_{n},j}   \geq \vartheta_n \min \{ \delta ,{\displaystyle \inf_{\tilde{\theta}\in
\Theta (\eeta) \cap \Theta_I }} \| \theta-\tilde{\theta} \| \}$.} Also assume that $\kappa_n$ satisfies
\begin{equation}\label{eq:cor:cond}
\left(\frac{\log^4 (p n^{d_\theta})}{n}\right)^{1/6}+ \kappa_n\frac{d_\theta^{1/2}\log(p  n^{ d_\theta}) }{n^{1/2}} + \frac{\bar w_n}{\kappa_n} \leq \frac{C_2n^{-c_2}}{\mathcal{A}_{n}}.
\end{equation}
where $\bar w_n$ is the $(1-n^{-c_2})$-quantile of the effective noise. Then, under $H_0$,
$$ \P(T_n(\eeta) \geq c_{n}(\eeta ,\alpha ) ) \leq \alpha + C n^{-c} ,$$
where $c$ and $C$ are constants that depend only on $c_1,C_1,c_2,C_2$.
\end{corollary}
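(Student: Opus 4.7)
The plan is to derive this corollary as a specialization of the general penalized-bootstrap result Theorem~\ref{thm:PB:inferenceSubvector}, whose hypotheses are written for the overview decomposition
\[
\P(T_n(\eeta) \geq c_n(\eeta,\alpha)) \leq \P(R_n^{*} \geq c_n(\eeta,\alpha) - \Delta_n) + Cn^{-c} \leq \alpha + \mathcal{A}_n \cdot \Delta_n + Cn^{-c}.
\]
The bulk of the work is to show that, under the stronger but simpler conditions (i)--(iv), each of the three terms in (\ref{eq:cor:cond}) corresponds to one of the three approximation errors comprising $\Delta_n$: Gaussian coupling, discretization of $\Theta(\eeta)$, and penalization bias.

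First I would verify the primitive regularity. Conditions (i) and (iii) make the variance-normalized summands $m_j(W,\theta)/\sigma_{\theta,j}$ uniformly bounded and Lipschitz in $\theta$ with constant $\lesssim C_1/\sqrt{c_1}$, while (ii) bounds $\Theta(\eeta)$ in $\ell_\infty$-diameter by $2C_1$. A standard covering argument then produces a net $\Theta_n \subset \Theta(\eeta)$ of cardinality $|\Theta_n| \leq n^{C d_\theta}$ with $\|\cdot\|_\infty$-mesh of order $\kappa_n/\sqrt{n}$, and the resulting discretization error in both $T_n(\eeta)$ and $R_n^{*}$ is $\lesssim \kappa_n\, d_\theta^{1/2}\log(p n^{d_\theta})/\sqrt{n}$, exactly the second term of (\ref{eq:cor:cond}). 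Next, invoking the MinMax coupling of Theorem~\ref{thm:clt:minmax:discrete} on the discrete index set $[p]\times \Theta_n$ yields a Gaussian coupling of order $(\log^4(p n^{d_\theta})/n)^{1/6}$, matching the first term. Finally, under $H_0$ one may upper bound the infimum defining $R_n^{*}$ by its value at $\theta^{*}$, where $\Ep[m_j(W,\theta^{*})] \leq 0$ so the penalized centering can only help, while the remaining empirical-process fluctuation $\kappa_n^{-1}\sqrt{n}\,|\bar m_{\theta^{*},j}-\Ep[m_j(W,\theta^{*})]|/\hat\sigma_{\theta^{*},j}$ is at most $\bar w_n/\kappa_n$ with probability at least $1-n^{-c_2}$ by definition of $\bar w_n$, giving the third term.

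The polynomial minorant condition (iv) is what justifies using $\bar w_n$ in place of the (much larger) worst-case centering: for any $\theta \in \Theta(\eeta)$ that is $\ell_\infty$-far from $\Theta_I$, (iv) produces at least one $j$ with $\Ep[m_j(W,\theta)]/\sigma_{\theta,j}$ linear in the distance, which after the $\kappa_n^{-1}\sqrt{n}$ scaling in $R_n^{*}$ dominates the Gaussian noise and forces the bootstrap infimum to localize near $\Theta(\eeta)\cap \Theta_I$, where the leftover penalized centering is nonpositive. Aggregating these three errors into $\Delta_n$ and applying the anti-concentration bound (\ref{eq:AC}) yields $\mathcal{A}_n \cdot \Delta_n \leq C_2 n^{-c_2}$, which is precisely the content of (\ref{eq:cor:cond}) and delivers the claim. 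The main obstacle, which is really the substance of the general Theorem~\ref{thm:PB:inferenceSubvector} and of Theorem~\ref{thm:clt:minmax:discrete}, is establishing a non-asymptotic Gaussian coupling for a MinMax functional of a \emph{non-centered} empirical process: the infimum over $\theta$ is incompatible with direct application of standard Max-coupling techniques, and the smooth-MinMax approximation flagged in the introduction is the device that closes this gap.
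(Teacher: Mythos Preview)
Your high-level plan---verify Condition MB under (i)--(iv) and then invoke Theorem~\ref{thm:PB:inferenceSubvector}---is exactly what the paper does, and your identification of the three dominant contributions to $\delta_n^{PR}$ as the three summands in \eqref{eq:cor:cond} is correct. The paper's proof is in fact very short: it records $L_G = C_1\sqrt{d_\theta}$, $L_C = L_G^2$, $\chi = 2$, VC constants $(\bar A,v)=(6C_1 d_\theta p^{1/d_\theta},d_\theta)$, bounded envelope and $q=16$, hence $K_n \lesssim \log p + d_\theta\log n$, and then reads off the bound on $\delta_n^{PR}$.

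However, your mechanism for the \emph{second} summand is wrong. The factor $\kappa_n d_\theta^{1/2}\log(pn^{d_\theta})/n^{1/2}$ does not come from discretizing $\Theta(\eeta)$ with a $\kappa_n/\sqrt{n}$ mesh (indeed, with that mesh the un-penalized centering $\sqrt{n}\Ep[m_j]/\sigma_{\theta,j}$, which is $\sqrt{n}L_G$-Lipschitz, would move by order $\kappa_n\sqrt{d_\theta}$, not the much smaller claimed quantity). It arises instead from the comparison $T_n(\eeta)\le S_n^\kappa+\text{error}$ in Lemma~\ref{thm:Ineq01}: given an $\varepsilon$-minimizer $\theta_n$ of $S_n^\kappa$, one uses the polynomial minorant (iv) via Lemma~\ref{lem:OtherThetaConstruction} to build a convex combination $\breve\theta_n$ at which the \emph{full} centering $\sqrt{n}\Ep[m_j(W,\breve\theta_n)]/\sigma_{\breve\theta_n,j}$ is controlled by the \emph{penalized} centering at $\theta_n$ plus a second-order Taylor remainder $L_G\kappa_n(\bar w_n/\vartheta_n)^2/\sqrt{n}$. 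With $L_G\asymp\sqrt{d_\theta}$ and $\bar w_n^2\asymp K_n$ (Lemma~\ref{lemma:BoundsOn3}), this is precisely the second term. The discretization of $\Theta(\eeta)$ enters only inside the coupling Theorems~\ref{thm:clt:minmax}--\ref{thm:clt:minmax:Gaussian}, with a mesh that is independent of $\kappa_n$.

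Your account of the third term also has the direction inverted: $\bar w_n/\kappa_n$ is not produced by evaluating the bootstrap infimum at $\theta^*$, but by step~(5) in the proof of Theorem~\ref{thm:PB:inferenceSubvector}, which replaces the true penalized centering $\kappa_n^{-1}\sqrt{n}\Ep[m_j]/\sigma_{\theta,j}$ in the intermediate $R_n^{**}$ by the empirical one in $R_n^{PR*}$, uniformly over the localized set $\bar\Psi$. None of this invalidates your strategy---applying Theorem~\ref{thm:PB:inferenceSubvector} is the right move---but the $\kappa_n$ dependence in \eqref{eq:cor:cond} is entirely a feature of the penalization comparison, not of a $\kappa_n$-dependent net.
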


Corollary \ref{cor:mainresult1} provides low-level conditions that apply to many cases of practical interest. For example, it includes moment conditions that are Lipschitz continuous and it allows the dimension of the parameter space $d_\theta$ and the number of moment inequalities to grow with the sample size. In particular, it allows for $p\gg n$.

\begin{example}[Polynomially many inequalities and large $d_\theta$]
In addition to  conditions (i)-(iv) in Corollary \ref{cor:mainresult1},  assume that $p=n^C$ for some fixed $C>1$, $d_\theta = n^a$ for some $a<1/4$, and that the anti-concentration parameter satisfies $\mathcal{A}_{n} \leq C\log^{3/2}n$. Then, 
condition (\ref{eq:cor:cond}) holds provided $\kappa_n = n^b$ for any $b\in ( \frac{1}{2}a, \frac{1}{2}-\frac{3}{2}a)$.
\end{example}

\begin{example}[Exponentially many inequalities]
In addition to  conditions (i)-(iv) in Corollary \ref{cor:mainresult1},  assume that $p\geq n^{\log n}$, $d_\theta\leq C\log n$, and the anti-concentration parameter satisfies $\mathcal{A}_{n}\leq C\log^{3/2}p$. Then, 
condition (\ref{eq:cor:cond}) holds if $\kappa_n \in [ n^{c_2} \log^2p \ , \ n^{1/2-c_2}\log^{-5/2}p]$, and $n^{-1/6+c_2}\log^{13/6}p = o(1)$.
\end{example}

%

Corollary \ref{cor:mainresult1} requires the choice of $\kappa_n$ to be appropriate. Such requirements generalize the requirements in \citet{bugni2017inference} where it was required $\kappa_n \to \infty$ and $\sqrt{n}/\kappa_n\to \infty$. Theorem \ref{thm:PB:inferenceSubvector} characterizes the finite sample impact of $\kappa_n$ for a non-Donsker class of functions. Theorem \ref{thm:PB:inferenceSubvector} also motivates data driven choices of $\kappa_n$ that accounts for the anti-concentration of the process $R_n^*$ which might be non-trivial since $\mathcal{A}_{n}$ could grow.

The bootstrap procedure discussed above and the others studied in Section \ref{Sec:Bootstrap} adapt to the correlation structure of the process to improve power. However, that is achieved by levering conditions on the number of inequalities $p$, sample size $n$, effective noise $\bar w_n$, and anti-concentration $\mathcal{A}_{n}$.

An alternative approach is to rely on union bounds that are more conservative but are valid under weaker conditions. Moreover, since the bounds are based on the marginal distribution, there is no need to handle the anti-concentration. With that in mind, similarly to \citet{chernozhukov2013testing}, ideas from self-normalized moderate deviation theory can be applied to control size as we describe next. By construction of the test statistics $T_n(\eeta)$ satisfies
\begin{equation}\label{eq:SN} \P(T_n(\eeta) > c_{n}(\eeta ,\alpha ) ) \leq \min_{\theta \in \Theta(\bar h)} \P\left( \max_{j\in [p]} \frac{\sqrt{n}\bar m_{\theta,j}}{\hat \sigma_{\theta,j}} > c_{n}(\eeta ,\alpha ) \right).\end{equation}
Moreover, $H_0$ implies that $\Ep[m_j(W_i,\theta^*)]\leq 0$, and when combined with the union bound we have
\begin{equation}\label{eq:SN2} \P(T_n(\eeta) > c_{n}(\eeta ,\alpha ) ) \leq \sum_{j=1}^p \P\left( \sum_{i=1}^n\frac{m_j(W_i,\theta^*)-\Ep[m_j(W_i,\theta^*)]}{\sqrt{n}\hat \sigma_{\theta^*,j}} > c_{n}(\eeta ,\alpha ) \right),\end{equation}
where the last sum is approximately self-normalized. Such self-normalization has been exploited in the moderate deviation literature to establish central limit theorems that are valid in the tails of the distribution under mild moment conditions, see e.g. \citet{delapena}. In turn this leads to pivotal choices for the critical value to control size of the test by setting $$c_{n}(\eeta ,\alpha ) \approx \Phi^{-1}(1-\alpha/p).$$ Since $c_{n}(\eeta ,\alpha )$ is of the order of $\sqrt{\log(p/\alpha)}$, it  grows in the many moment inequality setting where $p\to \infty$. Therefore the Gaussian approximation should hold sufficiently deep in the tails to include $c_{n}(\eeta ,\alpha )$. This leads to the restriction $\log^3 (p/\alpha) =o(n)$ under suitable moment requirements. The following corollary of Theorem \ref{thm:SNresult} provides a result.

\begin{corollary}\label{cor:SNresult} Assume that $H_0$ holds, i.e., $\theta^{*} \in \Theta(\eeta)$, and that the are constants $0<c_1<1/2$ and $C_1>0$ such that $\max_{j\in[p]}\Ep[|m_j(W,\theta^*)|^3] \leq C_1$, $\min_{j\in[p]}{\rm Var}(m_j(W,\theta^*))\geq c_1$, and
$\log^{3/2}(p/\alpha) \leq C_1 n^{1/2-c_1}.$  Then,
$$ \P ( T_n(\eeta) > c_{n}(\eeta ,\alpha ) ) \leq \alpha + Cn^{-c},$$
where $c$ and $C$ are constants that depend only on other constants of the problem.
\end{corollary}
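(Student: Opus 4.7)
The plan is to combine the two reductions (\ref{eq:SN}) and (\ref{eq:SN2}) already highlighted in the overview with a self-normalized moderate deviation bound applied to each summand. Because $\theta^{*}\in\Theta(\eeta)$ under $H_0$, the infimum defining $T_n(\eeta)$ in (\ref{eq:Tstat}) is dominated by its value at $\theta^{*}$, so
$$
T_n(\eeta)\leq\max_{j\in[p]}\sqrt{n}\,\bar m_{\theta^{*},j}/\hat\sigma_{\theta^{*},j}.
$$
For every index $j$ the population mean satisfies $\Ep[m_j(W,\theta^{*})]\leq 0$ (with equality for those indices $j$ corresponding to the doubled equality constraints), so subtracting the mean can only enlarge the studentized ratio. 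Writing $X_{ij}\equiv m_j(W_i,\theta^{*})-\Ep[m_j(W,\theta^{*})]$ and $U_{n,j}\equiv\sum_{i=1}^n X_{ij}/(\sqrt{n}\hat\sigma_{\theta^{*},j})$, the union bound (\ref{eq:SN2}) yields
$$
\P\bigl(T_n(\eeta)>c_n(\eeta,\alpha)\bigr)\leq\sum_{j=1}^p\P\bigl(U_{n,j}\geq c_n(\eeta,\alpha)\bigr).
$$

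Next I would invoke a self-normalized moderate deviation theorem for studentized sums (e.g.\ Jing--Shao--Wang or de~la~Pe\~na--Lai--Shao). Since $X_{ij}$ is a centered version of $m_j(W_i,\theta^{*})$, the hypotheses imply $\Ep X_{ij}^2={\rm Var}(m_j(W,\theta^{*}))\geq c_1$ and, using $|X_{ij}|\leq|m_j(W_i,\theta^{*})|+\Ep|m_j(W,\theta^{*})|$ together with $(a+b)^3\leq 4(a^3+b^3)$, $\Ep|X_{ij}|^3\leq 8C_1$. Consequently the per-coordinate Lyapunov ratio $L_{n,3,j}\equiv n^{-1/2}\Ep|X_{ij}|^3/(\Ep X_{ij}^2)^{3/2}$ is $O(n^{-1/2})$ uniformly in $j\in[p]$, and the moderate deviation result delivers
$$
\max_{j\in[p]}\sup_{0\leq x\leq An^{1/6}}\left|\frac{\P(U_{n,j}\geq x)}{1-\Phi(x)}-1\right|\leq C(1+x)^3 n^{-1/2},
$$
for some $A,C$ depending only on $c_1,C_1$.

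Finally I would calibrate $c_n(\eeta,\alpha)\equiv\Phi^{-1}(1-\alpha/p)$. The standard Gaussian tail estimate gives $c_n(\eeta,\alpha)\leq\sqrt{2\log(p/\alpha)}$, so the maintained growth condition $\log^{3/2}(p/\alpha)\leq C_1 n^{1/2-c_1}$ forces $c_n(\eeta,\alpha)^3=O(n^{1/2-c_1})$; in particular $c_n(\eeta,\alpha)=o(n^{1/6})$, placing it inside the validity window of the moderate deviation expansion with relative error at most $C(1+c_n(\eeta,\alpha))^3 n^{-1/2}=O(n^{-c_1})$. Combining with the union bound,
$$
\P\bigl(T_n(\eeta)>c_n(\eeta,\alpha)\bigr)\leq p\bigl(1-\Phi(c_n(\eeta,\alpha))\bigr)\bigl(1+Cn^{-c_1}\bigr)=\alpha+Cn^{-c_1},
$$
which matches the claim with $c=c_1$ and $C$ depending only on $c_1,C_1$.

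The main obstacle is making the moderate deviation expansion applicable \emph{uniformly} in $j\in[p]$ at the growing level $x=c_n(\eeta,\alpha)\asymp\sqrt{\log(p/\alpha)}$, since a relative error of size larger than $1/p$ would be amplified by the union bound and could swamp the nominal level. The third-moment bound, the uniform variance lower bound, and the growth condition $\log^{3/2}(p/\alpha)\leq C_1 n^{1/2-c_1}$ are exactly the ingredients calibrated so that the Lyapunov-type error $(1+x)^3 n^{-1/2}$ remains polynomially small after summing over $j$, which constitutes the crux of the argument.
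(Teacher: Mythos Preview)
Your proposal is correct and follows the same route the paper takes: the paper derives Corollary~\ref{cor:SNresult} from Theorem~\ref{thm:SNresult}, whose proof is simply a reference to Theorem~4.1 of \citet{chernozhukov2013testing}, i.e., exactly the reduction \eqref{eq:SN}--\eqref{eq:SN2} at $\theta^*$ followed by a union bound and a self-normalized moderate deviation estimate. The only noteworthy difference is that the paper's formal critical value \eqref{def:cSN} carries the correction factor $1/\sqrt{1-\Phi^{-1}(1-\alpha/p)^2/n}$, which converts the studentized statistic $U_{n,j}$ (normalized by $\hat\sigma_{\theta^*,j}$) into the exact self-normalized form $S_n/(\sum_i X_{ij}^2)^{1/2}$ to which the Jing--Shao--Wang bound applies directly; your choice $c_n=\Phi^{-1}(1-\alpha/p)$ omits this, so strictly speaking you would need one extra line showing the resulting discrepancy is $O(n^{-c})$ under the growth condition, which it is.
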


We also consider two-step versions of the self-normalization procedure that can improve power relative to the self-normalization procedure described above. However, because of the MinMax structure, substantial departure from \citet{chernozhukov2013testing} is needed to establish the validity of a proposed critical value. These described in detail in Section \ref{Sec:SN:cv}.


\section{Critical Values and Theoretical Guarantees}\label{SEC:INFERENCESUBVECTORPI}

In this section, we propose several methods to construct critical values $c_{n}(\eeta ,\alpha )$ in the hypothesis test procedure in (\ref{eq:HT}). We consider critical values based on the bootstrap approximations and based on self-normalized moderate deviation approximations. While the resulting hypothesis tests are shown to control asymptotic size, they have different power properties and are asymptotically valid under different conditions.


\subsection{Bootstrap-based critical values}\label{Sec:Bootstrap}

In the following subsections we propose and analyze bootstrap-based method for the construction of critical values that appropriately control size despite the high-dimensionality of the process being considered. For each $\theta \in \Theta$ and $j\in[p]$, we will denote the bootstrap process
\begin{equation}\label{def:GMB}
	\hat v_{\theta,j}^{\ast }~\equiv~\frac{1}{\sqrt{n}}\sum_{i=1}^n \xi_i \{m_j(W_i,\theta)-\bar m_{\theta,j}\}/\hat \sigma_{\theta,j},
\end{equation} where $(\xi_i)_{i=1}^n$ are i.i.d.\ $N(0,1)$ random variables independent of the data $(W_i)_{i=1}^n$. We will make the following assumption to analyse the bootstrap based critical values. 
~\\
\noindent {\bf Condition MB.} {\it
The set $\Theta(\eeta)$ is convex and the following conditions hold: (i) $\|\theta\|_\infty \leq \sqrt{n}$ and $\max_{ j \in [p]} \|\nabla_\theta\Ep[m_j(W,\theta)]/\sigma_{\theta,j}\| \leq L_G$ for every $\theta \in \Theta(\eeta)$. The set of functions $\{m_j(\cdot,\theta)/\sigma_{\theta,j} : \theta \in \Theta(\eeta), j\in [p]\}$ is VC type with measurable envelope $F$ and constants $\bar A$ and $v\geq 1$.\footnote{See Section \ref{SEC:PROCESSES} for a definition.} Moreover, for constants $b\geq \sigma >0$ we have  $\sup_{\theta\in\Theta(\eeta)}\max_{j\in[p]}\Ep[|m_j(W,\theta)/\sigma_{\theta,j}|^k] \leq \sigma^2 b^{k-2}$, $k=2,3,4$, and $\Ep[F^q]^{1/q} \leq b$ for $q>6$.  (ii) $\max_{ j \in [p]} \Ep[\{m_j(W,\theta)/\sigma_{\theta,j}-m_j(W,\tilde\theta)/\sigma_{\tilde\theta,j}\}^2] \leq L_C\|\theta - \tilde\theta\|^{\chi}$ for every $\theta, \tilde \theta \in \Theta(\eeta)$ for some $\chi \geq 1$. (iii) For every $\theta \in \Theta(\eeta)\setminus \Theta_I$ we have  $$\max_{j\in[p]}  \Ep\left[
m_{j}\left( W,\theta\right) \right] /\sigma _{\theta
_{n},j}   \geq \vartheta_n \min \{ \delta ,\inf_{\tilde{\theta}\in
\Theta (\eeta) \cap \Theta_I } \| \theta-\tilde{\theta} \| \}.$$ }

These conditions impose the existence of fourth moments of $m_j(W,\theta)/\sigma_{\theta,j}$ for each $j \in [p]$ and $\theta \in \Theta$. It also states that the functions $m_j(\cdot,\theta)/\sigma_{\theta,j}$ are well behaved in the sense of being a VC type, which covers a many applications of interest. Condition MB(iii) is a polynomial minorant condition as we move away from the identified set similar to the conditions imposed in \citet{chernozhukov2007estimation} and \citet{bugni2017inference}. However, we also allow for the parameter $\vartheta_n \to 0$ in our analysis.


For a sequence $\gamma_n = o(1)$, we define
\begin{align}
	&\bar w_n \equiv (1-\gamma_n)\mbox{-quantile of} \ \sup_{\theta\in \Theta(\eeta), j\in [p]}(|v_{\theta,j}|\vee|\hat v_{\theta,j}^*|)\label{eq:wn}\\
	&t_n^\sigma \equiv (1-\gamma_n)\mbox{-quantile of} \ \sup_{\theta \in \Theta(\eeta), j\in[p]} (|\hat\sigma_{\theta,j}/\sigma_{\theta,j}-1|\vee |\sigma_{\theta,j}/\hat\sigma_{\theta,j}-1|)\label{eq:tn}\\
	&K_n  \equiv (d_\theta/\chi) \log (nL_G) + v(\log n \vee \log( p\bar Ab/\sigma) ).\label{eq:Kn}
\end{align}
Throughout the paper we will typically consider $\gamma_n = n^{-c}$ for a suitably small constant $c>0$. In words, $\bar w_n$ is an effective measure of the noise in the problem, $t_n^\sigma$ is a uniform rate of convergence for the estimation of the standard deviation on the moment conditions, and $K_n$ controls the entropy associated with the class of functions induced by the moment conditions. Importantly, we will be able to approximate $\bar w_n$ via bootstrap simulations of the Gaussian process $\{\hat v^*_{\theta,j}: \theta \in \Theta, j\in [p]\}$ conditional on the data. Under mild conditions, we have that $\bar w_n \lesssim \sqrt{d_\theta \log(pn/\gamma_n)}$ and $t_n^\sigma \bar w_n = o(1)$.

\subsubsection{Discard Resampling or DR Bootstrap}

The first strategy to construct critical values is based on a bootstrap procedure that discards parameter values and moment inequalities that are problematic for an asymptotic approximation. The definition of this bootstrap statistic requires certain sample objects. For some sequence $M_n \geq \bar w_n$, let:
\begin{align*}
		\widehat \Theta_n(\eeta) & \subseteq \{ \theta \in \Theta(\eeta) : \max_{j\in[p]} \sqrt{n}\bar m_{\theta,j}/\hat\sigma_{\theta,j} = T_n(\eeta)\}\\
	\widehat \Psi_\theta & \equiv \{ j \in [p] : \sqrt{n}\bar m_{\theta,j}/\hat\sigma_{\theta,j}  \geq \max_{\tilde j\in[p]}\sqrt{n}\bar m_{\theta,\tilde j}/\hat\sigma_{\theta,\tilde j} - M_n\}.
\end{align*}
By definition, $\widehat \Theta_n(\eeta)$ is an arbitrary non-empty subset of the minimizers that define the MinMax statistic and, for each $\theta \in \Theta$, $\widehat \Psi_\theta$ denotes the subset of the moment inequalities that are sufficiently close to being binding. In principle, we can choose $\widehat \Theta_n(\eeta)$ to be any of the minimizers that define the MinMax statistic.

The DR bootstrap statistic is defined as follows:
\begin{equation}\label{Def:MSB}
	R_n^{DR\ast} ~~\equiv~~ \inf_{\theta \in \widehat \Theta_n(\eeta)  }~\max_{j\in\hat \Psi_\theta} ~\hat v_{\theta,j}^{\ast },\end{equation}
where $\hat v_{\theta,j}^{\ast }$ is the Gaussian multiplier bootstrap defined in (\ref{def:GMB}). For $\alpha \in (0,1)$, we define the corresponding conditional critical value as follows:
\begin{equation*}
	c_n^{DR}(\eeta,\alpha) ~\equiv~ \text{conditional}~~(1-\alpha)\mbox{-quantile of} \ \ R_n^{DR\ast} \ \ \mbox{given} \ (W_i)_{i=1}^n.
\end{equation*}

The following result establishes the relationship between the original MinMax statistic and the DR bootstrap statistic, and it exploits this result to show the asymptotic validity of the DR bootstrap approximation.

\begin{theorem}\label{thm:MSB:inferenceSubvector}
Assume that Condition MB is satisfied, $K_n^3 \leq n$, and that $M_n / \bar w_n \geq  ({L_G}/{\vartheta_n}+ 4 + t_n^\sigma{4}/{3} ){2(1+t_n^\sigma)^2}/{(1-t_n^\sigma)^2}$.
Then, under $H_0$,
$$ \P(T_n(\eeta) > t ) \leq \P(R^{DR*}_n > t - C\delta_{n}^{DR} ) + C\{\gamma_n+n^{-1}\}, $$
where $c$ and $C$  are constants that depend only on other constants of the problem, and
$$ \begin{array}{rl}
\delta_{n}^{DR}  & \equiv  \frac{Cb\sigma^2 K_n}{\gamma_n^{3/q}n^{1/2}} + \frac{C(b\sigma^2 K_n^2)^{1/3}}{\gamma_n^{1/3}n^{1/6}} +  C L_C^{1/2}\left(\frac{C\sigma K_n^{1/2}}{\gamma_n^{1/q}n^{1/2}\vartheta_n}\right)^{\chi/2} \frac{K_n^{1/2}}{\gamma_n^{1/q}} + \frac{CbK_n}{\gamma_n^{1/q}n^{1/2-1/q}}
 \end{array}.$$
Moreover, we have 
$$ \P(T_n(\eeta) > c_n^{DR}(\eeta,\alpha) ) \leq \alpha + C\mathcal{A}^{DR}_{n}\delta_n^{DR}+ C\{\gamma_n+n^{-1}\}, $$ where $\mathcal{A}^{DR}_{n}$ is as in (\ref{eq:AC}) but with $R^{*}_n$, $c_n(\eeta,\alpha)$, and $\delta_{n}$ replaced by $R^{DR*}_n$ and $c_n^{DR}(\eeta,\alpha)$, and $\delta_{n}^{DR}$, respectively.
 \end{theorem}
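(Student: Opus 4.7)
The plan is to construct a high-probability coupling of $T_n(\eeta)$ and $R_n^{DR*}$ on which $T_n(\eeta) \le R_n^{DR*} + C\delta_n^{DR}$; the first inequality of the theorem then follows immediately, and the second follows by splitting the event $\{R_n^{DR*}>c_n^{DR}(\eeta,\alpha)-C\delta_n^{DR}\}$ and invoking the anti-concentration parameter $\mathcal A_n^{DR}$.

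I begin by conditioning on the good event $\mathcal{E}_n$ on which the empirical and multiplier processes are uniformly bounded by $\bar w_n$ and $\sup_{\theta,j}|\hat\sigma_{\theta,j}/\sigma_{\theta,j}-1|\le t_n^\sigma$; by (\ref{eq:wn})--(\ref{eq:tn}), $\P(\mathcal{E}_n)\ge 1-C\gamma_n$. Under $H_0$, evaluating the infimum in $T_n(\eeta)$ at any $\theta^\star\in\Theta_I\cap\Theta(\eeta)$ and using that the population centering is nonpositive there gives $T_n(\eeta)\le\max_{j\in[p]}\hat v_{\theta^\star,j}\le\bar w_n$. Feeding this bound into the polynomial minorant of Condition MB(iii) yields the localization $\dist(\tilde\theta,\Theta_I\cap\Theta(\eeta))\lesssim \bar w_n/(\vartheta_n\sqrt n)$ for every sample minimizer $\tilde\theta\in\widehat\Theta_n(\eeta)$; combined with the Lipschitz gradient in Condition MB(i), this produces the uniform ceiling $\max_{j\in[p]}\sqrt n\,\Ep[m_j(W,\tilde\theta)]/\hat\sigma_{\tilde\theta,j}\lesssim L_G\bar w_n/\vartheta_n$. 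The explicit calibration of $M_n/\bar w_n$ in the hypothesis is exactly what is needed so that the near-binding set $\widehat\Psi_{\tilde\theta}$ includes every $j$ that matters when passing from $T_n(\eeta)$ to $R_n^{DR*}$; without it, shrinking the index set from $[p]$ to $\widehat\Psi_\theta$ could drive $R_n^{DR*}$ strictly below $T_n(\eeta)$ by an amount comparable to $L_G\bar w_n/\vartheta_n$.

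The technical heart of the argument is an application of the non-centered MinMax coupling, Theorem \ref{thm:clt:minmax:discrete}, to the class $\{m_j(\cdot,\theta)/\sigma_{\theta,j}:\theta\in\Theta(\eeta),\,j\in[p]\}$; Conditions MB(i)--(ii) and the entropy bound $K_n$ verify its VC-type and Lipschitz hypotheses. Its conclusion produces a joint construction in which $\inf_{\theta\in\widehat\Theta_n(\eeta)}\max_{j\in\widehat\Psi_\theta}\{\hat v_{\theta,j}+\sqrt n\,\Ep[m_j(W,\theta)]/\hat\sigma_{\theta,j}\}$ lies within $C\delta_n^{DR}$ of $R_n^{DR*}=\inf_{\theta\in\widehat\Theta_n(\eeta)}\max_{j\in\widehat\Psi_\theta}\hat v_{\theta,j}^{*}$ on an event of probability at least $1-C/n$. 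The four summands in $\delta_n^{DR}$ correspond to the four error sources in such a coupling: a Nazarov-type Gaussian approximation error of order $K_n/n^{1/2}$; a Yurinski/KMT-type coupling rate $(K_n^2/n)^{1/3}$; a local discretization term whose $\vartheta_n^{-\chi/2}$ factor is inherited from the localization radius and the Lipschitz exponent $\chi$ in MB(ii); and an envelope truncation error at rate $n^{-(1/2-1/q)}$. Since for every $\tilde\theta\in\widehat\Theta_n(\eeta)$ the left-hand side of the coupling equals $T_n(\eeta)$ by definition of the minimizer set and of $\widehat\Psi_{\tilde\theta}$, the coupling rearranges to $T_n(\eeta)\le R_n^{DR*}+C\delta_n^{DR}$ on the good event, which proves the first claim.

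For the second claim I apply Part~1 at $t=c_n^{DR}(\eeta,\alpha)$ and then bound $\P(R_n^{DR*}>c_n^{DR}(\eeta,\alpha)-C\delta_n^{DR})$ via the decomposition into $\{R_n^{DR*}>c_n^{DR}(\eeta,\alpha)\}\cup\{|R_n^{DR*}-c_n^{DR}(\eeta,\alpha)|\le C\delta_n^{DR}\}$: the first event has unconditional probability at most $\alpha$ by the tower property applied to the conditional $(1-\alpha)$-quantile definition, and the second has probability at most $C\mathcal A_n^{DR}\delta_n^{DR}$ directly by (\ref{eq:AC}). The main obstacle throughout is the interlock between the non-vanishing centering ceiling $L_G\bar w_n/\vartheta_n$ and the index truncation $[p]\to\widehat\Psi_\theta$: unlike in a finite-inequality Donsker setting one cannot simply discard the centering, and it is the new non-centered MinMax coupling of Theorem \ref{thm:clt:minmax:discrete} together with the calibrated $M_n$ that absorb this interaction into $\delta_n^{DR}$ uniformly over all sample minimizers.
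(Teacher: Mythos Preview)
Your sketch has the right ingredients but the central coupling step does not go through as written. Two related gaps:

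\textbf{(i) The coupling theorems require deterministic index sets.} Theorems \ref{thm:clt:minmax} and \ref{thm:clt:minmax:Gaussian} (and their discrete version \ref{thm:clt:minmax:discrete}) couple the MinMax of $B+\Gn$ with the MinMax of $B+G_P$ over a \emph{fixed} class $\{\mathcal F_\theta:\theta\in\Lambda\}$. You invoke them with $\Lambda=\widehat\Theta_n(\eeta)$ and inner index set $\widehat\Psi_\theta$, both of which are functions of the sample. The approximation error $\delta_{n,\eta,\gamma_n}$ is derived for a fixed functional; it does not automatically transfer to a data-driven MinMax.

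\textbf{(ii) The coupling preserves the centering; it does not remove it.} Even granting a uniform coupling, the output of Theorem \ref{thm:clt:minmax:Gaussian} applied to your quantity would be $\inf_{\theta\in\widehat\Theta_n}\max_{j\in\widehat\Psi_\theta}\{\hat v^*_{\theta,j}+\sqrt n\,\Ep[m_j(W,\theta)]/\hat\sigma_{\theta,j}\}$, not $R_n^{DR*}=\inf_{\theta\in\widehat\Theta_n}\max_{j\in\widehat\Psi_\theta}\hat v^*_{\theta,j}$. On $\widehat\Theta_n$ the population centering can be as large as $L_G\bar w_n/\vartheta_n$ in absolute value (and as negative as $-M_n$ on $\widehat\Psi_\theta$), which is not dominated by $\delta_n^{DR}$. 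You never explain how the centering disappears.

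The paper resolves both issues by reversing the order of operations. It first restricts the infimum to the \emph{deterministic} set $\Theta_I(\eeta)$ and the inner maximum to the deterministic near-active set $\Psi_\theta$ (steps (1)--(3)); applies the coupling there (step (4)); then, because $\theta\in\Theta_I(\eeta)$ forces $\Ep[m_j(W,\theta)]\le 0$, it drops the centering in the correct inequality direction (step (6)). Only \emph{after} the centering is gone does it pass from the deterministic $\check\Theta_n\subseteq\Theta_I(\eeta)$ to the random $\widehat\Theta_n$ and from $\Psi_{\check\theta}$ to $\widehat\Psi_\theta$ (step (7)), paying the modulus-of-continuity term $\mu_n$ for $v^*$ over balls of radius $\psi_n/\vartheta_n$ and using Lemma \ref{lemma:selectionExtra} (which is where the calibrated lower bound on $M_n/\bar w_n$ is actually consumed) to guarantee $\Psi_{\check\theta}\subseteq\widehat\Psi_\theta$. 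Your localization of $\widehat\Theta_n$ near $\Theta_I$ is correct and corresponds to Lemma \ref{lemma:selectionExtraTheta}, but it is used to bound $\mu_n$, not to bypass the centering.
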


Theorem \ref{thm:MSB:inferenceSubvector} shows how the tail probability of the statistic of interest can be bounded by the tail probability of the bootstrap statistic (up to an approximation error). The result allows for  both $p$ and $d_\theta$ to increase with the sample size. In particular, $p\gg n$ is allowed. However, our proof of the validity of this procedure requires the choice of $M_n$ to be such that binding inequalities are not missed. In particular, it would suffice to choose $M_n$ such that $M_n/\bar w_n \to \infty$ in cases where $\vartheta_n \geq c$ and $L_G \leq C$, which are common in applications. Importantly, we can rigorously approximate $\bar w_n$ as the $1-n^{-c}$ quantile of $\sup_{\theta\in \Theta(\eeta), j\in [p]}|\hat v_{\theta,j}^*|$ which provides a data-driven approach to set $M_n$. Note that this represents an additional restriction relative to $M_n\to \infty$ required by \citet{bugni2017inference} in the context of finite number of moment inequalities.\footnote{Note that $\bar w_n$ would be uniformly bounded if $p$ and $d_\theta$ are fixed.} Instead, our requirement that $M_n/\bar w_n \to \infty$ indicates that $M_n$ is adaptive to the setting under consideration.

To guarantee that the DR bootstrap approximation provides asymptotic size control we require that $\gamma_n+\mathcal{A}^{DR}_{n}\delta_n^{DR} \approx 0$, i.e., the distribution DR bootstrap statistic cannot concentrate excessively at the quantile of interest relative to the approximation error $\delta_n^{DR}$. (See Section \ref{SEC:ANTICONCENTRATION} for further discussion.) The following corollary of Theorem \ref{thm:MSB:inferenceSubvector} provides simple sufficient conditions that covers many data generating process of interest.

\begin{corollary}
Assume that Condition MB holds, $K_n^3 \leq n$, $L_G/\vartheta_n \leq C_1$, $\bar w_n/M_n \leq \log^{-1} n$, and that $\gamma_n + \mathcal{A}^{DR}_{n}\delta_n^{DR} \leq C_1n^{-c_1}$. Then, under $H_0$, $$ \P( T_n(\eeta) > c_n^{DR}(\eeta,\alpha) ) \leq \alpha + Cn^{-c},$$ where $c$ and $C$ are constants that depend only on other constants of the problem.
\end{corollary}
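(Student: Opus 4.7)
The plan is to obtain the corollary as a direct specialization of Theorem \ref{thm:MSB:inferenceSubvector}. Since the theorem already delivers the bound
$$\P(T_n(\eeta) > c_n^{DR}(\eeta,\alpha)) \leq \alpha + C\mathcal{A}_n^{DR}\delta_n^{DR} + C(\gamma_n + n^{-1}),$$
the only substantive work is to verify that the corollary's hypotheses imply the theorem's requirement on the threshold $M_n$, namely
$$M_n/\bar w_n \geq \left(L_G/\vartheta_n + 4 + \tfrac{4}{3}\, t_n^\sigma\right)\frac{2(1+t_n^\sigma)^2}{(1-t_n^\sigma)^2},$$
and then to absorb every remaining error term into $Cn^{-c}$.

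For the $M_n$ verification, I will first argue that $t_n^\sigma$ is small. Condition MB provides VC-type control of the class $\{m_j(\cdot,\theta)/\sigma_{\theta,j}\}$ together with moment bounds, so standard empirical process concentration (as already invoked inside the proof of Theorem \ref{thm:MSB:inferenceSubvector}) yields $t_n^\sigma \lesssim \sqrt{K_n/n}$ up to logarithmic factors; the assumption $K_n^3 \leq n$ then guarantees $t_n^\sigma = o(1)$ and in particular $t_n^\sigma \leq 1/2$ for $n$ sufficiently large. Combined with $L_G/\vartheta_n \leq C_1$, the right-hand side of the $M_n$ condition is bounded by an absolute constant $C^*$ depending only on $C_1$, so the hypothesis $\bar w_n/M_n \leq \log^{-1} n$ (equivalently $M_n/\bar w_n \geq \log n$) implies the required inequality once $\log n \geq C^*$. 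For the finitely many remaining values of $n$, the conclusion can be enforced by enlarging the absolute constant $C$ in the final bound.

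With the theorem in force, I would then substitute the hypothesis $\gamma_n + \mathcal{A}_n^{DR}\delta_n^{DR} \leq C_1 n^{-c_1}$ directly into the tail bound, obtaining
$$\P(T_n(\eeta) > c_n^{DR}(\eeta,\alpha)) \leq \alpha + C\cdot C_1 n^{-c_1} + C n^{-1},$$
and collapse the two polynomial terms into a single $Cn^{-c}$ by taking $c = \min(c_1, 1)$ and adjusting $C$. This is the entire argument.

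The only step that requires any genuine care is the uniform control of $t_n^\sigma$, since one must argue on a class indexed by $\theta \in \Theta(\eeta)$ and $j \in [p]$ with $p$ possibly exponentially large in $n$. However, this is precisely the type of supremum that the VC envelope $F$, the fourth-moment bounds, and the entropy parameter $K_n$ are designed to handle; the necessary concentration inequality is already part of the machinery used to prove Theorem \ref{thm:MSB:inferenceSubvector}, so no new technical input is needed. Consequently I do not anticipate any real obstacle beyond careful bookkeeping of constants.
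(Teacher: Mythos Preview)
Your proposal is correct and matches the paper's intended argument. The paper does not give a separate proof of this corollary; it is presented as an immediate consequence of Theorem \ref{thm:MSB:inferenceSubvector}, and your verification of the $M_n/\bar w_n$ condition via the bound on $t_n^\sigma$ (available from Lemma \ref{lemma:BoundsOn3} under Condition MB and $K_n^3\leq n$), together with $L_G/\vartheta_n\leq C_1$ and $M_n/\bar w_n\geq \log n$, is exactly the intended route.
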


\begin{remark}[Anti-concentration]
In cases where we choose $\widehat \Theta_n(\eeta)$ to be a singleton, we can show that $\mathcal{A}^{DR}_{n} \leq C\log^{1/2} p$ by the known anti-concentration properties associated with the maximum of Gaussian variables. Moreover, we have $\mathcal{A}^{DR}_{n}(W) \leq C\{\log (1+|\widehat\Psi_{\hat\theta}|)\}^{1/2}\leq C\log^{1/2} p$. Moreover, Lemma \ref{lem:density} in the appendix shows that if the cardinality of $\widehat \Theta_n(\eeta)$ is bounded by $k$, then we have $\mathcal{A}^{DR}_{n} \leq Ck\log^{1/2} p$. \footnote{We conjecture the dependence is a logarithmic factor in the covering number of $\widehat \Theta_n(\eeta)$. (See Section \ref{SEC:ANTICONCENTRATION} for a discussion.)}
\end{remark}

\subsubsection{Penalized Resampling or PR Bootstrap}

The second strategy to construct critical values is based on a bootstrap procedure that takes into account the non-centered feature of the empirical process via penalization. The Penalized Resampling or PR bootstrap statistic is defined as follows
\begin{equation}\label{Def:MPB}R_n^{PR\ast }\equiv \inf_{\theta \in \Theta (\eeta) }\max_{j\in[p]}\left\{ \hat v_{\theta,j}^{\ast } +\kappa^{-1}_n\sqrt{n}\bar{%
m}_{\theta,j} /\hat{\sigma}_{\theta,j} \right\}\end{equation}
where $\kappa_n \geq 1$ is a user-specified tuning parameter and the $\hat v^{\ast}_{\theta,j}$ is the bootstrap process defined in (\ref{def:GMB}). For $\alpha \in (0,1)$, we define the corresponding conditional critical value as follows:
\begin{equation*}
	c_n^{PR}(\eeta,\alpha) ~\equiv~ \text{conditional}~~(1-\alpha)\mbox{-quantile of} \ \ R_n^{PR\ast} \ \ \mbox{given} \ (W_i)_{i=1}^n.
\end{equation*}

Similarly to Theorem \ref{thm:MSB:inferenceSubvector}, the following result establishes the relationship between the original MinMax statistic and the PR bootstrap statistic, and it exploits this result to show the asymptotic validity of the PR bootstrap approximation.


\begin{theorem}\label{thm:PB:inferenceSubvector}
Assume that Condition MB is satisfied and $K_n^3\leq n$. Then, under $H_0$,
$$ \P(T_n(\eeta) \geq t ) \leq \P(R^{PR*}_n \geq t - C\delta_{n}^{PR} ) + C\{\gamma_n+n^{-1}\}, $$
where $c$ and $C$  are constants that depend only on other constants of the problem, and
$$ \delta_{n}^{PR} \equiv
\frac{L_G\kappa_n \sigma^2K_n}{\gamma_n^{2/q}n^{1/2}\vartheta_n^2} +  \frac{(b\sigma^2 K_n^2)^{1/3}}{\gamma_n^{1/3}n^{1/6}} + \frac{(b\sigma)^{1/2} K_n^{3/4}}{\gamma_n^{1/q}n^{1/4}} + \frac{b K_n}{\gamma_n^{1/q}n^{1/2-1/q}} $$ $$ + \frac{\bar w_n}{\kappa_n}+ L_C^{1/2}\left(\frac{\kappa_n\sigma K_n^{1/2}}{n^{1/2}\vartheta_n\gamma_n^{1/q}}\right)^{\chi/2} \frac{K_n^{1/2}}{\gamma_n^{1/q}}.$$
Moreover, we have 
 $$ \P(T_n(\eeta) \geq c_n^{PR}(\eeta,\alpha) ) \leq \alpha + C\mathcal{A}^{PR}_{n}\delta_{n}^{PR} + C\{\gamma_n+n^{-1}\}^{1/2},  $$  where $\mathcal{A}^{PR}_{n}$ is as in (\ref{eq:AC}) but with $R^{*}_n$, $c_n(\eeta,\alpha)$, and $\delta_{n}$ replaced by $R^{PR*}_n$ and $c_n^{PR}(\eeta,\alpha)$, and $\delta_{n}^{PR}$, respectively.
\end{theorem}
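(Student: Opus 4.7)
The plan is to chain four approximations comparing $T_n(\eeta)$ to $R_n^{PR*}$, each contributing one of the displayed terms in $\delta_n^{PR}$, and then derive size control via anti-concentration.

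First I would discretize $\Theta(\eeta)$ via a $\rho_n$-net $\Theta_n$ of cardinality $\lesssim e^{K_n}$ using the VC-type assumption in Condition MB(i); the $L^2$-H\"older assumption MB(ii) controls the discretization error of both $T_n(\eeta)$ and $R_n^{PR*}$ by a term of the form $L_C^{1/2}\rho_n^{\chi/2}\cdot\bar w_n$, from which the term $L_C^{1/2}(\kappa_n\sigma K_n^{1/2}/(n^{1/2}\vartheta_n\gamma_n^{1/q}))^{\chi/2}K_n^{1/2}/\gamma_n^{1/q}$ emerges after choosing $\rho_n$ to match the polynomial-minorant localization radius appearing in Step 4. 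Next I would invoke the new MinMax coupling (Theorem \ref{thm:clt:minmax:discrete}) to construct on the same space a Gaussian process $\{G_{\theta,j}\}$ with the same covariance as $\{m_j(\cdot,\theta)-\Ep[m_j(W,\theta)]\}/\sigma_{\theta,j}$, yielding $|T_n(\eeta)-T_n^G|$ small with high probability, where $T_n^G:=\inf_{\theta\in\Theta_n}\max_{j\in[p]}[G_{\theta,j}+\mu_{\theta,j}]$ and $\mu_{\theta,j}:=\sqrt{n}\Ep[m_j(W,\theta)]/\hat\sigma_{\theta,j}$; the coupling error supplies the $(b\sigma^2 K_n^2)^{1/3}/(\gamma_n^{1/3}n^{1/6})$ and $bK_n/(\gamma_n^{1/q}n^{1/2-1/q})$ terms.

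Since $\hat v^*$ conditionally on the data is Gaussian with sample covariance $\hat\Sigma$, I would next use the concentration $\|\hat\Sigma-\Sigma\|_\infty\lesssim\sqrt{\log p/n}$ combined with a Gaussian comparison to produce a process $G'$ with covariance $\Sigma$ (independent of $G$) such that $R_n^{PR*}$ is close to $R_n^{PR,G}:=\inf_{\theta\in\Theta_n}\max_{j\in[p]}[G'_{\theta,j}+\kappa_n^{-1}\sqrt{n}\bar m_{\theta,j}/\hat\sigma_{\theta,j}]$; the smoothing of the MinMax functional required to control this Gaussian comparison (echoing the smoothing used in Theorem \ref{thm:clt:minmax:discrete}) contributes the $(b\sigma)^{1/2}K_n^{3/4}/(\gamma_n^{1/q}n^{1/4})$ term. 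Writing $\sqrt{n}\bar m_{\theta,j}/\hat\sigma_{\theta,j}=\hat v_{\theta,j}+\mu_{\theta,j}$ and using $|\hat v_{\theta,j}|\le\bar w_n$ on a $(1-\gamma_n)$-event simplifies $R_n^{PR,G}$ to $\tilde R_n^G:=\inf_{\theta\in\Theta_n}\max_{j\in[p]}[G'_{\theta,j}+\kappa_n^{-1}\mu_{\theta,j}]$ up to the error $\bar w_n/\kappa_n$, which is the third-to-last explicit term in $\delta_n^{PR}$.

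The heart of the proof, and the main obstacle, is to show $\P(T_n^G\ge t)\le\P(\tilde R_n^G\ge t)+C(\gamma_n+n^{-1})$. Because the PR bootstrap uses fresh randomness $\xi_i$, the processes $G$ and $G'$ are independent, so a pointwise coupling of $T_n^G$ and $\tilde R_n^G$ is unavailable. My plan is: (i) use the polynomial minorant MB(iii) and the $\sqrt{n}L_G$-Lipschitzness of $\mu$ in $\theta$ to argue that both infima are attained in a ball $\mathcal{B}$ around $\Theta(\eeta)\cap\Theta_I$ (which is non-empty under $H_0$) of radius $O(\kappa_n\bar w_n/(\sqrt{n}\vartheta_n))$; (ii) on $\mathcal{B}$, both $\mu_{\theta,j}$ and $\kappa_n^{-1}\mu_{\theta,j}$ are $O(L_G\bar w_n/\vartheta_n)$ in magnitude so their difference can be absorbed into a controlled shift; (iii) exchange $G$ for $G'$ (same distribution, same covariance $\Sigma$) and apply a Slepian-type Gaussian-comparison-with-shifts on the finite index set $\Theta_n\times[p]$, restricted to $\mathcal{B}$. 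The covariance-error amplification factor $L_G\kappa_n/\vartheta_n^2$ that appears when translating the $\sqrt{\log p/n}$ bound on $\|\hat\Sigma-\Sigma\|_\infty$ through the localization radius produces the first term $L_G\kappa_n\sigma^2 K_n/(\gamma_n^{2/q}n^{1/2}\vartheta_n^2)$ of $\delta_n^{PR}$.

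For the size control claim, evaluating the first inequality at $t=c_n^{PR}(\eeta,\alpha)$ gives $\P(T_n(\eeta)\ge c_n^{PR})\le\P(R_n^{PR*}\ge c_n^{PR}-C\delta_n^{PR})+C(\gamma_n+n^{-1})\le \alpha + \P(|R_n^{PR*}-c_n^{PR}|\le C\delta_n^{PR})+C(\gamma_n+n^{-1})$, and bounding the middle term by $C\mathcal{A}_n^{PR}\delta_n^{PR}$ via the anti-concentration definition (\ref{eq:AC}); the $\{\gamma_n+n^{-1}\}^{1/2}$ rate appearing in the statement (in place of the linear rate present in the first inequality) is the product of a Cauchy--Schwarz step needed when passing from conditional to unconditional probabilities, since $c_n^{PR}(\eeta,\alpha)$ is itself a function of the data.
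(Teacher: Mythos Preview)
Your chain-of-approximations skeleton is broadly right, but the step you correctly flag as ``the heart of the proof, and the main obstacle'' has a genuine gap, and the paper resolves it by a different device than the one you propose.

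\textbf{Where your plan breaks.} In step (ii) you claim that on the localization ball $\mathcal{B}$ the centerings $\mu_{\theta,j}$ and $\kappa_n^{-1}\mu_{\theta,j}$ are both $O(L_G\bar w_n/\vartheta_n)$ so that their difference can be absorbed. This is false for the $\theta$ that actually achieves the infimum of the \emph{penalized} statistic $\tilde R_n^G$. For such $\theta$ you only learn $\max_j \kappa_n^{-1}\mu_{\theta,j}\le 2\bar w_n$, hence $\max_j\mu_{\theta,j}$ can be as large as $2\kappa_n\bar w_n$, and the difference $(1-\kappa_n^{-1})\mu_{\theta,j}$ at the binding $j$ is of order $\kappa_n\bar w_n$, which diverges. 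A Slepian-type shift argument as in your step (iii) would need $\mu_{\theta,j}\le \kappa_n^{-1}\mu_{\theta,j}$ for all relevant $(\theta,j)$, i.e.\ $\mu_{\theta,j}\le 0$, which holds on $\Theta_I(\eeta)$ but not on $\mathcal{B}$; and restricting the infimum of $\tilde R_n^G$ to $\Theta_I(\eeta)$ goes the wrong way (it makes $\tilde R_n^G$ larger, but you need to show it is \emph{not smaller} than $T_n^G$).

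\textbf{What the paper does instead.} The paper does not compare the two centerings at the \emph{same} $\theta$. It proves a deterministic inequality (Lemma~\ref{thm:Ineq01}, via Lemma~\ref{lem:OtherThetaConstruction}) at the level of the original empirical process, \emph{before} any Gaussian coupling: given an $\varepsilon$-minimizer $\theta_n$ of the penalized statistic $S_n^\kappa=\inf_\theta\max_j\{v_{\theta,j}+\kappa_n^{-1}\mu_{\theta,j}\}$, it \emph{constructs} $\breve\theta_n=(1-\kappa_n^{-1})\tilde\theta_n+\kappa_n^{-1}\theta_n$ with $\tilde\theta_n\in\Theta_I(\eeta)$ (using convexity of $\Theta(\eeta)$) such that the \emph{full} centering at $\breve\theta_n$ satisfies $\mu_{\breve\theta_n,j}\le \kappa_n^{-1}\mu_{\theta_n,j}+L_G\kappa_n(\max_j l_{n,j}/\vartheta_n)^2/\sqrt{n}$ for every $j$. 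Two applications of the mean value theorem to $\theta\mapsto\mu_{\theta,j}$ produce exactly the second-order remainder $L_G\kappa_n(\bar w_n/\vartheta_n)^2/\sqrt{n}$, which---after plugging $\bar w_n\lesssim \sigma K_n^{1/2}/\gamma_n^{1/q}$---is the first term $L_G\kappa_n\sigma^2K_n/(\gamma_n^{2/q}n^{1/2}\vartheta_n^2)$ in $\delta_n^{PR}$. (Your attribution of this term to ``covariance-error amplification'' is therefore off.) The equicontinuity modulus $\mu_n$ controls $|v_{\theta_n,j}-v_{\breve\theta_n,j}|$ over $\|\theta_n-\breve\theta_n\|\le \kappa_n\bar w_n/(\sqrt{n}\vartheta_n)$, which is where the $L_C^{1/2}(\cdot)^{\chi/2}K_n^{1/2}/\gamma_n^{1/q}$ term comes from---not from a discretization of $\Theta(\eeta)$.

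\textbf{Order of operations.} Because the comparison $T_n(\eeta)\le S_n^\kappa+\delta_1'$ is done first and deterministically, the Gaussian coupling (Theorems~\ref{thm:clt:minmax} and~\ref{thm:clt:minmax:Gaussian}) is applied \emph{only} to the penalized statistic $S_n^\kappa$, which already carries the centering $\kappa_n^{-1}\mu_{\theta,j}$ shared with $R_n^{PR*}$. This sidesteps entirely the comparison of Gaussian MinMax statistics with different centerings that stalls your plan. The remaining passage $R_n^{**}\to R_n^{PR*}$ (replacing $\kappa_n^{-1}\mu_{\theta,j}$ by $\kappa_n^{-1}\sqrt{n}\bar m_{\theta,j}/\hat\sigma_{\theta,j}$) is where the $\bar w_n/\kappa_n$ term enters, exactly as you guessed. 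The paper also separates out a trivial Case~1 ($\ell_{\min}<-8\bar w_n$) in which $T_n(\eeta)\le R_n^{PR*}$ holds directly.
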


Theorem \ref{thm:PB:inferenceSubvector} bounds the probability distribution function of $T_n(\eeta)$ based on approximation errors and the probability distribution of $R^{PR*}_n$. The core of the proof constructs intermediary processes for which we can apply Theorems \ref{thm:clt:minmax} and \ref{thm:clt:minmax:Gaussian}. 
Theorem \ref{thm:PB:inferenceSubvector} also provides guideline on how to choose $\kappa_n$. Indeed we need to ensure that $\kappa_n/\bar w_n \to \infty$ and $n^{-1/2}\kappa_n (L_G\sigma^2K_n/\vartheta_n^2)\to 0$. These highlights the role of considering many moment inequalities on the threshold sequence. In fact, the moment inequality literature with a fixed number of moment conditions typically just requires that $\kappa_n\to \infty$ and $n^{-1/2}\kappa_n \to 0$.

Provided that $\gamma_n$ such as $\gamma_n + \delta_{n}^{PR} \mathcal{A}^{PR}_{n} \leq C_2n^{-c_2}$, Theorem \ref{thm:PB:inferenceSubvector} implies that the critical value based on the PR bootstrap approximation provides asymptotic size control. However, note that the anti-concentration condition impacts the choice of threshold sequence $\kappa_n$ as it impact $\delta_{n}^{PR}$. (See Section \ref{SEC:ANTICONCENTRATION}.) Corollary \ref{cor:mainresult1} stated earlier provided conditions under which we obtain (\ref{eq:CSvalid}).

\begin{remark}[Refinements on centering]
For the vector inference problem, \citet{romano2014practical} discussed an alternative approach to incorporate the centering in the bootstrap-based statistics. Also, see the discussion in Comment 4.4 in \citet{chernozhukov2013testing}. In the vector inference problem, the hypothesis $H_0:\theta^* = \bar{\theta}$ completely specifies the true parameter value. Letting $\mu_j=\Ep[m_j(\bar\theta,W_1)]$, the value $\tilde \mu_{j}= \min\{0, \hat \mu_j + \hat \sigma_{j} \bar c_n/\sqrt{n}\}$ which satisfies $\mu_j \leq \tilde \mu_j$ with probability exceeding $1-\gamma_n$ under $H_0$ by taking $\bar c_n$ as the $1-\gamma_n$ quantile of the effective noise $\max_{j\in [p]} \sqrt{n}(\hat \mu_j - \mu_j)$. Thus $\sqrt{n}\tilde \mu_{j}/\hat\sigma_j$ can be used as a valid centering for the vector inference problem to construct critical values. In the subvector inference problem considered in this paper, we have $\Ep[m_j(W,\theta)] \leq \bar m_{\theta,j} + \hat \sigma_{\theta,j} \bar w_n/\sqrt{n}$ for all $\theta \in \Theta(\eeta)$ and $j\in [p]$ with probability exceeding $1-\gamma_n$. However, we cannot take the minimum with zero as $H_0:h(\theta^*) = \bar{h}$ does not completely specify completely true parameter value. When compared with the recentered used in the PR bootstrap as defined in (\ref{Def:MPB}), we note that neither recentering
$$ \kappa_n^{-1}\sqrt{n}\bar m_{\theta,j}/\hat\sigma_{\theta,j} \ \ \ \mbox{and} \ \ \ \sqrt{n}\bar m_{\theta,j}/\hat \sigma_{\theta,j} + \bar w_n $$
dominates each other in general. Indeed, the recentering for the PR bootstrap will be smaller if and only if
$$ -\bar w_n < (1-\kappa_n^{-1})\bar m_{\theta,j}/\hat\sigma_{\theta,j}.$$
Therefore we can take a recentering
$$ \min\{ \kappa_n^{-1}\sqrt{n}\bar m_{\theta,j}/\hat\sigma_{\theta,j}, \ \ \sqrt{n} \bar m_{\theta,j}/\hat \sigma_{\theta,j} + \bar w_n \}.$$
The analysis of a bootstrap procedure based on this recentering follows essentially the same proof as of the penalized bootstrap and therefore it is omitted.
\end{remark}


\begin{remark}[Data-driven choice of $\kappa_n$]
Implicitly, both the PR bootstrap statistic $R_n^{PR*} =R_n^{PR*}(\kappa_{n})$ and its anti-concentration parameters $\mathcal{A}^{PR}_{n}=\mathcal{A}^{PR}_{n}(\kappa_{n})$ and $\mathcal{A}_n^{PR}(W,\kappa_n)$ depend on the threshold sequence $\kappa_{n}$. The following is an implementable procedure to make an adaptive data-driven choice for $\kappa_n$. \\
~\\
Step 1. Approximate $\bar w_n$ as the $1-n^{-c}$ quantile of $\sup_{\theta\in \Theta(\eeta), j\in [p]}|\hat v_{\theta,j}^*|$.\\
Step 2. Compute the anti-concentration parameter $\mathcal{A}^{PR}_{n}(W,\kappa)$ of $R_n^{PR*}(\kappa)$. \\
Step 3. Define $\kappa_n \equiv \inf\{ \kappa : \kappa \geq \bar w_n \mathcal{A}^{PR}_{n}(W,\kappa) n^{c}\}$.\\
Step 4. Compute critical value $c_n^{PR}(\eeta,\alpha)$ based on $R_n^{PR*} \equiv R_n^{PR*}(\kappa_n)$.
~\\


\end{remark}

\subsubsection{Minimum Resampling or MR Bootstrap}

We can combine the two previous bootstrap approximations by taking the minimum of their bootstrap statistics. Formally, the Minimum Resampling or MR bootstrap statistic is defined as follows:
$$ R^{MR\ast}_n = \min\{ R^{DR\ast}_n, R^{PR\ast}_n\}. $$
For $\alpha \in (0,1)$, we define the corresponding critical values as follows
$$c_n^{MR*}(\eeta,\alpha) = \mbox{conditional} \ \ (1-\alpha)\mbox{-quantile of} \ \ R_n^{MR\ast} \ \ \mbox{given} \ (W_i)_{i=1}^n.$$

The following result builds on Theorems \ref{thm:MSB:inferenceSubvector} and \ref{thm:PB:inferenceSubvector}. It establishes a relationship between the original MinMax statistic $T_n(\eeta)$ and the MR bootstrap statistic $R_n^{PR\ast}$, and it shows the asymptotic validity of the MR bootstrap approximation.

\begin{theorem}\label{thm:MT:inferenceSubvector}
Assume that the conditions in Theorems \ref{thm:MSB:inferenceSubvector} and \ref{thm:PB:inferenceSubvector} hold. Then, under $H_0$,
$$ \P(T_n(\eeta) \geq t ) \leq \P(R^{MR*}_n \geq t - C\delta_{n}^{MR} ) + C\{\gamma_n+n^{-1}\}, $$
where $c$ and $C$  are constants that depend only on other constants of the problem, $\delta_{n}^{MR} \equiv \delta_{n}^{DR}+\delta_{n}^{PR}$, and $\delta_{n}^{DR}$ and $\delta_{n}^{PR}$ are as defined in Theorems \ref{thm:MSB:inferenceSubvector} and \ref{thm:PB:inferenceSubvector}, respectively.
Moreover, we have 
 $$ \P(T_n(\eeta) \geq c_n^{MR}(\eeta,\alpha) ) \leq \alpha + C\mathcal{A}^{MR}_{n}\delta_{n}^{MR} + C\{\gamma_n+n^{-1}\}^{1/2},  $$  where $\mathcal{A}^{MR}_{n}$ is as in (\ref{eq:AC}) but with $R^{*}_n$, $c_n(\eeta,\alpha)$, and $\delta_{n}$ replaced by $R^{MR*}_n$ and $c_n^{MR}(\eeta,\alpha)$, and $\delta_{n}^{MR}$, respectively.
\end{theorem}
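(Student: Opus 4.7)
The proof exploits the fact that $R^{DR\ast}_n$, $R^{PR\ast}_n$, and hence $R^{MR\ast}_n=\min(R^{DR\ast}_n,R^{PR\ast}_n)$ are all built from the same data $(W_i)_{i=1}^n$ and the same multipliers $(\xi_i)_{i=1}^n$ entering the Gaussian multiplier process $\hat v^{\ast}_{\theta,j}$. Consequently the approximations of Theorems~\ref{thm:MSB:inferenceSubvector} and \ref{thm:PB:inferenceSubvector} live on a common probability space and can be intersected.

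For the first display, I would recast the proofs of Theorems~\ref{thm:MSB:inferenceSubvector} and \ref{thm:PB:inferenceSubvector} as producing measurable events $A_{DR}$ and $A_{PR}$ with $\P(A_{DR}^{c})+\P(A_{PR}^{c})\leq C\{\gamma_n+n^{-1}\}$ on which the pathwise bounds $T_n(\eeta)\leq R^{DR\ast}_n+C\delta_n^{DR}$ and $T_n(\eeta)\leq R^{PR\ast}_n+C\delta_n^{PR}$ respectively hold. On $A_{DR}\cap A_{PR}$,
\begin{equation*}
T_n(\eeta)~\leq~\min\bigl(R^{DR\ast}_n+C\delta_n^{DR},\; R^{PR\ast}_n+C\delta_n^{PR}\bigr)~\leq~R^{MR\ast}_n+C(\delta_n^{DR}+\delta_n^{PR})~=~R^{MR\ast}_n+C\delta_n^{MR},
\end{equation*}
and a union bound on the complement gives the first display (with the factor $2$ absorbed into $C$).

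For the second display I would instantiate the first with $t=c_n^{MR}(\eeta,\alpha)$, obtaining
\begin{equation*}
\P\bigl(T_n(\eeta)\geq c_n^{MR}(\eeta,\alpha)\bigr)~\leq~\P\bigl(R^{MR\ast}_n\geq c_n^{MR}(\eeta,\alpha)-C\delta_n^{MR}\bigr)+C\{\gamma_n+n^{-1}\}.
\end{equation*}
Anti-concentration as in \eqref{eq:AC}, applied to $R^{MR\ast}_n$, trades the shift $C\delta_n^{MR}$ for an additive error $C\mathcal{A}^{MR}_n\delta_n^{MR}$, and then $\P(R^{MR\ast}_n\geq c_n^{MR}(\eeta,\alpha))\leq\alpha$ follows from the conditional-quantile definition (by tower property and continuity of the bootstrap law). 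The residual square-root factor $\{\gamma_n+n^{-1}\}^{1/2}$ is inherited from the same step already present in Theorem~\ref{thm:PB:inferenceSubvector}, where the passage from the data-dependent quantile to the unconditional anti-concentration parameter is carried out by a Markov-type inequality.

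The main obstacle is bookkeeping rather than new mathematics: one must verify that the approximation events underlying the two prior theorems indeed live on the same underlying probability space so the intersection argument is legitimate. This is a formality because both DR and PR bootstraps are defined through the identical Gaussian multiplier process $\hat v_{\theta,j}^\ast$, so no extra coupling is required; no new central-limit theorem or anti-concentration estimate beyond those already developed for Theorems~\ref{thm:MSB:inferenceSubvector} and \ref{thm:PB:inferenceSubvector} is needed.
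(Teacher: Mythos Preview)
Your intersection idea is correct for the purely pathwise portions of the two proofs, but it breaks at the coupling step. In the proofs of Theorems~\ref{thm:MSB:inferenceSubvector} and \ref{thm:PB:inferenceSubvector}, the passage from the empirical MinMax to the bootstrap MinMax goes through an intermediate Gaussian via Theorems~\ref{thm:clt:minmax} and \ref{thm:clt:minmax:Gaussian}: the empirical side is coupled to a variable $\widetilde T$ with the Gaussian MinMax law, and the bootstrap side is (separately) coupled to a variable $\widetilde S$ with the same law, but $\widetilde T$ and $\widetilde S$ are \emph{distinct} random variables; the link between them is only the distributional identity $\widetilde T =_d \widetilde S$. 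Consequently those proofs deliver tail-probability inequalities, not pathwise bounds $T_n(\eeta)\le R_n^{DR\ast}+C\delta_n^{DR}$ on events of the original space. You could invoke Strassen to manufacture such pathwise couplings, but each application produces a fresh copy of $R_n^{DR\ast}$ (resp.\ $R_n^{PR\ast}$) with the right marginal law; two separately constructed copies need not have the correct \emph{joint} law, and $R_n^{MR\ast}=\min(R_n^{DR\ast},R_n^{PR\ast})$ depends on precisely that joint law. That both bootstrap statistics are built from the same multipliers $\hat v_{\theta,j}^\ast$ does not help, because the obstruction lies in the coupling to the intermediate Gaussian, not on the bootstrap side.

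The paper resolves this by observing that a minimum of two MinMax statistics is itself a MinMax over the disjoint union of the two index sets. After running the initial pathwise steps of each proof (up to just before the coupling), it writes $\min\{S_n(\psi_n,\Psi),\,S_n(\kappa)\}$ as a single MinMax and applies Theorems~\ref{thm:clt:minmax} and \ref{thm:clt:minmax:Gaussian} \emph{once} to this combined statistic, obtaining a single coupling that simultaneously controls both pieces; the remaining pathwise steps of the two proofs can then indeed be combined exactly as you propose. So you are right that no new CLT or anti-concentration estimate is required, but the existing coupling theorems must be re-applied to the combined MinMax rather than lifted from the two separate proofs. Your argument for the second display is fine once the first display is in place.
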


The proof of Theorem \ref{thm:MT:inferenceSubvector} shows how the finite sample analysis used here lands itself for the combination of valid procedures. The approach allows us to avoid considering the limit distribution and the issues associated with its existence. Indeed, we do not require the existence of a distributional limit but instead control approximation errors for each $n$.

%

\subsection{Self-Normalization based Critical Values}\label{Sec:SN:cv}

In this section, we discuss inference using critical values based on self-normalized moderate deviation theory. Although the resulting inference is potentially more conservative than the one based on the bootstrap, the method is easier to compute and asymptotically valid for a wider class of data generating processes.

The self-normalized inference was originally proposed by \citet{chernozhukov2013testing} in the context of inference on the parameter vector $\theta^*$ in the partially identified many moment inequality model. The ideas proposed in this section follow closely the arguments in \citet{chernozhukov2013testing}.

For $\alpha \in (0,1)$, we define the self-normalized or SN critical value as follows:
\begin{equation}\label{def:cSN}
	c_{n}^{SN}(p,\alpha) ~\equiv~ \frac{\Phi^{-1}(1-\alpha/p)}{\sqrt{1-\Phi^{-1}(1-\alpha/p)^2/n}},
\end{equation}
where $\Phi$ is the cumulative distribution function of the standard normal distribution.


\begin{theorem}[Validity of SN method]\label{thm:SNresult}
	Assume that $H_0$ holds, i.e., $\theta^{*} \in \Theta(\eeta)$, and that there are constants $0<c_1<1/2$ and $C_1 > 0$ such that
$$ \max_{j\in[p]}\Ep[|\sigma_{\theta^*,j}^{-1}\{m_{j}(W, \theta^*)- \Ep[m_{j}(W,\theta^*)]\}|^3] \log^{3/2}(p/\alpha) \leq C_1 n^{1/2-c_1}.$$
Then,
$$ \P ( T_n(\eeta) > c_{n}^{SN}(p,\alpha) ) \leq \alpha + Cn^{-c},$$
where $c$ and $C$ are constants that depend only on $\alpha,c_1,C_1$.
\end{theorem}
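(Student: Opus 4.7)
The plan is to reduce the MinMax test statistic to a maximum of one-sided self-normalized sums evaluated at the true parameter value $\theta^{*}$, and then apply a self-normalized moderate deviation inequality term by term combined with a union bound. This essentially mirrors the strategy sketched in the Overview section after display \eqref{eq:SN2}.

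\textbf{Step 1: Reduction to fixed $\theta^{*}$ via the null.} Under $H_0$ we have $\theta^{*}\in\Theta(\eeta)$, so plugging $\theta=\theta^{*}$ into the infimum in \eqref{eq:Tstat} gives
$$T_n(\eeta)\ \leq\ \max_{j\in[p]}\frac{\sqrt{n}\,\bar m_{\theta^{*},j}}{\hat\sigma_{\theta^{*},j}}.$$
A union bound then yields
$$\P\bigl(T_n(\eeta)>c_n^{SN}(p,\alpha)\bigr)\ \leq\ \sum_{j=1}^{p}\P\!\left(\frac{\sqrt{n}\,\bar m_{\theta^{*},j}}{\hat\sigma_{\theta^{*},j}}>c_n^{SN}(p,\alpha)\right).$$
Also under $H_0$, $\mu_j:=\Ep[m_j(W,\theta^{*})]\leq 0$, so with $Y_{ij}:=m_j(W_i,\theta^{*})-\mu_j$, $S_{n,j}:=\sum_i Y_{ij}$, $V_{n,j}^2:=\sum_i Y_{ij}^2$, one has $\sqrt{n}\,\bar m_{\theta^{*},j}/\hat\sigma_{\theta^{*},j}\leq \sqrt{n}\,\bar Y_j/\hat\sigma_{\theta^{*},j}$ (degenerate cases $\hat\sigma=0$ are handled by the convention stated after \eqref{eq:Tstat}).

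\textbf{Step 2: Conversion to classical self-normalized form.} Centering does not affect the sample variance, so $n\hat\sigma_{\theta^{*},j}^{2}=V_{n,j}^{2}-S_{n,j}^{2}/n$, whence a direct algebraic manipulation gives
$$\frac{\sqrt{n}\,\bar Y_j}{\hat\sigma_{\theta^{*},j}}\ =\ \frac{S_{n,j}/V_{n,j}}{\sqrt{1-(S_{n,j}/V_{n,j})^{2}/n}}.$$
The map $u\mapsto u/\sqrt{1-u^{2}/n}$ is strictly increasing on $(-\sqrt n,\sqrt n)$, and by construction $c_n^{SN}(p,\alpha)$ is the image of $x_p:=\Phi^{-1}(1-\alpha/p)$ under this map. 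Hence, provided $x_p<\sqrt n$ (which is forced by the hypothesis on $\log(p/\alpha)$),
$$\P\!\left(\frac{\sqrt{n}\,\bar m_{\theta^{*},j}}{\hat\sigma_{\theta^{*},j}}>c_n^{SN}(p,\alpha)\right)\ \leq\ \P\!\left(\frac{S_{n,j}}{V_{n,j}}>x_p\right).$$

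\textbf{Step 3: Self-normalized moderate deviations and conclusion.} To each term I apply the self-normalized moderate deviation inequality of \citet{delapena} (Jing--Shao--Wang type): there exist absolute constants $A,A'>0$ such that for i.i.d.\ centered random variables with finite third moment and every $0\leq x\leq A'\,n^{1/6}\,\bigl(\sigma_j^{3}/\Ep[|Y_{1j}|^{3}]\bigr)^{1/3}$,
$$\P\!\left(\frac{S_{n,j}}{V_{n,j}}>x\right)\ \leq\ \bigl(1-\Phi(x)\bigr)\left\{1+A\,\frac{(1+x)^{3}\,\Ep[|Y_{1j}|^{3}]}{\sigma_{j}^{3}\,\sqrt n}\right\}.$$
Specialising at $x=x_p$, the standard bound $x_p\leq C\sqrt{\log(p/\alpha)}$ together with the hypothesis $\max_{j}\Ep[|Y_{1j}/\sigma_j|^{3}]\,\log^{3/2}(p/\alpha)\leq C_1 n^{1/2-c_1}$ simultaneously verifies that $x_p$ lies in the admissible range and that the multiplicative remainder is $O(n^{-c_1})$. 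Since $p\cdot(1-\Phi(x_p))=\alpha$, summing over $j\in[p]$ gives $\alpha\bigl(1+O(n^{-c_1})\bigr)$, which is of the claimed form $\alpha+Cn^{-c}$.

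The main obstacle is the third step: verifying that $x_p$ lies inside the validity window of the self-normalized moderate deviation inequality and controlling the relative remainder. Everything else (steps 1 and 2) is a clean algebraic reduction using only the null hypothesis, the definition of $T_n(\eeta)$, and the monotonicity of the self-normalizing transformation; the force of the theorem is entirely carried by the moderate deviation bound and the calibration of the hypothesis on $\log^{3/2}(p/\alpha)$.
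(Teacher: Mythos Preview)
Your argument is correct and is precisely the approach the paper has in mind: the paper's own ``proof'' consists of the single sentence that the result follows from Theorem~4.1 in \citet{chernozhukov2013testing}, and your Steps~1--3 reproduce that argument (reduction to $\theta^{*}$ using the infimum and $\mu_j\le 0$, the monotone self-normalizing transformation that explains the form of $c_n^{SN}$, and the Jing--Shao--Wang moderate deviation bound combined with a union bound). The only thing to add is that your verification that $x_p$ falls in the validity range and that the multiplicative remainder is $O(n^{-c_1})$ is exactly why the hypothesis is stated with the $\log^{3/2}(p/\alpha)$ factor.
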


The proof of Theorem \ref{thm:SNresult} follows from the proof of Theorem 4.1 in \citet{chernozhukov2013testing}. Note that the assumptions of  Theorem \ref{thm:SNresult} are only required to hold at the true parameter value $\theta^*$, rather than every $\theta \in \Theta(\eeta)$.

Next, we discuss a two-step version of the SN procedure. Similar to analogous procedures defined in \citet{chernozhukov2013testing}, our two-step SN procedure takes advantage of restricting attention to relevant parameter values and moment inequalities. However, unlike the two-step procedure defined in \citet{chernozhukov2013testing}, our two-step SN procedure takes into account the fact that $H_0$ does not completely specify the true parameter vector $\theta^*$.

For every $\theta \in \Theta(\eeta)$, and $\gamma_n \in (0,\alpha/4)$ define the following objects:
\begin{align*}
	\hat J_n^{SN}(\theta) &~\equiv~ \{ j \in [p]: \sqrt{n} \bar m_{\theta,j}/\hat \sigma_j(\theta)  > - 2c_{n}^{SN}(p,\gamma_n)\}\\
	c_{n}^{SN,2S}(\theta,\alpha)& ~\equiv~ \left\{ \begin{array}{rl}\frac{\Phi^{-1}(1-(\alpha-3\gamma_n)/|\hat J_n^{SN}(\theta)|}{\sqrt{1-\Phi^{-1}(1-(\alpha-3\gamma_n)|\hat J_n^{SN}(\theta)|)^2/n}}, & \mbox{if} \  |\hat J_n^{SN}(\theta)|\geq 1,\\
	0, & \mbox{if} \  |\hat J_n^{SN}(\theta)|=0. \end{array}\right.\\
	\widehat \Theta_n^{SN} &~\equiv~ \{ \theta \in \Theta(\bar h): \ \max_{j\in[p]} \sqrt{n}\bar m_{\theta,j}/\hat\sigma_{\theta,j} \leq c^{SN}(p,\gamma_n)\}.
\end{align*}
For every candidate parameter $\theta \in \Theta(\eeta)$, $\hat J_n^{SN}(\theta)$ is the subset of moment inequalities that are sufficiently close to binding and $c_{n}^{SN,2S}(\theta,\alpha)$ is the two-step SN critical value associated with the parameter value $\theta$. Finally, $\widehat \Theta_n^{SN}$ is a subset of $\Theta(\eeta)$ that is sufficiently close to the identified set.


\begin{theorem}[Validity of two-step SN method]\label{thm:SNresult2S}
	Assume that $H_0$ holds, i.e., $\theta^{*} \in \Theta(\eeta)$, $\gamma_n \in (0,\alpha/4)$, and that there are constants $0<c_1<1/2$ and $C_1 > 0$ such that
	\begin{align*}
		\max_{j\in[p]}\Ep[|\sigma_{\theta^{*},j}^{-1}\{m_{j}(W, \theta^{*})- \Ep[m_{j}(W,\theta^{*})]\}|^3] \log^{3/2}(p/\alpha) \leq C_1 n^{1/2-c_1} \\
		\{\Ep[\max_{j\in[p]}|\sigma_{\theta^{*},j}^{-1}\{m_{j}(W, \theta^{*})- \Ep[m_{j}(W,\theta^{*})]\}|^4]\}^{1/2}\log^{2}(p/\gamma_n) \leq C_1 n^{1/2-c_1}.
	\end{align*}
Then,
$$ \P \left( T_n(\bar h) > \max_{\theta\in \widehat\Theta_n^{SN}} c_{n}^{SN,2S}(\theta,\alpha) \right) \leq \alpha + Cn^{-c}.$$
where $c$ and $C$  are constants that depend only on other constants of the problem.
\end{theorem}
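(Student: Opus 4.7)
The plan is to reduce the theorem, via the $H_0$-implication $\theta^{*}\in\Theta(\bar h)$, to two high-probability events. By the definition of $T_n(\bar h)$ as an infimum over $\Theta(\bar h)$,
\[ T_n(\bar h)~\leq~\max_{j\in[p]}\frac{\sqrt{n}\,\bar m_{\theta^{*},j}}{\hat\sigma_{\theta^{*},j}}, \]
so it suffices to establish, with probability at least $1-\alpha-Cn^{-c}$, (A) $\theta^{*}\in\widehat\Theta_n^{SN}$, and (B) $\max_{j\in[p]}\sqrt{n}\,\bar m_{\theta^{*},j}/\hat\sigma_{\theta^{*},j}\leq c_n^{SN,2S}(\theta^{*},\alpha)$. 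On (A), the outer maximum in the statement satisfies $\max_{\theta\in\widehat\Theta_n^{SN}}c_n^{SN,2S}(\theta,\alpha)\geq c_n^{SN,2S}(\theta^{*},\alpha)$, so (A) together with (B) delivers the non-rejection. I will budget $\gamma_n$ for the failure of (A) and the remaining $\alpha-\gamma_n$ for (B).

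Event (A) follows from a direct application of Theorem~\ref{thm:SNresult} at nominal level $\gamma_n$: since $\Ep[m_j(W,\theta^{*})]\leq 0$ under $H_0$ and $\log^{3/2}(p/\gamma_n)\leq\log^{2}(p/\gamma_n)\leq C_1n^{1/2-c_1}$ by the second displayed moment hypothesis, Theorem~\ref{thm:SNresult} yields $\P(\max_j\sqrt{n}\,\bar m_{\theta^{*},j}/\hat\sigma_{\theta^{*},j}>c_n^{SN}(p,\gamma_n))\leq\gamma_n+Cn^{-c}$, i.e.\ $\P(\theta^{*}\notin\widehat\Theta_n^{SN})\leq\gamma_n+Cn^{-c}$, which is exactly the budget I allotted for (A).

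For event (B), if $\hat J_n^{SN}(\theta^{*})=\emptyset$ then every summand is below $-2c_n^{SN}(p,\gamma_n)<0=c_n^{SN,2S}(\theta^{*},\alpha)$ and (B) is automatic; otherwise the maximum over $[p]$ coincides with the one over $\hat J_n^{SN}(\theta^{*})$. Letting $Z_j:=\sqrt{n}(\bar m_{\theta^{*},j}-\Ep[m_j(W,\theta^{*})])/\hat\sigma_{\theta^{*},j}$, centering under $H_0$ gives $\sqrt{n}\,\bar m_{\theta^{*},j}/\hat\sigma_{\theta^{*},j}\leq Z_j$. A Fuk--Nagaev type maximal inequality exploiting the fourth moment hypothesis, together with a uniform control of $\max_j|\hat\sigma_{\theta^{*},j}/\sigma_{\theta^{*},j}-1|$, produces with probability $\geq 1-\gamma_n-Cn^{-c}$ a deterministic sandwich of the data-dependent index set
\[ J_n^{-}~\subseteq~\hat J_n^{SN}(\theta^{*})~\subseteq~ J_n^{+},\quad J_n^{\pm}:=\{j\in[p]:\sqrt{n}\,\Ep[m_j(W,\theta^{*})]/\sigma_{\theta^{*},j}\geq -\kappa_{\pm}\,c_n^{SN}(p,\gamma_n)\},\ \kappa_{-}=1,\ \kappa_{+}=3. \]
On this sandwich event, $\max_{j\in\hat J_n^{SN}(\theta^{*})}Z_j\leq\max_{j\in J_n^{+}}Z_j$, and $|\hat J_n^{SN}(\theta^{*})|\geq|J_n^{-}|$ yields a deterministic lower bound for $c_n^{SN,2S}(\theta^{*},\alpha)$ via its monotonicity in $|\hat J_n^{SN}(\theta^{*})|$. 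Combining these with the self-normalized moderate deviation inequality applied to each $Z_j$, $j\in J_n^{+}$, following the template of the proof of Theorem~4.1 of \citet{chernozhukov2013testing}, bounds the failure probability of (B) by $\alpha-\gamma_n+Cn^{-c}$; summing with the failure probability of (A) produces the theorem.

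The main obstacle is that $c_n^{SN,2S}(\theta^{*},\alpha)$ is itself random through its dependence on $|\hat J_n^{SN}(\theta^{*})|$, which precludes a direct union bound against a deterministic threshold. The sandwich construction above resolves this by trapping $\hat J_n^{SN}(\theta^{*})$ between two deterministic sets differing only by the constant multiplying $c_n^{SN}(p,\gamma_n)$; the $3\gamma_n$ buffer built into the definition of $c_n^{SN,2S}(\theta,\alpha)$ is precisely what absorbs the sandwich failure probability, the first-step selection error of (A), and the residual self-normalized approximation error, leaving a net $\alpha$-level guarantee. As in \citet{chernozhukov2013testing}, the argument never requires a distributional limit for $T_n(\bar h)$ and proceeds entirely through non-asymptotic, self-normalized tail bounds.
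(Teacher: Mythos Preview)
Your overall architecture---reduce via $T_n(\bar h)\leq\max_j\sqrt n\,\bar m_{\theta^*,j}/\hat\sigma_{\theta^*,j}$, control $\theta^*\in\widehat\Theta_n^{SN}$ by the one-step SN result at level $\gamma_n$, then handle the random critical value $c_n^{SN,2S}(\theta^*,\alpha)$---matches the paper's. The gap is in your treatment of~(B). After bounding $\sqrt n\,\bar m_{\theta^*,j}/\hat\sigma_{\theta^*,j}\le Z_j$ for \emph{every} $j$, you propose to control $\max_{j\in J_n^+}Z_j$ against the deterministic threshold $c_n^{SN}(|J_n^-|,\alpha-3\gamma_n)$. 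But the self-normalized union bound over $J_n^+$ gives, up to the moderate-deviation error, $|J_n^+|\cdot(\alpha-3\gamma_n)/|J_n^-|$, not $\alpha-3\gamma_n$; the ratio $|J_n^+|/|J_n^-|$ is uncontrolled, since nothing in the hypotheses limits how many population moments $\sqrt n\,\Ep[m_j(W,\theta^*)]/\sigma_{\theta^*,j}$ fall in the band $[-3c_n^{SN}(p,\gamma_n),\,-c_n^{SN}(p,\gamma_n))$. The $3\gamma_n$ slack in the definition of $c_n^{SN,2S}$ is additive and cannot absorb this multiplicative mismatch.

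The paper avoids this by \emph{not} using an upper sandwich. It works with a single deterministic set $J_n^*$ (your $J_n^-$) and decomposes
\[
\P\Big(\max_{j\in[p]}\sqrt n\,\bar m_{\theta^*,j}/\hat\sigma_{\theta^*,j}>c_n^{SN,2S}(\theta^*,\alpha)\Big)
\le \P\Big(\max_{j\in J_n^*}\cdots\Big)+\P\big(\exists\, j\notin J_n^*:\bar m_{\theta^*,j}>0\big).
\]
The second piece is cheap precisely because, for $j\notin J_n^*$, the large negative centering $\sqrt n\,\Ep[m_j(W,\theta^*)]/\sigma_{\theta^*,j}<-c_n^{SN}(p,\gamma_n)$ is \emph{retained} rather than discarded: one SN tail bound gives $\P(\bar m_{\theta^*,j}>0)\lesssim\gamma_n/p$ per index, totaling $\gamma_n+Cn^{-c}$. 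In this way the critical-value calibration and the union-bound count both refer to the same $|J_n^*|$, and the argument closes. Your step ``centering under $H_0$ gives $\sqrt n\,\bar m_{\theta^*,j}/\hat\sigma_{\theta^*,j}\le Z_j$'' is exactly what throws away this leverage for indices outside $J_n^-$; for those $j$ you must keep the centering, not drop it.
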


Theorem \ref{thm:SNresult2S} shows that the two-step approach provides asymptotic size control under mild conditions compared to the bootstrap-based methods. For example, we are not making polynomial minorant assumption as in Condition MB(iii). The need for taking the maximum over $\widehat \Theta_n^{SN}$ arises from the fact that $H_0$ does not completely determine the true parameter value. As in Theorem \ref{thm:SNresult}, note that the moment assumptions of Theorem \ref{thm:SNresult2S} are only required to hold at the true parameter value $\theta^*$. The proof builds upon Theorem 4.2 in \citet{chernozhukov2013testing} but the additional selection over $\widehat \Theta_n^{SN}$ requires controlling additional approximation errors.

\section{Anti-concentration and Data-driven estimates}\label{SEC:ANTICONCENTRATION}


In the high-dimensional settings the statistics of interest might have no limiting distribution and the use of simulation procedures (e.g. bootstrap) are useful to  approximate the distribution of the statistics of interest, except for some special pivotal statistics (e.g. \citet{BC-SparseQR} for quantile regression). However, even if the approximation has a vanishing error, such error can distort coverage. The anti-concentration property ensures that the random variable we aim to simulate does not concentrate too much at relevant points of the distribution. Unless the approximation errors introduced by the coupling procedures are of a smaller order of the rate in which the random variable concentrate, we cannot reliably use the simulated quantiles to approximate the quantile of the original statistics. The anti-concentration properties for the maximum of (correlated and non-centered) Gaussian random variables have been understood recently, see \citet{chernozhukov2012comparison,chernozhukov2015noncenteredprocesses}. This is a prime example which covers several applications of interest including the construction of simultaneous confidence bands in high-dimensional regression settings, e.g., \citet{BCK-LAD,belloni2015uniformly}.

\begin{remark}[Anti-concentration for the Max]\label{Remark:Max}
Consider a $p$-dimensional Gaussian random vector $W\sim N(0,\Sigma)$, $\Sigma_{jj}= 1$, $j\in[p]$, $p\geq 2$. It is known that $Z=\max_{j\in[p]}W_j$ concentrates around $\Ep[Z]$. However, although it concentrates it does not concentrate too fast. Indeed, the probability density function $f_Z $of $Z=\max_{j\in[p]}W_j$ is bounded by $C\sqrt{\log p}$, see \citet{chernozhukov2012comparison}, so that
$$\sup_{t\in\mathbb{R}}\P( |Z-t|\leq \epsilon ) \leq \epsilon \max_{t'\in\mathbb{R}}f_Z(t') \leq \epsilon C\sqrt{\log p}. $$ Such feature provides a bound on the impact of the approximation error of the coupling on the estimation of the probability distribution of the original process via the bootstrap process. In the case we couple a process for the maximum of $p$ functions, under typical assumptions, the approximation errors from the coupling techniques are of the order $n^{-1/6}\log^{2/3}p$, so that we can reliably use the multiplier bootstrap provided $ \{n^{-1/6}\log^{2/3}p\} \{ \log^{1/2}p \} = n^{-1/6}\log^{7/6}p = o(1)$.
\end{remark}

In this section we are concerned with the anti-concentration property associated with the MinMax statistic that we propose for the problem of subvector inference in partially identified models with many moment inequalities. In contrast to the Max operator, the MinMax operation is not a convex function of its arguments, and this has substantial implications for our derivations. The anti-concentration bounds based on the maximum density function for the MinMax are qualitatively different than the case of the max discussed in Remark \ref{Remark:Max}. The following lemma shows this in the case of the statistic $Z = \min_{k\in [N]}\max_{j\in [p]} W_{kj}$ where $W \in \mathbb{R}^{N\times p}$ is a random matrix with i.i.d.\ $N(0,1)$ entries.

\begin{proposition}\label{lemma:AnticoncentrationNormal}
For $W_{kj}\sim N(0,1)$, i.i.d., $k\in[N]$, $j\in[p]$, define the random variable $Z = \min_{k\in [N]}\max_{j\in [p]} W_{kj}$. Provided that $p/\sqrt{2\pi} > \log(Np) \geq 2$, we have that the probability density function $f_Z$ satisfies:
$$\begin{array}{lrl}
&\displaystyle   (i) \max_{t'\in \mathbb{R}} f_Z(t') & \leq 2\{\sqrt{2}+2\}\log^{3/2}(Np) \\
 & &  \\
&\displaystyle (ii) \max_{t'\in \mathbb{R}} f_Z(t') & \geq  \left\{\sqrt{2}\log^{1/2}\left(\frac{{\displaystyle p}}{\sqrt{2\pi}\log N}\right) - 2\right\} {\displaystyle \frac{\log(N)}{e}}.
  \end{array}$$
\end{proposition}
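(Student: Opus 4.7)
Both bounds hinge on the identity
\[ f_Z(z) = N f_Y(z)(1-F_Y(z))^{N-1}, \quad f_Y(z) = p\phi(z)\Phi(z)^{p-1}, \quad F_Y(z) = \Phi(z)^p, \]
where $Z = \min_{k\in [N]} Y_k$ with $Y_k := \max_{j\in[p]} W_{kj}$ having density $f_Y$ and CDF $F_Y$. I will repeatedly use Mills-type bounds on the Gaussian hazard $\rho(z) := \phi(z)/(1-\Phi(z))$: the upper bound $\rho(z) \leq z_+ + \sqrt{2/\pi}$ (which follows from $\rho(0) = \sqrt{2/\pi}$ together with the monotonicity of $\rho - z$ on $[0,\infty)$ established via $\rho'(z) = \rho(z)(\rho(z)-z)$), and the classical lower bound $\rho(z) \geq z$ valid for $z > 0$.

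For the upper bound in (i), I parametrize by $s = F_Y(z) = \Phi(z)^p$. Combining $\phi(z) \leq (z_+ + \sqrt{2/\pi})(1-\Phi(z))$ with $1-\Phi(z) = 1-s^{1/p} \leq p^{-1}\log(1/s)$ (from $1-e^{-x}\leq x$) and $\Phi(z)^{p-1} \leq 2s$ (using $\Phi(z)\geq 1/2$ for $z\geq 0$) yields $f_Y(z) \leq 2(z_+ + \sqrt{2/\pi})s\log(1/s)$ on $\{z\geq 0\}$, while the range $z\leq 0$ contributes at most $Np\,2^{-(p-1)}/\sqrt{2\pi}$, a quantity dominated by $2(\sqrt{2}+2)\log^{3/2}(Np)$ under the hypothesis $p/\sqrt{2\pi} > \log(Np)$. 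Hence $f_Z(z) \leq 2N(z_+ + \sqrt{2/\pi})s\log(1/s)(1-s)^{N-1}$. A split at $s = 1/N$ (on $s\leq 1/N$ use $(1-s)^{N-1}\leq 1$ together with the monotonicity of $s\log(1/s)$ on $[0,1/e]$; on $s > 1/N$ use $(1-s)^{N-1}\leq e^{-(N-1)s}$ and maximize $u\log((N-1)/u)e^{-u}$ via the substitution $u := (N-1)s$) yields $s\log(1/s)(1-s)^{N-1} \leq \log N/N$ uniformly in $s$. Inverting Mills through $(\phi(z)/z)(1-1/z^2) \leq 1-\Phi(z)$ converts the relevant $s$ into $z_+ + \sqrt{2/\pi} \leq \sqrt{2\log(Np)} + O(1)$, and the bound $\log N \cdot \sqrt{\log(Np)} \leq \log^{3/2}(Np)$ then delivers the constant $2(\sqrt{2}+2)$.

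For the lower bound in (ii), I evaluate $f_Z$ at the point $z^*$ defined by $\Phi(z^*)^p = 1/N$. Then $F_Y(z^*) = 1/N$, so $(1-F_Y(z^*))^{N-1} = (1-1/N)^{N-1} \geq 1/e$ for every $N\geq 1$ (via $\log(1-1/N) \geq -1/N - 1/[2N(N-1)]$), and $\Phi(z^*)^{p-1} = N^{-(p-1)/p} \geq 1/N$. Using Mills in the form $\phi(z^*)\geq z^*q^*$ with $q^* = 1-e^{-\log N/p}$, together with the Taylor estimate $1-e^{-x}\geq x(1-x/2)$ and the hypothesis $\log N \leq p/\sqrt{2\pi}$, gives $pq^* \geq \log N(1-1/(2\sqrt{2\pi}))$. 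Hence
\[ f_Z(z^*) \geq N \cdot p z^* q^* \cdot (1/N) \cdot (1/e) \geq z^*\log N/e \]
up to an explicit constant. A final application of the Mills lower inequality $1-\Phi(z^*) \geq (\phi(z^*)/z^*)(1-1/z^{*2})$ combined with $q^* \leq \log N/p$ gives $z^{*2} \geq 2\log(p/(\sqrt{2\pi}\log N)) - 2\log z^* - O(1)$, which after absorbing the $\log z^*$ correction yields $z^* \geq \sqrt{2}\log^{1/2}(p/(\sqrt{2\pi}\log N)) - 2$.

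The principal difficulty is the bookkeeping of constants through the chain of Mills-ratio and Taylor inversions, especially matching the coefficient $\sqrt{2}$ and the additive ``$-2$'' in (ii). The hypothesis $p/\sqrt{2\pi} > \log(Np)$ enters at two places: it renders the $z\leq 0$ regime negligible in (i), and it guarantees $\log N/p$ is small enough that the Taylor expansion of $1-e^{-\log N/p}$ in (ii) loses only a multiplicative factor close to $1$.
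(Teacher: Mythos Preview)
Your upper-bound approach is genuinely different from the paper's and arguably more streamlined. The paper proceeds by a five-case analysis on the location of the maximizer $t^*$, and in the main case uses the first-order condition $\partial_t\log f_Z(t^*)=0$ to force $\Phi^p(t^*)\leq 1/N$. You bypass the casework entirely by substituting $s=\Phi(z)^p$ and invoking the Mills upper bound $\phi(z)\leq (z_++\sqrt{2/\pi})(1-\Phi(z))$ to reduce $f_Z$ to $2N(z_++\sqrt{2/\pi})\,s\log(1/s)(1-s)^{N-1}$, then optimizing in $s$. The key lemma $s\log(1/s)(1-s)^{N-1}\leq (\log N)/N$ is correct (your monotonicity argument on $[0,1/N]$ needs $N\geq 3$; $N=2$ requires a one-line check). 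One point is too compressed: ``inverting Mills converts the relevant $s$ into $z_++\sqrt{2/\pi}\leq\sqrt{2\log(Np)}+O(1)$'' is not a step in the argument as written. What you actually need is the trivial bound $f_Z(z)\leq Np\,\phi(z)$ on the range $z\geq\sqrt{2\log(Np)}$, which disposes of that regime directly (this is exactly the paper's Case~1) and restricts $z_+$ on the complement.

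For the lower bound you evaluate at the same point as the paper, $\Phi(z^*)^p=1/N$, but your bookkeeping loses constants twice. First, replacing the exact $\Phi(z^*)^{p-1}=N^{1/p}/N$ by $\geq 1/N$ forces you through $1-e^{-x}\geq x(1-x/2)$ and picks up an unwanted factor $1-1/(2\sqrt{2\pi})$; the paper keeps the $N^{1/p}$ and uses $e^x-1\geq x$ to reach $f_Z(\bar t)\geq e^{-1}\bar t\log N$ with no correction. Second, your Mills form $(\phi/z)(1-1/z^2)\leq 1-\Phi$ produces a $-2\log(1-1/z^{*2})$ term that blows up as $z^*\downarrow 1$, so ``absorbing the $\log z^*$ correction'' requires first establishing $z^*>1$ under the hypotheses and then checking a residual inequality. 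The paper avoids this by using the Abramowitz--Stegun form $2\phi(t)/(t+\sqrt{t^2+4})\leq 1-\Phi(t)$, which is regular at the origin and delivers the additive ``$-2$'' from the elementary bound $(\bar t+2)^2\geq \bar t^2+2\log(\bar t+\sqrt{\bar t^2+4})$ for all $\bar t\geq 0$. With these two substitutions your argument matches the stated constants.
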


Proposition \ref{lemma:AnticoncentrationNormal} shows that the concentration properties of the MinMax case is different than the concentration properties of the Max case even for i.i.d. standard normal random variables. In particular, Proposition \ref{lemma:AnticoncentrationNormal}(ii) shows that in the high-dimensional case with $p=N$, the  probability density function has values of the order $\log^{3/2} p$ which contrasts with the max operator. Indeed, the random variable  $\max_{k,j\in[p]} W_{kj}$ has the maximum of its probability density function to be at most of the order of $\log^{1/2} p$. Nonetheless, Proposition \ref{lemma:AnticoncentrationNormal}(i) suggests it is possible for the anti-concentration parameter to increase logarithmically with the number of moment inequalities $p$.

We propose a data-driven procedure to estimate the anti-concentration property associated with the statistic of interest. This is applicable not only to the MinMax functional but to other functionals.  This is particularly relevant in settings where it is hard to derive analytical bounds or when such bounds might be conservative. Indeed in the case of standardized by non-centered processes it is likely that the statistics of interest (and their anti-concentration property) will depend on a subset of the process associated with points that are more likely to determine the statistics. In that case, even if theoretical bounds for how fast some statistics concentrate are available, they can be conservative. Therefore, the use of an adaptive tool can be desired.


We now note several features of the anti-concentration parameter $\mathcal{A}_{n}$ associated to a bootstrap statistic $R_n^*$, as defined in (\ref{eq:AC}). First, note that the anti-concentration quantity defined in (\ref{eq:AC}) trivially satisfies $$\mathcal{A}_n \leq \max_{t'\in \mathbb{R}} f_{R^*_n}(t')$$ but it could be much smaller. Second, it allows for the distribution of $R^*_n$ to have a point mass and still be nicely bounded as we restrict the size of the smallest set.\footnote{This could be of interest to consider other bootstrap procedures other than the Gaussian multiplier bootstrap considered in this paper.} Third, we can construct upper bounds for $\mathcal{A}_{n}$ and $\mathcal{A}_{n}(W)$ by using a lower bound on $\delta_n$. In most cases of interest, $n^{-1/2}$ constitutes such a lower bound which makes upper bounding $\mathcal{A}_{n}(W)$ feasible.

A sufficient condition for the concentration of the statistics not to impact the size is $\mathcal{A}_{n} \delta_n \to 0$.  This ensures that the process $R_n^*$ has suitable anti-concentration not to introduce additional size distortions because of the coupling approximation errors. The next proposition connects the control of distortions via the quantity $\mathcal{A}_{n}(W)$.

\begin{theorem}\label{thm:CondAntiConcentration}
Assume that the conditions in Theorems \ref{thm:MSB:inferenceSubvector} and \ref{thm:PB:inferenceSubvector} hold.\\
(i) Then, under $H_0$, with probability $1-C\{\gamma_n+n^{-1}\}$ we have $$\begin{array}{rl}
\P(T_n(\eeta) > t ) & \leq \P(R^{DR*}_n > t - C\delta_{n}^{DR} \mid (W_i)_{i=1}^n ) + C\{\gamma_n+n^{-1}\}\\
\\
\P(T_n(\eeta) > c_n(\eeta,\alpha) ) & \leq \alpha + C\mathcal{A}_n^{DR}(W)\delta_{n}^{DR}+ C\{\gamma_n+n^{-1}\},\end{array}$$
\noindent (ii) Then, under $H_0$, with probability $1-C\{\gamma_n+n^{-1}\}$ we have $$\begin{array}{rl}
\P(T_n(\eeta) > t ) & \leq \P(R^{PR*}_n > t - C\delta_{n}^{PR} \mid (W_i)_{i=1}^n ) + C\{\gamma_n+n^{-1}\}\\
\\
\P(T_n(\eeta) > c_n(\eeta,\alpha) ) & \leq \alpha + C\mathcal{A}_n^{PR}(W)\delta_{n}^{PR}+ C\{\gamma_n+n^{-1}\},\end{array}$$
\end{theorem}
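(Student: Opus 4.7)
The plan is to lift the unconditional couplings used in Theorems \ref{thm:MSB:inferenceSubvector} and \ref{thm:PB:inferenceSubvector} to their conditional-on-data counterparts, and then to combine the resulting conditional bootstrap comparison with the conditional anti-concentration $\mathcal{A}_n^{DR}(W)$ (resp.\ $\mathcal{A}_n^{PR}(W)$) in place of the unconditional $\mathcal{A}_n^{DR}$ (resp.\ $\mathcal{A}_n^{PR}$). The unconditional proofs construct a single Gaussian MinMax target $Z$ against which both $T_n(\eeta)$ and the bootstrap statistic are coupled: the $T_n$–$Z$ coupling is an application of the high-dimensional CLT of Section \ref{SEC:MinMaxCoupling} (Theorem \ref{thm:clt:minmax:discrete}), and the $Z$–bootstrap coupling, after conditioning on $(W_i)_{i=1}^n$, reduces to a Gaussian-to-Gaussian comparison driven by covariance estimation error. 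This structure is precisely what allows a conditional refinement.

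First, I would extract from Theorem \ref{thm:clt:minmax:discrete} (and the auxiliary bounds used in the proofs of Theorems \ref{thm:MSB:inferenceSubvector} and \ref{thm:PB:inferenceSubvector}) an event $\mathcal{E}$ with $\P(\mathcal{E}) \geq 1 - C\{\gamma_n + n^{-1}\}$ on which simultaneously (a) $|T_n(\eeta) - Z| \leq C\delta_n^{DR}/2$ and (b) the conditional coupling $\P(|R_n^{DR\ast} - Z| \leq C\delta_n^{DR}/2 \mid (W_i)_{i=1}^n) \geq 1 - C\{\gamma_n + n^{-1}\}$ holds. Chaining (a) and (b) on $\mathcal{E}$ yields, for any $t$,
$$
\mathbb{1}\{T_n(\eeta) > t\} \,\leq\, \P\bigl(R_n^{DR\ast} > t - C\delta_n^{DR} \,\big|\, (W_i)_{i=1}^n\bigr) + C\{\gamma_n + n^{-1}\},
$$
which is the first displayed inequality of part (i) after integrating the trivial identity $\P(T_n(\eeta) > t) = \Ep[\mathbb{1}\{T_n(\eeta) > t\}]$ against the good-event restriction. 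Part (ii) is identical after substituting $R_n^{PR\ast}$ and $\delta_n^{PR}$ for their DR counterparts using the PR-side conditional coupling in the proof of Theorem \ref{thm:PB:inferenceSubvector}.

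For the second inequality in each part, I would specialize to $t = c_n^{DR}(\eeta,\alpha)$ (resp.\ $c_n^{PR}(\eeta,\alpha)$). By definition of the conditional quantile, $\P(R_n^{DR\ast} > c_n^{DR}(\eeta,\alpha) \mid (W_i)_{i=1}^n) \leq \alpha$, and shifting the threshold down by $C\delta_n^{DR}$ costs only the conditional anti-concentration factor: by the definition of $\mathcal{A}_n^{DR}(W)$ in \eqref{eq:ACcond},
$$
\P\bigl(R_n^{DR\ast} \in \bigl(c_n^{DR}(\eeta,\alpha) - C\delta_n^{DR},\, c_n^{DR}(\eeta,\alpha)\bigr] \,\big|\, (W_i)_{i=1}^n\bigr) \,\leq\, C\mathcal{A}_n^{DR}(W)\delta_n^{DR},
$$
provided $C\delta_n^{DR}$ exceeds the infimum scale used to define $\mathcal{A}_n^{DR}(W)$, which we may take to be of order $n^{-1/2}$. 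Substituting into the previous display gives the second claim of part (i), and the same argument with PR quantities in place of DR quantities gives part (ii).

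The main obstacle is ensuring that a single Gaussian process $Z$ can simultaneously serve both couplings, so that the conditional-on-$W$ bootstrap bound and the unconditional statistic bound share the same reference process; this is exactly what the new MinMax CLT and the Gaussian comparison of Section \ref{SEC:MinMaxCoupling} deliver, and the remaining steps are a triangle inequality together with the anti-concentration definition. A minor technical point is the bookkeeping of the good events for (a) and (b): (b) must be arranged to hold with high conditional probability on a high-probability subset of $\mathcal{E}$, which is standard via Markov's inequality applied to the conditional probability of failure.
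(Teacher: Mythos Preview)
Your chaining step does not deliver the claimed inequality, and the underlying reason is that a single $Z$ cannot simultaneously satisfy (a) and (b) in the way you need. The Strassen coupling behind Theorem \ref{thm:clt:minmax} produces a random variable $\tilde T=_d \min\max G_P$ on an enlarged space with $\P(|T_n-\tilde T|\leq C\delta)$ large; but $\tilde T$ is built from $(W_i)$ and auxiliary randomness and has no prescribed conditional law given $(W_i)$. Conversely, any $Z$ that is conditionally coupled to $R_n^{DR\ast}$ given $(W_i)$ must be a function of $(W_i,\xi)$, and its pointwise relation to $T_n$ is uncontrolled. Even granting both (a) and (b) for a common $Z$, your displayed bound $\mathbb{1}\{T_n>t\}\leq \P(R_n^{DR\ast}>t-C\delta\mid W)+\ldots$ does not follow: (a) gives $\mathbb{1}\{T_n>t\}\leq \mathbb{1}\{Z>t-C\delta/2\}$ on $\mathcal{E}$, while (b) only gives $\P(Z>s\mid W)\leq \P(R_n^{DR\ast}>s-C\delta/2\mid W)+\ldots$; the right-hand side of the first is an indicator, not a conditional probability, and integrating yields only the unconditional Theorem \ref{thm:MSB:inferenceSubvector}, not the conditional statement.

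The paper's proof avoids this by never attempting a shared pointwise coupling. It works at the level of probabilities throughout: first $\P(T_n>t)\leq \P(\min\max G_P>t-\delta)+r$ unconditionally (both sides deterministic); then, because $G_P$ is independent of the data, the right-hand side equals $\P(\min\max \tilde G>t-\delta\mid (W_i))$ almost surely; finally, on a data-measurable good event $E'$ (defined via \eqref{eq:defE}), a Gaussian-to-Gaussian comparison (Theorem \ref{thm:clt:minmax:Gaussian:discrete}, driven by the bound \eqref{bound:CovInfty}) replaces $\tilde G$ by the bootstrap process $v^\ast$ conditionally, and the remaining steps of the DR/PR proofs are then rerun conditionally. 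The crucial idea you are missing is this middle equality: the deterministic Gaussian probability equals its own conditional version, which is what turns the unconditional CLT into a conditional-on-data bound without any shared coupling. Your anti-concentration argument for the second inequality in each part is fine once the first inequality is established.
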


Theorem \ref{thm:CondAntiConcentration} accounts for the fact that the selection of inequalities and centering are data-driven and therefore it does not follow from a direct application of Markov inequality and Theorems \ref{thm:MSB:inferenceSubvector} and \ref{thm:PB:inferenceSubvector}.

The next result shows how coupling results can be used to provide upper bounds on the unconditional anti-concentration quantity $\mathcal{A}_n$ based on the conditional anti-concentration quantity $\mathcal{A}_n(W)$ with high probability.

\begin{theorem}\label{thm:ANTI-consistency}
Suppose that with probability $1-\gamma_n$ we have
$$ \P( |R_n^* - c_n(\eeta,\alpha) | \leq \epsilon ) \leq \P( |R_n^* - c_n(\eeta,\alpha) | \leq \epsilon + \tilde \delta_n \mid (W_i)_{i=1}^n ) + \gamma_n$$
Then, with probability $1-\gamma_n$ we have
$$  \mathcal{A}_n \leq \mathcal{A}_n(W) (1+\tilde\delta_n/\delta_n)+\gamma_n/\delta_n$$

\end{theorem}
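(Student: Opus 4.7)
The proof is essentially a direct algebraic manipulation of the hypothesis and the defining supremum for the anti-concentration parameters, so I would keep it short. The plan is to fix an arbitrary $\epsilon \ge \delta_n$, apply the hypothesis to bound the unconditional probability by a conditional one evaluated at $\epsilon + \tilde\delta_n$, then invoke the definition of $\mathcal{A}_n(W)$ at that shifted level, and finally take the supremum over $\epsilon \ge \delta_n$.

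More concretely, I would let $\Omega_n$ denote the event (of probability at least $1-\gamma_n$) on which the displayed inequality in the hypothesis is valid uniformly in $\epsilon$. On $\Omega_n$, for every $\epsilon \geq \delta_n$ one has
\begin{equation*}
\frac{1}{\epsilon}\P\bigl(|R_n^{*}-c_n(\eeta,\alpha)| \le \epsilon\bigr)
\;\le\; \frac{1}{\epsilon}\,\P\bigl(|R_n^{*}-c_n(\eeta,\alpha)| \le \epsilon+\tilde\delta_n \,\big|\, (W_i)_{i=1}^n\bigr) + \frac{\gamma_n}{\epsilon}.
\end{equation*}
Since $\epsilon+\tilde\delta_n \ge \delta_n$, the definition of $\mathcal{A}_n(W)$ yields $\P(|R_n^{*}-c_n(\eeta,\alpha)|\le \epsilon+\tilde\delta_n \mid (W_i)_{i=1}^n) \le (\epsilon+\tilde\delta_n)\,\mathcal{A}_n(W)$, so the right-hand side becomes
\begin{equation*}
\frac{\epsilon+\tilde\delta_n}{\epsilon}\,\mathcal{A}_n(W) + \frac{\gamma_n}{\epsilon}
\;\le\; \Bigl(1+\frac{\tilde\delta_n}{\delta_n}\Bigr)\mathcal{A}_n(W) + \frac{\gamma_n}{\delta_n},
\end{equation*}
where in the last step I use $\epsilon \ge \delta_n$ to bound $\tilde\delta_n/\epsilon \le \tilde\delta_n/\delta_n$ and $\gamma_n/\epsilon \le \gamma_n/\delta_n$. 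Taking the supremum of the left-hand side over $\epsilon \ge \delta_n$ gives $\mathcal{A}_n \le \mathcal{A}_n(W)(1+\tilde\delta_n/\delta_n) + \gamma_n/\delta_n$ on $\Omega_n$, which is the asserted bound.

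The only delicate point is the interpretation of the hypothesis: the inequality should be read as holding simultaneously for all $\epsilon$ on the good event $\Omega_n$ (so that the supremum over $\epsilon \ge \delta_n$ can be moved outside), rather than separately in $\epsilon$ with possibly different exceptional sets. Assuming that reading, the argument is purely bookkeeping, and I do not foresee a genuine obstacle; the main thing to flag in a write-up is that $\mathcal{A}_n$ is itself a random bound through its dependence on the (data-driven) critical value $c_n(\eeta,\alpha)$, but since both sides inherit the same $c_n$ this causes no issue.
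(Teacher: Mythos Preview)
Your proof is correct and essentially identical to the paper's: both apply the hypothesis at a fixed $\epsilon\ge\delta_n$, bound the conditional probability by $(\epsilon+\tilde\delta_n)\mathcal{A}_n(W)$ via the definition of $\mathcal{A}_n(W)$, divide by $\epsilon$, use $\epsilon\ge\delta_n$ to replace $\tilde\delta_n/\epsilon$ and $\gamma_n/\epsilon$ by $\tilde\delta_n/\delta_n$ and $\gamma_n/\delta_n$, and then take the supremum. Your remark about the hypothesis holding uniformly in $\epsilon$ on a single good event is exactly the reading the paper uses as well.
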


\begin{remark} We note that $\delta_n$ in the definition of the anti-concentration is based on the coupling of the original non-Gaussian process while $\tilde \delta_n$ is based on the coupling of the unconditional and conditional multiplier bootstrap in (\ref{def:CouplingANTI}). In particular, note that $R_n^*$ is the MinMax of symmetric random variables which allows to improve the coupling bounds which leads $\tilde \delta_n$ to be of smaller order of magnitude than $\delta_n$, i.e. $\tilde\delta_n = o(\delta_n)$. Therefore, provided we set $\gamma_n = o(\delta_n)$, with high probability we obtain a meaningful bound to $\mathcal{A}_n$ based on $\mathcal{A}_n(W)$.
\end{remark}

Although this procedure provides a way to conduct inference, theoretical bounds for the anti-concentration play an important role to inform of the requirements on the sample size that are needed for the overall validity of the procedure. Nonetheless, it is conceivable that in more complex applications the anti-concentration can vary substantially with the data generating process. To provide such additional adaptivity is precisely the goal of the proposed data-driven procedure.

\begin{remark}[Global anti-concentration] Note that the anti-concentration (\ref{eq:AC}) is a local measure of the anti-concentration around the point $c_n(\eeta,\alpha)$. Indeed, one could change our definition for a more global definition of anti-concentration such as $\sup_{t,\epsilon \geq n^{-1/2}} \frac{1}{2\epsilon}\P( |R^*_n - t | \leq \epsilon \mid (W_i)_{i=1}^n )$. However, we choose the local definition of anti-concentration as the global version could be unnecessarily conservative.
\end{remark}

\begin{remark}[Alternative definitions of the data-driven estimator]
	Rather than (\ref{eq:AC}), we could propose alternative data-driven estimators of the anti-concentration. For example, we could use $   \bar l_{n} \equiv \inf\{ l : \P( |R^*_n - c_n(\eeta,\alpha) | \leq l \mid (W_i)_{i=1}^n ) \leq m_n\} $ where $m_n = o(1)$ is a pre-specified sequence (e.g.\ $\log^{-1} n$). Provided that $\delta_n = o(\bar l_n)$, no additional asymptotic size distortions are introduced.
\end{remark}

\section{Coupling for MinMax Functional}\label{SEC:MinMaxCoupling}

In this section we state results for coupling the MinMax  of the sum of independent empirical processes with the MinMax of Gaussian processes. Such results are crucial in our analysis and may be of independent interest. They build upon and complement recent results on the literature for the maximum of processes due to \citet{chernozhukov2012Gaussian,chernozhukov2014clt,chernozhukov2014honestbands,chernozhukov2012comparison,chernozhukov2015noncenteredprocesses}.

\subsection{MinMax of Empirical Processes}\label{SEC:PROCESSES}

Let $B:\mathcal{F}\to \mathbb{R}$ be a given functional. For $\eta >0$, let $N_B(\eta)$ denote the cardinality of a minimum $\eta$-cover of $\mathcal{F}$, i.e., $N_B(\eta)$ is the minimal integer $N$ such that there exist $f_1,\ldots,f_N \in \mathcal{F}$ with the property that for every $f\in\mathcal{F}$, $|B(f) - B(f_j)|< \eta$ for some $j\leq N$.
In what follows, we let $(X_i)_{i=1}^{n}$ be i.i.d.\ random processes mapping $\mathcal{F}\to \mathbb{R}$, and we define $\Gn(f) \equiv n^{-1/2}\sum_{i=1}^n \{X_i(f)-\Ep[X_i(f)]\}$. Also, for i.i.d.\ random variables $\xi=(\xi_i)_{i=1}^n$ independent from $(X_i)_{i=1}^{n}$, we let $\Gn^\xi(f) \equiv n^{-1/2}\sum_{i=1}^n \xi_i\{X_i(f)-\En[X_i(f)]\}$. We say a class of functions is VC type with envelope $F$ and constants $\bar A$ and $v$ if for each $\epsilon>0$, its covering number satisfies $N(\mathcal{F},\|\cdot\|_Q,\epsilon\|F\|_Q)\leq (\bar A/\epsilon)^v$, see \cite{Dudley99} for definitions.


 ~\\

\noindent {\bf Condition A.} {\it
(i) There exists a countable subset $\mathcal{G}$ of $\mathcal{F}$ such that for every $f\in \mathcal{F}$ there is a sequence $g_m\in\mathcal{G}$ with $g_m\to f$ pointwise and $B(g_m) \to B(f)$. (ii) The class of functions $\mathcal{F}$ is VC type with measurable envelope $F$ and constants $\bar A\geq e$ and $v\geq 1$. (iii) There exists constants $b\geq \sigma>0$ and $q\in [4,\infty)$ such that $\sup_{f \in \mathcal{F}} \Ep[ |f|^k ] \leq \sigma^2 b^{k-2}$, $k=2,3,4$, and $\|F\|_{P,q}\leq b$.
}

~\\

Conditions A(i)-(iii) correspond to the conditions in \citet{chernozhukov2015noncenteredprocesses}. These assumptions imply the existence of a centered Gaussian process $G_P$ indexed by $\mathcal{F}$ with uniformly continuous sample paths with respect to the $\mathcal{L}^2(P)$-seminorm $d(f,g)=\{\Ep[(f-g)^2]\}^{1/2}$ and covariance operator given $\Ep[G_P(f)G_P(g)]={\rm cov}(f(X),g(X))$.

Next we turn to conditions that are specific to the setting we consider.


~\\
\noindent {\bf Condition B.} {\it (i) The class of functions $\mathcal{F}$ can be written as $\mathcal{F} = \{ f \in \mathcal{F}_\theta, \ \mbox{for some} \ \  \theta \in \Lambda \}$ where $\Lambda \subset \mathbb{R}^{d_\theta}$ satisfies ${\rm diam}(\Lambda) \leq n$. (ii) There is some ${\bar \chi} > 0$ such that for $\gamma_n \in (0,1)$, there are constants $C_{\mathcal{F},\gamma_n}$ and $\eta_{\mathcal{F},\gamma_n}$ such that for any $\epsilon >0$  we have {\small $$P\left( \sup_{\theta,\tilde\theta\in \Lambda, \|\theta - \tilde \theta\|\leq \epsilon} \left| \sup_{f\in \mathcal{F}_\theta} B(f)+ \G(f) - \sup_{\tilde f\in \mathcal{F}_{\tilde \theta}} B(\tilde f)+ \G(\tilde f)  \right| > C_{\mathcal{F},\gamma_n}\epsilon^{\bar \chi} + \eta_{\mathcal{F},\gamma_n} \right) \leq \gamma_n, $$} for $\G = G_P, \Gn, \Gn^\xi$, and with $\xi=(\xi_i)_{i=1}^n$ are i.i.d.\ $N(0,1)$.}

~\\

Condition B(i) postulates the function class is parameterized by two indices. Condition B(ii) is a high level condition that allows us to construct a net for the functionals $\{ \sup_{f\in \mathcal{F}_\theta} B(f)+ \Gn(f) : \theta \in \Lambda\}$ and $\{ \sup_{f\in \mathcal{F}_\theta} B(f)+ G_P(f) : \theta \in \Lambda\}$. It is implied by typical equicontinuity assumptions imposed on the whole process $\{ B(f)+ \Gn(f) : \theta \in \Lambda, f \in \mathcal{F}_\theta\}$ (see \citet{vdV-W} for general definitions and for moment inequalities models see \citet{andrews2014nonparametric,andrews2013inference,bugni2017inference}). However, Condition B(ii) allows for non-Donsker classes as the constant $C_{\mathcal{F},\gamma_n}$ can increase with the sequence of the data generating process. The case ${\bar \chi}=1$ and $\eta_{\mathcal{F},\gamma_n}=0$ covers the important case in which the functions in $\mathcal{F}_\theta$ and the operator $B$ are Lipschitz functions of $\theta$ whose constant can increase with the dimension $d_\theta$. The case ${\bar \chi}=1$ and $\eta_{\mathcal{F},\gamma_n}=o(1)$ allows for discontinuous functions. In Section \ref{SEC:INFERENCESUBVECTORPI} below we provide simple conditions that imply Condition B(ii).


Recently new central limit theorems for high-dimensional vectors have been derived in a sequence of papers by \citet{chernozhukov2012Gaussian, chernozhukov2014clt, chernozhukov2014honestbands, chernozhukov2012comparison, chernozhukov2015noncenteredprocesses}. They have been used to construct Gaussian approximations for the distribution of the maximum of the components as in the case of many moment studied in \citet{chernozhukov2013testing}. In this work, we build upon these tools and ideas to develop new Gaussian approximation for the MinMax statistic.

In what follows it will be convenient to define \begin{equation}\label{def:Kn}
K_n = \log N_B(\eta) + v\{\log n \vee \log( \bar Ab/\sigma) \} + (d_\theta/{\bar \chi})\log( nC_{\mathcal{F},\gamma_n}b/\sigma )
\end{equation}
which is an entropy measure associated with the class of function $\mathcal{F}$. The following theorem provides a key result for our analysis.

\begin{theorem}\label{thm:clt:minmax}
Suppose that Conditions A and B are satisfied and $K_n^3 \leq n$. Define the random variable $T=\inf_{\theta \in \Lambda}\sup_{f \in \mathcal{F}_\theta} B(f) + \Gn(f)$. Then, for every $\gamma_n \in (0,1)$, there exists a random variable $\widetilde T =_d \inf_{\theta \in \Lambda}\max_{f \in \mathcal{F}_\theta} B(f) + G_P(f)$ such that
$$\P( |T - \widetilde T| > C  \delta_{n,\eta,\gamma_n}  ) \leq  C'\{ \gamma_n + n^{-1}\},$$
where $C, C'$ are constants that depend only on $q$, $$\delta_{n,\eta,\gamma_n}= \frac{(b\sigma^2 K_n^2)^{1/3}}{\gamma_n^{1/3}n^{1/6}} + \frac{bK_n}{\gamma_n^{1/q} n^{1/2-1/q}} + \eta_{\mathcal{F},\gamma_n}+\eta,$$
and ${=}_{d}$ means equality in distribution.
\end{theorem}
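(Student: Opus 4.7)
The plan is to reduce the continuous MinMax coupling to a Gaussian approximation for a finite array of empirical process values, using a nested smooth approximation of the MinMax functional as the bridge. First, I would invoke Condition B(ii) to replace $\Lambda$ by a finite $\epsilon$-net $\Lambda_\epsilon$ at approximation cost $C_{\mathcal{F},\gamma_n}\epsilon^{\bar\chi}+\eta_{\mathcal{F},\gamma_n}$, with the same kind of replacement made simultaneously for the Gaussian process $G_P$. Since $\operatorname{diam}(\Lambda)\le n$, the net has cardinality $(n/\epsilon)^{d_\theta}$. For each $\theta\in\Lambda_\epsilon$, I would use an $\eta$-cover of $\{B(f):f\in\mathcal F_\theta\}$ of cardinality $N_B(\eta)$ together with a VC covering bound on $\mathcal F_\theta$ to discretize the inner supremum, so that
$$T\ \approx\ T_{\mathrm{disc}}\ :=\ \min_{k\le|\Lambda_\epsilon|}\ \max_{\ell\le N_\epsilon}\bigl\{B(f_{k,\ell})+\Gn(f_{k,\ell})\bigr\},$$
with a Gaussian analog $\widetilde T_{\mathrm{disc}}$ built from $G_P$. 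The total log-cardinality is absorbed into the entropy measure $K_n$ defined in \eqref{def:Kn}.

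Second, I would introduce a smooth proxy for the min-max operator via nested log-sum-exp: letting $\beta>0$ be a smoothing parameter, define
$$\Phi_\beta(z)\ :=\ -\frac{1}{\beta}\log\sum_{k}\Bigl(\sum_{\ell}\exp(\beta z_{k,\ell})\Bigr)^{-1},$$
which satisfies $|\Phi_\beta(z)-\min_k\max_\ell z_{k,\ell}|\le \beta^{-1}\log(|\Lambda_\epsilon|N_\epsilon)\lesssim K_n/\beta$. The key analytical work is to bound the first three partial derivatives of $\Phi_\beta$ by powers of $\beta$ (mirroring the soft-max bounds used in \citet{chernozhukov2013testing,chernozhukov2014clt} but for the composed operator); this is precisely the novel smooth-MinMax ingredient announced in the introduction. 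With these bounds, one can apply the Stein/Lindeberg coupling of \citet{chernozhukov2015noncenteredprocesses} componentwise to the random vector $(B(f_{k,\ell})+\Gn(f_{k,\ell}))_{k,\ell}$ and obtain, after truncation at the envelope level $bK_n^{1/2}$, that $\Phi_\beta$ evaluated at the empirical vector is close in probability to $\Phi_\beta$ evaluated at the Gaussian vector, with error controlled by $\beta^{2}\sigma^2 K_n/n^{1/2}+\beta b K_n/n^{1/2-1/q}$ plus higher-order terms. Optimizing $\beta$ to balance the smoothing error $K_n/\beta$ against the Stein error yields $\beta\asymp (n/K_n)^{1/6}\sigma^{-2/3}b^{-1/3}$ and produces the leading rate $(b\sigma^2K_n^2)^{1/3}/n^{1/6}$; the $bK_n/(\gamma_n^{1/q}n^{1/2-1/q})$ term is the price paid for the envelope truncation inherited from $\|F\|_{P,q}\le b$.

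Third, the above bounds give an in-probability closeness $|\Phi_\beta(\cdot_{\mathrm{emp}})-\Phi_\beta(\cdot_{\mathrm{Gauss}})|\le C\delta_{n,\eta,\gamma_n}$ on an event of probability $\ge 1-C'(\gamma_n+n^{-1})$. Combining with the discretization errors and the smoothing errors converts this into the same bound for $|T_{\mathrm{disc}}-\widetilde T_{\mathrm{disc}}|$ and then for $|T-\widetilde T_{\mathrm{cont}}|$, where $\widetilde T_{\mathrm{cont}}=\inf_\theta\sup_f B(f)+G_P(f)$. The existence of a version $\widetilde T=_d\widetilde T_{\mathrm{cont}}$ satisfying the almost sure type bound is obtained by a Strassen/Skorohod coupling step that upgrades convergence in distribution on an auxiliary probability space to the stated form; this is standard once the in-probability closeness is established on the original space (equivalently, one works with the quantile coupling of the joint distribution of $(T,\widetilde T_{\mathrm{cont}})$).

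The main obstacle is bounding the third-order partial derivatives of the nested $\Phi_\beta$. For the pure soft-max the derivatives reduce to moments of the Gibbs probabilities $e^{\beta z_\ell}/\sum_{\ell'}e^{\beta z_{\ell'}}$, which are bounded; for the nested operator the outer soft-min reweights these inner Gibbs weights in a non-convex way, so the usual $\ell^1$-type bounds give an extra factor of $\beta$ per derivative unless one exploits the sub-probability structure of the composed weights carefully. Getting the clean $\beta^{3}$ scaling (rather than a power that degrades $K_n$ in the final rate) is where the argument requires the novel estimate. A secondary but manageable concern is ensuring that the discretization error $C_{\mathcal{F},\gamma_n}\epsilon^{\bar\chi}+\eta_{\mathcal{F},\gamma_n}$ can be taken of order $\eta+\eta_{\mathcal{F},\gamma_n}$ by choosing $\epsilon$ appropriately small without blowing up $(n/\epsilon)^{d_\theta}$ beyond what $K_n$ already absorbs; Condition B(ii) and the definition of $K_n$ are calibrated to accomplish exactly this.
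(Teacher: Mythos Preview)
Your proposal is correct and follows essentially the same route as the paper: discretize $\Lambda$ and each $\mathcal{F}_\theta$ via Condition B and the VC/$N_B$ covers, approximate the MinMax by the nested log-sum-exp $G_\beta(W)=-F_\beta(-F_\beta(W))$ (your $\Phi_\beta$), run a Slepian--Lindeberg interpolation on $g\circ G_\beta$ with the third-derivative bounds for the composed functional, and finish with Strassen's theorem to produce the coupled $\widetilde T$. The paper organizes the argument slightly differently---it first proves a standalone matrix-level result (Theorem~\ref{thm:clt:minmax:discrete}) with the smooth-MinMax Lindeberg step and then invokes it after discretization---but the substance is the same, and your identification of the key technical hurdle (getting the clean $\beta^3$ scaling for $\sum_{(k,j)}|m'''_{(k,j)}|$ by exploiting the probability-vector structure of the inner and outer Gibbs weights) matches exactly what Lemmas~\ref{lemma:Der1}--\ref{lemma:Der3} accomplish.
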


Theorem \ref{thm:clt:minmax} above establishes that we can approximate the value of the MinMax statistic with the MinMax of Gaussian processes. A crucial step in the proof of Theorem \ref{thm:clt:minmax} was the development of a new smooth approximation function for the MinMax operator. Finally, in most applications the parameters $\eta_{\mathcal{F},\gamma_n}$ and $\eta$ are dominated by the other terms in $\delta_{n,\eta,\gamma_n}$ (e.g., see Section \ref{SEC:INFERENCESUBVECTORPI}).


Our next result shows that the distribution of the MinMax of the Gaussian process is close to the distribution of the MinMax of our bootstrap approximation.


\begin{theorem}\label{thm:clt:minmax:Gaussian}
	Suppose that Conditions A and B are satisfied and $K_n \leq n$ and let $S=\inf_{\theta \in \Lambda}\sup_{f \in \mathcal{F}_\theta} B(f) + \Gn^\xi(f)$. Then, for every $\gamma_n \in (0,1)$, there exists a random variable $\widetilde S {=}_{{{d\mid (X_i)_{i=1}^{n}}}} \inf_{\theta \in \Lambda}\sup_{f \in \mathcal{F}_\theta} B(f) + G_P(f)$ such that
$$\P( |S - \widetilde S| > C  \bar \delta_{n,\eta,\gamma_n}  ) \leq  C'\{ \gamma_n + n^{-1}\}$$
where $C, C'$ are  universal constants that depend on $q$, $$\bar\delta_{n,\eta,\gamma_n}= \frac{(b\sigma K_n^{3/2})^{1/2}}{\gamma_n^{1+1/q}n^{1/4}} + \frac{bK_n}{\gamma_n^{1+1/q} n^{1/2-1/q}}+ \eta_{\mathcal{F},\gamma_n}+\eta,$$
and ${=}_{{{d\mid (X_i)_{i=1}^{n}}}}$ means equality in conditional distribution given $(X_i)_{i=1}^n$.
\end{theorem}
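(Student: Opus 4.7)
\textbf{Proof proposal for Theorem \ref{thm:clt:minmax:Gaussian}.} The plan is to mirror the strategy that establishes Theorem \ref{thm:clt:minmax}, with the key difference that the process $\mathbb{G}_n^\xi$ is already \emph{conditionally} Gaussian given $(X_i)_{i=1}^n$, so the final coupling reduces to a Gaussian-to-Gaussian comparison rather than to the Berry--Esseen style coupling used for $\mathbb{G}_n$. Concretely, I would first use Condition B(ii) (applied to $\mathbb{G}_n^\xi$) to replace the supremum over $\Lambda$ by a maximum over an $\varepsilon$-net $\Lambda_\varepsilon \subset \Lambda$ of cardinality at most $(nC_{\mathcal{F},\gamma_n})^{d_\theta/\bar\chi}$, incurring error $C_{\mathcal{F},\gamma_n}\varepsilon^{\bar\chi} + \eta_{\mathcal{F},\gamma_n}$ with probability at least $1-\gamma_n$. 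For each $\theta \in \Lambda_\varepsilon$ I would use Condition A(ii) together with the functional-covering hypothesis to replace $\sup_{f\in\mathcal{F}_\theta}$ by a maximum over an $\eta$-cover of size $N_B(\eta)$; combined, this reduces $S$ (up to additive error $\eta + C_{\mathcal{F},\gamma_n}\varepsilon^{\bar\chi} + \eta_{\mathcal{F},\gamma_n}$) to a MinMax of at most $K := e^{K_n}$ linear combinations of the $\xi_i$'s, i.e.\ a conditionally Gaussian vector with covariance $\widehat{\Sigma}$ equal to the sample Gram matrix of the representative functions.

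Next I would smooth both MinMax functionals using the same log-sum-exp / soft-min-max approximation $F_\beta$ that was constructed for Theorem \ref{thm:clt:minmax}, with smoothing parameter $\beta \asymp \sqrt{\log K} \asymp K_n^{1/2}$; the smoothing error is $O(K_n^{1/2}/\beta)$ on each side. Because $F_\beta$ has bounded second derivatives (with the usual polynomial-in-$\beta$ bounds for MinMax smoothings), I can invoke a Gaussian-to-Gaussian comparison inequality (Slepian/Stein interpolation of the form used in \citet{chernozhukov2015noncenteredprocesses}) conditionally on $(X_i)_{i=1}^n$, bounding $|\mathbb{E}[F_\beta(\text{finite bootstrap}) \mid X] - \mathbb{E}[F_\beta(\text{finite Gaussian})]|$ by $\beta^2 \Delta$, where $\Delta := \max_{j,k}|\widehat{\Sigma}_{jk} - \Sigma_{jk}|$. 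Standard Bernstein plus union bound, together with the VC-type envelope and the moment condition $\|F\|_{P,q}\le b$, gives $\Delta \lesssim \sigma^2 (K_n/n)^{1/2} + bK_n/(\gamma_n^{1/q} n^{1-1/q})$ with probability at least $1 - \gamma_n$, exactly as in the corresponding step for Theorem \ref{thm:clt:minmax} but with the empirical-process rate replaced by its covariance-estimation rate. A Strassen-type construction then yields the coupling $\widetilde S$ with the claimed probability bound.

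The two rates in $\bar\delta_{n,\eta,\gamma_n}$ now emerge naturally: balancing $\beta \asymp K_n^{1/2}$ against $\beta^2 \Delta \asymp K_n (b\sigma K_n/n)^{1/2}$ gives the leading term $(b\sigma K_n^{3/2})^{1/2}/(\gamma_n^{1+1/q} n^{1/4})$, while the truncation of the envelope $F$ at level $b\gamma_n^{-1/q} n^{1/q}$ contributes the second term $bK_n/(\gamma_n^{1+1/q}n^{1/2-1/q})$; the remaining $\eta_{\mathcal{F},\gamma_n}+\eta$ are the discretization errors from Conditions A and B. The main obstacle I anticipate is controlling the MinMax smoothing carefully enough that the third derivative bounds needed for the Gaussian comparison depend only polynomially on $\beta$ and not on $K$ itself; this is where the nonconvexity of MinMax (as emphasized in Section \ref{SEC:ANTICONCENTRATION}) prevents the cleaner max-only analysis from going through directly, and is presumably why the same bespoke smoothing constructed in the proof of Theorem \ref{thm:clt:minmax} must be reused here rather than the standard log-sum-exp.
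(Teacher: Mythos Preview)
Your approach is essentially the paper's: discretize via Conditions A and B, then apply the Gaussian--to--Gaussian MinMax comparison (packaged in the paper as Theorem~\ref{thm:clt:minmax:Gaussian:discrete}) between the finite conditional-bootstrap vector and the finite $G_P$ vector, control $\Delta=\|\widehat\Sigma-\Sigma\|_\infty$ on a high-probability event for $(X_i)$, and invoke conditional Strassen.

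Two corrections to your bookkeeping, which as written would not produce the stated rate. First, the MinMax smoothing $G_\beta$ has approximation error $\beta^{-1}\log(Np)\asymp K_n/\beta$, not $K_n^{1/2}/\beta$; the paper (and the comparison theorem) introduces a \emph{second} smoothing $g$ at scale $\delta$ with $\beta=\delta^{-1}\log(Np)$, so the relevant second-derivative sum is $\|g''\|_\infty+4\beta\|g'\|_\infty\asymp K_n\delta^{-2}$ and the comparison error is $K_n\delta^{-2}\Delta$; solving $K_n\delta^{-2}\Delta\lesssim\gamma_n$ with $\Delta\lesssim b\sigma K_n^{1/2}/(\gamma_n^{2/q}n^{1/2})$ is what yields the leading $(b\sigma K_n^{3/2})^{1/2}n^{-1/4}$ term, not a choice $\beta\asymp K_n^{1/2}$. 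Second, for this Gaussian--to--Gaussian step only the \emph{second}-derivative bounds on $g\circ G_\beta$ are needed (Stein's identity, Lemma~\ref{lemma:Der2}); the third-derivative control you worry about is precisely what separates Theorem~\ref{thm:clt:minmax} from the present result and is not required here.
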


The combination of Theorems \ref{thm:clt:minmax} and \ref{thm:clt:minmax:Gaussian} provide a constructive way to approximate the distribution of $T$ by simulating $S$ which is associated with the process induced by the multiplier bootstrap procedure.

\begin{remark}[Other Bootstrap Procedures]
It seems it possible to apply other bootstrap procedures. Of particular interest is the use of the  empirical bootstrap which allows to match higher order moments relative to the Gaussian multiplier bootstrap (which can lead to weaker requirements when compared to Theorem \ref{thm:clt:minmax}). Indeed, the main novelty of the proofs in Theorems \ref{thm:clt:minmax} and \ref{thm:clt:minmax:Gaussian} is the use of a new approximating function to handle the MinMax structure we are concerned here. Any improvements on coupling results for smooth functions can be incorporated.
\end{remark}

\subsection{MinMax of High-dimensional Matrices}\label{SEC:DISCRETIZED}

In this section we collect key results for coupling the MinMax of the components of  high-dimensional matrices. They are of independent interest and apply for non-i.i.d.\ matrices.
We first state a coupling result between (arbitrary) random matrices and Gaussian random matrices for the MinMax operator. In what follows let $\Var{X_i}=\Ep[(X_i-\Ep[X_i])(X_i-\Ep[X_i])']$.

\begin{theorem}\label{thm:clt:minmax:discrete}
Let $(X_i)_{i=1}^n$ be independent random matrices in $\mathbb{R}^{N\times p}$ (with $Np\geq 2$) with finite absolute third moments componentwise, and $(Y_i)_{i=1}^n$ be independent random matrices in $\mathbb{R}^{N\times p}$ with $Y_i \sim N(\Ep[X_i], \Var{X_i})$. Set $$Z=\min_{k\in [N]}\max_{j\in[p]}\frac{1}{\sqrt{n}}\sum_{i=1}^nX_{ikj} \ \ \mbox{and} \ \ \widetilde Z = \min_{k\in [N]}\max_{j\in[p]}\frac{1}{\sqrt{n}}\sum_{i=1}^nY_{ikj}.$$ Then for any $\delta>0$ and Borel subset $A$ of $\mathbb{R}$,
$$\P( Z \in A) \leq \P ( \widetilde Z \in A^{C\delta}) + \frac{C\log^2(Np)}{\delta^3n^{1/2}}\{ L_n + M_{n,X}(\delta)+M_{n,Y}(\delta)\},$$
where $C>0$ is a universal constant, and
$$\hspace{-0.2cm}\begin{array}{l}
\displaystyle L_n =  \max_{k\in [N], j\in [p]}\frac{1}{n}\sum_{i=1}^n\Ep[|\widetilde X_{ikj}|^3],\\
\displaystyle M_{n,X}(\delta) = \frac{1}{n}\sum_{i=1}^n\Ep\left[\max_{k\in [N], j\in [p]}|\widetilde X_{ikj}|^3 1\left\{\max_{k\in [N], j\in [p]}|\widetilde X_{ikj}|>\delta\sqrt{n}/\log(Np)\right\}\right],\\
\displaystyle M_{n,Y}(\delta) = \frac{1}{n}\sum_{i=1}^n\Ep\left[\max_{k\in [N], j\in [p]}|\widetilde Y_{ikj}|^3 1\left\{\max_{k\in [N], j\in [p]}|\widetilde Y_{ikj}|>\delta\sqrt{n}/\log (Np)\right\}\right],
\end{array}
$$  for $\widetilde X_i=X_{i}-\Ep[X_{i}]$ and $\widetilde Y_i=Y_{i}-\Ep[Y_{i}]$.
\end{theorem}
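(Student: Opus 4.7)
The approach is Lindeberg interpolation applied to a smooth surrogate of the MinMax functional, following the Chernozhukov--Chetverikov--Kato program but with a smoothing tailored to the nested $\min\max$ structure. Define, for a smoothing parameter $\beta>0$,
\[
F_\beta(M) \;\equiv\; -\beta^{-1}\log\sum_{k=1}^{N}\Bigl(\sum_{j=1}^{p} e^{\beta M_{kj}}\Bigr)^{-1},
\]
i.e.\ a soft-max with parameter $\beta$ inside each row followed by a soft-min with parameter $\beta$ across rows. Sandwiching each log-sum-exp by its extreme terms gives
\[
\min_k \max_j M_{kj} - \beta^{-1}\log N \;\leq\; F_\beta(M) \;\leq\; \min_k \max_j M_{kj} + \beta^{-1}\log p,
\]
so $F_\beta$ approximates MinMax uniformly to accuracy $\beta^{-1}\log(Np)$; I will set $\beta \asymp \log(Np)/\delta$ at the end.

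The key step is to bound the partial derivatives of $F_\beta$ through third order. Write $\pi_{kj}=e^{\beta M_{kj}}/S_k$ with $S_k=\sum_j e^{\beta M_{kj}}$ (the within-row softmax weights, summing to $1$ over $j$) and $\rho_k=(1/S_k)/\sum_{k'}(1/S_{k'})$ (the cross-row soft-min weights, summing to $1$ over $k$). The chain rule then yields the clean identity $\partial_{kj}F_\beta = \rho_k \pi_{kj}$, so that $\sum_{k,j}|\partial_{kj}F_\beta| = 1$. Iterating and separating the cases $k=k'$ and $k\neq k'$ shows that every second partial has the schematic form ``$\beta$ times a product of $\rho$'s and $\pi$'s'', and using $\sum_j\pi_{kj}=\sum_k\rho_k=1$ I would verify
\[
\sum_{a,b}|\partial^2_{ab}F_\beta| \;\lesssim\; \beta, \qquad \sum_{a,b,c}|\partial^3_{abc}F_\beta| \;\lesssim\; \beta^2,
\]
where $a,b,c$ run over index pairs $(k,j)$. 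This is the most delicate part: cross-row terms do not appear in the pure soft-max analysis of CCK, and one has to check that they combine with the within-row terms to give the correct $\beta,\beta^2$ scalings rather than a worse $N\beta$-type blow-up.

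Next, for a Borel set $A$ I build a mollifier $g_\delta\colon\mathbb{R}\to[0,1]$ with $\mathbf{1}_A \leq g_\delta \leq \mathbf{1}_{A^\delta}$ and $\|g_\delta^{(r)}\|_\infty \lesssim \delta^{-r}$ for $r=1,2,3$, and set $m \equiv g_\delta \circ F_\beta$. Expanding by the chain rule and using the derivative bounds above,
\[
\sum_{a,b,c}\|\partial^3_{abc}m\|_\infty \;\lesssim\; \delta^{-3} + \beta\delta^{-2} + \beta^2\delta^{-1} \;\lesssim\; \beta^2/\delta
\]
as soon as $\beta \gtrsim 1/\delta$. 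I then run a standard Lindeberg swap with truncation: interpolate between $\sum_i X_i/\sqrt n$ and $\sum_i Y_i/\sqrt n$ one summand at a time, truncate each $X_i$ and $Y_i$ componentwise at level $\delta\sqrt n/\log(Np)$, Taylor-expand $m$ to third order around each partial sum, and cancel the first- and second-order contributions using the matching means and covariances of $X_i$ and $Y_i$. The cubic remainder, together with the truncation error, produces
\[
\bigl|\mathbb{E}[m(Z)] - \mathbb{E}[m(\widetilde Z)]\bigr| \;\lesssim\; \frac{\beta^2/\delta}{n^{1/2}}\bigl\{L_n + M_{n,X}(\delta) + M_{n,Y}(\delta)\bigr\}.
\]

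Finally, setting $\beta=\log(Np)/\delta$, the smoothing error $|F_\beta - \min\max|\leq \beta^{-1}\log(Np) = \delta$ transfers the comparison from $F_\beta$ back to MinMax by inflating $A$ by an additional $\delta$, which yields the $A^{C\delta}$ in the statement and the final prefactor $\log^2(Np)/(\delta^3 n^{1/2})$ multiplying $L_n+M_{n,X}(\delta)+M_{n,Y}(\delta)$. The hard part, as flagged, is the derivative bookkeeping: the nested soft-min-of-soft-max smoothing requires a genuinely new calculation that is not reducible to existing soft-max identities, and extracting the right $\beta,\beta^2$ scalings of the partial-derivative sums is precisely what produces the clean $\log^2(Np)/\delta^3$ dependence in the bound.
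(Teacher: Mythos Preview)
Your proposal is correct and follows essentially the same route as the paper: the same nested soft-min-of-soft-max smoothing $G_\beta$, the same sandwich $|G_\beta-\min\max|\le\beta^{-1}\log(Np)$, the same mollifier $g$ with $m=g\circ G_\beta$, and the same derivative-sum bounds $\sum|\partial m|\le\|g'\|_\infty$, $\sum|\partial^2 m|\lesssim\|g''\|_\infty+\beta\|g'\|_\infty$, $\sum|\partial^3 m|\lesssim\|g'''\|_\infty+\beta\|g''\|_\infty+\beta^2\|g'\|_\infty$ (the paper records these as Lemmas on the derivatives of $m$, with the explicit row/column weights $\pi_k(-F_\beta(X))$ and $\pi_j^{\bar\mu_k}(X_{k\cdot})$ playing exactly your $\rho_k$ and $\pi_{kj}$). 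The only notable difference is the interpolation device: the paper uses a continuous Slepian--Stein path $Z(t)=\sqrt{t}\,\widetilde X+\sqrt{1-t}\,\widetilde W$ with a leave-one-out decomposition, whereas you describe a one-summand-at-a-time Lindeberg swap; both require the same third-derivative stability under $\|y\|_\infty\le\beta^{-1}$ perturbations (which the paper isolates as a separate lemma on $U_{(k,j)}$) to factor the truncated cubic remainder, and both deliver the identical $\log^2(Np)/(\delta^3 n^{1/2})$ prefactor.
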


The following theorem establishing a coupling result regarding two Gaussian random matrices for the MinMax operator. This result is important to establish the validity of the Gaussian multiplier bootstrap which relies on an estimate of the covariance matrix of the original process.

\begin{theorem}\label{thm:clt:minmax:Gaussian:discrete}
Let $X$ and $Y$ be random matrices in $\mathbb{R}^{N\times p}$ ($Np\geq 2$) with $X\sim N(\mu,\Sigma^X)$ and $Y \sim N(\mu,\Sigma^Y)$. Define $T = \min_{k\in [N]}\max_{j\in[p]} X_{kj}$ and $\widetilde T = \min_{k\in [N]}\max_{j\in[p]} Y_{kj}$.  Then for any $\delta>0$ and Borel subset $A$ of $\mathbb{R}$,
$$\P( T \in A) \leq  \P( \widetilde T \in A^{5\delta}) + C\delta^{-2}\|\Sigma^X-\Sigma^Y\|_\infty \log(Np),$$
where  $\|\Sigma^Y-\Sigma^X\|_\infty= \max_{(k,j)\in [N]^2\times[p]^2} |\Sigma^Y_{(k_1j_1,k_2j_2)}-\Sigma^X_{(k_1j_1,k_2j_2)}|$ and $C>0$ is a universal constant.
\end{theorem}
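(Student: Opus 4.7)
The plan is to adapt the Slepian--Stein interpolation technique used for Gaussian max-comparison (Chernozhukov--Chetverikov--Kato) to the nested $\min\max$ functional by building a two-level log-sum-exp smoothing.

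\textbf{Step 1: Smooth the MinMax operator.} Define the soft-max $\Phi_\beta(y) = \beta^{-1}\log\sum_{j\in[p]} e^{\beta y_j}$ and the soft-min $\Psi_\beta(z) = -\beta^{-1}\log\sum_{k\in[N]}e^{-\beta z_k}$, and set
\[
F_\beta(x) \,=\, \Psi_\beta\bigl(\Phi_\beta(x_{1,\cdot}),\ldots,\Phi_\beta(x_{N,\cdot})\bigr).
\]
The standard log-sum-exp estimates give $|F_\beta(x) - \min_k\max_j x_{kj}| \le \beta^{-1}\log(Np)$, so choosing $\beta = \delta^{-1}\log(Np)$ makes the smoothing error at most $\delta$. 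Pick also a $C^2$ indicator $g_\delta: \R \to [0,1]$ with $\mathbf{1}_{A^\delta}\le g_\delta\le\mathbf{1}_{A^{2\delta}}$, $\|g_\delta'\|_\infty\lesssim \delta^{-1}$, $\|g_\delta''\|_\infty\lesssim \delta^{-2}$.

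\textbf{Step 2: Reduce to a smooth Gaussian comparison.} Combining the two smoothings,
\[
\P(T\in A) \;\le\; \P\bigl(F_\beta(X)+\delta \in A^{2\delta}\bigr) \;\le\; \Ep[g_\delta(F_\beta(X)+\delta)],
\]
and symmetrically $\Ep[g_\delta(F_\beta(Y)+\delta)] \le \P(\widetilde T\in A^{5\delta})$, using the $\delta$-smoothing of $F_\beta$ on the $Y$-side and the $2\delta$-widening built into $g_\delta$. It therefore suffices to bound
$|\Ep[g_\delta(F_\beta(X)+\delta)]-\Ep[g_\delta(F_\beta(Y)+\delta)]|$.

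\textbf{Step 3: Stein interpolation.} Set $Z(t)=\sqrt t\,(X-\mu)+\sqrt{1-t}\,(Y-\mu)+\mu$. Differentiating in $t$ and applying Gaussian integration by parts (as in the proof of Theorem~1 of Chernozhukov--Chetverikov--Kato, 2015) gives
\[
\bigl|\Ep[g_\delta(F_\beta(X)+\delta)]-\Ep[g_\delta(F_\beta(Y)+\delta)]\bigr|
 \;\le\; \tfrac12\|\Sigma^X-\Sigma^Y\|_\infty \sum_{(a,b),(c,d)}\int_0^1 \Ep\bigl|\partial^2 (g_\delta\circ F_\beta)/\partial x_{ab}\partial x_{cd}\bigr|\,dt.
\]

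\textbf{Step 4: Bound the sum of absolute second derivatives.} By the chain rule,
\[
\sum_{(a,b),(c,d)}|\partial^2(g_\delta\circ F_\beta)|
 \;\le\; \|g_\delta''\|_\infty\,\|\nabla F_\beta\|_1^{\,2}
 \,+\, \|g_\delta'\|_\infty\sum_{(a,b),(c,d)}|\partial^2 F_\beta/\partial x_{ab}\partial x_{cd}|.
\]
The gradient entries of $F_\beta$ are products of soft-min weights $\pi_k$ and row-wise soft-max weights $\sigma_{k,j}$, each summing to $1$, so $\|\nabla F_\beta\|_1=\sum_{k,j}\pi_k\sigma_{k,j}=1$. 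A direct computation of the Hessian (splitting into same-row and cross-row contributions) yields $\sum_{(a,b),(c,d)}|\partial^2 F_\beta/\partial x_{ab}\partial x_{cd}|\le C\beta$, analogous to Lemma~A.2 of Chernozhukov--Chetverikov--Kato (2013). Plugging in $\|g_\delta''\|_\infty\lesssim\delta^{-2}$, $\|g_\delta'\|_\infty\lesssim\delta^{-1}$, and $\beta=\delta^{-1}\log(Np)$ gives the advertised rate $C\delta^{-2}\log(Np)\|\Sigma^X-\Sigma^Y\|_\infty$.

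\textbf{Main obstacle.} The delicate step is the Hessian bound $\sum|\partial^2 F_\beta|\lesssim \beta$. Because the soft-min of soft-maxes is not convex, perturbing $x_{ab}$ changes $\Phi_\beta(x_{a,\cdot})$, which reweights \emph{all} soft-min coefficients $\pi_k$; the cross-row second derivatives therefore couple every pair $(a,b)$ and $(c,d)$ through $\pi_a\pi_c$-type terms. The saving comes from Gibbs-type stability: $\sum_{a,b}\pi_a\sigma_{a,b}(1-\sigma_{a,b})\le 1$, $\sum_{a\neq c}\pi_a\pi_c\le 1$, and the analogous identity for mixed derivatives across rows. Carefully enumerating the four types of index combinations (same row--same column, same row--different column, different rows same/different column) and bounding each by the corresponding Gibbs identity, multiplied by a factor of $\beta$ coming from a single differentiation of a soft weight, yields the desired $O(\beta)$ total.
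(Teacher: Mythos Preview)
Your proposal is correct and follows essentially the same route as the paper: both use the two-level log-sum-exp smoothing $G_\beta$ of the MinMax, sandwich with a smooth indicator $g$, apply Stein's identity for the Gaussian interpolation, and then reduce everything to the bound $\sum_{(k,j)}|\partial^2(g\circ G_\beta)/\partial x_{k_1j_1}\partial x_{k_2j_2}| \le \|g''\|_\infty + C\beta\|g'\|_\infty$, which is exactly the paper's Lemma~\ref{lemma:Der2}. The paper derives this Hessian bound by writing out the derivatives explicitly in terms of the Gibbs weights $\pi_k$, $\pi_j^{\bar\mu_k}$ and the quantities $w_{k_1k_2}$, $w_{j_1j_2}^{\bar\mu_k}$ (Lemma~\ref{lemma:Der1}), which is the concrete realization of your ``enumerate the four types of index combinations and bound each by a Gibbs identity'' sketch.
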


Theorems \ref{thm:clt:minmax:discrete} and \ref{thm:clt:minmax:Gaussian:discrete} parallels the results for non-central random vectors for the max operator obtained in \citet{chernozhukov2015noncenteredprocesses}. Their proofs rely on the use of a novel smooth approximation of the MinMax that is a suitable composition of the logarithm of the sum of the exponentials.

\begin{appendix}

\section{Example}

\begin{example}\label{Ex:Fail}

Consider $d_\theta = 2$, $\Theta =[-1,1]^2$. Let $p=2$ and the following moment inequalities
 $$
\begin{array}{rl}
 \Ep[m_1(W_i,\theta)] & = \Ep[\theta_1 + \theta_2 - W_{i,1}] \leq 0\\
 \Ep[m_2(W_i,\theta)] & = \Ep[W_{i,2} - \theta_1 - \theta_2] \leq 0\\
 \end{array}
 $$ where $W_i \in \mathbb{R}^p$, $W_i\sim N(0,I)$ and we are interest on testing
 $$ H_0: \theta_1 = 0 \ \ vs. \ \ H_1:\theta_1 \neq 0 \ \ \ \   \mbox{so that} \ \ $$
  $$\Theta(h_0) = \{ \theta \in \Theta : \theta_1 = 0\} \ \ \mbox{and} \ \ \Theta_I = \{ \theta \in \Theta: \theta_1+\theta_2 = 0\}$$
It follows that for $(Z_1,Z_2)\sim N(0,I)$ we have
$$ T_n(0) = \min_{-1\leq \theta_2\leq 1} \max\left\{ \frac{\sqrt{n} \theta_2 - \bar W_1}{\hat\sigma_1}, \frac{\bar W_2 - \sqrt{n}\theta_2}{\hat\sigma_2} \right\} \to_d \frac{Z_2-Z_1}{2} \sim N(0,1/2)  $$

In contrast, setting $R_n^*(0) = \min_{\theta \in \Theta(0)} \max_{j\in[p]}\hat v_{\theta,j}$ we have
$$R_n^*(0)\mid (W_i)_{i=1}^n 
\to_d \min\{ -Z_1, Z_2 \} $$
Therefore, critical values based on the conditional quantiles of $R_n^*(0)$ given the data fail to control size. For example, for $\alpha = 0.1$, it follows $c_n(0,\alpha)\approx 0.5$ and $P(T_n(0)>c_n(0,\alpha))\approx 0.24$,
 while the $(1-\alpha)$-quantile of $T_n(0)$ is $\approx 0.86$.\end{example}

\section{Proofs of Section \ref{SEC:INFERENCESUBVECTORPI}}

Before proceeding with the proofs, we first introduce the following notation.

\begin{align}
	v_{\theta,j}~\equiv ~ \frac{1}{\sqrt{n}}\sum_{i=1}^n\{m_j(W_i,\theta)-\Ep[m_j(W_i,\theta)]\}/\sigma_{\theta,j}.
\end{align}
For a (non-random) sequence $\Delta_n$, define $\mu_n$ as follows
\begin{align}
	\mu_n ~\equiv ~(1-\gamma_n)\mbox{-quantile of} \ \sup_{  \theta, \tilde \theta \in \Theta(\eeta), \| \theta - \tilde \theta \|\leq \Delta_n,j\in[p]} \left|  v_{\theta,j}- v_{\tilde\theta,j}\right|.
	\label{eq:mun}
\end{align}
Finally, we define the following objects.
\begin{align*}
	\ell_{\min}&\equiv \inf_{\theta\in\Theta(\eeta)}\max_{j\in[p]}\sqrt{n}\sigma_{\theta,j}^{-1}\Ep[m_{j}(W,\theta)]\\
	\Psi_\theta &\equiv \left\{ j \in [p] : \sqrt{n}\Ep[m_{j}(W,\theta)]/\sigma_{\theta,j}  \geq \max_{\tilde j \in [p]} \sqrt{n}\sigma_{\theta,\tilde j}^{-1}\Ep[m_{\tilde j}(W,\theta)] - \frac{2\bar w_n}{1-t^\sigma_n}\right\}\\
	\Theta_n^{\psi_{n}} &\equiv \{  \theta \in \Theta(\eeta) : \max_{j\in[p]} \sqrt{n}\sigma_{\theta,j}^{-1}\Ep[m_{j}(W,\theta)] \leq \psi_n\sqrt{n} \}\\
	\psi_n &~\equiv ~ 2(\bar w_n/\sqrt{n}) ({1+t_n^\sigma}/{1-t_n^\sigma}) .
\end{align*}

\begin{proof}[Proof of Theorem \ref{thm:MSB:inferenceSubvector}] We divide the argument into cases.


Case 1: $\ell_{\min} \leq -2\bar w_n/(1-t^\sigma_n).$ Then, we have that with probability exceeding $1-C\gamma_n-Cn^{-1}$,
\begin{align*}
	T_n(\eeta) & = \inf_{\theta \in \Theta(\eeta)}\max_{j\in[p]} \hat v_{\theta,j} +\sqrt{n}\Ep[m_{j}(W,\theta)]/\hat\sigma_{\theta,j}\\
	 & \leq_{(1)} \sup_{\theta \in \Theta(\eeta),j\in[p]} |\hat v_{\theta,j}| + \inf_{\theta \in \Theta(\eeta)}\max_{j\in[p]}\sqrt{n}\Ep[m_{j}(W,\theta)]/\hat\sigma_{\theta,j}\\
	 & \leq_{(2)} \bar w_n /(1-t^\sigma_n) - 2\bar w_n/(1-t^\sigma_n) \\
	 & \leq_{(3)} - \sup_{\theta \in \Theta(\eeta),j\in[p]} |\hat v_{\theta,j}|\\
	 & \leq_{(4)} - \sup_{\theta \in \Theta(\eeta),j\in[p]} |\hat v^*_{\theta,j}|+\delta_4\\
	 & \leq_{(5)} \inf_{\theta \in \widehat \Theta_n} \max_{j\in\widehat \Psi_\theta} \hat v^*_{\theta,j}+\delta_4,
\end{align*}
where (1) holds by $\inf_k \max_m a_{km} + b_{km} \leq \max_{k,m}|a_{km}| + \inf_k \max_m b_{km}$, (2) by $\sup_{\theta \in \Theta(\eeta),j\in[p]} |\hat v_{\theta,j}| \leq \bar w_n/(1-t^\sigma_n)$ holding with probability exceeding $1-2\gamma_n$ and the assumption on $\ell_{\min}$, (3) by the same event, (4) holds by Theorem \ref{thm:clt:minmax} and \ref{thm:clt:minmax:Gaussian} with $\delta_4=\bar \delta_1+\bar \delta_2$ with probability exceeding $1-C(\gamma_n+n^{-1})$ and (5) holds since $-\sup_{k \in \bar K,m \in \bar M}|a_{km}|\leq \inf_{k \in K}\max_{m\in M} a_{km}$ for any sets $K\subset \bar K$ and $M \subset \bar M$. The result then follows in this case.

~\\

Case 2: $\ell_{\min} > -2\bar w_n/(1-t^\sigma_n).$
Since $\Theta_I(\eeta) = \Theta_I\cap \Theta(\eeta) \neq \emptyset$, $\ell_{\min}\leq 0$.
Moreover, let {\small $$\check \Theta_n = \{\Theta_I(\eeta) \cap \widehat \Theta_n\} \cup \left\{ \theta \in \Theta_I(\eeta) : \exists \hat\theta \in \widehat \Theta_n\setminus \Theta_I, \begin{array}{c}
 \|\theta-\hat\theta\| \leq \psi_n/\vartheta_n, \\ \max_{j\in[p]}\Ep[m_j(W,\theta)]=0\end{array}\right\}.$$} Note that by Lemma \ref{lemma:selectionExtraTheta} we have $\widehat \Theta_n \subseteq \Theta_n^{\psi_n}$ with probability exceeding $1-2\gamma_n$. Therefore, with probability exceeding $1-2\gamma_n$, by Condition MB we have that for every $\hat\theta \in \widehat \Theta_n \subset \Theta_n^{\psi_n}$ we can find $\theta_n^* \in \Theta_I(\eeta)$ such that
  $$\vartheta_n \| \hat\theta - \theta_n^*\| \leq \max_{j\in[p]}\Ep[\sigma_{\theta,j}^{-1}m_j(W,\hat\theta)]\leq \psi_n.  $$



The main argument follows the following sequence of inequalities.
\begin{align*}
	& \P( T_n(\eeta) \geq t )  = \P( \inf_{\theta \in \Theta(\eeta)}\max_{j\in[p]} \hat v_{\theta,j} +\sqrt{n}\Ep[m_{j}(W,\theta)]/\hat\sigma_{\theta,j} \geq t)\\
	& \leq_{(1)} \P( \inf_{\theta \in \Theta_I(\eeta)}\max_{j\in[p]} v_{\theta,j} +\sqrt{n}\Ep[m_{j}(W,\theta)]/\hat \sigma_{\theta,j} \geq t - \delta_1)+2\gamma_n\\
	& \leq_{(2)} \P( \inf_{\theta \in \Theta_I(\eeta)}\max_{j\in \Psi_\theta} v_{\theta,j} +\sqrt{n}\Ep[m_{j}(W,\theta)]/\hat \sigma_{\theta,j} \geq t - \delta_2)+2\gamma_n\\
	& \leq_{(3)} \P( \inf_{\theta \in \Theta_I(\eeta)}\max_{j\in \Psi_\theta} v_{\theta,j} +\sqrt{n}\Ep[m_{j}(W,\theta)]/\sigma_{\theta,j} \geq t - \delta_3)+3\gamma_n\\
	& \leq_{(4)} \P( \inf_{\theta \in \Theta_I(\eeta)}\max_{j\in \Psi_\theta} v_{\theta,j}^* +\sqrt{n}\Ep[m_{j}(W,\theta)]/\sigma_{\theta,j} \geq t - \delta_4)+ r_4\\
	& \leq_{(5)} \P( \inf_{\theta \in \check\Theta_n}\max_{j\in \Psi_\theta} v_{\theta,j}^* +\sqrt{n}\Ep[m_{j}(W,\theta)]/\sigma_{\theta,j} \geq t - \delta_5)+ r_5\\
	& \leq_{(6)} \P( \inf_{\theta \in \check\Theta_n}\max_{j\in \Psi_\theta} v_{\theta,j}^* \geq t - \delta_6)+ r_6\\
	& \leq_{(7)} \P( \inf_{\theta \in \widehat\Theta_n}\max_{j\in \hat\Psi_\theta} v_{\theta,j}^* \geq t - \delta_7)+ r_7\\
	& \leq_{(8)} \P( \inf_{\theta \in \widehat\Theta_n}\max_{j\in \hat \Psi_\theta} \hat v_{\theta,j}^* \geq t - \delta_8)+ r_8,
\end{align*}
where (1) holds since $\Theta_I(\eeta) \subset \Theta(\eeta)$ and $\delta_1=\bar w_n t^\sigma_n$,  (2) holds with $\delta_2 = \delta_1$ as $\Psi_\theta$ contains all the maximizers for each $\theta \in \Theta_I(\eeta)$ with probability exceeding $1-2\gamma_n$ by Lemma \ref{lemma:selectionExtra0}, and (3) holds because for $\theta \in \Theta_I(\eeta)$ and $j\in \Psi_\theta$ we have $\ell_{\min} - 2\bar w_n/(1-t_n^\sigma) \leq \sqrt{n}\sigma_{\theta,j}^{-1}\Ep[m_j(W,\theta)] \leq 0$ so that we can take $$\begin{array}{rl}
\delta_3 & = \delta_2+ \left|\ell_{\min}- 2\bar w_n/(1-t^\sigma_n)\right|t^\sigma_n \leq \delta_2+ \frac{4\bar w_n t_n^\sigma }{1-t^\sigma_n}.\end{array}$$
By Theorem \ref{thm:clt:minmax} and \ref{thm:clt:minmax:Gaussian} (with $\eta = n^{-1/2}$) we have that (4) holds with $\delta_4 = \delta_3 + \delta_{n,\eta,\gamma_n} + \bar\delta_{n,\eta,\gamma_n}$  and $r_4 = 4\gamma_n + Cn^{-1}$. Relation (5) holds with $\delta_5=\delta_4$ and $r_5 = r_4$ since  $\check\Theta_n \subseteq \Theta_I(\eeta)$ by construction. Next note that by definition of $\check\Theta_n$, (6) holds with $\delta_6 = \delta_5$ and $r_6 = r_5$ since $\check\Theta_n \subseteq \Theta_I(\eeta)$ implies $\Ep[m_j(W,\theta)] \leq 0$ for all $\theta \in \check\Theta_n$. Relation (7) holds with $r_7 = r_6+2\gamma_n+\gamma_n$ and
$$\begin{array}{rl}
\delta_7 & = \delta_6 + \inf_{\theta \in \check\Theta_n}\max_{j\in \Psi_\theta} v_{\theta,j}^*-\inf_{\theta \in \widehat\Theta_n}\max_{j\in \hat\Psi_\theta} v_{\theta,j}^*\\
 & \leq \delta_6 + \sup_{\|\hat \theta-\theta\|\leq \psi_n/\vartheta_n, j\in [p]}|v_{\hat\theta,j}^*-v_{\theta,j}^*| \\
 & \leq \delta_6 + \mu_n, \end{array}
$$  since with probability exceeding $1-2\gamma_n$ we have $\widehat \Theta_n \subseteq \Theta_n^{\psi_n}$ and by definition of $\mu_n$. Indeed we have that for every $\theta \in \widehat \Theta_n$, there is $\check\theta \in \check \Theta_n\subset \Theta_I(\eeta)$ such that $\|\theta - \check\theta\| \leq  \frac{\psi_n}{\vartheta_n}$. Then by Lemma \ref{lemma:selectionExtra}, $\Psi_{\check\theta} \subseteq \hat\Psi_{\theta}$ with probability exceeding $1-2\gamma_n$ provided that
 $$ \begin{array}{rl}
 \frac{\psi_n}{\vartheta_n} & \leq \frac{M_n+\max_{j\in[p]}\sqrt{n}\sigma_{\check\theta,j}^{-1}\Ep[m_{j}(W,\check\theta)]- (1-t_n^\sigma)\max_{\tilde j\in[p]} \sqrt{n}  \sigma_{\theta,\tilde j}^{-1}\Ep[m_j(W,\theta)]-4\bar w_n a_n }{L_G\sqrt{n}(1+t_n^\sigma)},
 \end{array}$$
where $a_n\equiv [(1+t_n^\sigma)^2/(1-t_n^\sigma) + \frac{4}{3}t_n^\sigma]/(1-t_n^\sigma) \to 1$ as $t_n^\sigma =o(1)$. Under our conditions, $$\max_{j\in[p]}\sqrt{n}\sigma_{\check\theta,j}^{-1}\Ep[m_{j}(W,\check\theta)] \geq -\frac{2\bar w_n}{1-t_n^\sigma} \ \mbox{and} \  \max_{\tilde j\in[p]} \sqrt{n}  \sigma_{\theta,\tilde j}^{-1}\Ep[m_j(W,\theta)]\leq \sqrt{n}\psi_n,$$ where $\psi_n = 2\frac{(1+t_n^\sigma)}{(1-t_n^\sigma)}\bar w_n/n^{1/2}$, and therefore it suffices that
$$ M_n \geq \frac{L_G(1+t_n^\sigma)}{\vartheta_n}2\frac{(1+t_n^\sigma)}{(1-t_n^\sigma)}\bar w_n + 2\bar w_n / (1-t_n^\sigma) + 2\frac{(1+t_n^\sigma)}{(1-t_n^\sigma)}\bar w_n +4\bar w_n a_n.$$
 Finally, (8) holds with $\delta_8 = \delta_7 + \bar w_n t^\sigma_n $ and $r_8 = r_7+ \gamma_n$.

Under Condition MB, since $b$ and $\sigma>0$ are constants, $q>6$, and $K_n^3\leq n$, the following relations hold:\\
(i) $(b/\sigma)K_n^{1/2}/n^{1/2-2/q} \leq C$, \\
(ii) $K_n^{1/4}b^{1/2}\sigma^{3/2}\leq n^{1/4}$,\\
(iii) $(b/\sigma)^{1/6}K_n^{1/12}/\gamma_n^{2/3+1/q} \leq n^{1/12}$, and \\
(iv) $K_n^{1/3}b^{1/3}\sigma^{-2/3}/\gamma_n^{2/3+1/q}\leq n^{1/3-1/q}$.

 Therefore, we have $r_8 \leq C\{\gamma_n + n^{-1}\}$ and
$$\begin{array}{rl}
\delta_8  &  \leq  t^\sigma_n\{\sqrt{n}\psi_n+{1+t_n^\sigma}{1-t_n^\sigma}|\ell_{\min}|+2\bar w_n/(1-t^\sigma_n)\} + \bar \delta_{n,\eta,\gamma_n} + \bar \delta_{n,\eta,\gamma_n} + \mu_n\\
& \leq  \frac{Cb\sigma^2 K_n}{\gamma_n^{3/q}n^{1/2}} + \frac{C(b\sigma^2 K_n^2)^{1/3}}{\gamma_n^{1/3}n^{1/6}} +  C L_C^{1/2}\left(\frac{C\sigma K_n^{1/2}}{\gamma_n^{1/q}n^{1/2}\vartheta_n}\right)^{\chi/2} \frac{K_n^{1/2}}{\gamma_n^{1/q}} + \frac{CbK_n}{\gamma_n^{1/q}n^{1/2-1/q}},
\end{array}$$
where we used Lemma \ref{lemma:BoundsOn3} with $\Delta_n = \psi_n/\vartheta_n$, $(1+t^\sigma_n)/(1-t_n^\sigma) \leq 2$, the definitions of $\delta_{n,\eta,\gamma_n}$ and $\bar \delta_{n,\eta,\gamma_n}$ in Theorems \ref{thm:clt:minmax} and \ref{thm:clt:minmax:Gaussian} with $\eta = n^{-1/2}$ and $\eta_{\mathcal{F},\gamma_n}$ as in Lemma \ref{lemma:MimpliesAB}, and the definition of $ \psi_n$.

The second statement follows from the definition of the anti-concentration (\ref{eq:AC}), and the triangle inequality. (Note that the proof allows for $t$ to be random and data dependent, i.e. $t=t(W)$.)

\end{proof}

\begin{lemma}\label{lemma:selectionExtraTheta}
	Suppose $\Theta_I(\eeta)=\Theta_I\cap \Theta(\eeta)\neq \emptyset$. Then, with probability exceeding $1-2\gamma_n$, $\widehat \Theta_n \subseteq \Theta_n^{\psi_n}$.
\end{lemma}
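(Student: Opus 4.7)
The plan is to argue on a single high-probability event where both the self-normalized empirical process and the variance-ratio noise are uniformly controlled, and then to use the hypothesis $\Theta_I(\eeta)\neq\emptyset$ to get a cheap upper bound on $T_n(\eeta)$ by evaluating the sample criterion at any point in the identified set.

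\textbf{Step 1 (choose the good event).} Let $E$ be the event on which simultaneously $\sup_{\theta\in\Theta(\eeta),\,j\in[p]}|v_{\theta,j}|\le \bar w_n$ and $\sup_{\theta\in\Theta(\eeta),\,j\in[p]}\bigl(|\hat\sigma_{\theta,j}/\sigma_{\theta,j}-1|\vee|\sigma_{\theta,j}/\hat\sigma_{\theta,j}-1|\bigr)\le t_n^\sigma$. By the definitions of $\bar w_n$ and $t_n^\sigma$ and a union bound, $\P(E)\ge 1-2\gamma_n$. On $E$, using $\hat v_{\theta,j}=v_{\theta,j}\,\sigma_{\theta,j}/\hat\sigma_{\theta,j}$, we have $\sup_{\theta,j}|\hat v_{\theta,j}|\le \bar w_n(1+t_n^\sigma)$.

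\textbf{Step 2 (upper bound $T_n(\eeta)$ via the identified set).} Fix any $\theta_0\in\Theta_I(\eeta)$, which exists by hypothesis. Since $\Ep[m_j(W,\theta_0)]\le 0$ for all $j$,
\[
T_n(\eeta)\;\le\;\max_{j\in[p]}\frac{\sqrt n\,\bar m_{\theta_0,j}}{\hat\sigma_{\theta_0,j}}
\;=\;\max_{j\in[p]}\Bigl\{\hat v_{\theta_0,j}+\frac{\sqrt n\,\Ep[m_j(W,\theta_0)]}{\hat\sigma_{\theta_0,j}}\Bigr\}
\;\le\;\max_{j\in[p]}\hat v_{\theta_0,j}\;\le\;\bar w_n(1+t_n^\sigma).
\]

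\textbf{Step 3 (pull out the $\Ep[m_j]/\hat\sigma_{\theta,j}$ part at any $\theta\in\widehat\Theta_n$).} Let $\theta\in\widehat\Theta_n(\eeta)$, so $\max_j\sqrt n\,\bar m_{\theta,j}/\hat\sigma_{\theta,j}=T_n(\eeta)$. If $j^\star=\arg\max_j \sqrt n\,\Ep[m_j(W,\theta)]/\hat\sigma_{\theta,j}$, then
\[
\frac{\sqrt n\,\Ep[m_{j^\star}(W,\theta)]}{\hat\sigma_{\theta,j^\star}}
\;=\;\frac{\sqrt n\,\bar m_{\theta,j^\star}}{\hat\sigma_{\theta,j^\star}}-\hat v_{\theta,j^\star}
\;\le\;T_n(\eeta)+|\hat v_{\theta,j^\star}|
\;\le\;2\bar w_n(1+t_n^\sigma),
\]
using Step 2 and the uniform noise bound from Step 1.

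\textbf{Step 4 (swap $\hat\sigma$ for $\sigma$ in the denominator).} If $\max_j\sqrt n\,\Ep[m_j(W,\theta)]/\sigma_{\theta,j}\le 0$ the desired bound $\le\sqrt n\,\psi_n$ is trivial. Otherwise, at the (nonnegative) maximizer the ratio $\hat\sigma_{\theta,j}/\sigma_{\theta,j}\le 1+t_n^\sigma\le 1/(1-t_n^\sigma)$ applies, so
\[
\max_{j\in[p]}\frac{\sqrt n\,\Ep[m_j(W,\theta)]}{\sigma_{\theta,j}}
\;\le\;\frac{1}{1-t_n^\sigma}\,\max_{j\in[p]}\frac{\sqrt n\,\Ep[m_j(W,\theta)]}{\hat\sigma_{\theta,j}}
\;\le\;\frac{2\bar w_n(1+t_n^\sigma)}{1-t_n^\sigma}\;=\;\sqrt n\,\psi_n.
\]
This is exactly the membership criterion for $\Theta_n^{\psi_n}$, so $\widehat\Theta_n\subseteq\Theta_n^{\psi_n}$ on $E$, which has probability at least $1-2\gamma_n$. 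There is no real obstacle beyond careful bookkeeping of the two sources of noise ($v_{\theta,j}$ and $\hat\sigma/\sigma$); the lemma is a purely deterministic chain of inequalities conditional on the good event.
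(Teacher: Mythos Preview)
Your proof is correct and follows essentially the same approach as the paper's: condition on the event where both $\sup_{\theta,j}|v_{\theta,j}|\le\bar w_n$ and the variance-ratio deviation is at most $t_n^\sigma$, upper bound $T_n(\eeta)$ by evaluating at a point in $\Theta_I(\eeta)$, and then lower bound $T_n(\eeta)$ at any $\theta\in\widehat\Theta_n$ in terms of $\max_j\sqrt n\,\Ep[m_j(W,\theta)]/\sigma_{\theta,j}$. The only cosmetic differences are that the paper works with $\varepsilon$-minimizers and sends $\varepsilon\to 0$ at the end, and it folds your Steps~3 and~4 into a single lower-bound inequality, whereas you separate the $\hat\sigma\to\sigma$ swap into its own step and handle the sign case explicitly.
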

\begin{proof}
Take an $\varepsilon$-minimizer of $T_n(\eeta)$, say $\hat\theta_n$. We can assume that $\hat\theta_n \not\in \Theta_I$ (otherwise we are done). Therefore we have with probability exceeding $1-2\gamma_n$ that
\begin{eqnarray*}
\max_{j\in[p]}\sqrt{n%
}\bar{m}_{j}( \hat \theta_n ) /\hat{\sigma}_{\hat\theta_n,j} &\leq  & \varepsilon + \inf_{\theta \in \Theta(\eeta)  }\max_{j\in[p]}\sqrt{n
}\bar{m}_{j} ( \theta ) /\hat{\sigma}_{\theta,j}  \\
& \leq & \varepsilon + \inf_{\theta \in \Theta_I(\eeta)   }\max_{j\in[p]} v_{\theta,j} {\sigma}_{\theta,j}/\hat{\sigma}_{\theta,j}  \\
& \leq & \varepsilon + (1+t_n^\sigma)\sup_{\theta \in \Theta_I(\eeta)   }\max_{j\in[p]} |v_{\theta,j}|  \\ & \leq & \varepsilon + (1+t_n^\sigma)\bar w_n .
\end{eqnarray*}%
Moreover we have with the same probability that
\begin{eqnarray*} \max_{j\in[p]}\frac{\sqrt{n%
}\bar{m}_{j}( \hat \theta_n )}{\hat{\sigma}_{\hat\theta_n,j}}  &\geq & \max_{j\in[p]} \frac{\sqrt{n}\Ep[m_j(W,\hat\theta_n)]}{\hat{\sigma}_{\hat\theta_n,j}} - \sup_{\theta\in \Theta_h, j\in[p]} |v_{\theta,j}| \frac{{\sigma}_{\theta,j}}{\hat{\sigma}_{\theta,j}} \\
 &\geq & (1-t_n^\sigma)\max_{j\in[p]} \frac{\sqrt{n}\Ep[m_j(W,\hat\theta_n)]}{{\sigma}_{\hat\theta_n,j}} - (1+t_n^\sigma)\sup_{\substack{\theta\in \Theta_h\\ j\in[p]}} |v_{\theta,j}|\\
 &\geq & (1-t_n^\sigma)\max_{j\in[p]} \frac{\sqrt{n}\Ep[m_j(W,\hat\theta_n)]}{{\sigma}_{\hat\theta_n,j}} - (1+t_n^\sigma) \bar w_n .\\
 \end{eqnarray*}%
Combining these relations we have that for any $\varepsilon$-minimizer we have
$$ \max_{j\in[p]} \frac{\sqrt{n}\Ep[m_j(W,\hat\theta_n)]}{{\sigma}_{\hat\theta_n,j}} \leq \frac{2(1+t^\sigma_n)\bar w_n+\varepsilon}{1-t_n^\sigma} $$
The result follows by taking $\varepsilon = 0$.
\end{proof}

The following results use the following sets of indices in $[p]$ parametrized by $\theta\in \Theta(\eeta)$.
{\small\begin{align*}
	\hat \Psi_\theta & \equiv \{ j\in [p] : \sqrt{n} \hat \sigma_{\theta,j}^{-1}\bar m_{\theta,j} \geq \max_{\tilde j\in[p]} \sqrt{n} \hat \sigma_{\theta,\tilde j}^{-1}\bar m_{\theta,\tilde j} - M_n \},\\
	\Psi_\theta & \equiv \left\{ j \in [p] : \sqrt{n} \sigma_{\theta,j}^{-1}\Ep[m_{j}(W,\theta)]\geq {\displaystyle \max_{\tilde j\in[p]}}\sqrt{n} \sigma_{\theta,\tilde j}^{-1}\Ep[m_{\tilde j}(W,\theta)]   -\frac{2\bar w_n}{1-t_n^\sigma}\frac{1+t_n^\sigma}{1-t_n^\sigma}\right\}.
\end{align*}}
\begin{lemma}\label{lemma:selectionExtra0}
Suppose that $\ell_{\min} \geq -2\bar w_n/(1-t^\sigma_n)$. Then, with probability exceeding $1-2\gamma_n$, $\theta \in \Theta_I(\eeta)$ implies that
$$\max_{j\in[p]} v_{\theta,j} +\sqrt{n}\Ep[m_{j}(W,\theta)]/\hat \sigma_{\theta,j} = \max_{j\in \Psi_\theta} v_{\theta,j} +\sqrt{n}\Ep[m_{j}(W,\theta)]/\hat \sigma_{\theta,j}.$$
\end{lemma}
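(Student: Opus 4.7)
The plan is to work on the intersection $E$ of two good events: (i) $\sup_{\theta \in \Theta(\eeta), j\in[p]}|v_{\theta,j}|\leq \bar w_n$, and (ii) $\sup_{\theta,j}|\sigma_{\theta,j}/\hat\sigma_{\theta,j}-1|\vee|\hat\sigma_{\theta,j}/\sigma_{\theta,j}-1|\leq t^\sigma_n$. By the definitions of $\bar w_n$ and $t^\sigma_n$ each has probability at least $1-\gamma_n$, so $\P(E)\geq 1-2\gamma_n$ by the union bound. Since $\Psi_\theta\subseteq[p]$, the maximum over $[p]$ is always at least the maximum over $\Psi_\theta$; it remains to prove the reverse inequality on $E$.

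Fix an arbitrary $\theta\in\Theta_I(\eeta)$ and abbreviate $c_j := \sqrt{n}\Ep[m_j(W,\theta)]/\sigma_{\theta,j}$. Since $\theta\in\Theta_I$, we have $c_j\leq 0$ for every $j\in[p]$. Choose $j^*\in\arg\max_{j\in[p]} c_j$; trivially $j^*\in\Psi_\theta$. The key observation is that because $\theta\in\Theta(\eeta)\subseteq\Theta(\eeta)$, the hypothesis on $\ell_{\min}$ yields
$$c_{j^*} \;=\; \max_{j\in[p]} c_j \;\geq\; \ell_{\min} \;\geq\; -\frac{2\bar w_n}{1-t^\sigma_n}.$$
Thus $c_{j^*}$ is pinned between $-2\bar w_n/(1-t^\sigma_n)$ and $0$.

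Next, for any $j\in[p]$, since $c_j\leq 0$ and $\sigma_{\theta,j}/\hat\sigma_{\theta,j}\in[1-t^\sigma_n,1+t^\sigma_n]$ on $E$, multiplication by a smaller positive number makes a nonpositive quantity larger, so
$$(1+t^\sigma_n)c_j \;\leq\; \sqrt{n}\Ep[m_j(W,\theta)]/\hat\sigma_{\theta,j} \;\leq\; (1-t^\sigma_n)c_j.$$
Combining with $|v_{\theta,j}|\leq\bar w_n$ yields, for every $j\notin\Psi_\theta$,
$$v_{\theta,j} + \sqrt{n}\Ep[m_j(W,\theta)]/\hat\sigma_{\theta,j} \;\leq\; \bar w_n + (1-t^\sigma_n) c_j,$$
while at $j^*$,
$$v_{\theta,j^*} + \sqrt{n}\Ep[m_{j^*}(W,\theta)]/\hat\sigma_{\theta,j^*} \;\geq\; -\bar w_n + (1+t^\sigma_n) c_{j^*}.$$
Using the defining property $c_j < c_{j^*} - \frac{2\bar w_n(1+t^\sigma_n)}{(1-t^\sigma_n)^2}$ for $j\notin\Psi_\theta$, the upper bound for index $j$ is strictly dominated by the lower bound at $j^*$ provided
$$\bar w_n + (1-t^\sigma_n)c_{j^*} - \frac{2\bar w_n(1+t^\sigma_n)}{1-t^\sigma_n} \;\leq\; -\bar w_n + (1+t^\sigma_n)c_{j^*},$$
which rearranges to $c_{j^*}\geq -2\bar w_n/(1-t^\sigma_n)$, precisely what we just established. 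Consequently, on $E$ the maximum over $[p]$ coincides with the maximum over $\Psi_\theta$, completing the proof.

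The only subtle point is tracking how the $\sigma/\hat\sigma$ distortion flips inequality directions when $c_j\leq 0$; this is exactly why the slack in the definition of $\Psi_\theta$ carries the extra factor $(1+t^\sigma_n)/(1-t^\sigma_n)$ beyond the naive $2\bar w_n/(1-t^\sigma_n)$, and why the lemma requires the specific lower bound on $\ell_{\min}$ rather than merely $\ell_{\min}\leq 0$.
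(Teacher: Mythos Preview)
Your proof is correct and follows essentially the same approach as the paper's: both work on the event where $\sup_{\theta,j}|v_{\theta,j}|\leq \bar w_n$ and $|\sigma_{\theta,j}/\hat\sigma_{\theta,j}-1|\leq t_n^\sigma$, exploit $c_j\leq 0$ for $\theta\in\Theta_I(\eeta)$ to control the $\sigma/\hat\sigma$ distortion, and use the hypothesis $\ell_{\min}\geq -2\bar w_n/(1-t_n^\sigma)$ to pin down $c_{j^*}$. The only cosmetic difference is that the paper argues by contradiction (assuming a maximizer lies outside $\Psi_\theta$ and deriving that it must lie inside), whereas you give the direct comparison showing any $j\notin\Psi_\theta$ is dominated by the $c$-maximizer $j^*\in\Psi_\theta$; the underlying algebra is identical.
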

\begin{proof}
Suppose the contrary. Then there exists $\theta \in \Theta_I(\eeta)\equiv \Theta_I\cap \Theta(\eeta)$ and $j^*\in [p]\setminus \Psi_{\theta}$ such that
$$  v_{\theta,j^*} +\sqrt{n}\Ep[m_{j^*}(W,\theta)]/\hat \sigma_{\theta,j^*} > \max_{j\in \Psi_\theta} v_{\theta,j} +\sqrt{n}\Ep[m_{j}(W,\theta)]/\hat \sigma_{\theta,j}.$$
Note that $\theta \in \Theta_I(\eeta)$ implies $\Ep[m_{j}(W,\theta)] \leq 0$.
Since $|v_{\theta,j}|\leq \bar w_n$ and $|\hat\sigma_{\theta,j}/\sigma_{\theta,j}-1|\leq t^\sigma_n$ with probability exceeding $1-2\gamma_n$ then, with the same probability,
$$\begin{array}{rl}
 (1-t_n^\sigma) \sqrt{n}\Ep[m_{j^*}(W,\theta)]/ \sigma_{\theta,j^*} & \geq \sqrt{n}\Ep[m_{j^*}(W,\theta)]/\hat \sigma_{\theta,j^*} \\
& > \max_{j\in[p]}\sqrt{n}\Ep[m_{j}(W,\theta)]/\hat \sigma_{\theta,j} - 2\bar w_n\\
& \geq (1+t_n^\sigma){\displaystyle \max_{j\in[p]}}\sqrt{n}\Ep[m_{j}(W,\theta)]/ \sigma_{\theta,j} - 2\bar w_n.\\
\end{array}$$
Therefore, since $\ell_{\min} \geq -2\bar w_n/(1-t^\sigma_n)$ and $\Ep[m_{j}(W,\theta)] \leq 0$,
$$\begin{array}{rl}
\sqrt{n} \Ep[m_{j^*}(W,\theta)]/\sigma_{\theta,j^*}& \geq \frac{1+t_n^\sigma}{1-t_n^\sigma}\max_{j\in[p]}\sqrt{n}\Ep[m_{j}(W,\theta)]/ \sigma_{\theta,j} - \frac{2\bar w_n}{1-t_n^\sigma} \\
& \geq \max_{j\in[p]}\sqrt{n}\Ep[m_{j}(W,\theta)]/ \sigma_{\theta,j} - \frac{2\bar w_n}{1-t_n^\sigma} \\
&  + \left(\frac{1+t_n^\sigma}{1-t_n^\sigma}-1\right) {\displaystyle \inf_{\theta\in\Theta(\eeta)}\max_{j\in[p]}}\sqrt{n}\Ep[m_{j}(W,\theta)]/\sigma_{\theta,j}\\
& \geq \max_{j\in[p]}\sqrt{n}\Ep[m_{j}(W,\theta)]/ \sigma_{\theta,j} - \frac{2\bar w_n}{1-t_n^\sigma}\frac{1+t_n^\sigma}{1-t_n^\sigma}.
\end{array}$$
This implies that $j^* \in  \Psi_\theta$ which yields a contradiction.
\end{proof}

\begin{lemma}\label{lemma:selectionExtra}
Suppose that $\ell_{\min} \geq -2\bar w_n/(1-t^\sigma_n)$.
Then, with probability exceeding $1-2\gamma_n$, the fact that $\tilde \theta, \theta \in \Theta(\eeta)$ with
\begin{align*}
	&\|\tilde \theta - \theta\| \{L_G\sqrt{n}(1+t_n^\sigma)\}  \leq\\
	&
	\left\{\begin{array}{l}
		M_n + \max_{j\in[p]}\sqrt{n}\sigma_{\tilde\theta,j}^{-1}\Ep[m_{j}(W,\tilde\theta)] \\ - (1-t_n^\sigma)\max_{\tilde j\in[p]} \sqrt{n} \sigma_{\theta,\tilde j}^{-1}\Ep[m_{j}(W,\theta)]
		 - \frac{4\bar w_n}{1-t_n^\sigma} \left\{\frac{(1+t_n^\sigma)^2}{1-t_n^\sigma} + \frac{4}{3}t_n^\sigma\right\}
	\end{array}\right\}
\end{align*}
 %
  implies that $\Psi_{\tilde \theta} \subseteq \hat \Psi_{\theta}$.
\end{lemma}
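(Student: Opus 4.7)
The plan is to prove the contrapositive-style implication directly: fix $j^{*}\in\Psi_{\tilde\theta}$ and verify $j^{*}\in\hat\Psi_{\theta}$, which by definition amounts to showing
$$ \sqrt{n}\hat\sigma_{\theta,j^{*}}^{-1}\bar m_{\theta,j^{*}} \ \geq\ \max_{\tilde j\in[p]} \sqrt{n}\hat\sigma_{\theta,\tilde j}^{-1}\bar m_{\theta,\tilde j} - M_{n}. $$
I would work on the event of probability at least $1-2\gamma_{n}$ on which $\sup_{\theta\in\Theta(\eeta),j\in[p]}|v_{\theta,j}|\leq \bar w_{n}$ and $\sup_{\theta\in\Theta(\eeta),j\in[p]}\{|\hat\sigma_{\theta,j}/\sigma_{\theta,j}-1|\vee|\sigma_{\theta,j}/\hat\sigma_{\theta,j}-1|\}\leq t_{n}^{\sigma}$. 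Writing $\sqrt{n}\hat\sigma_{\theta,j}^{-1}\bar m_{\theta,j}=v_{\theta,j}\sigma_{\theta,j}/\hat\sigma_{\theta,j}+\sqrt{n}\hat\sigma_{\theta,j}^{-1}\Ep[m_{j}(W,\theta)]$ and bounding the stochastic pieces on each side by $\bar w_{n}/(1-t_{n}^{\sigma})$, it suffices to show
$$ M_{n} \ \geq\ \frac{2\bar w_{n}}{1-t_{n}^{\sigma}} + \max_{\tilde j\in[p]}\sqrt{n}\hat\sigma_{\theta,\tilde j}^{-1}\Ep[m_{\tilde j}(W,\theta)] - \sqrt{n}\hat\sigma_{\theta,j^{*}}^{-1}\Ep[m_{j^{*}}(W,\theta)]. $$

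For the positive (upper) term, I would split by the sign of $\Ep[m_{\tilde j}(W,\theta)]$: using $\hat\sigma_{\theta,\tilde j}\in[(1-t_{n}^{\sigma})\sigma_{\theta,\tilde j},(1+t_{n}^{\sigma})\sigma_{\theta,\tilde j}]$, the maximum is bounded by $D/(1-t_{n}^{\sigma})$ when $D:=\max_{\tilde j}\sqrt{n}\sigma_{\theta,\tilde j}^{-1}\Ep[m_{\tilde j}(W,\theta)]\geq 0$, and by $D/(1+t_{n}^{\sigma})$ otherwise; in the latter case the hypothesis $\ell_{\min}\geq -2\bar w_{n}/(1-t_{n}^{\sigma})$ (implicitly in force, as it is in the companion Lemma \ref{lemma:selectionExtra0}) controls $|D|$ so that the gap to $(1-t_{n}^{\sigma})D$ is absorbed into the additive slack $\tfrac{4\bar w_{n}}{1-t_{n}^{\sigma}}\cdot\tfrac{4}{3}t_{n}^{\sigma}$. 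For the negative (lower) term, I would analogously use $\sqrt{n}\hat\sigma_{\theta,j^{*}}^{-1}\Ep[m_{j^{*}}(W,\theta)]\geq \sqrt{n}\sigma_{\theta,j^{*}}^{-1}\Ep[m_{j^{*}}(W,\theta)]/(1+t_{n}^{\sigma})$ on the positive branch (with the symmetric argument on the negative branch), then apply Condition MB(i) via the mean-value theorem to transfer
$$ \sqrt{n}\sigma_{\theta,j^{*}}^{-1}\Ep[m_{j^{*}}(W,\theta)] \ \geq\ \sqrt{n}\sigma_{\tilde\theta,j^{*}}^{-1}\Ep[m_{j^{*}}(W,\tilde\theta)] - L_{G}\sqrt{n}\|\theta-\tilde\theta\|, $$
and finally invoke $j^{*}\in\Psi_{\tilde\theta}$ to lower bound $\sqrt{n}\sigma_{\tilde\theta,j^{*}}^{-1}\Ep[m_{j^{*}}(W,\tilde\theta)]$ by $\max_{j}\sqrt{n}\sigma_{\tilde\theta,j}^{-1}\Ep[m_{j}(W,\tilde\theta)]-\tfrac{2\bar w_{n}(1+t_{n}^{\sigma})}{(1-t_{n}^{\sigma})^{2}}$.

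Collecting the four contributions---$2\bar w_{n}/(1-t_{n}^{\sigma})$ from the bootstrap/centered-empirical noise, $(1-t_{n}^{\sigma})D$ (plus slack) from the upper term, and the sum $L_{G}\sqrt{n}(1+t_{n}^{\sigma})\|\theta-\tilde\theta\| - A + \tfrac{2\bar w_{n}(1+t_{n}^{\sigma})^{2}}{(1-t_{n}^{\sigma})^{2}}$ with $A=\max_{j}\sqrt{n}\sigma_{\tilde\theta,j}^{-1}\Ep[m_{j}(W,\tilde\theta)]$ from the lower term, together with doubling of the $\kappa$-slack to $\tfrac{4\bar w_{n}}{1-t_{n}^{\sigma}}\cdot\tfrac{(1+t_{n}^{\sigma})^{2}}{1-t_{n}^{\sigma}}$---recovers exactly the inequality assumed on $M_{n}$, completing the argument.

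The main obstacle will be the bookkeeping of the $(1\pm t_{n}^{\sigma})$ ratios in a sign-symmetric way: on the upper-max side one wants a bound of the form $D/(1-t_{n}^{\sigma})$ which is only directly valid when $D\geq 0$, while the statement's right-hand side features $(1-t_{n}^{\sigma})D$; reconciling these two expressions (equivalently, absorbing $[1/(1-t_{n}^{\sigma})-(1-t_{n}^{\sigma})]|D|$ into the explicit $O(\bar w_{n}t_{n}^{\sigma})$ correction) requires using $|D|\leq 2\bar w_{n}/(1-t_{n}^{\sigma})$ from the $\ell_{\min}$ bound whenever $D<0$, and is what explains the specific form of the additive constant $\tfrac{4\bar w_{n}}{1-t_{n}^{\sigma}}\{\tfrac{(1+t_{n}^{\sigma})^{2}}{1-t_{n}^{\sigma}}+\tfrac{4}{3}t_{n}^{\sigma}\}$ in the hypothesis.
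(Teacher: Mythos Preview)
Your approach is essentially the paper's: fix $j^{*}\in\Psi_{\tilde\theta}$, work on the event $\{\sup|v_{\theta,j}|\le\bar w_n\}\cap\{\sup|\hat\sigma/\sigma-1|\vee|\sigma/\hat\sigma-1|\le t_n^\sigma\}$, write $\sqrt{n}\bar m_{\theta,j}/\hat\sigma_{\theta,j}$ as centered process plus mean, use the Lipschitz bound from Condition~MB(i) to pass from $\theta$ to $\tilde\theta$, and invoke the defining inequality of $\Psi_{\tilde\theta}$ together with the $\ell_{\min}$ hypothesis to control the sign-dependent $\sigma/\hat\sigma$ distortions.

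The one substantive difference is in how the ratio $\sigma_{\theta,j}/\hat\sigma_{\theta,j}$ is handled. You propose splitting case-by-case on the sign of the population moment and then reconciling the resulting $1/(1\pm t_n^\sigma)$ factors with the target $(1-t_n^\sigma)$ factor via an additive $O(\bar w_n t_n^\sigma)$ correction. The paper instead uses a single add--subtract device: it writes the possibly-negative quantity $\max_{\tilde j}\sqrt{n}\sigma_{\tilde\theta,\tilde j}^{-1}\Ep[m_{\tilde j}(W,\tilde\theta)]$ as $\{\,\cdot+\tfrac{2\bar w_n}{1-t_n^\sigma}\}-\tfrac{2\bar w_n}{1-t_n^\sigma}$, uses $\ell_{\min}\ge-2\bar w_n/(1-t_n^\sigma)$ to make the bracketed term nonnegative, and then applies $\sigma/\hat\sigma\in[1-t_n^\sigma,1+t_n^\sigma]$ once to each piece. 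This trick delivers the stated constants more directly and avoids the branch-by-branch bookkeeping you flag as the main obstacle (in particular it sidesteps the awkward step where, on the negative branch, the $(1+t_n^\sigma)$ factor multiplies not just the Lipschitz penalty but also $\max_j\sqrt{n}\sigma_{\tilde\theta,j}^{-1}\Ep[m_j(W,\tilde\theta)]$, which then needs a further correction). Both routes arrive at the same conclusion; the paper's is a bit cleaner for tracking constants.
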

\begin{proof}
Take $j \in \Psi_{\tilde \theta}$ so that $$\sqrt{n} \sigma_{\tilde \theta,j}^{-1}\Ep[m_{j}(W,\tilde \theta)]\geq \max_{\tilde j\in[p]}\sqrt{n} \sigma_{\tilde\theta,\tilde j}^{-1}\Ep[m_{\tilde j}(W,\tilde \theta)] -\frac{2\bar w_n}{1-t^\sigma_n}\frac{1+t_n^\sigma}{1-t_n^\sigma}.$$

Then with probability at least  $1-2\gamma_n$ we have $$|v_{\theta,j}|\leq \bar w_n \ \ \mbox{and} \ \ |\hat \sigma_{\theta,j}/\sigma_{\theta,j}-1|\leq t_n^\sigma.$$ Thus for any $\theta \in \Theta(\eeta)$ we have
$$
\begin{array}{rl}
\sqrt{n}\bar m_{\theta,j}/\hat\sigma_{\theta,j} &  = \{ v_{\theta,j} + \sqrt{n}\sigma_{\theta,j}^{-1}\Ep[m_j(W,\theta)]\} \sigma_{\theta,j}/\hat\sigma_{\theta,j}\\
& \geq \{ v_{\theta,j} -L_G\sqrt{n}\|\theta-\tilde\theta\| + \sqrt{n}\sigma_{\tilde \theta,j}^{-1}\Ep[m_j(W,\tilde\theta)]\} \sigma_{\theta,j}/\hat\sigma_{\theta,j}\\
& \geq \{ v_{\theta,j} -L_G\sqrt{n}\|\theta-\tilde\theta\| + \max_{\tilde j\in[p]}\sqrt{n} \sigma_{\tilde\theta,\tilde j}^{-1}\Ep[m_{\tilde j}(W,\tilde \theta)] \\
 & -\frac{2\bar w_n}{1-t^\sigma_n}\frac{1+t_n^\sigma}{1-t_n^\sigma}\} \sigma_{\theta,j}/\hat\sigma_{\theta,j}\\
& \geq -\{L_G\sqrt{n}\|\theta-\tilde\theta\|  + \frac{3\bar w_n}{1-t^\sigma_n}\frac{1+t_n^\sigma}{1-t_n^\sigma}\} (1+t_n^\sigma) \\
 & +  \max_{\tilde j\in[p]}\sqrt{n} \sigma_{\tilde\theta,\tilde j}^{-1}\Ep[m_{\tilde j}(W,\tilde \theta)] \sigma_{\theta,j}/\hat\sigma_{\theta,j}.
\end{array}
$$
Next note that
$$
\begin{array}{rl}
&\max_{\tilde j\in[p]}\sqrt{n} \sigma_{\tilde\theta,\tilde j}^{-1}\Ep[m_{\tilde j}(W,\tilde \theta)] \sigma_{\theta,j}/\hat\sigma_{\theta,j} \\
& =\max_{\tilde j\in[p]} \left\{ \sqrt{n} \sigma_{\tilde\theta,\tilde j}^{-1}\Ep[m_{\tilde j}(W,\tilde \theta)] + \frac{2\bar w_n}{1-t_n^\sigma}- \frac{2\bar w_n}{1-t_n^\sigma} \right\}\sigma_{\theta,j}/\hat\sigma_{\theta,j}\\
& \geq   \max_{\tilde j\in[p]}  \left\{ \sqrt{n} \sigma_{\tilde\theta,\tilde j}^{-1}\Ep[m_{\tilde j}(W,\tilde \theta)] + \frac{2\bar w_n}{1-t_n^\sigma} \right\}\sigma_{\theta,j}/\hat\sigma_{\theta,j} - 2\bar w_n\frac{1+t_n^\sigma}{1-t_n^\sigma}\\
& \geq \left\{ \max_{\tilde j\in[p]}\sqrt{n} \sigma_{\tilde\theta,\tilde j}^{-1}\Ep[m_{\tilde j}(W,\tilde \theta)] + \frac{2\bar w_n}{1-t_n^\sigma} \right\}(1-t_n^\sigma) - 2\bar w_n\frac{1+t_n^\sigma}{1-t_n^\sigma}\\
& =\max_{\tilde j\in[p]}\sqrt{n} \sigma_{\tilde\theta,\tilde j}^{-1}\Ep[m_{\tilde j}(W,\tilde \theta)]  - 2\bar w_n\frac{2t_n^\sigma}{1-t_n^\sigma}.
\end{array}
$$

Therefore, $j \in \widehat \Psi_\theta$ occurs provided that
$$
\begin{array}{rl} & {\displaystyle \max_{\tilde j\in[p]}}\sqrt{n} \sigma_{\tilde\theta,\tilde j}^{-1}\Ep[m_{\tilde j}(W,\tilde \theta)] -L_G\sqrt{n}\|\theta-\tilde\theta\|(1+t_n^\sigma)  - 3\bar w_n\frac{(1+t_n^\sigma)^2}{(1-t_n^\sigma)^2}   - \frac{4\bar w_n t_n^\sigma}{1-t_n^\sigma}\\ &  \geq {\displaystyle \max_{\tilde j\in[p]}} \sqrt{n} \hat \sigma_{\theta,\tilde j}^{-1}\bar m_{\theta,\tilde j} - M_n.\end{array}$$

Therefore it suffices that
 $$\begin{array}{rl} \|\tilde \theta - \theta\|\{L_G\sqrt{n}(1+t_n^\sigma)\} & \leq  M_n +\max_{j\in[p]}\sqrt{n}\sigma_{\tilde\theta,j}^{-1}\Ep[m_{j}(W,\tilde\theta)]\\ &- \max_{\tilde j\in[p]} \sqrt{n} \hat \sigma_{\theta,\tilde j}^{-1}\bar m_{\theta,\tilde j}
   -\frac{3\bar w_n}{1-t_n^\sigma} \left\{\frac{(1+t_n^\sigma)^2}{1-t_n^\sigma} + \frac{4}{3}t_n^\sigma\right\} .\end{array}$$
To obtain the statement of the lemma, note that
$$
\begin{array}{rl}
& \max_{\tilde j\in[p]} \sqrt{n} \hat \sigma_{\theta,\tilde j}^{-1}\bar m_{\theta,\tilde j} = \max_{\tilde j\in[p]} \{ ( v_{\theta,\tilde j} + \sqrt{n}\sigma_{\theta,\tilde j}^{-1}\Ep[m_{\tilde j}(W,\theta)] ) \sigma_{\theta,\tilde j}/\hat \sigma_{\theta,\tilde j} \}\\
& \geq - \bar w_n (1+t_n^\sigma) + \max_{\tilde j\in[p]} \{  \sqrt{n}\sigma_{\theta,\tilde j}^{-1}\Ep[m_{\tilde j}(W,\theta)]  \sigma_{\theta,\tilde j}/\hat \sigma_{\theta,\tilde j} \}\\
& = - \bar w_n (1+t_n^\sigma) + \max_{\tilde j\in[p]} \{  ( \sqrt{n}\sigma_{\theta,\tilde j}^{-1}\Ep[m_{\tilde j}(W,\theta)] + \frac{2\bar w_n}{1-t_n^\sigma} - \frac{2\bar w_n}{1-t_n^\sigma}) \frac{\sigma_{\theta,\tilde j}}{\hat \sigma_{\theta,\tilde j}} \} \\
& \geq - \bar w_n (1+t_n^\sigma) -2\bar w_n\frac{1+t_n^\sigma}{1-t_n^\sigma}+ \max_{\tilde j\in[p]} \{  ( \sqrt{n}\sigma_{\theta,\tilde j}^{-1}\Ep[m_{\tilde j}(W,\theta)] + \frac{2\bar w_n}{1-t_n^\sigma}) \frac{\sigma_{\theta,\tilde j}}{\hat \sigma_{\theta,\tilde j}} \} \\
& \geq - \bar w_n (1+t_n^\sigma) -2\bar w_n\frac{2t_n^\sigma}{1-t_n^\sigma}+ (1-t_n^\sigma)\max_{\tilde j\in[p]} \sqrt{n}\sigma_{\theta,\tilde j}^{-1}\Ep[m_{\tilde j}(W,\theta)],
\end{array}
$$
since $\max_{\tilde j\in[p]} \{  ( \sqrt{n}\sigma_{\theta,\tilde j}^{-1}\Ep[m_{\tilde j}(W,\theta)] + \frac{2\bar w_n}{1-t_n^\sigma}) \sigma_{\theta,\tilde j}/\hat \sigma_{\theta,\tilde j} \} \geq 0$ by our assumption.

\end{proof}

\begin{proof}[Proof of Theorem \ref{thm:PB:inferenceSubvector}]
Let $\varepsilon = 1/\sqrt{n}$ and $\gamma_n \to 0$. Let $\bar w_n$, $t_n^{\sigma}$, and $\mu_n$ be defined as in (\ref{eq:wn}), (\ref{eq:tn}), and (\ref{eq:mun}) with $\Delta_n = \frac{\varepsilon + 2\bar w_n}{\kappa_n^{-1}\sqrt{n}\vartheta_n}$, respectively. Lemma \ref{lemma:BoundsOn3} provides upper bounds for each of these quantities. Under our conditions, $t_n^\sigma = o(1)$, $\mu_n=o(1)$ and $\bar w_n\geq 1$ can grow with $n$.
Moreover, by Lemma \ref{lemma:MimpliesAB}, Condition MB implies Conditions A and B (with suitable parameters), and so Theorems \ref{thm:clt:minmax} and \ref{thm:clt:minmax:Gaussian} apply.

We divide the rest of the argument into cases.

Case 1: $\ell_{\min} < - 8\bar w_n$. Then, with probability exceeding $1-2\gamma_n$ we have that $\sup_{\theta\in\Theta(\eeta),j\in[p]}|v_{\theta,j}|\vee |\hat v^*_{\theta,j}|\leq \bar w_n$ and the conditions of Lemma \ref{lemma:Ineq00} are satisfied (given that $(1+t_n^\sigma)/(1-t_n^\sigma) < 2$ and $\kappa_n \geq 2$). Therefore we have that
$$ \P(T_n(\eeta) \geq t ) \leq \P( R_n^{PR*} \geq t ) + 2\gamma_n,$$
and the result follows.

Case 2: $\ell_{\min} \geq - 8\bar w_n$. Define the following auxiliary statistics:
$$
\begin{array}{rl}
S_n^\kappa & \equiv \inf_{\theta \in \Theta (\eeta) }\max_{j\in [p]}\left\{
v_{\theta,j} +\kappa_n^{-1}\sqrt{n}\Ep\left[ m_{j}\left( W,\theta \right) %
\right] /\sigma_{\theta,j}\right\} \\
M_n & \equiv \inf_{\theta \in \Theta (\eeta) }\max_{j\leq
p} M(\theta,j)\\
R_{n}^{\ast \ast \ast } & \equiv \inf_{\theta \in \Theta (\eeta)
}\max_{j\in [p]}\left\{ \hat v_{\theta,j}^{\ast }\hat\sigma_{\theta,j}/\sigma_{\theta,j} +\kappa _{n}^{-1}%
\sqrt{n}\Ep\left[ m_{j}\left( W,\theta \right) \right] /\sigma _{\theta,j}\right\} \\
R_{n}^{\ast \ast } & \equiv \inf_{\theta \in \Theta (\eeta)
}\max_{j\in [p]}\left\{ \hat v_{\theta,j}^{\ast } +\kappa _{n}^{-1}%
\sqrt{n}\Ep\left[ m_{j}\left( W,\theta \right) \right] /\sigma _{\theta,j}\right\},
\end{array}
$$ where $M:\Theta(\eeta)\times[p] \to \mathbb{R}$ denotes a Gaussian process with $\Ep[M(\theta,j)]=\kappa^{-1}_n\sqrt{n}\Ep[m_j(\theta)]/\sigma_{\theta,j}$ and covariance operator given by ${\rm Cov}_M((\theta,j),(\theta',j'))=\Ep[\sigma_{\theta,j}^{-1}(m_j(W,\theta)-\Ep[m_j(W,\theta)])\sigma_{\theta',j'}^{-1}(m_{j'}(W,\theta')-\Ep[m_{j'}(W,\theta')])]$.

We have that
$$
\begin{array}{rl}
\P( T_n(\eeta) \geq t ) & \leq_{(1)} \P( S_n^\kappa \geq t - \delta_1' ) + r_1' \\
 & \leq_{(2)} \P( M_n \geq t - \delta_2' ) + r_2'\\
  & \leq_{(3)} \P( R_n^{\ast\ast\ast} \geq t - \delta_3' ) + r_3'\\
    & \leq_{(4)} \P( R_n^{\ast\ast} \geq t - \delta_4' ) + r_4'\\
& \leq_{(5)} \P( R_n^{PR\ast} \geq t - \delta_5' ) + r_5',
\end{array}
$$
where (1) holds by Lemma \ref{thm:Ineq01} with $\delta_1' = \mu_n + Ct_n^\sigma \bar w_n+L_G\kappa_n (\bar w_n/\vartheta_n)^2/\sqrt{n}$ and $r_1' = 3\gamma_n$.\footnote{Indeed, because $(1+t^\sigma_n)/(1-t^\sigma_n)\leq 2$ and $\varepsilon \leq t_n^\sigma \leq 1  \leq \bar w_n$, with probability $1-3\gamma_n$ we have $\mathcal{W}_n\leq \bar w_n$, $\mathcal{T}_n\leq t_n^\sigma$, and $\mathcal{M}_n \leq \mu_n$ and Lemma \ref{thm:Ineq01} implies that $T_n(\eeta) \leq S_{n}^\kappa + \varepsilon + t_n^\sigma(\varepsilon+3\bar w_n)+L_G\frac{\kappa_n}{\sqrt{n}}(1+t_n^\sigma)(\frac{\varepsilon+2\bar w_n}{\vartheta_n})^2+(1+t_n^\sigma)\mu_n
 \leq \mu_n + Ct_n^\sigma \bar w_n+CL_G\kappa_n (\bar w_n/\vartheta_n)^2/\sqrt{n}$.} Relation (2) holds by Theorem \ref{thm:clt:minmax} with $\delta_2' = \delta_1' + \delta_{n,\eta,\gamma_n}$ and $r_2' = r_1' + C\{ \gamma_n + n^{-1} \}$. Relation (3) follows by Theorem \ref{thm:clt:minmax:Gaussian} with $\delta_3' = \delta_2' + \bar\delta_{n,\eta,\gamma_n}$ and $r_3'= r_2' + C\{ \gamma_n + n^{-1} \}$. Relation (4) follows by $|R_n^{\ast\ast\ast}-R_n^{\ast\ast}|\leq 2\bar w_n t_n^\sigma$ with probability exceeding $1-2\gamma_n$ (so we can take $\delta_4' = \delta_3' + 2\bar w_n t_n^\sigma$ and $r_4' = r_3' + 2\gamma_n$).

Finally, to show relation (5) consider the values $\theta\in \Theta(\eeta)$ that are candidates to be $\varepsilon$ away from the the infimum. In particular, we have for any such $\varepsilon$-minimizer $\theta$ associated with $R_n^{**}$ it holds that
$$ \max_{j\in [p]}\kappa_n^{-1}\sqrt{n}\Ep[m_j(W,\theta)]/\sigma_{\theta,j} \leq \varepsilon + 2\sup_{\theta\in\Theta(\eeta),j\in[p]}|\hat v_{\theta,j}^*| $$
and for  $\varepsilon$-minimizer $\theta$ associated with $R_n^{PR*}$ it holds that
$$ \max_{j\in [p]}\kappa_n^{-1}\sqrt{n}\bar m_{\theta,j}/\hat\sigma_{\theta,j} \leq \varepsilon + 2\sup_{\theta\in\Theta(\eeta),j\in[p]}|\hat v_{\theta,j}^*|. $$
which implies that any $\varepsilon$-minimizer $\theta$ associated with $R_n^{PR*}$ satisfies
$$ \kappa_n^{-1}\sqrt{n}\Ep[m_j(W,\theta)]/\hat\sigma_{\theta,j} \leq \varepsilon + 2\sup_{\theta\in\Theta(\eeta),j\in[p]}|\hat v_{\theta,j}^*| + \kappa_n^{-1}\frac{\sqrt{n}(\Ep[m_j(W,\theta)]-\bar m_{\theta,j})}{\hat\sigma_{\theta,j}}. $$
Further, with probability exceeding $1-2\gamma_n$ we have that any $\varepsilon$-minimizer $\theta$ associated with $R_n^{PR*}$ satisfies
{\small $$\begin{array}{rl}
\displaystyle  \max_{j\in [p]}\kappa_n^{-1}\sqrt{n}\Ep[m_j(W,\theta)]/\sigma_{\theta,j} & \displaystyle \leq \max_{j\in [p]}\left\{ \frac{\hat\sigma_{\theta,j}}{\sigma_{\theta,j}}\left(\varepsilon + 2\sup_{\theta\in\Theta(\eeta),j\in[p]}|\hat v_{\theta,j}^*|\right) + \left|\frac{v_{\theta,j}}{\kappa_n}\right|\right\} \\
 & \leq \frac{1+t_n^\sigma}{1-t_n^\sigma}(\varepsilon+3\bar w_n).
 \end{array} $$}

Defining the following set of pairs $(\theta,j)$ which are candidates to define $R_n^{PR*}$ and $R^{**}_n${\small $$\bar \Psi \equiv \left\{ (\theta,j) \in \Theta(\eeta)\times[p] : \begin{array}{c}\max_{\tilde j\in [p]}\kappa_n^{-1}\sqrt{n}\Ep[m_{\tilde j}(W,\theta)]/\sigma_{\theta,\tilde j} \leq \frac{1+t_n^\sigma}{1-t_n^\sigma}(\varepsilon + 3 \bar w_n )\\
\kappa_n^{-1}\sqrt{n}\frac{\Ep[m_{j}(W,\theta)]}{\sigma_{\theta,j}} \geq \kappa_n^{-1}\sqrt{n} {\displaystyle \max_{\tilde j\in [p]}}\frac{\Ep[m_{\tilde j}(W,\theta)]}{\sigma_{\theta,\tilde j}} - 12\bar w_n\end{array}\right\}.$$}
With probability exceeding $1-2\gamma_n$, the $\varepsilon$-minimizers and the corresponding ``binding'' inequalities that define $R_n^{PR*}$ and $R^{**}_n$ are in $\bar \Psi$. Note that if
$\bar j$ is such that $\kappa_n^{-1}\sqrt{n}\frac{\Ep[m_{\bar j}(W,\theta)]}{\sigma_{\theta,\bar j}} < \kappa_n^{-1}\sqrt{n} {\displaystyle \max_{\tilde j\in [p]}}\frac{\Ep[m_{\tilde j}(W,\theta)]}{\sigma_{\theta,\tilde j}}- 12\bar w_n$ we have that
$$\begin{array}{rl} v_{\bar j}^*(\theta)+\kappa_n^{-1}\sqrt{n}\frac{\Ep[m_{\bar j}(W,\theta)]}{\sigma_{\theta,\bar j}} & \leq \bar w_n + \kappa_n^{-1}\sqrt{n}\frac{\Ep[m_{\bar j}(W,\theta)]}{\sigma_{\theta,\bar j}} \\
& \leq -11\bar w_n + \max_{\tilde j\in[p]}\frac{\Ep[m_{\tilde j}(W,\theta)]}{\sigma_{\theta,\tilde j}}\\
& \leq -10\bar w_n + \max_{\tilde j\in[p]}v_{\tilde j}^*(\theta)+\frac{\Ep[m_{\tilde j}(W,\theta)]}{\sigma_{\theta,\tilde j}},
\end{array}
$$ so $(\theta,j)$ cannot be an $\varepsilon$-minimizer of $R_n^{**}$ for $\varepsilon < \bar w_n$. Similarly, for $R^*_n$,
$$\begin{array}{rl} & v_{\bar j}^*(\theta)+\kappa_n^{-1}\sqrt{n}\frac{\bar m_{\theta,j}}{\hat\sigma_{\theta,\bar j}}  \leq \bar w_n + \kappa_n^{-1}\sqrt{n}\frac{\bar m_{\theta,j}}{\hat\sigma_{\theta,\bar j}} \\
& \leq \bar w_n + \frac{1+t^\sigma_n}{1-t_n^\sigma}\kappa_n^{-1}\sqrt{n}\frac{|\bar m_{\theta,\bar j}-\Ep[m_{\bar j}(W,\theta)]|}{\sigma_{\theta,\bar j}}+\frac{1+t^\sigma_n}{1-t_n^\sigma}\kappa_n^{-1}\sqrt{n}\frac{\Ep[m_{\bar j}(W,\theta)]}{\sigma_{\theta,\bar j}} \\
& \leq -(11-\kappa_n^{-1})\frac{1+t^\sigma_n}{1-t_n^\sigma}\bar w_n +\frac{1+t^\sigma_n}{1-t_n^\sigma}\max_{\tilde j\in[p]}\frac{\kappa_n^{-1}\sqrt{n}\Ep[m_{\tilde j}(W,\theta)]}{\sigma_{\theta,\tilde j}}\\
& \leq -(10-\kappa_n^{-1})\frac{1+t^\sigma_n}{1-t_n^\sigma}\bar w_n +\frac{1+t^\sigma_n}{1-t_n^\sigma}\max_{\tilde j\in[p]}v^*_{j}(\theta)+\frac{\kappa_n^{-1}\sqrt{n}\Ep[m_{\tilde j}(W,\theta)]}{\sigma_{\theta,\tilde j}}\\
& \leq -(10-\kappa_n^{-1})\frac{1+t^\sigma_n}{1-t_n^\sigma}\bar w_n +\frac{2t^\sigma_n}{1-t_n^\sigma}10\bar w_n + \max_{\tilde j\in[p]}v^*_{j}(\theta)+\frac{\kappa_n^{-1}\sqrt{n}\Ep[m_{\tilde j}(W,\theta)]}{\sigma_{\theta,\tilde j}}.
\end{array}
$$

Then, with the same probability
$$\begin{array}{rl}
 |R_n^{PR*} - R^{**}_n| & \leq \kappa_n^{-1}\sup_{(\theta,j)\in \bar\Psi} \sqrt{n}\left|\frac{\Ep[m_j(W,\theta)]}{\sigma_{\theta,j}} -\frac{\bar m_{\theta,j}}{\hat\sigma_{\theta,j}}\right|\\
 & \leq \kappa_n^{-1}\sqrt{n}\sup_{(\theta,j)\in \bar\Psi} \left|\frac{\Ep[m_j(W,\theta)]-\bar m_{\theta,j}}{\sigma_{\theta,j}}\right|+\left| \frac{\bar m_{\theta,j}}{\hat\sigma_{\theta,j}}-\frac{\bar m_{\theta,j}}{\sigma_{\theta,j}}\right|\\
 & \leq \kappa_n^{-1}\sup_{(\theta,j)\in \bar\Psi}|v_{\theta,j}|\left(1+|\hat\sigma_{\theta,j}^{-1}-\sigma_{\theta,j}^{-1}|\right)\\
 &  +  \kappa_n^{-1}\sup_{(\theta,j)\in \bar\Psi} \left|\frac{\sqrt{n}\Ep[m_j(W,\theta)]}{\sigma_{\theta,j}}\right|\left|\frac{\sigma_{\theta,j}}{\hat \sigma_{\theta,j}}-1\right|.
\end{array}$$
Since $\ell_{min}\geq -8\bar w_n$, for any $(\theta,j) \in \bar\Psi$ it follows that
$$ -8\kappa_n^{-1}\bar w_n -12\bar w_n \leq \kappa_n^{-1}\frac{\sqrt{n}\Ep[m_j(W,\theta)]}{\sigma_{\theta,j}} \leq 2(\varepsilon + 3\bar w_n).$$ In turn we have we have
$$ |R_n^{PR*} - R^{**}_n|  \leq C\kappa_n^{-1} \bar w_n + Ct_n^\sigma \bar w_n.$$
and (5) holds with $r_5' = r_4' + 2\gamma_n$ and $\delta_5' = \delta_4' + C\kappa_n^{-1} \bar w_n + C\bar w_n t_n^\sigma$.

Using the definitions of $r_\ell$'s and $\delta_\ell$'s, $\ell=1,2,3,4,5$, we have established that
$$ \P(T_n(\eeta) \geq t ) \leq P\left( R_n^{PR*} \geq t -
\delta_5'\right) + C\{ \gamma_n + n^{-1} \}, $$ where $\delta_5' = C\{t_n^\sigma \bar w_n+L_G\frac{\kappa_n \bar w_n^2}{\sqrt{n}\vartheta_n^2} + \delta_{n,\eta,\gamma_n} + \bar\delta_{n,\eta,\gamma_n}+ \frac{\bar w_n}{\kappa_n}+  \mu_n\}$. Lemma \ref{lemma:BoundsOn3} below provides bounds on $t_n^\sigma$, $\bar w_n$ and $\mu_n$.

Under our conditions $(b/\sigma)K_n^{1/2}/n^{1/2-2/q} \leq C$, $K_n^{1/4}b^{1/2}\sigma^{3/2}\leq n^{1/4}$, $b^{1/6}\sigma^{-1/6}K_n^{1/12}/\gamma_n^{2/3+1/q} \leq n^{1/12}$, and  $K_n^{1/3}b^{1/3}\sigma^{-2/3}/\gamma_n^{2/3+1/q}\leq n^{1/3-1/q}$. Therefore, we have
$$\begin{array}{rl}
\delta_5'  & = CL_G\frac{\kappa_n \sigma^2K_n}{\gamma_n^{2/q}n^{1/2}\vartheta_n^2} + C\frac{(b\sigma^2 K_n^2)^{1/3}}{\gamma_n^{1/3}n^{1/6}} +C\frac{b  K_n}{\gamma_n^{1/q}n^{1/2-1/q}}  \\
&+ \frac{\bar w_n}{\kappa_n} + C L_C^{1/2}\left(\frac{\kappa_n\sigma K_n^{1/2}}{n^{1/2}\vartheta_n\gamma_n^{1/q}}\right)^{\chi/2} \frac{K_n^{1/2}}{\gamma_n^{1/q}},
\end{array}$$
and the result follows.

The second statement follows from the definition of the anti-concentration (\ref{eq:AC}), and the triangle inequality. (Note that the proof allows for $t$ to be random and data dependent, i.e. $t=t(W)$.)


%

%
\end{proof}

\begin{lemma}\label{lemma:BoundsOn3}
Assume Condition A. For $\widetilde K_n =  v\{\log n \vee \log(Ab/\sigma)\}$,
\begin{align*}
	t_n^\sigma &\leq  \frac{Cb\sigma \widetilde K_n^{1/2}}{\gamma_n^{2/q}n^{1/2}}+ \frac{Cb^2\widetilde K_n}{\gamma_n^{2/q}n^{1-2/q}}\\
	\bar w_n &\leq \frac{C\sigma \widetilde K_n^{1/2}}{\gamma_n^{1/q}}+\frac{Cb\widetilde K_n}{\gamma_n^{1/q}n^{1/2-1/q}}\\
	\mu_n &\leq C L_C^{1/2}\Delta_n^{\chi/2} \widetilde K_n^{1/2}/\gamma_n^{1/q} + Cb\widetilde K_n/(\gamma_n^{1/q}n^{1/2-1/q}).
\end{align*}
\end{lemma}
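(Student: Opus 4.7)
The plan is to treat each of the three quantities as (the $(1-\gamma_n)$-quantile of) the supremum of a suitable empirical process indexed by a VC-type class, apply a maximal inequality of Chernozhukov–Chetverikov–Kato type (e.g.\ Corollary 5.1 of \citet{chernozhukov2014clt}), and then convert the resulting moment bound into a quantile bound via Markov's inequality, which accounts for the $\gamma_n^{-1/q}$ (or $\gamma_n^{-2/q}$) factors that appear in the statement. Under Condition A the envelope $F$ and the VC parameters $(\bar A, v)$ give the standard entropy term $\widetilde K_n = v\{\log n\vee \log(\bar Ab/\sigma)\}$, which governs all three bounds.

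First I would bound $\bar w_n$. The class $\mathcal{F}_0=\{m_j(\cdot,\theta)/\sigma_{\theta,j} : \theta\in\Theta(\eeta),\, j\in[p]\}$ is VC type with envelope dominated by $F$, has weak variance $\sup_f \mathrm{Var}(f(W))\leq \sigma^2$, and the envelope satisfies $\|F\|_{P,q}\leq b$. The maximal inequality then gives $\Ep\big[\sup_{\theta,j}|v_{\theta,j}|^q\big]^{1/q}\lesssim \sigma\widetilde K_n^{1/2} + b\widetilde K_n/n^{1/2-1/q}$, and the same bound holds for the multiplier process $\hat v^{*}_{\theta,j}$ by a symmetrization + multiplier inequality (conditionally on the data, then integrated). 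Markov's inequality with the $L^q$ norm yields $\bar w_n \leq \gamma_n^{-1/q}(\sigma\widetilde K_n^{1/2}+ b\widetilde K_n/n^{1/2-1/q})$ up to a constant.

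For $\mu_n$ the same maximal inequality is applied, but to the (rescaled) class $\mathcal{F}_1=\{m_j(\cdot,\theta)/\sigma_{\theta,j}-m_j(\cdot,\tilde\theta)/\sigma_{\tilde\theta,j} : \|\theta-\tilde\theta\|\leq \Delta_n,\,j\in[p]\}$. By Condition MB(ii) its weak variance is bounded by $L_C\Delta_n^{\chi}$, while its envelope is still $2F$ with $\|2F\|_{P,q}\lesssim b$, and it is VC type with the same parameters (up to a universal multiplicative constant). The resulting $L^q$ bound is $L_C^{1/2}\Delta_n^{\chi/2}\widetilde K_n^{1/2}+b\widetilde K_n/n^{1/2-1/q}$, and Markov gives the stated bound for $\mu_n$.

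For $t_n^\sigma$ I would work with $\widehat{\sigma}^2_{\theta,j}/\sigma^2_{\theta,j}-1$, which is itself the average of a zero-mean empirical process plus a bias term, both indexed by the class $\{(m_j(\cdot,\theta)/\sigma_{\theta,j})^2\}$. Writing $\bar m_{\theta,j}/\sigma_{\theta,j}=O_P(n^{-1/2}\bar w_n)$ and $\tfrac{1}{n}\sum_i m_j^2(W_i,\theta)/\sigma^2_{\theta,j}-1$ as an empirical process, the maximal inequality applied to the squared class (whose weak variance is controlled by $\sigma^2 b^{2}$ and whose envelope is $F^2$ with $\|F^2\|_{P,q/2}\leq b^2$) gives an $L^{q/2}$ bound of order $\sigma b\widetilde K_n^{1/2}/n^{1/2}+b^2\widetilde K_n/n^{1-2/q}$. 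A Markov step (with exponent $q/2$, producing the $\gamma_n^{-2/q}$) bounds $\sup_{\theta,j}|\widehat\sigma^2_{\theta,j}/\sigma^2_{\theta,j}-1|$ with probability at least $1-\gamma_n$. The final step is the elementary inequality $|\widehat\sigma/\sigma-1|\vee|\sigma/\widehat\sigma-1|\leq 2|\widehat\sigma^2/\sigma^2-1|$ valid whenever $|\widehat\sigma^2/\sigma^2-1|\leq 1/2$, which under our rate assumptions holds on this high-probability event, producing the claimed bound on $t_n^\sigma$. The main bookkeeping nuisance is to make sure the same $\widetilde K_n$ (and not a larger entropy term) suffices for the squared class; this follows because the squared class is obtained from $\mathcal{F}_0$ via a fixed Lipschitz map on the bounded range, which preserves the VC-type parameters up to universal constants.
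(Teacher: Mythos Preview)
Your proposal is correct and follows essentially the same route as the paper. The paper likewise reduces each of $t_n^\sigma$, $\bar w_n$, and $\mu_n$ to a supremum of an empirical process over a VC-type class (the original class, the squared class, and the difference class $\mathcal{F}-\mathcal{F}$ restricted to $\|\theta-\tilde\theta\|\leq\Delta_n$, respectively), uses the same elementary inequality $|\hat\sigma/\sigma-1|\leq |\hat\sigma^2-\sigma^2|/\sigma^2$ for the first, and invokes the CCK maximal-inequality machinery. The only cosmetic difference is that the paper obtains the high-probability bound directly from the deviation inequalities in Lemmas~6.1 and~6.2 of \citet{chernozhukov2015noncenteredprocesses} (which is where the $\gamma_n^{-1/q}$ and $\gamma_n^{-2/q}$ factors enter), rather than via an $L^q$ moment bound plus Markov as you phrase it; the two devices lead to the same rates.
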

\begin{proof}
	To bound $t_n^\sigma$ note that
$$ \left|\frac{\hat \sigma_{\theta,j}}{\sigma_{\theta,j}}-1\right| = \left|\frac{\hat\sigma_{\theta,j}-\sigma_{\theta,j}}{\sigma_{\theta,j}}\right| =  \left| \frac{\hat \sigma_{\theta,j}^2-\sigma_{\theta,j}^2}{\sigma_{\theta,j}(\hat \sigma_{\theta,j}+\sigma_{\theta,j})}\right| \leq \frac{|\sigma_{\theta,j}^2-\hat\sigma_{\theta,j}^2|}{\sigma_{\theta,j}^2}.  $$
By the same argument as in (\ref{eq:defE}) and (\ref{eq:ControlDelta}), we have with probability exceeding $1-\gamma_n$,
$$\sup_{\theta\in\Theta(\eeta),j\in[p]} \frac{|\sigma_{\theta,j}^2-\hat\sigma_{\theta,j}^2|}{\sigma_{\theta,j}^2} \leq  \frac{Cb\sigma \widetilde K_n^{1/2}}{\gamma_n^{2/q}n^{1/2}}+ \frac{Cb^2\widetilde K_n}{\gamma_n^{2/q}n^{1-2/q}}.$$
Since $|1-a|\leq \epsilon<1$ implies $|1-a^{-1}|\leq \epsilon/(1-\epsilon)$, two times the right hand side of the equation above a valid (upper) bound on $t_n^\sigma$ provided the right hand side is less than 1/2.

Note that with probability exceeding $1-\gamma_n$, we have $$|\hat v_{\theta,j}^*|\leq (1+t_n^\sigma)|\hat v_{\theta,j}^*|\hat\sigma_{\theta,j}/\sigma_{\theta,j}.$$ By similar arguments we have that with probability exceeding $1-\gamma_n$
$$\sup_{\theta\in\Theta(\eeta),j\in[p]} |v_{\theta,j}|\vee |\hat v_{\theta,j}^*|\leq \frac{C\sigma \widetilde K_n^{1/2}}{\gamma_n^{1/q}}+\frac{Cb\widetilde K_n}{\gamma_n^{1/q}n^{1/2-1/q}}.$$
which makes the right hand side of the equation above a valid (upper) bound on $\bar w_n$.

Finally, we bound $\mu_n$. Note that

$$
\begin{array}{rl}
 (I) \equiv & \sup_{  \theta, \tilde \theta \in \Theta(\eeta), j \in [p], \| \theta - \tilde \theta \|\leq \Delta_n} \left| v_{\theta,j}- v_{\tilde\theta,j}\right|\\
& \leq \sup_{ \theta, \tilde \theta \in \Theta(\eeta),  \| \theta - \tilde \theta \|\leq \Delta_n, j\in [p]} |\Gn(\sigma_{\theta,j}^{-1}m_j(W,\theta)-\sigma_{\tilde\theta,j}^{-1}m_j(W,\tilde\theta))|\\
\end{array}
$$
The class of functions $\mathcal{F}_{\mu_n} \equiv \{ m_j(W,\theta)/\sigma_{\theta,j}-m_j(W,\tilde\theta)/\sigma_{\tilde\theta,j} :  \theta, \tilde \theta \in \Theta(\eeta), \| \theta - \tilde \theta \|\leq \frac{\varepsilon + 2\bar w_n}{\kappa_n^{-1}\sqrt{n}\vartheta_n}, j\in [p] \}$ has an envelope $F_{\mu_n}$ bounded by $CF$ and entropy number bounded by a constant times the entropy number $\widetilde K_n$ of $\mathcal{F}$ as we have $\mathcal{F}_{\mu_n}\subset \mathcal{F}-\mathcal{F}$. Therefore, since $\Ep[m_j(W,\theta)/\{\sigma_{\theta,j}-m_j(W,\tilde\theta)/\sigma_{\tilde\theta,j}\}^2] \leq L_C\|\theta-\tilde\theta\|^\chi$,  by Lemmas 6.1 and 6.2 in \citet{chernozhukov2015noncenteredprocesses} with $t=\gamma_n^{-2/q}$ and $\alpha = \gamma_n^{1/q}$, we have that with probability exceeding $1-\gamma_n$,
\begin{equation}\label{bound:eqI}
	(I) ~\leq~ C L_C^{1/2}\Delta_n^{\chi/2} \widetilde K_n^{1/2}/\gamma_n^{1/q} + Cb\widetilde K_n/(\gamma_n^{1/q}n^{1/2-1/q}),
\end{equation}
which makes the right hand side a valid choice of $\mu_n$.
\end{proof}

\begin{lemma}\label{lemma:MimpliesAB}
	Assume that Condition MB holds and $d_\theta< n$. Then, Conditions A and B hold with $\Lambda = \Theta(\eeta)$,
$\bar \chi=1\wedge \chi/2$, $\gamma_n = C\{\tilde \gamma_n + n^{-1}\}$, and
\begin{align*}
	C_{\mathcal{F},\tilde \gamma_n} & = C\left\{\sqrt{n}L_G + L^{1/2}_C\widetilde K_n^{1/2}\tilde \gamma_n^{-1/q}\right\}\\
	\eta_{\mathcal{F},\tilde \gamma_n} & =  C\left\{\frac{(b\sigma)^{1/2} \widetilde K_n^{3/4}}{\tilde \gamma_n^{1/q}n^{1/4}}+\frac{b\widetilde K_n}{\tilde \gamma_n^{1/q}n^{1/2-1/q}} + \frac{ \sigma \widetilde K_n^{1/2}}{n^{1/2}}\right\},
\end{align*}
where $\widetilde K_n \equiv v\{\log n \vee \log(Ab/\sigma)\}$.
\end{lemma}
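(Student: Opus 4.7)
The plan is to verify A and B sub-condition by sub-condition. Conditions A(ii)-(iii) are immediate restatements of the VC-type hypothesis and moment bounds in MB(i). For A(i), separability follows by taking $\mathcal{G}$ indexed by a countable dense subset of $\Theta(\eeta)$ (which exists since $\Theta(\eeta)\subset\mathbb{R}^{d_\theta}$ is convex, hence separable), using $L^2(P)$-continuity of $\theta\mapsto m_j(\cdot,\theta)/\sigma_{\theta,j}$ from MB(ii) to upgrade pointwise approximation to an approximation of $B(\cdot)$ as well. For B(i), I set $\Lambda=\Theta(\eeta)$; by MB(i), $\|\theta\|_\infty\leq\sqrt{n}$ gives $\mathrm{diam}(\Lambda)\leq 2\sqrt{nd_\theta}\leq 2n$ when $d_\theta<n$, which is within the desired $\leq n$ after absorbing constants.

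The substantive work is B(ii). I would use $|\max_j a_j-\max_j b_j|\leq\max_j|a_j-b_j|$ to decompose
\begin{align*}
& \left|\sup_{f\in\mathcal{F}_\theta} B(f)+\G(f) - \sup_{\tilde f\in\mathcal{F}_{\tilde\theta}} B(\tilde f)+\G(\tilde f)\right| \\
& \leq \max_{j\in[p]}\sqrt{n}\left|\frac{\Ep[m_j(W,\theta)]}{\sigma_{\theta,j}}-\frac{\Ep[m_j(W,\tilde\theta)]}{\sigma_{\tilde\theta,j}}\right| + \max_{j\in[p]}\left|\G\bigl(\tfrac{m_j(\cdot,\theta)}{\sigma_{\theta,j}}-\tfrac{m_j(\cdot,\tilde\theta)}{\sigma_{\tilde\theta,j}}\bigr)\right|.
\end{align*}
The first term is controlled deterministically by $\sqrt{n}L_G\|\theta-\tilde\theta\|$ from the gradient bound in MB(i), which provides the $\sqrt{n}L_G$ contribution to $C_{\mathcal{F},\tilde\gamma_n}$. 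For the second term I plan to work with the difference class $\mathcal{F}_\epsilon=\{m_j(\cdot,\theta)/\sigma_{\theta,j}-m_j(\cdot,\tilde\theta)/\sigma_{\tilde\theta,j}:\|\theta-\tilde\theta\|\leq\epsilon, j\in[p]\}\subset\mathcal{F}-\mathcal{F}$, whose envelope is $2F$, whose uniform entropy is at most $2v\log(\bar A/\epsilon)$, and whose $L^2(P)$-radius is at most $L_C^{1/2}\epsilon^{\chi/2}$ by MB(ii). I then apply the local maximal inequalities of \citet{chernozhukov2015noncenteredprocesses} (Lemma 6.1 for expectations, Lemma 6.2 for tail bounds) to the empirical process $\Gn$, Gaussian concentration for $G_P$, and the multiplier analog for $\Gn^\xi$, each on $\mathcal{F}_\epsilon$, yielding with probability at least $1-C\{\tilde\gamma_n+n^{-1}\}$ a uniform bound of the form
$$CL_C^{1/2}\widetilde K_n^{1/2}\tilde\gamma_n^{-1/q}\epsilon^{\chi/2} + C\left\{\frac{b\widetilde K_n}{\tilde\gamma_n^{1/q}n^{1/2-1/q}}+\frac{(b\sigma)^{1/2}\widetilde K_n^{3/4}}{\tilde\gamma_n^{1/q}n^{1/4}}+\frac{\sigma\widetilde K_n^{1/2}}{n^{1/2}}\right\}.$$
Summing with the deterministic part and collecting $\epsilon^{\bar\chi}$ with $\bar\chi=1\wedge\chi/2$ (since both $\epsilon$ and $\epsilon^{\chi/2}$ can be dominated by $\epsilon^{\bar\chi}$ on the relevant range, with any excess factor absorbed into the constant) produces the stated $C_{\mathcal{F},\tilde\gamma_n}$ and $\eta_{\mathcal{F},\tilde\gamma_n}$.

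The hard part will be making the three process-specific maximal inequalities produce a single residual $\eta_{\mathcal{F},\tilde\gamma_n}$ of precisely the stated form; in particular, the term $(b\sigma)^{1/2}\widetilde K_n^{3/4}/n^{1/4}$ is the signature of a multiplier-bootstrap bound that mixes a variance contribution with a fourth-moment truncation, and one must track exponents carefully so that the empirical and multiplier bounds are not dominated by the Gaussian analog (or vice versa). A secondary subtlety is the behavior at very small $\epsilon$: the $L^2$-localization only helps down to $O(1/\sqrt{n})$, below which the baseline $\sigma\widetilde K_n^{1/2}/n^{1/2}$ from the expectation bound takes over; this is exactly what is absorbed into $\eta_{\mathcal{F},\tilde\gamma_n}$ rather than into the $\epsilon$-dependent piece.
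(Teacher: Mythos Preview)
Your proposal is correct and follows essentially the same approach as the paper: the same max-difference decomposition into the deterministic $B$-part (bounded by $\sqrt{n}L_G\epsilon$ via MB(i)) and the process increments over the difference class $\mathcal{F}_\epsilon\subset\mathcal{F}-\mathcal{F}$, with the three processes $\Gn$, $G_P$, $\Gn^\xi$ handled by, respectively, the CCK local maximal inequality, Borell--Sudakov--Tsirel'son plus Dudley, and the conditional Gaussian concentration argument on the event $E$ that controls $\sigma_n^2$. Your identification of the $(b\sigma)^{1/2}\widetilde K_n^{3/4}/(\tilde\gamma_n^{1/q}n^{1/4})$ term as arising from the multiplier bound (via $\sigma_n^2\leq L_C\epsilon^\chi + O(b\sigma\widetilde K_n^{1/2}/n^{1/2})$) and of $\sigma\widetilde K_n^{1/2}/n^{1/2}$ as the small-$\epsilon$ floor is exactly how the paper obtains $\eta_{\mathcal{F},\tilde\gamma_n}$.
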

\begin{proof}
Condition MB(i) implies Condition A for the class of functions  $\mathcal{F}_\theta = \{ m_j(\cdot,\theta)/\sigma_{\theta,j} : \theta \in \Theta(\eeta), j\in [p]\}$. Condition B(i) holds by definition of $\mathcal{F}_\theta$ and that $\Lambda = \Theta(\eeta)$ satisfies ${\rm diam}(\Lambda) \leq n$ as $\sup_{\theta \in \Theta(\eeta)}\|\theta\| \leq \sqrt{d_\theta}\sup_{\theta \in \Theta(\eeta)}\|\theta\|_\infty \leq \sqrt{d_\theta n} < n$. To complete the verification, note that for any $\theta,\tilde \theta \in \Theta(\eeta)$, $\|\theta-\tilde\theta\|\leq \epsilon$,
$$\begin{array}{l}
\left| \sup_{f\in \mathcal{F}_\theta} B(f)+ \G(f) - \sup_{\tilde f\in \mathcal{F}_{\tilde \theta}} B(\tilde f)+ \G(\tilde f)  \right|\\
\leq \left| \sup_{f\in \mathcal{F}_\theta, \tilde f\in \mathcal{F}_{\tilde \theta}} B(f)+ \G(f) - \{B(\tilde f)+ \G(\tilde f)\}  \right|\\
\leq \sup_{\|\theta-\tilde \theta\| \leq \epsilon, j\in[p]} |B(\{\theta,j\})-B(\{\tilde\theta,j\})|\\
+\sup_{\|\theta-\tilde \theta\| \leq \epsilon,j\in[p]}|\G(\{\theta,j\})-\G(\{\tilde \theta,j)\}|
\end{array}, $$ since $\mathcal{F}_\theta = \{m_j(\cdot,\theta)/\sigma_{\theta,j} : j\in [p]\}$ for all $\theta\in\Theta(\eeta)$ and we can associate the function $m_j(\cdot,\theta)/\sigma_{\theta,j}$ with the index $f=\{\theta,j\}$.

Condition MB implies that
\begin{equation}\label{Bpart0}
	\begin{array}{rl}
		|B(\{\theta,j\})-B(\{\tilde\theta,j\})| & = \sqrt{n}|\Ep[\sigma_{\theta,j}^{-1}m_j(W,\theta)-\Ep[\sigma_{\tilde\theta,j}^{-1}m_j(W,\tilde\theta)]| \\
 &\leq \sqrt{n}L_G\|\theta-\tilde\theta\| \leq \sqrt{n}L_G\epsilon.
\end{array}.
\end{equation}

Next, for $\G=G_P, \Gn,$ and $\Gn^\xi$, we now provide an upper bound for
\begin{equation}\label{toshowB}
\sup_{\|\theta-\tilde \theta\| \leq \epsilon,j\in[p]}|\G(\{\theta,j\})-\G(\{\tilde \theta,j)\}|.
\end{equation}


For $\G=\Gn$, the same calculation as in (\ref{bound:eqI}) yields
\begin{equation}\label{Bpart1}
\begin{array}{rl}
	\sup_{\|\theta-\tilde \theta\| \leq \epsilon, j\in[p]} |\Gn(m_j(W,\theta)/\sigma_{\theta,j}-m_j(W,\tilde\theta)/\sigma_{\tilde\theta,j})| \\
\leq \frac{C L_C^{1/2}\epsilon^{\chi/2} \widetilde K_n^{1/2}}{\gamma_n^{1/q}} + \frac{C_qb\widetilde K_n}{\gamma_n^{1/q}n^{1/2-1/q}}.
\end{array}
\end{equation}
Next we bound (\ref{toshowB}) for $\G=G_P$. Since $\Ep[\{\G(\{\theta,j\})-\G(\{\tilde\theta,j\}) \}^2]=\Ep[\{m_j(W,\theta)/\sigma_{\theta,j}-m_j(W,\tilde\theta)/\sigma_{\tilde\theta,j}\}^2],$ we have $\sup_{\|\theta-\tilde \theta\| \leq \epsilon, j\in[p]} \Ep[\{\G(\{\theta,j\})-\G(\{\tilde\theta,j\}) \}^2] \leq  L_C\epsilon^{\chi}$. Then we apply the Borell-Sudakov-Tsirel'son inequality combined with Dudley's maximal inequality, as in Step 2 in the proof of Theorem 2.1 of \citet{chernozhukov2015noncenteredprocesses}. Thus, with probability exceeding $1-2n^{-1}$,
\begin{equation}\label{Bpart2}
 \sup_{\|\theta-\tilde \theta\| \leq \epsilon, j\in[p]} |\G(\{\theta,j\})-\G(\{\tilde\theta,j\})| < CL_C^{1/2}\epsilon^{\chi/2}\widetilde K_n^{1/2}+L_C^{1/2}\epsilon^{\chi/2}\sqrt{2\log n}.\end{equation}
To bound (\ref{toshowB}) for $\G=\Gn^\xi$, we consider the event $E$ defined in (\ref{eq:defE}), and define the following:
\begin{align*}
	Z_\epsilon &\equiv \sup_{\|\theta-\tilde \theta\| \leq \epsilon, j\in[p]} |\Gn^\xi(m_j(W,\theta)/\sigma_{\theta,j})-\Gn^\xi(m_j(W,\tilde\theta)/\sigma_{\tilde\theta,j})|\\
	\sigma_n^2 &\equiv \sup_{\|\theta-\tilde\theta\|\leq \epsilon, j\in[p]} \En[\{\sigma_{\theta,j}^{-1}m_j(W,\theta)-\sigma_{\tilde\theta,j}^{-1}m_j(W,\tilde\theta)\}^2]\\
	\mathcal{F}_\epsilon &\equiv\{ m_j(W,\theta)/\sigma_{\theta,j}-m_j(W,\tilde\theta)/\sigma_{\tilde \theta,j} : \|\theta-\tilde \theta\| \leq \epsilon, j\in[p]\}.
\end{align*}
Step 0 in the proof of Theorem 2.2 in \citet{chernozhukov2015noncenteredprocesses} established that $\P(E) \geq 1-\gamma_n-n^{-1}$. Since $\Gn^\xi$ is a centered Gaussian process conditional on $(W_i)_{i=1}^{n}$, the Borell-Sudakov-Tsirel'son inequality implies that
\begin{align*}
	\left.P\left( Z_\epsilon > \Ep[Z_\epsilon| (W_i)_{i=1}^n] + \sigma_n\sqrt{2\log n}~\right|~ (W_i)_{i=1}^n \right) \leq 2n^{-1},
\end{align*}
where $|Z_\epsilon| \leq \sup_{f\in\mathcal{F}_\epsilon} \left|\frac{1}{\sqrt{n}}\sum_{i=1}^n\xi_if(W_i)\right|+ \sup_{f\in\mathcal{F}_\epsilon} \left|\frac{1}{\sqrt{n}}\sum_{i=1}^n\xi_i\En[f(W_i)]\right|$.
As in Step 2 in the proof of Theorem 2.2 in \citet{chernozhukov2015noncenteredprocesses},
$$ \Ep[Z_\epsilon\mid (W_i)_{i=1}^n] \leq C(\sigma_n \vee (\sigma n^{-1/2}))\widetilde  K_n^{1/2}.$$
Since $E$ implies
$$ \sigma_n^2 \leq \sup_{f\in\mathcal{F}_\epsilon}\Ep[f^2(W)] + n^{-1/2}\sup_{f\in\mathcal{F}_\epsilon}|\Gn(f^2)| \leq L_C\epsilon^\chi + \frac{b\sigma \widetilde K_n^{1/2}}{\gamma_n^{2/q}n^{1/2}}+\frac{b^2\widetilde K_n}{\gamma_n^{2/q}n^{1-2/q}},$$
it follows that with probability exceeding $1-\gamma_n-3n^{-1}$ that
\begin{equation}\label{Bpart3}
	\begin{array}{rl}
 |Z_\epsilon| & \leq \Ep[Z_\epsilon\mid (X_i)_{i=1}^n] + \sigma_n\sqrt{2\log n}   \\
 & \lesssim \sigma_n \widetilde K_n^{1/2} + n^{-1/2}\sigma \widetilde K_n^{1/2} + \sigma_n\sqrt{2\log n}\\
 & \lesssim L_C^{1/2}\epsilon^{\chi/2} \widetilde K_n^{1/2} + \frac{(b\sigma)^{1/2} \widetilde K_n^{3/4}}{\gamma_n^{1/q}n^{1/4}}+\frac{b\widetilde K_n}{\gamma_n^{1/q}n^{1/2-1/q}} + \frac{ \sigma \widetilde K_n^{1/2}}{n^{1/2}},
\end{array}
\end{equation}
where we used that $\log n \leq C\widetilde K_n$.

Combining (\ref{Bpart0}), (\ref{Bpart1}), (\ref{Bpart2}), and (\ref{Bpart3}), Condition B holds with the parameters specified in the statement.
\end{proof}


\begin{lemma}\label{lemma:Ineq00}
Assume that
$\ell_{min}< - 8\sup_{\theta \in \Theta(\eeta), j\in [p]} |v_{\theta,j}|\vee |\hat v_{\theta,j}^*|$ and
\begin{equation}\label{eq:Ineq00}
	\frac{7}{8}\geq \frac{1}{8}\left(1 + \kappa_n^{-1}\sup_{\theta\in\Theta(\eeta),j\in[p]}\frac{\sigma_{\theta,j}}{\hat\sigma_{\theta,j}} \right) \sup_{\theta \in  \Theta(\eeta), j\in[p]} \frac{\hat\sigma_{\theta,j}}{\sigma_{\theta,j}} + \kappa_n^{-1}.
\end{equation}
Then, $T_n(\eeta) \leq R_n^{PR*}$.
\end{lemma}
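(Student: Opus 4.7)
The plan is to prove $T_n(\eeta)\leq R_n^{PR\ast}$ by separately deriving a lower bound on $R_n^{PR\ast}$ in terms of $T_n(\eeta)$, an upper bound on $T_n(\eeta)$ in terms of $\ell_{min}$, and then combining them algebraically via the hypothesis (\ref{eq:Ineq00}). Introduce the shorthands $W=\sup_{\theta,j}(|v_{\theta,j}|\vee|\hat v_{\theta,j}^{\ast}|)$, $\alpha=\sup_{\theta,j}\sigma_{\theta,j}/\hat\sigma_{\theta,j}$, $\beta=\sup_{\theta,j}\hat\sigma_{\theta,j}/\sigma_{\theta,j}$, $y_{\theta,j}=\sqrt{n}\Ep[m_j(W,\theta)]/\sigma_{\theta,j}$, $r_{\theta,j}=\sigma_{\theta,j}/\hat\sigma_{\theta,j}\geq 1/\beta$, $S(\theta)=\max_j\sqrt{n}\bar m_{\theta,j}/\hat\sigma_{\theta,j}=\max_j r_{\theta,j}(v_{\theta,j}+y_{\theta,j})$, and $M(\theta)=\max_j y_{\theta,j}$; by construction $\inf_\theta M(\theta)=\ell_{min}$ and $T_n(\eeta)=\inf_\theta S(\theta)$.

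For the first bound, the elementary inequality $\max_j(a_j+b_j)\geq\max_jb_j-\max_j|a_j|$ applied with $a_j=\hat v_{\tilde\theta,j}^{\ast}$ and $b_j=\kappa_n^{-1}\sqrt{n}\bar m_{\tilde\theta,j}/\hat\sigma_{\tilde\theta,j}$ gives $\max_j\{\hat v_{\tilde\theta,j}^{\ast}+\kappa_n^{-1}\sqrt{n}\bar m_{\tilde\theta,j}/\hat\sigma_{\tilde\theta,j}\}\geq-W+\kappa_n^{-1}S(\tilde\theta)$, and taking the infimum over $\tilde\theta\in\Theta(\eeta)$ yields $R_n^{PR\ast}\geq-W+\kappa_n^{-1}T_n(\eeta)$. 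For the second, fix an $\varepsilon$-minimizer $\theta^{\ast}$ of $M$ with $M(\theta^{\ast})\leq\ell_{min}+\varepsilon$. The hypothesis $\ell_{min}<-8W$ forces $M(\theta^{\ast})<-W$ for small $\varepsilon$, so for every $j$, $v_{\theta^{\ast},j}+y_{\theta^{\ast},j}\leq W+M(\theta^{\ast})<0$. Since $r_{\theta^{\ast},j}\geq 1/\beta>0$ and the map $r\mapsto r\cdot x$ is decreasing in $r>0$ whenever $x<0$, one has $r_{\theta^{\ast},j}(v_{\theta^{\ast},j}+y_{\theta^{\ast},j})\leq(1/\beta)(v_{\theta^{\ast},j}+y_{\theta^{\ast},j})$ for each $j$, whence $T_n(\eeta)\leq S(\theta^{\ast})\leq (1/\beta)\max_j(v_{\theta^{\ast},j}+y_{\theta^{\ast},j})\leq (W+M(\theta^{\ast}))/\beta$. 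Letting $\varepsilon\to 0$ gives $T_n(\eeta)\leq(W+\ell_{min})/\beta$.

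To close, the lower bound on $R_n^{PR\ast}$ reduces the target to $T_n(\eeta)(1-\kappa_n^{-1})\leq-W$. The hypothesis (\ref{eq:Ineq00}) forces $\kappa_n^{-1}\leq 7/8<1$ (its first right-hand summand is nonnegative), so $1-\kappa_n^{-1}>0$; substituting the upper bound on $T_n(\eeta)$ and clearing the positive factor $\beta(1-\kappa_n^{-1})$ reduces the target in turn to $|\ell_{min}|(1-\kappa_n^{-1})\geq W(\beta+1-\kappa_n^{-1})$. Invoking $|\ell_{min}|>8W$, this is implied by the algebraic inequality $\beta+7\kappa_n^{-1}\leq 7$. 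Finally, multiplying (\ref{eq:Ineq00}) by $8$ yields $(1+\kappa_n^{-1}\alpha)\beta+8\kappa_n^{-1}\leq 7$, and bounding $(1+\kappa_n^{-1}\alpha)\beta\geq\beta$ gives $\beta+8\kappa_n^{-1}\leq 7$, which implies the required $\beta+7\kappa_n^{-1}\leq 7$. The main delicacy is the direction of monotonicity in the $T_n(\eeta)$ bound: for $x<0$ the map $r\mapsto rx$ is decreasing in $r>0$, so upper bounding $r_{\theta^{\ast},j}(v_{\theta^{\ast},j}+y_{\theta^{\ast},j})$ forces the use of the lower bound $1/\beta$ on $r_{\theta^{\ast},j}$, not its upper bound $\alpha$; mishandling this sign bookkeeping would introduce a spurious factor of $\alpha$ in the $T_n(\eeta)$ bound that would not be absorbable by (\ref{eq:Ineq00}).
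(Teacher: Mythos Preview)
Your proof is correct and takes a genuinely different route from the paper's. The paper bounds both $R_n^{PR\ast}$ from below and $T_n(\eeta)$ from above as multiples of the common centering quantity $L\equiv\inf_{\theta}\max_{j}\sqrt{n}\Ep[m_{j}(W,\theta)]/\hat\sigma_{\theta,j}$; since $L<0$, the comparison $R_n^{PR\ast}\geq T_n(\eeta)$ then reduces to an inequality between the two coefficients multiplying $L$, which is exactly (\ref{eq:Ineq00}). In that derivation the factor $\alpha=\sup_{\theta,j}\sigma_{\theta,j}/\hat\sigma_{\theta,j}$ appears naturally, because the lower bound on $R_n^{PR\ast}$ first passes from $\bar m_{\theta,j}/\hat\sigma_{\theta,j}$ to $\Ep[m_j(W,\theta)]/\hat\sigma_{\theta,j}$ and picks up a slop term $\kappa_n^{-1}\sup_{\theta,j}|v_{\theta,j}|\,\sigma_{\theta,j}/\hat\sigma_{\theta,j}$. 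Your argument instead relates $R_n^{PR\ast}$ \emph{directly} to $T_n(\eeta)$ via the one-line bound $R_n^{PR\ast}\geq -W+\kappa_n^{-1}T_n(\eeta)$, bypassing $L$ altogether; the remainder is essentially the paper's $T_n$ upper bound plus elementary algebra. The payoff is a shorter argument that in fact only needs the consequence $\beta+8\kappa_n^{-1}\leq 7$ of (\ref{eq:Ineq00}), discarding $\alpha$ entirely. The paper's route, on the other hand, makes (\ref{eq:Ineq00}) emerge as the exact comparison of two coefficients multiplying $L$, so the specific form of that hypothesis is more transparent there.
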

\begin{proof}
First, consider the following derivation.
{\small \begin{align*}
R_n^{PR*} & = \inf_{\theta\in \Theta(\eeta)}\max_{j\in [p]}\left\{ \hat v_{\theta,j}^* +\kappa_n^{-1}\sqrt{n}\bar m_{\theta,j} /\hat \sigma _{\theta,j} \right\}   \\
 & \geq  \inf_{\theta\in \Theta(\eeta)}\max_{j\in [p]}\left\{ \hat v_{\theta,j}^* +\kappa_n^{-1}\sqrt{n}\Ep\left[ m_{j}\left( W,\theta\right) \right] /\hat \sigma _{\theta,j} \right\} - \kappa^{-1}_n \sup_{\theta\in\Theta(\eeta),j\in[p]}|v_{\theta,j}|\frac{\sigma_{\theta,j}}{\hat\sigma_{\theta,j}}  \\
& \geq  - \sup_{\theta \in \Theta(\eeta), j\in [p]} |\hat v_{\theta,j}^*| + \inf_{\theta\in \Theta(\eeta)}\max_{j\in [p]}\kappa_n^{-1}\sqrt{n}\Ep\left[ m_{j}\left( W,\theta\right) \right]/\hat\sigma _{\theta,j}  \\
 & - \kappa^{-1}_n \sup_{\theta\in\Theta(\eeta),j\in[p]}|v_{\theta,j}|\frac{\sigma_{\theta,j}}{\hat\sigma_{\theta,j}}   \\
& \geq  \frac{1}{8}\left(1 + \kappa_n^{-1}\sup_{\theta\in\Theta(\eeta),j\in[p]}\frac{\sigma_{\theta,j}}{\hat\sigma_{\theta,j}} \right)\inf_{\theta \in  \Theta(\eeta)}\max_{j\in[p]} \sqrt{n}\Ep[m_j(W,\theta)]/\sigma_{\theta,j} \\
 & + \inf_{\theta\in \Theta(\eeta)}\max_{j\in [p]}\kappa_n^{-1}\frac{\sqrt{n}\Ep\left[ m_{j}\left( W,\theta\right) \right]}{\hat\sigma _{\theta,j}}    \\
& \geq   \left\{\frac{1}{8}\left(1 + \kappa_n^{-1}\sup_{\theta\in\Theta(\eeta),j\in[p]}\frac{\sigma_{\theta,j}}{\hat\sigma_{\theta,j}} \right) \inf_{\theta \in  \Theta(\eeta), j\in[p]} \frac{\hat\sigma_{\theta,j}}{\sigma_{\theta,j}} + \kappa_n^{-1} \right\} \cdot\\
& \cdot \inf_{\theta \in  \Theta(\eeta)}\max_{j\in[p]} \frac{\sqrt{n}\Ep[m_j(W,\theta)]}{\hat \sigma_{\theta,j}}.
\end{align*}}
Second, consider the following derivation.
\begin{align*}
T_n(\eeta) & = \inf_{\theta\in \Theta(\eeta)}\max_{j\in [p]}\left\{ v_{\theta,j} +\sqrt{n}\Ep\left[ m_{j}\left( W,\theta\right) \right] /\sigma _{\theta,j} \right\} \frac{\sigma _{\theta,j}}{\hat \sigma _{\theta,j}}   \\
& \leq \inf_{\theta\in \Theta(\eeta)} \sup_{\theta \in \Theta(\eeta), j\in [p]} \left\{ |v_{\theta,j}|  + \max_{j\in [p]}\frac{\sqrt{n}\Ep\left[ m_{j}\left( W,\theta\right) \right]}{\sigma _{\theta,j}}\right\} \frac{\sigma _{\theta,j}}{\hat \sigma _{\theta,j}}    \\
& \leq  \inf_{\theta\in \Theta(\eeta)} \frac{7}{8} \max_{j\in [p]}\frac{\sqrt{n}\Ep\left[ m_{j}\left( W,\theta\right) \right]}{\sigma _{\theta,j}} \frac{\sigma _{\theta,j}}{\hat \sigma _{\theta,j}}    \\
& =  \frac{7}{8} \inf_{\theta\in \Theta(\eeta)}  \max_{j\in [p]}\frac{\sqrt{n}\Ep\left[ m_{j}\left( W,\theta\right) \right]}{\hat\sigma _{\theta,j}}.
\end{align*}

The result then follows from combining the previous two derivations, (\ref{eq:Ineq00}), and the fact that $\inf_{\theta\in \Theta(\eeta)}  \max_{j\in [p]}{\sqrt{n}\Ep\left[ m_{j}\left( W,\theta\right) \right]}/{\hat\sigma _{\theta,j}} < 0$.
\end{proof}

\begin{lemma}
\label{thm:Ineq01}Assume Condition MB holds, $\Theta(\eeta) \cap \Theta _{I}\left(
F\right) \not=\emptyset $ and let
$$\begin{array}{rl}
 \mathcal{W}_n & \equiv \sup_{\theta\in\Theta(\eeta),j\in[p]}|v_{n,j}(\theta)|, \\
 \mathcal{T}_n^\sigma & \equiv \sup_{\theta\in\Theta(\eeta),j\in[p]}\left|\frac{\sigma_{\theta,j}}{\hat\sigma_{\theta,j}}-1\right|, \ \ \mbox{and} \ \ \\
 \mathcal{M}_n &\equiv  \sup_{\|\theta-\tilde\theta\|\leq \frac{\varepsilon+2\mathcal{W}_n}{\kappa_n^{-1}\sqrt{n}}, j\in[p]}|v_{\theta,j}-v_{\tilde\theta,j}|.\end{array}$$
Then for any $\varepsilon >0$,%
$$
R_{n}\leq S_{n}^\kappa + \varepsilon + t_n^\sigma(\varepsilon+3\mathcal{W}_n)+L_G\frac{\kappa_n}{\sqrt{n}}(1+\mathcal{T}_n^\sigma)\{(\varepsilon+2\mathcal{W}_n)/\vartheta_n\}^2+(1+t_n^\sigma)\mathcal{M}_n.
$$ where $S_n^\kappa \equiv  \inf_{\theta \in \Theta(\eeta) }\max_{j\in [p]}\left\{ v_{\theta,j} +\sqrt{n}\kappa_n^{-1}\Ep\left[ m_{j}\left( W,\theta \right) \right] /\sigma_{\theta,j}\right\}$.

\end{lemma}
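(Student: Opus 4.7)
I would pick an $\varepsilon$-minimizer $\tilde\theta \in \Theta(\eeta)$ of $S_n^\kappa$, move to a nearby point $\theta^{**}\in \Theta(\eeta)\cap\Theta_I$ supplied by the polynomial minorant, and upper bound $T_n(\eeta)$ by the MinMax objective evaluated at $\theta^{**}$. The non-emptiness of $\Theta(\eeta)\cap\Theta_I$ is used first to bound $S_n^\kappa$: at any $\theta_0$ in this set, $\Ep[m_j(W,\theta_0)]\leq 0$ yields $\max_j\{v_{\theta_0,j}+\kappa_n^{-1}\sqrt{n}\Ep[m_j(W,\theta_0)]/\sigma_{\theta_0,j}\}\leq \max_j v_{\theta_0,j}\leq \mathcal{W}_n$, hence $S_n^\kappa\leq \mathcal{W}_n$. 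Combined with the near-minimizer property of $\tilde\theta$ and $|v_{\tilde\theta,j}|\leq \mathcal{W}_n$, rearrangement produces the key a priori inequality
$$\max_j \sqrt{n}\Ep[m_j(W,\tilde\theta)]/\sigma_{\tilde\theta,j}\leq \kappa_n(\varepsilon+2\mathcal{W}_n).$$

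Second, I would apply Condition MB(iii). Provided the right-hand side above divided by $\sqrt{n}\vartheta_n$ does not exceed $\delta$ (which holds in the regime of interest), the polynomial minorant yields $\theta^{**}\in \Theta(\eeta)\cap \Theta_I$ with $\|\tilde\theta-\theta^{**}\|\leq \kappa_n(\varepsilon+2\mathcal{W}_n)/(\sqrt{n}\vartheta_n)$, so that $\theta^{**}$ lies within the radius used to define $\mathcal{M}_n$.

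Third, feasibility gives $T_n(\eeta)\leq \max_j \sqrt{n}\bar m_{\theta^{**},j}/\hat\sigma_{\theta^{**},j}$. I would decompose
$$\sqrt{n}\bar m_{\theta^{**},j}/\hat\sigma_{\theta^{**},j}=\bigl(v_{\theta^{**},j}+\sqrt{n}\Ep[m_j(W,\theta^{**})]/\sigma_{\theta^{**},j}\bigr)\,\sigma_{\theta^{**},j}/\hat\sigma_{\theta^{**},j}$$
and handle the pieces separately. The ratio $\sigma_{\theta^{**},j}/\hat\sigma_{\theta^{**},j}=1+O(\mathcal{T}_n^\sigma)$ produces the $t_n^\sigma(\varepsilon+3\mathcal{W}_n)$ term after multiplying against the bounded components. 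Since $\theta^{**}\in\Theta_I$, the centering $\sqrt{n}\Ep[m_j(W,\theta^{**})]/\sigma_{\theta^{**},j}\leq 0$ is dropped, leaving $\max_j v_{\theta^{**},j}$, which I would transfer to $\max_j v_{\tilde\theta,j}$ using $\mathcal{M}_n$. Then writing $v_{\tilde\theta,j}=[v_{\tilde\theta,j}+\kappa_n^{-1}\sqrt{n}\Ep[m_j(W,\tilde\theta)]/\sigma_{\tilde\theta,j}]-\kappa_n^{-1}\sqrt{n}\Ep[m_j(W,\tilde\theta)]/\sigma_{\tilde\theta,j}$ lets the first bracket inherit the bound $S_n^\kappa+\varepsilon$ from the $\varepsilon$-minimizer property, while the second bracket is controlled by Lipschitz ($L_G$ from Condition MB(i)) together with the distance bound from step two.

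The main obstacle is the precise combination that yields the quadratic $\{(\varepsilon+2\mathcal{W}_n)/\vartheta_n\}^2$ factor with the $L_G\kappa_n/\sqrt{n}$ prefactor: it requires pairing the Lipschitz control $\sqrt{n}L_G\|\tilde\theta-\theta^{**}\|$ with the a priori bound on $\max_j\sqrt{n}\Ep[m_j(W,\tilde\theta)]/\sigma_{\tilde\theta,j}$ so that one factor of $(\varepsilon+2\mathcal{W}_n)/\vartheta_n$ comes from the distance bound $\|\tilde\theta-\theta^{**}\|\leq \kappa_n(\varepsilon+2\mathcal{W}_n)/(\sqrt{n}\vartheta_n)$ and the other from revisiting the polynomial minorant together with the Lipschitz gradient bound at the max-attaining coordinate; the remaining $(1+\mathcal{T}_n^\sigma)$ factor is the by-product of the studentization correction applied to this centering contribution.
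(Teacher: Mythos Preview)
There is a genuine gap in your third step. After dropping the nonpositive centering at $\theta^{**}\in\Theta_I$ you are left with $\max_j v_{\theta^{**},j}$, which you transfer to $\max_j v_{\tilde\theta,j}$ via $\mathcal{M}_n$. You then try to recover $S_n^\kappa$ by writing
\[
v_{\tilde\theta,j}=\bigl[v_{\tilde\theta,j}+l_{n,j}\bigr]-l_{n,j},\qquad l_{n,j}\equiv \kappa_n^{-1}\sqrt{n}\,\Ep[m_j(W,\tilde\theta)]/\sigma_{\tilde\theta,j}.
\]
But taking the maximum over $j$ gives $\max_j v_{\tilde\theta,j}\leq (S_n^\kappa+\varepsilon)-\min_j l_{n,j}$, and $\min_j l_{n,j}$ is \emph{not} controlled: inequalities that are very slack at $\tilde\theta$ can make $l_{n,j}$ arbitrarily negative. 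The Lipschitz bound only compares $\Ep[m_j(W,\tilde\theta)]/\sigma$ to $\Ep[m_j(W,\theta^{**})]/\sigma$, and the latter is merely $\leq 0$, so it cannot rescue $-l_{n,j}$ either. In short, once you throw away the centering at $\theta^{**}$ you cannot reattach the correct centering coordinate by coordinate, and the route back to $S_n^\kappa$ is blocked.

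The paper avoids this by \emph{not} jumping to $\Theta_I$. Its auxiliary Lemma~\ref{lem:OtherThetaConstruction} constructs $\breve\theta_n=(1-\kappa_n^{-1})\tilde\theta_n+\kappa_n^{-1}\theta_n$ with $\tilde\theta_n\in\Theta(\eeta)\cap\Theta_I$ (found via the minorant) and $\theta_n$ the $\varepsilon$-minimizer. Two applications of the mean-value theorem show that at this convex combination,
\[
\sqrt{n}\,\Ep[m_j(W,\breve\theta_n)]/\sigma_{\breve\theta_n,j}\ \leq\ l_{n,j}\;+\;L_G\,\tfrac{\kappa_n}{\sqrt{n}}\{\max_j l_{n,j}/\vartheta_n\}^2\qquad\text{for every }j,
\]
the quadratic error coming from the difference of gradients at the two intermediate points. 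Because this holds for all $j$ simultaneously, one can replace $l_{n,j}$ by the full centering $\sqrt{n}\Ep[m_j(W,\breve\theta_n)]/\sigma_{\breve\theta_n,j}$ \emph{inside} the max, obtaining (after the $v_{\theta_n}\to v_{\breve\theta_n}$ transfer via $\mathcal{M}_n$ and the studentization correction $a_{1n}$) the full MinMax objective at $\breve\theta_n$, which dominates $R_n$. The coordinate-wise matching of the centering---not the membership in $\Theta_I$---is what makes the argument close; your move to $\theta^{**}\in\Theta_I$ discards precisely this information.
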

\begin{proof}[Proof of Lemma \ref{thm:Ineq01}]
By definition of $R_n$ we have
\begin{eqnarray*}
R_{n} &\equiv &\inf_{\theta \in \Theta(\eeta)}\max_{j\in[p]}\sqrt{n%
}\bar{m}_{\theta,j} /\hat{\sigma}_{\theta,j}  \\
&=&\inf_{\theta \in \Theta(\eeta) }\max_{j\in[p]}\left\{
v_{\theta,j} \sigma _{\theta,j} /\hat{\sigma}_{\theta,j} +\sqrt{n}\Ep[ m_j( W,\theta)]/\hat{\sigma}_{\theta,j} \right\}  \\
&=&\inf_{\theta \in \Theta(\eeta) }\max_{j\in[p]} \left\{v_{\theta,j} +\sqrt{n}\Ep[ m_{j}( W,\theta)] /\sigma _{\theta,j} \right\}\sigma_{\theta,j}/\hat\sigma_{\theta,j}
\end{eqnarray*}%
and $ S_n^\kappa \equiv \inf_{\theta \in \Theta(\eeta) }\max_{j\in[p]}\left\{
v_{\theta,j}+\kappa_n^{-1}\sqrt{n}\Ep[m_j( W,\theta)] /\sigma_{\theta,j} \right\}$. Since $\Theta(\eeta)\cap\Theta_{I}\not=\emptyset $, we have
%
%

\begin{eqnarray}\label{eq:52}
S_{n}^\kappa &\leq &\max_{j\in [p]}\left\{ v_{\tilde{\theta}_{n},j} +%
\kappa_n^{-1}\sqrt{n}\Ep [ m_{j}(W,\tilde{\theta}_{n}) ] /\sigma
_{\tilde{\theta}_{n},j} \right\} \notag \\
&\leq &\max_{j\in [p]}  v_{\tilde{\theta}_{n},j}
  \notag \\
&\leq &\sup_{\theta \in \Theta(\eeta)}\max_{j\in[p]}| v_{\theta ,j}|.  \label{eq:UpperB2}
\end{eqnarray}

By definition of infimum, $\exists \theta_{n}\in \Theta(\eeta)$
s.t.
\begin{eqnarray}
S_{n}^\kappa & \geq & \max_{j\in [p]}\left\{ v_{\theta_{n},j} +\kappa_n^{-1}\sqrt{n}\Ep[ m_{j}( W,\theta_{n})] /\sigma_{\theta_n,j} \right\} -\varepsilon \notag  \\
& \geq & -\sup_{\theta\in\Theta(\eeta)}\max_{j\in[p]}|v_{\theta,j}|+\max_{j\in[p]}\kappa_n^{-1}\sqrt{n}\Ep[m_j(W,\theta_n)]/\sigma_{\theta_n,j} - \varepsilon \notag\\
& \geq & -\sup_{\theta\in\Theta(\eeta)}\max_{j\in[p]}|v_{\theta,j}|+\kappa_n^{-1}\sqrt{n}\Ep[m_j(W,\theta_n)]/\sigma_{\theta_n,j} - \varepsilon \label{eq:53}
\end{eqnarray}%
Thus by (\ref{eq:52}) and (\ref{eq:53}) we have
\begin{equation}\label{eq:up01}
l_{n,j} := \frac{\kappa_n^{-1}\sqrt{n}\Ep%
\left[ m_{j}( W,\theta_n) \right]}{\sigma _{\theta_n,j}}  \leq \varepsilon  + 2 \sup_{\theta \in \Theta(\eeta)}\max_{j\in[p]}|v_{\theta,j}|, \ \ j\in [p].\end{equation}

First assume that $\max_{j\in [p]} l_{n,j} \geq 0$. By Lemma  \ref{lem:OtherThetaConstruction}, there exist $\breve{\theta}_n \in \Theta(\eeta)$ such that
\[ \|\theta _{n}-\breve{\theta}_{n}\|
\leq \frac{{\displaystyle \max_{j\in[p]}}  \frac{l_{n,j}}{\vartheta_n}}{\kappa _{n}^{-1}n^{1/2}} \ \ \ \mbox{and} \ \ \ \frac{\sqrt{n}\Ep[ m_{j}( W,\breve{\theta}_{n})]}{\sigma
_{\breve{\theta}_{n},j}} \leq l_{n,j}+ L_G  \frac{\kappa _{n}}{n^{1/2}} \{\max_{j\in [p]}l_{n,j}/\vartheta_n\}^2 .
\]
Therefore we have
\begin{eqnarray}
S_{n}^\kappa & \geq & \max_{j\in[p]}\left\{ v_{\theta _{n},j} +\kappa_n^{-1}\sqrt{n}\Ep[ m_{j}( W,\theta_{n})] /\sigma_{\theta_n,j} \right\} -\varepsilon \notag  \\
& = & \max_{j\in [p]}\left\{ v_{\theta_{n},j} +\kappa_n^{-1}\sqrt{n}\Ep
[ m_{j}( W,\theta _{n})] /\sigma_{\theta_n,j} \right\}\frac{\sigma_{\breve\theta_n,j}}{\hat\sigma _{\breve\theta_n,j}} -\varepsilon + a_{1n} \notag  \\
& \geq & \max_{j\in[p]}\left\{ v_{\theta _{n},j} +\sqrt{n}\Ep[ m_{j}( W,\breve\theta _{n}) ] /\sigma _{\breve\theta_n,j} \right\}\frac{\sigma _{\breve\theta_n,j}}{\hat\sigma _{\breve\theta_n,j}} \notag  \\
& & -\varepsilon + a_{1n} - L_G\frac{\kappa_n}{\sqrt{n}} \{\max_{j\in [p]} l_{nj}/\vartheta_n\}^2 \frac{\sigma _{\breve\theta_n,j}}{\hat\sigma _{\breve\theta_n,j}} \notag  \\
& \geq & \max_{j\in [p]}\left\{ v_{\breve\theta_{n},j} +\sqrt{n}\Ep[ m_{j}(W,\breve\theta_{n})]/\sigma_{\breve\theta_n,j} \right\}\frac{\sigma_{\breve\theta_n,j}}{\hat\sigma _{\breve\theta_n,j}} \notag  \\
 & & -\varepsilon + a_{1n} - L_G\frac{\kappa_n}{\sqrt{n}} \{\max_{j\in [p]} l_{nj}/\vartheta_n\}^2 \frac{\sigma _{\breve\theta_n,j}}{\hat\sigma _{\breve\theta_n,j}} +\frac{\sigma _{\breve\theta_n,j}}{\hat\sigma _{\breve\theta_n,j}} \{ \max_{j\in[p]} v_{\theta _{n},j}- \max_{j\in [p]} v_{\breve\theta _{n},j}\}\notag  \\
& \geq & R_n -\varepsilon + a_{1n} - L_G\frac{\kappa_n}{\sqrt{n}} \{\max_{j\in [p]} l_{nj}/\vartheta_n\}^2 \frac{\sigma _{\breve\theta_n,j}}{\hat\sigma _{\breve\theta_n,j}} - \frac{\sigma _{\breve\theta_n,j}}{\hat\sigma _{\breve\theta_n,j}} \max_{j\in [p]} |v_{\theta _{n},j} - v_{\breve\theta _{n},j}|\notag  \\
 \notag
\end{eqnarray}%
where\footnote{Indeed by definition
$$ \begin{array}{rl}a_{1n}
& = \max_{j\in[p]}\left\{ v_{\theta_{n},j} +\kappa_n^{-1}\sqrt{n}\Ep[ m_{j}( W,\theta_{n})] /\sigma_{\theta_n,j} \right\}\\
 & - \max_{j\in[p]}\left\{ v_{\theta_{n},j} +\kappa_n^{-1}\sqrt{n}\Ep[m_{j}( W,\theta_{n})] /\sigma_{\theta_n,j} \right\}\frac{\sigma_{\breve\theta_n,j}}{\hat\sigma _{\breve\theta_n,j}}  \end{array}$$} $|a_{1n}| \leq \max_{j\in[p]}\{\left| v_{\theta _{n},j} +\kappa_n^{-1}\sqrt{n}\Ep[ m_{j}(W,\theta_{n})] /\sigma _{\theta_n,j} \right| \ |1-\frac{\sigma_{\breve\theta_n,j}}{\hat\sigma_{\breve\theta_n,j}}|\}$.

Because $\max_{j\in [p]} l_{n,j} \geq 0$ we have that
$$\begin{array}{rl}
|a_{1n}| &\displaystyle \leq \left\{\sup_{\theta \in \Theta(\eeta)} \max_{j\in[p]}|v_{\theta,j}| +  \max_{j\in[p]} l_{n,j}\right\}\sup_{\theta\in\Theta(\eeta),j\in[p]}\left|1-\frac{\sigma_{\theta,j}}{\hat\sigma_{\theta,j}}\right|\\
& \leq \left\{\varepsilon + 3\sup_{\theta \in \Theta(\eeta)} \max_{j\in[p]}|v_{\theta,j}| \right\}\sup_{\theta\in\Theta(\eeta),j\in[p]}\left|1-\frac{\sigma_{\theta,j}}{\hat\sigma_{\theta,j}}\right| \\
\end{array}
$$
where the second step follows from (\ref{eq:up01}).

In the other case, $\max_{j\in[p]}l_{n,j} < 0$, we have that
\begin{eqnarray}
S_{n}^\kappa & \geq & \max_{j\in [p]}\left\{ v_{\theta _{n},j} +\kappa_n^{-1}\sqrt{n}\Ep[ m_{j}( W,\theta_{n})] /\sigma_{\theta_n,j} \right\} -\varepsilon \notag  \\
& \geq & \max_{j\in[p]}\left\{ v_{\theta _{n},j} + \sqrt{n}\Ep[m_{j}(W,\theta_{n}) ] /\sigma _{\theta_n,j} \right\}\frac{\sigma_{\theta_n,j}}{\hat\sigma_{\theta_n,j}} -\varepsilon \notag  \\
& & -\left|1-\frac{\sigma _{\theta_n,j}}{\hat\sigma _{\theta_n,j}}\right| \max_{j\in[p]}|v_{\theta_n,j}| \notag\\
& \geq & R_n - \varepsilon    -\left|1-\frac{\sigma_{\theta_n,j}}{\hat\sigma _{\theta_n,j}}\right| \max_{j\in[p]}|v_{\theta_n,j}|\notag
\end{eqnarray}
where the second inequality holds provided that $\frac{\sigma _{\theta_n,j}}{\hat\sigma _{\theta_n,j}} \geq \kappa_n^{-1}$ because we have $\max_{j\in[p]}l_{n,j} < 0$.

\end{proof}

\begin{lemma}
\label{lem:OtherThetaConstruction} Assume Condition MB, $\Theta (\eeta) \cap
\Theta_I \not=\emptyset $ and  $\kappa_n\geq 1$. Consider an arbitrary (possibly
random) $\theta _{n}\in \Theta \left(\eeta\right) $ and define the vector $$l_{n}\equiv
\left\{ \kappa _{n}^{-1}\sqrt{n}\Ep\left[ m_{j}\left( W,\theta _{n}\right) %
\right] /\sigma _{\theta _{n},j} :j\in [p]\right\} \in \mathbb{R}%
^{p}.$$ Then, there exists a (possibly random) $\breve{\theta}_{n}\in \Theta
(\eeta) $ s.t.%
{\small \[ \left\Vert \theta _{n}-\breve{\theta}_{n}\right\Vert
\leq \frac{0\vee \max_{j\in [p]}  l_{n,j}/\vartheta_n}{\kappa _{n}^{-1}\sqrt{n}} \ \ \ \mbox{and} \ \ \ \frac{\sqrt{n}\Ep[ m_{j}( W,\breve{\theta}_{n})]}{\sigma
_{\breve{\theta}_{n},j}} \leq l_{n,j}+ \frac{L_G\kappa _{n}}{\sqrt{n}} \left\{\max_{j\leq
p}\frac{l_{n,j}}{\vartheta_n}\right\}^2
\]}
for all $j\in[p]$.

\end{lemma}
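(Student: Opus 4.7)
The plan is to split into two cases based on the sign of $\bar l_n := \max_{j\in[p]} l_{n,j}$. If $\bar l_n\leq 0$, I would take $\breve{\theta}_n = \theta_n$: the distance bound trivially reduces to $0$, and the moment bound reduces coordinatewise to $\sqrt{n}\Ep[m_j(W,\theta_n)]/\sigma_{\theta_n,j} = \kappa_n l_{n,j}\leq l_{n,j}$, which holds because $(\kappa_n-1)l_{n,j}\leq 0$ for every $j$ (since $\kappa_n\geq 1$ and $l_{n,j}\leq \bar l_n\leq 0$), while the stated correction term is nonnegative and only helps.

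For the substantive case $\bar l_n>0$, Condition MB(iii) together with $\Theta(\eeta)\cap\Theta_I\neq\emptyset$ produces a nearby element $\tilde\theta_n\in\Theta(\eeta)\cap\Theta_I$. Concretely, MB(iii) gives
\[
\vartheta_n\min\{\delta,d_n\}\;\leq\;\max_{j\in[p]}\Ep[m_j(W,\theta_n)]/\sigma_{\theta_n,j}\;=\;\kappa_n\bar l_n/\sqrt{n},
\]
where $d_n := \inf_{\theta\in\Theta(\eeta)\cap\Theta_I}\|\theta_n-\theta\|$. Taking $\tilde\theta_n$ to nearly achieve $d_n$ yields $\|\theta_n-\tilde\theta_n\|\leq \kappa_n\bar l_n/(\sqrt{n}\vartheta_n)$; the boundary regime $d_n>\delta$ would be treated separately, exploiting that $\bar l_n\geq \sqrt{n}\vartheta_n\delta/\kappa_n$ is then large enough for the quadratic correction to absorb any leftover slack.

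Using the convexity of $\Theta(\eeta)$ from Condition MB, I would then set $\breve\theta_n = (1-\tau_*)\theta_n + \tau_*\tilde\theta_n\in\Theta(\eeta)$ with $\tau_*\in(0,1)$ chosen so that the interpolation exactly rescales the dominant moment by the target factor $1/\kappa_n$; the natural choice is $\tau_*=1-1/\kappa_n$. The distance bound then follows directly from $\|\breve\theta_n-\theta_n\|=\tau_*\|\tilde\theta_n-\theta_n\|\leq (1-1/\kappa_n)\kappa_n\bar l_n/(\sqrt{n}\vartheta_n)\leq \kappa_n\bar l_n/(\sqrt{n}\vartheta_n)$. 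For the pointwise moment bound I would control $g_j(\theta):=\Ep[m_j(W,\theta)]/\sigma_{\theta,j}$ along the interpolating segment via the chord-error estimate supplied by the Lipschitz hypothesis MB(i),
\[
g_j(\breve\theta_n)\;\leq\;(1-\tau_*)g_j(\theta_n)+\tau_* g_j(\tilde\theta_n)+2L_G\tau_*(1-\tau_*)\|\tilde\theta_n-\theta_n\|,
\]
and combine $(1-\tau_*)g_j(\theta_n)=l_{n,j}/\sqrt{n}$, $g_j(\tilde\theta_n)\leq 0$ (since $\tilde\theta_n\in\Theta_I$), and the explicit distance bound. Multiplying by $\sqrt{n}$ and collecting the $\kappa_n$, $\bar l_n$ and $\vartheta_n$ factors recovers the claimed correction of order $L_G\kappa_n\{\bar l_n/\vartheta_n\}^2/\sqrt{n}$.

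The main obstacle I anticipate is the \emph{simultaneous} control of the moment bound across all $j\in[p]$. For coordinates $j$ with $l_{n,j}$ close to $\bar l_n$, the interpolation must move $\breve\theta_n$ far enough toward $\tilde\theta_n$ to accomplish the $\kappa_n$-rescaling; but for coordinates with $l_{n,j}$ strongly negative, the required bound $l_{n,j}+\text{correction}$ is sensitive because any motion away from $\theta_n$ can inflate $g_j$ by up to $L_G\|\breve\theta_n-\theta_n\|$. The interpolation weight $\tau_*=1-1/\kappa_n$ is precisely what reconciles these competing pressures, and the quadratic dependence on $\bar l_n/\vartheta_n$ in the correction is the output of the Lipschitz chord-error estimate combined with the $\kappa_n\bar l_n/(\sqrt{n}\vartheta_n)$ displacement budget produced by MB(iii).
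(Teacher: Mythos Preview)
Your construction --- split on the sign of $\bar l_n$, locate $\tilde\theta_n\in\Theta(\bar h)\cap\Theta_I$ via MB(iii), set $\breve\theta_n=(1-\kappa_n^{-1})\tilde\theta_n+\kappa_n^{-1}\theta_n$ --- is exactly the paper's, and your distance bound is fine. The gap is in the moment bound. Your chord-error estimate
\[
g_j(\breve\theta_n)\leq (1-\tau_*)g_j(\theta_n)+\tau_* g_j(\tilde\theta_n)+2L_G\tau_*(1-\tau_*)\|\tilde\theta_n-\theta_n\|
\]
uses only the Lipschitz property of $g_j$ and carries $\|\tilde\theta_n-\theta_n\|$ to the \emph{first} power. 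Working it out with $\tau_*=1-\kappa_n^{-1}$ and $\|\tilde\theta_n-\theta_n\|\leq\kappa_n\bar l_n/(\sqrt n\,\vartheta_n)$ gives, after multiplying by $\sqrt n$, an error of order $L_G\bar l_n/\vartheta_n$. This is \emph{linear} in $\bar l_n$, not the quadratic $L_G\kappa_n(\bar l_n/\vartheta_n)^2/\sqrt n$ required by the lemma; your sentence ``collecting the factors recovers the claimed correction'' is incorrect bookkeeping. In the regime of interest ($\kappa_n\bar l_n/(\sqrt n\,\vartheta_n)\to 0$) your bound is strictly larger, so the lemma as stated does not follow.

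What is missing is a second-order cancellation. The paper gets it by writing two mean-value expansions around $\tilde\theta_n$: one for $g_j(\theta_n)-g_j(\tilde\theta_n)=G_j(\theta_n^{*,j})(\theta_n-\tilde\theta_n)$ and one for $g_j(\breve\theta_n)-g_j(\tilde\theta_n)=G_j(\theta_n^{**,j})(\breve\theta_n-\tilde\theta_n)$. Because $\breve\theta_n-\tilde\theta_n=\kappa_n^{-1}(\theta_n-\tilde\theta_n)$, the first-order pieces match exactly up to the factor $\kappa_n^{-1}$, yielding
\[
\sqrt n\,g_j(\breve\theta_n)=l_{n,j}+(1-\kappa_n^{-1})\sqrt n\,g_j(\tilde\theta_n)+\{G_j(\theta_n^{**,j})-G_j(\theta_n^{*,j})\}\sqrt n(\breve\theta_n-\tilde\theta_n).
\]
The middle term is $\leq 0$, and the remainder is a \emph{difference of gradients at two nearby mean-value points} times $\sqrt n\|\breve\theta_n-\tilde\theta_n\|$; bounding that difference by $L_G\|\theta_n^{**,j}-\theta_n^{*,j}\|\leq L_G\|\theta_n-\tilde\theta_n\|$ is what produces $\|\theta_n-\tilde\theta_n\|^2$ and hence the quadratic $L_G\kappa_n(\bar l_n/\vartheta_n)^2/\sqrt n$. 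Note this step uses Lipschitz continuity of the \emph{gradient} $G_j$, not just of $g_j$; a first-order chord bound cannot see this cancellation.
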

\begin{proof}
First note that if $\theta_n \in \Theta (\eeta) \cap
\Theta_I$, we can trivially take $\breve \theta_n=\theta_n$. Indeed the first relation is trivial and the second holds because $l_{n,j}\leq 0$, $\kappa_n \geq 1$ and $\sqrt{n}\Ep[ m_j( W,\breve{\theta}_{n})]/\sigma_{\breve{\theta}_{n},j} = \kappa_n l_{n,j}$.

Next we consider the case that $\theta_n \not\in \Theta ( h) \cap
\Theta_I$. Condition MB implies that
\[
\max_{j\in [p]}  \Ep[
m_{j}( W,\theta _{n})] /\sigma _{\theta
_{n},j}   \geq \vartheta_n \min \{ \delta ,\inf_{\tilde{\theta}\in
\Theta(h) \cap \Theta_I }\| \theta
_{n}-\tilde{\theta}\| \},
\]%
and therefore:%
\[\begin{array}{rl}
{\displaystyle \max_{j\in[p]}} \frac{\kappa_{n}^{-1}\sqrt{n}\Ep[ m_{j}(W,\theta_{n})] }{\sigma_{\theta_n, j}} & =\max_{j\in[p]}  l_{n,j}  \\
&  \geq \kappa_{n}^{-1}\sqrt{n} \vartheta_n \min \left\{ \delta
,\inf_{\tilde{\theta}\in \Theta (\eeta) \cap \Theta_I }\| \theta_{n}-\tilde{\theta}\| \right\} .
\end{array}\]%
This implies that we can find $\tilde{\theta}_{n}\in \Theta(h)
\cap \Theta_I $ s.t.
\begin{equation}\label{eq:rate:interm}
\| \theta _{n}-\tilde{\theta}_{n}\| \kappa _{n}^{-1}\sqrt{n}\vartheta_n %
\leq \max_{j\in [p]}  l_{n,j}  .
\end{equation}

By the intermediate value theorem, for each $j\in[p]$, we can then find $\theta _{n}^{\ast,j }\in
\Theta ( h) $ between $\theta _{n}$ and $\tilde{\theta}_{n}$,
i.e., $\theta _{n}^{\ast,j}=\alpha _{nj}\theta _{n}+( 1-\alpha
_{nj}) \tilde{\theta}_{n}$ for some $\alpha_{nj}\in [0,1] $%
, s.t.%
{\small \begin{equation}\label{eq:test}
\frac{\sqrt{n}\Ep[ m_{j}( W,\theta _{n})]
}{\kappa _{n}\sigma_{\theta_{n},j}} =\frac{\sqrt{n}\Ep[
m_{j}( W,\tilde{\theta}_{n})]}{\kappa _{n}\sigma _{\tilde{%
\theta}_{n},j}} +\kappa _{n}^{-1}\sqrt{n}G_j( \theta _{n}^{\ast,j
}) ( \theta _{n}-\tilde{\theta}_{n}) .
\end{equation}}%
Define $\breve{\theta}_{n}=( 1-\kappa _{n}^{-1}) \tilde{\theta}%
_{n}+\kappa _{n}^{-1}\theta _{n}$ or, equivalently, $( \breve{\theta}%
_{n}-\tilde{\theta}_{n}) =\kappa _{n}^{-1}( \theta _{n}-\tilde{%
\theta}_{n}) $, which can be done simultaneously for all $j\in [p]$. Therefore, we can rewrite (\ref{eq:test}) as
\begin{equation}
G_j( \theta_{n}^{\ast,j }) \sqrt{n}( \breve{\theta}_{n}-\tilde{%
\theta}_{n}) =\frac{\sqrt{n}\Ep[ m_{j}( W,\theta
_{n}) ]}{\kappa _{n}\sigma _{\theta _{n},j}} -
\frac{\sqrt{n}\Ep[ m_{j}( W,\tilde{\theta}_{n})]}{\kappa _{n}\sigma_{\tilde{\theta}_{n},j}} .  \label{eq:IVT1}
\end{equation}

By convexity, $\breve{\theta}_{n}\in \Theta (\eeta) $. Moreover, by definition of $\breve\theta_n$ and (\ref{eq:rate:interm}) we have
\[
\| \breve{\theta}_{n}-\tilde{\theta}_{n}\| \sqrt{n}\vartheta_n\leq
\max_{j\in [p]} l_{n,j} .
\]

By another application of the intermediate value theorem, we can find $%
\theta _{n}^{\ast \ast,j }=\beta _{nj}\breve{\theta}_{n}+\left(
1-\beta _{nj}\right) \tilde{\theta}_{n}$ for some $\beta _{nj}\in \left[ 0,1%
\right] $, s.t.%
{\small \begin{eqnarray}
\sqrt{n}\Ep[ m_{j}( W,\breve{\theta}_{n})] /\sigma_{\breve{\theta}_{n},j}  &=&\sqrt{n}\Ep[m_{j}( W,\tilde{\theta}_{n})] /\sigma _{\tilde{\theta}_{n},j} +G_j( \theta _{n}^{\ast \ast,j }) \sqrt{n}( \breve{%
\theta}_{n}-\tilde{\theta}_{n})   \notag \\
&=&\sqrt{n}\Ep[ m_{j}( W,\tilde{\theta}_{n})] /\sigma
_{\tilde{\theta}_{n},j} +G_j( \theta _{n}^{\ast,j })
\sqrt{n} ( \breve{\theta}_{n}-\tilde{\theta}_{n}) +\epsilon_{1,j,n},\label{eq:IVT2}
\end{eqnarray}}%
where%
$$
\epsilon _{1,j,n}\equiv \{ G_j\left( \theta _{n}^{\ast \ast,j }\right)
-G_j( \theta_{n}^{\ast,j }) \} \sqrt{n}( \breve{\theta}_{n}-\tilde{\theta}_{n}) .
$$
Combining \eqref{eq:IVT1} and \eqref{eq:IVT2}, we get:%
\begin{eqnarray*}
\sqrt{n}\Ep[ m_{j}( W,\breve{\theta}_{n})] /\sigma
_{\breve{\theta}_{n},j}  &=&\kappa _{n}^{-1}\sqrt{n}\Ep[
m_{j}(W,\theta_{n})] /\sigma_{\theta_{n},j} \\
&& +( 1-\kappa _{n}^{-1}) \sqrt{n}\Ep[ m_{j}( W,\tilde{\theta}_{n})]/\sigma_{\tilde{\theta}_{n},j} +\epsilon_{1,j,n} \\
&=&l_{n,j}+\epsilon _{2,n}+\epsilon_{1,j,n},
\end{eqnarray*}%
where%
\[
\epsilon _{2,j,n}\equiv ( 1-\kappa _{n}^{-1}) \sqrt{n}\Ep[
m_{j}( W,\tilde{\theta}_{n}) ] /\sigma_{\tilde{\theta}_{n},j} .
\]

From $\tilde{\theta}_{n}\in \Theta_I$ and $\kappa_{n} \geq 1$, we conclude that $\epsilon_{2,j,n}\leq 0$ for all $j\in[p]$. Moreover, it follows that \begin{eqnarray*}
|\epsilon _{1,j,n}|  &\leq &\left\Vert G_j( \theta
_{n}^{\ast \ast,j }) -G_j( \theta _{n}^{\ast,j }) \right\Vert
\| \sqrt{n}( \breve{\theta}_{n}-\tilde{\theta}_{n})\| \\
&\leq &\left\Vert G_j(\theta_{n}^{\ast \ast,j }) -G_j( \theta_{n}^{\ast,j }) \right\Vert \| \sqrt{n}( \breve{\theta}_{n}- \tilde{\theta}_{n}) \|  \\
&\leq &L_G \left\Vert \theta _{n}^{\ast \ast,j }-\theta _{n}^{\ast,j
}\right\Vert  \max_{j\in [p]}\left\{ l_{n,j}\right\}/\vartheta_n  \\
&=&L_G  \left\Vert \beta _{nj}\breve{\theta}_{n}+\left( 1-\beta
_{nj}\right) \tilde{\theta}_{n}-\alpha _{nj}\theta _{n}-\left( 1-\alpha
_{nj}\right) \tilde{\theta}_{n}\right\Vert  \max_{j\in [p]}
l_{n,j} /\vartheta_n  \\
&=&L_G \left\Vert \left( \alpha _{nj}-\beta_{nj}\kappa _{n}^{-1}\right)
\tilde{\theta}_{n}+\left( \beta _{nj}\kappa_{n}^{-1}-\alpha _{nj}\right)
\theta _{n}\right\Vert  \max_{j\in [p]} l_{n,j} /\vartheta_n  \\
&=&L_G|\alpha_n-\beta_n\kappa_n^{-1}| \ \|\tilde{\theta}_{n}-\theta_{n}\|
\max_{j\in [p]} l_{n,j}/\vartheta_n  \\
&\leq& L_G  \left\{\max_{j\in [p]}
l_{n,j}/\vartheta_n\right\}^{2} \kappa_{n}/\sqrt{n}.
\end{eqnarray*}%
Then, for all $j\in [p]$ we have%
\[
\frac{\sqrt{n}\Ep[ m_{j}( W,\breve{\theta}_{n})]}{\sigma_{\breve{\theta}_{n},j}} \leq \frac{\kappa _{n}^{-1}\sqrt{n}\Ep\left[
m_{j}\left( W,\theta _{n}\right) \right]}{\sigma _{\theta_n,j}} +L_G  \left\{\max_{j\in [p]}
l_{n,j}/\vartheta_n\right\}^{2} \frac{\kappa _{n}}{\sqrt{n}}
\]%
completing the proof of the second statement.

To show the first statement when $\theta_n \not\in\Theta(\eeta)\cap \Theta_I$ (i.e. $\max_{j\in [p]}  l_{n,j}>0$), recall that $\breve\theta_n = (1-\kappa_n^{-1})\tilde\theta_n + \kappa_n^{-1}\theta_n$. Therefore by (\ref{eq:rate:interm}), we have
{\small \[ \| \theta _{n}-\breve{\theta}_{n}\|
= (1-\kappa_n^{-1})\|\tilde\theta_n - \theta_n \| \leq \frac{\max_{j\in [p]}  l_{n,j}/\vartheta_n}{\kappa _{n}^{-1}\sqrt{n}} \]}
since $\kappa_{n} \geq 1$.
\end{proof}

\begin{proof}[Proof of Corollary \ref{cor:mainresult1}]
First we verify Condition MB. By assumption we have $\Theta(\eeta)$ convex and $\ell_\infty$-diameter uniformly bounded. Since the componentwise derivatives of $m_j$ are bounded by $C_1$, we can take $L_G = C_1\sqrt{d_\theta}$, $L_C = L_G^2$ and $\chi=2$. Condition MB(iii) holds by assumption.

Next we verify Condition A, because of the Lipchitz condition, we have that $\mathcal{F}$ has $\varepsilon$-covering number bounded by $p(6 C_1 d_\theta/\varepsilon)^{d_\theta}$ so it is VC type with $\bar A = 6C_1 d_\theta p^{1/d_\theta}$ and $v = d_\theta$. Moreover, since $m_j$'s are uniformly bounded above and the variances $\sigma_{\theta,j}$ are bounded away from zero, we can take $b\leq C_1$ and $\Ep[|f|^k] \leq \sigma^2 b^{k-2}$ trivially holds and we can take $q=16$. The set of functions $\mathcal{B}=\{B(f)=\sqrt{n}\Ep[m_j(W,\theta)] : j\in[p], \theta \in \Theta\}$ has covering number satisfying $N_B(\eta) \leq p(6C_1 d_\theta n/\eta)^{d_\theta}$. Thus we have $K_n \leq C\log p + C'd_\theta \log n$.

Thus the conditions to apply Theorem \ref{thm:PB:inferenceSubvector} hold with   $$\delta_{n,\gamma_n}^{PR} \lesssim  \frac{1}{\gamma_n}\left\{\frac{K_n^{2/3}}{n^{1/6}}+ \kappa_n\frac{d_\theta^{1/2}K_n}{n^{1/2}} + \frac{K_n^{1/2}}{\kappa_n} \right\}\lesssim \frac{1}{\gamma_n}\frac{C_1 n^{-c_1}}{\mathcal{A}_{n}^{PR}}.$$ Then by choosing $\gamma_n = C^2n^{-2c}$ for some $C$ sufficiently large and $c>0$ sufficiently small, the last result in Theorem \ref{thm:PB:inferenceSubvector} yields  the result.
\end{proof}


\begin{proof}[Proof of Theorem \ref{thm:MT:inferenceSubvector}]
The proof builds upon the proofs of Theorems \ref{thm:MSB:inferenceSubvector} and \ref{thm:PB:inferenceSubvector}. We now divide the argument into cases.

Case 1: $\ell_{\min} \geq -2 \bar w_n/(1-t_n^\sigma)$.
Let $$\begin{array}{rl}
S_n(\kappa) & = \inf_{\theta \in \Theta(\eeta) }\max_{j\in [p]}
v_{\theta,j} +\kappa_n^{-1}\sqrt{n}\Ep[ m_{j}( W,\theta ) %
] /\sigma_{\theta,j} \\
S_n^*(\kappa) & = \inf_{\theta \in \Theta(\eeta) }\max_{j\in [p]}
v_{\theta,j}^* +\kappa_n^{-1}\sqrt{n}\Ep[ m_{j}( W,\theta)] /\sigma_{\theta,j}
\end{array}$$
and $$ \begin{array}{rl}
 S_n(\psi_n,\Psi) & = \inf_{\theta \in \Theta_n^{\psi_n}}\max_{j\in \Psi_\theta} v_{\theta,j} +\sqrt{n}\Ep[m_{j}(W,\theta)]/\sigma_{\theta,j}\\
 S_n^*(\psi_n,\Psi) & = \inf_{\theta \in \Theta_n^{\psi_n}}\max_{j\in \Psi_\theta} v_{\theta,j}^* +\sqrt{n}\Ep[m_{j}(W,\theta)]/\sigma_{\theta,j}.
\end{array}$$

We have the following sequence of inequalities
$$
\begin{array}{rl}
 \P( T_n(\eeta) \geq t ) &  = \P( \min\{ T_n(\eeta), T_n(\eeta) \} \geq t ) \\
& \leq_{(1)} \P( \min\{ S_n(\psi_n,\Psi), S_n(\kappa)\} \geq t - \bar \delta_1 ) + \bar r_1 \\
& \leq_{(2)} \P( \min\{ S_n^*(\psi_n,\Psi), S_n^*(\kappa)\} \geq t - \bar \delta_2 ) + \bar r_2 \\
& \leq_{(3)} \P( \min\{ R_n^{DR*}, R_n^{PR*}\} \geq t - \bar \delta_3 ) + \bar r_3,
\end{array}
$$ where (1) holds with $\bar\delta_1 = \delta_3 \vee \delta_1'$ and $\bar r_1 := r_3 + r_1'$ where $\delta_3$ and $r_3$ are defined in the proof of Theorem \ref{thm:MSB:inferenceSubvector}, while $\delta_1'$  and $r_1'$ are defined in the proof of Theorem \ref{thm:PB:inferenceSubvector}.
Next, note that $\min\{ S_n(\psi_n,\Psi), S_n(\kappa)\}$ can be directly written as a MinMax statistic, so we can apply Theorems \ref{thm:clt:minmax} and \ref{thm:clt:minmax:Gaussian}. Thus (2) holds with $\bar \delta_2 = \bar\delta_1 + \delta_{n,\eta,\gamma_n} + \bar \delta_{n,\eta,\gamma_n} $ and $\bar r_2 = \bar r_1 +  C\{\gamma_n + n^{-1}\}$. Finally we have that (3) holds with $\bar \delta_3 = \bar \delta_2 + \delta_8 + \delta'_5$ and $\bar r_3 = \bar r_2 + r_8 + r_5'$ by the same arguments in in the proofs of Theorems \ref{thm:MSB:inferenceSubvector} and \ref{thm:PB:inferenceSubvector}.

Case 2: $\ell_{\min} \leq -2 \bar w_n/(1-t_n^\sigma)$. This follows directly from analogous arguments.

%
%
%
\end{proof}

\subsection{Proofs of Section \ref{Sec:SN:cv}}

\begin{proof}[Proof of Theorem \ref{thm:SNresult2S}]
Define $J^{*}_{n} \equiv  \{ j\in[p] : \sqrt{n} \Ep[m_j(W, \theta^*)]/\sigma_{\theta^{*},j} \geq -c^{SN}(p,\gamma_n)\}$. Consider the following derivation.
\begin{align*}
	&\P \Big(  T_n(\bar h) > {\displaystyle \max_{\theta\in \widehat\Theta_n^{SN}}} c^{SN,2N}(\theta,\alpha) \Big)
	\leq \P (  T_n(\bar h) > c^{SN,2N}(\theta^{*},\alpha) ) + \P(\theta^{*} \not\in \widehat\Theta_n^{SN}) \\
	& \leq \P (  \max_{j\in[p]}\sqrt{n}\bar m_{\theta^{*},j}/\hat\sigma_{\theta^{*},j} > c^{SN,2N}(\theta^{*},\alpha) ) + \P(\theta^{*} \not\in \widehat\Theta_n^{SN}) \\
		& \leq \P \Big(  \max_{j\in J^{*}_{n}}\sqrt{n}\bar m_{\theta^{*},j}/\hat\sigma_{\theta^{*},j} > c^{SN,2N}(\theta^{*},\alpha) \Big) + \P( \exists j \not\in J^{*}_{n} : \bar m_{\theta^{*},j} > 0  ) \\
	& ~~~+ \P(\theta^{*} \in \widehat\Theta_n^{SN}) .
\end{align*}
The proof is completed by providing suitable upper bounds on each terms on the right hand side.

For the first term, we use an argument based on Steps 2 and 3 in the proof of Theorem 4.2 in \citet{chernozhukov2013testing}.
\begin{align*}
	&\P \Big(  \max_{j\in J^{*}_{n}}\sqrt{n}\bar m_{\theta^{*},j}/\hat\sigma_{\theta^{*},j} > c^{SN,2N}(\theta^{*},\alpha) \Big)  \\
	&\leq \P (   \max_{j\in J^{*}_{n}}\sqrt{n}\bar m_{\theta^{*},j}/\hat\sigma_{\theta^{*},j} > c^{SN}(|J^{SN}|,\alpha-3\gamma_n) ) + \P(|\hat J_n^{SN}(\theta^{*})|<|J^{*}_{n}|)\\
	&\leq \alpha -3\gamma_n + \gamma_n + Cn^{-c}.
\end{align*}
For the second term, we use an argument based on Step 1 of the proof in Theorem 4.2 in \citet{chernozhukov2013testing}.
\begin{align*}
     \P( \exists j \not\in J^{*}_{n} : \bar m_{\theta^{*},j} > 0  )& \leq\P( \max_{j\in[p]}\sqrt{n} (\bar m_{\theta^{*},j}  -\Ep[m_j(W,\theta^{*})] )/\sigma_{\theta^{*},j} >  c^{SN}(p,\gamma_n)  )\\
    &  \leq \gamma_n + Cn^{-c}.
\end{align*}
For the third term, we note that the conditions we are imposing imply those in Theorem \ref{thm:SNresult}. Thus, consider the following derivation based on that result.
\begin{align*}
	\P(\theta^{*} \not\in \widehat\Theta_n^{SN}) &~=~ \P( \max_{j\in [p]} \sqrt{n}\bar m_{\theta^{*},j}/\hat \sigma_{\theta^{*},j} > c^{SN}(p,\gamma_n) ) \\
	&~\leq~ \P(  T_n(\eeta)  > c^{SN}(p,\gamma_n) )
	\leq \gamma_n + Cn^{-c}.
\end{align*}
The desired then result follows from combining the upper bounds.
 \end{proof}

\section{Proofs of Section \ref{SEC:ANTICONCENTRATION}}

\begin{proof}[Proof of Proposition \ref{lemma:AnticoncentrationNormal}]
By independence, $F_{Z}(t) \equiv \P(Z\leq t)=1-(1-\Phi ^{p}(t))^{N}$, and so
\begin{equation}
f_{Z}(t)=Np(1-\Phi ^{p}(t))^{N-1}\Phi ^{p-1}(t)\phi (t). \label{eq:DensityClosedForm}
\end{equation}

\underline{Upper bound.} Let $t^{\ast }\in \arg \max_{t\in \mathbb{R} }f_{Z}(t)$. It suffices to show that
\begin{equation}
f_{Z}(t^{\ast })=Np(1-\Phi ^{p}(t^{\ast }))^{N-1}\Phi ^{p-1}(t^{\ast })\phi (t^{\ast })\leq 5\sqrt{2}\ln ^{3/2}( Np) . \label{eq:UBinProof}
\end{equation}
We now divide the argument into cases.

Case 1: $\vert t^{\ast }\vert \geq \sqrt{2}\ln ^{1/2}( Np/ [ 2\pi \ln ^{3/2}( Np) ] ) $. \eqref{eq:DensityClosedForm} then implies $f_{Z}(t)\leq Np\phi (t)$ $\forall t\in \mathbb{R} $. Then, $\vert t^{\ast }\vert \geq \sqrt{2}\ln ^{1/2}( Np/ [ 2\pi \ln ^{3/2}( Np) ] ) $ implies that $ f_{Z}(t^{\ast })\leq Np\phi (t^{\ast })\leq \ln ^{3/2}( Np) $. Then, \eqref{eq:UBinProof} follows.

Case 2: $\vert t^{\ast }\vert <\sqrt{2}\ln ^{1/2}( Np/[ 2\pi \ln ^{3/2}( Np) ] ) $ and $t^{\ast }\leq 1/3$. Then, consider the following derivation.
\[
\begin{array}{rl}
f_{Z}(t^{\ast }) & \leq Np\Phi ^{p-1}(t^{\ast })\phi (0)\leq Np\Phi ^{p-1}(1/3)\phi (0)\\
& \leq Np(2/3)^{p-1}=(3/2)^{1+\ln ( Np) /\ln ( 3/2) -p}\leq 3/2,
\end{array}\]
where the first inequality follows from \eqref{eq:DensityClosedForm}, $ (1-\Phi ^{p}(t^{\ast }))^{N-1}\leq 1$, and $\phi (t^{\ast })\leq \phi (0)$, the second inequality follows from $t^{\ast }\leq 1/3$, and the third inequality follows from $p\ln ( 3/2) -\ln ( Np) >0$ which, in turn, follows from $\ln ( Np) /p\leq 1/\sqrt{2\pi }<\ln ( 3/2) $. Then, \eqref{eq:UBinProof} follows.

Case 3: $t^{\ast }\in ( 1/3,\sqrt{2}\ln ^{1/2}( Np/[ 2\pi \ln ^{3/2}( Np) ] ) ) $ and $\Phi ^{p}(t^{\ast })\leq 1/( Np) $. Then,
\[
f_{Z}(t^{\ast })\leq \phi (t^{\ast })/\Phi (t^{\ast })\leq \phi (0)/\Phi (1/3)\leq 2/3,
\]
where the first inequality follows from \eqref{eq:DensityClosedForm}, $ (1-\Phi ^{p}(t^{\ast }))^{N-1}\leq 1$, and $\Phi ^{p}(t^{\ast })<1/( Np) $, and the second inequality follows form $\phi (t^{\ast })\leq \phi (0)$ and $t^{\ast }>1/3$. Then, \eqref{eq:UBinProof} follows.

Case 4: $t^{\ast }\in ( 1/3,\sqrt{2}\ln ^{1/2}( Np/[ 2\pi \ln ^{3/2}( Np) ] ) ) $, $\Phi ^{p}(t^{\ast })>1/( Np) $, and $( Np-1) \phi ( t^{\ast }) \leq t^{\ast }\Phi (t^{\ast })$. Then,
\begin{equation}
\phi ( t^{\ast }) \leq t^{\ast }\Phi (t^{\ast })/( Np-1) \leq \sqrt{2}\ln ^{1/2}( Np/[ 2\pi \ln ^{3/2}( Np) ] ) /( Np-1) , \label{eq:Case4_intermediate}
\end{equation}
where the first inequality uses that $( Np-1) \phi ( t^{\ast }) \leq t^{\ast }\Phi (t^{\ast })$ and the second inequality uses that $t^{\ast }\leq \sqrt{2}\ln ^{1/2}( Np/[ 2\pi \ln ^{3/2}( Np) ] ) $. Then, consider the following derivation.
\[
\begin{array}{rl}
f_{Z}(t^{\ast })& \leq Np\phi (t^{\ast })\leq \sqrt{2}\ln ^{1/2}( Np/[ 2\pi \ln ^{3/2}( Np) ] ) {Np}/{( Np-1) }\\
&  \leq 2\sqrt{2}\ln ^{1/2}( Np) .
\end{array}
\]
where the first inequality follows from \eqref{eq:DensityClosedForm}, $ (1-\Phi ^{p}(t^{\ast }))^{N-1}\leq 1$, and $\Phi ^{p}(t^{\ast })<1/( Np) $, the second inequality follows from \eqref{eq:Case4_intermediate}, and the third inequality follows from $1\leq 2\pi \ln ^{3/2}( Np) $ and $Np/( Np-1) \leq 2$, which holds since $\ln ( Np) \geq 2$. Then, \eqref{eq:UBinProof} follows.

Case 5: $t^{\ast }\in ( 1/3,\sqrt{2}\ln ^{1/2}( Np/[ 2\pi \ln ^{3/2}( Np) ] ) ) $, $\Phi ^{p}(t^{\ast })>1/( Np) $, and $( Np-1) \phi ( t^{\ast }) >t^{\ast }\Phi (t^{\ast })$. Since $\ln f_{Z}(t)$ is a twice continuously differentiable function, any $t^{\ast }\in \arg \max_{t\in \mathbb{R}}f_{Z}(t)=\arg \max_{t\in \mathbb{R}}\ln f_{Z}(t)$ satisfies the first order condition: $ \partial \ln f_{Z}(t^{\ast })/\partial t=0$. This condition yields the following derivation.
\begin{equation}
\Phi ^{p}(t^{\ast })=\frac{( p-1) \phi ( t^{\ast }) -t^{\ast }\Phi ( t^{\ast }) }{( Np-1) \phi ( t^{\ast }) -t^{\ast }\Phi (t^{\ast })}\leq \frac{p-1}{Np-1}\leq \frac{1 }{N}, \label{eq:FOC_consequence}
\end{equation}
where the first inequality follows from first order condition, $( Np-1) \phi ( t^{\ast }) >t^{\ast }\Phi (t^{\ast })$, $Np>1$ , and $t^{\ast }>1/3>0$, and the remaining relationships are elementary.

Then, consider the following derivation.
\begin{align*}
f_{Z}(t^{\ast }) &\leq p\phi (t^{\ast })/\Phi (1/3) \\
&\leq ( 4/5) p( t^{\ast }+\sqrt{( t^{\ast }) ^{2}+4}) ( 1-\Phi (t^{\ast })) \\
&\leq ( 4/5) p( t^{\ast }+\sqrt{( t^{\ast }) ^{2}+4}) ( 1-1/( Np) ^{1/p}) \\
&=( 4/5) p( t^{\ast }+\sqrt{( t^{\ast }) ^{2}+4} ) ( 1-1/\exp ( \ln (Np)/p ) ) \\
&\leq ( 4/5) p( t^{\ast }+\sqrt{( t^{\ast }) ^{2}+4}) ( 1-1/[ 1+2 \ln(Np)/p ] ) \\
&\leq ( 8/5) ( t^{\ast }+\sqrt{( t^{\ast }) ^{2}+4}) \ln ( Np) \\
&\leq ( 16/5) ( t^{\ast }+1) \ln ( Np) \\
&\leq 5\sqrt{2}\ln ^{3/2}( Np) ,
\end{align*}
where the first inequality follows from \eqref{eq:FOC_consequence}, $ (1-\Phi ^{p}(t^{\ast }))^{N-1}$, $t^{\ast }>1/3$, and $1/\Phi (1/3)\leq 8/5$, the second inequality follows from $ 2\phi ( t) /( t+\sqrt{t^{2}+4}) \leq ( 1-\Phi ( t) ) $ for all $t\in \mathbb{R} $ (e.g., \cite{abramowitz/stegun:1964}), the third inequality follows from $\Phi ^{p}(t^{\ast })>1/( Np) $, the fourth inequality follows from the fact that $\exp ( x) \leq 1+2x$ for all $x\in [ 0,1/\sqrt{ 2\pi }] $, and sixth inequality follows from the fact that $ ( t^{\ast }+\sqrt{( t^{\ast }) ^{2}+4}) \leq 2( t^{\ast }+1) $, the seventh inequality follows from $t^{\ast }\in ( 1/3,\sqrt{2}\ln ^{1/2}( Np/[ 2\pi \ln ^{3/2}( Np) ] ) ) $ and the fact that
\begin{equation}
( 16/5) ( \sqrt{2}\ln ^{1/2}( Np/[ 2\pi \ln ^{3/2}( Np) ] ) +1) \leq 5\sqrt{2}\ln ^{1/2}( Np) , \label{eq:Case5_LastStep}
\end{equation}
To conclude the argument, it suffices to show \eqref{eq:Case5_LastStep}. Let $D\equiv [ 2\pi \ln ^{3/2}( Np) ] \geq 2^{5/2}\pi $, where the inequality holds by $\ln ( Np) \geq 2$. Because the relation $\sqrt{2}\ln ^{1/2}( Np/[ 2\pi \ln ^{3/2}( Np) ] ) \geq 1/3>0$ holds, \eqref{eq:Case5_LastStep} is equivalent to
\begin{equation}
( 2^{8}-5^{4}) \ln ( Np) +2^{8}( 2( \ln ( Np) -\ln D) ) ^{1/2}+2^{7}-2^{8}\ln D\leq 0. \label{eq:Case5_LastStep2}
\end{equation}
On the one hand, $2^{7}-2^{8}\ln D\leq 2^{7}-2^{8}\ln ( 2^{5/2}\pi ) \leq 0$. On the other hand, $2^{8}( 2( \ln ( Np) -\ln D) ) ^{1/2}\leq 2^{8}( 2( \ln ( Np) ) ) ^{1/2}\leq 2^{17/2}\ln ( Np) $, and so $ ( 2^{8}-5^{4}) \ln ( Np) +2^{8}( 2( \ln ( Np) -\ln D) ) ^{1/2}\leq ( 2^{8}+2^{17/2}-5^{4}) \ln ( Np) \leq 0$. By combining these inequalities, \eqref{eq:Case5_LastStep2} follows.

\underline{Lower bound.} Let $\bar{t}$ be (uniquely) defined by $\Phi ( \bar{t}) =( 1/N) ^{1/p}$. Note that
\begin{equation}\label{eq:LB_defn_implication}
\begin{array}{rl}
\Phi ( \bar{t}) & =( 1/N) ^{1/p}=\exp ( - (\ln N)/p) \geq \exp ( -(\ln( Np))/{p}) \\
& >\exp ( -1/(\sqrt{2\pi })) >0.5,
\end{array}\end{equation}
where the second inequality follows from $(\ln ( Np)) /p<1/\sqrt{ 2\pi }$, and the remaining relationships are elementary. \eqref{eq:LB_defn_implication} implies that $\bar{t}>0$.

As an intermediate step, we provide an alternative lower bound for $\bar{ t}$. First, note that $\bar{t}>0$ implies that:
\begin{equation}
\bar{t}\geq \sqrt{2\ln ( \bar{t}+\sqrt{\bar{t}^{2}+4}) +( \bar{t}) ^{2}}-2. \label{eq:LB_derivation_1}
\end{equation}
Second, consider the following derivation.
\begin{equation}\label{eq:LB_derivation2}
\begin{array}{rl}
\frac{2\exp ( -( \bar{t}) ^{2}/2) }{\sqrt{2\pi }( \bar{t}+\sqrt{\bar{t}^{2}+4}) }& =\frac{2\phi ( \bar{t}) }{ ( \bar{t}+\sqrt{\bar{t}^{2}+4}) }\\ & \leq 1-\Phi ( \bar{t} ) =1-\exp ( -({\ln N})/{p}) \leq 2({\ln N})/{p},
\end{array}
\end{equation}
where the first equality holds by the definition of $\phi ( t) $, the first inequality follows from the fact that $2\phi ( t) /( t+\sqrt{t^{2}+4}) \leq ( 1-\Phi ( t) ) $ for all $t\in \mathbb{R} $ (e.g., \cite{abramowitz/stegun:1964}), the second equality holds by the definition of $\bar{t}$, and the second inequality follows from the fact that $1-\exp ( -x) \leq 2x$ for all $x\geq 0$. \eqref{eq:LB_derivation2} then implies that:
\begin{equation}
\sqrt{2\ln ( \bar{t}+\sqrt{\bar{t}^{2}+4}) +( \bar{t}) ^{2}}\geq \sqrt{2\ln ( p/(\sqrt{2\pi }( \ln N)) ) }. \label{eq:LB_derivation3}
\end{equation}
By combining \eqref{eq:LB_derivation_1} and \eqref{eq:LB_derivation3}, we conclude that
\begin{equation}
\bar{t}\geq \sqrt{2\ln ( {p/(\sqrt{2\pi }( \ln N))}) }-2. \label{eq:LB_derivation4}
\end{equation}

The lower bound is a consequence of the following derivation.
\begin{eqnarray*}
\max_{t\in \mathbb{R}}f_{Z}(t) &\geq &f_{Z}( \bar{t}) =p(1-1/N)^{N-1}N^{1/p}\phi (\bar{t}) \\
&\geq &p\exp ( -1) \bar{t}( N^{1/p}-1) \\
&=&p\exp ( -1) \bar{t}( \exp ( \frac{1}{p}\ln N) -1) \\
&\geq &\exp ( -1) \bar{t}\ln N \\
&\geq &\exp ( -1) \left( \sqrt{2\ln ( {p/(\sqrt{2\pi }( \ln N))}) }-2\right) \ln N,
\end{eqnarray*}
where the first equality uses \eqref{eq:DensityClosedForm} and $\Phi ( \bar{t}) =( 1/N) ^{1/p}$, the second inequality follows from $\Phi ( \bar{t}) =( 1/N) ^{1/p}$ and the fact that $(1-1/N)^{N-1}\geq \exp ( -1) $, $\phi (t)\geq t( 1-\Phi ( t) ) $ for all $t\in \mathbb{R} $ (e.g., \cite{abramowitz/stegun:1964}), the third inequality follows from the fact that $\exp ( x) -1\geq x$ for all $x\geq 0$, and the fourth inequality follows from \eqref{eq:LB_derivation4}.
\end{proof}

\begin{proof}[Proof of Theorem \ref{thm:CondAntiConcentration}]
To show (i) we proceed similarly as in the proof of Theorem \ref{thm:MT:inferenceSubvector} and obtain
\begin{align*}
	&\P( T_n(\eeta) \geq t ) \\
	& \leq_{(3)} \P( \inf_{\theta \in \Theta_I(\eeta)}\max_{j\in \Psi_\theta} v_{\theta,j} +\sqrt{n}\Ep[m_{j}(W,\theta)]/\sigma_{\theta,j} \geq t - \delta_3)+3\gamma_n\\
& \leq_{(3')} \P( \inf_{\theta \in \Theta_I(\eeta)}\max_{j\in \Psi_\theta} G_{\theta,j} +\sqrt{n}\Ep[m_{j}(W,\theta)]/\sigma_{\theta,j} \geq t - \delta_3')+ r_3'\\
& = \P( \inf_{\theta \in \Theta_I(\eeta)}\max_{j\in \Psi_\theta} \tilde G_{\theta,j} +\sqrt{n}\Ep[m_{j}(W,\theta)]/\sigma_{\theta,j} \geq t - \delta_3' \mid (W_i)_{i=1}^n )+ r_3'\\
\end{align*}
where (3) holds by the proof of Theorem \ref{thm:MSB:inferenceSubvector}, (3') by Theorem \ref{thm:clt:minmax} (with $\eta = n^{-1/2}$)  with $\delta_3' = \delta_3 + \delta_{n,\eta,\gamma_n} $  and $r_3' = 4\gamma_n + Cn^{-1}$. The last step holds by Theorem \ref{thm:clt:minmax:Gaussian} which asserts the existence of a Gaussian process with the same distribution conditional on the data.

Next we condition on the event $E' = E \cap E_1\cap E_2$, with $E$ as defined in (\ref{eq:defE}), $E_1 = \{ \sup_{\theta\in\Theta(\eeta),j\in[p]}|v_{\theta,j}|\leq \bar w\}$ and $E_2 =  \{\sup_{\theta\in\Theta(\eeta),j\in[p]}|\hat\sigma_{\theta,j}/\sigma_{\theta,j}-1|\leq t_n^\sigma\}$ which occurs with probability $1-3\gamma_n-n^{-1}$. Under this event the covariance matrix of the bootstrap process induced by $(v^*_{\theta,j})$ is close to the process induced by $(\tilde G_{\theta,j})$ in the sense of (\ref{bound:CovInfty}). Therefore we have that with probability $1-3\gamma_n-n^{-1}$
\begin{align*}
	&\P( T_n(\eeta) \geq t ) \\
	& \leq_{(4')} \P( \inf_{\theta \in \Theta_I(\eeta)}\max_{j\in \Psi_\theta} v^*_{\theta,j} +\sqrt{n}\Ep[m_{j}(W,\theta)]/\sigma_{\theta,j} \geq t - C\delta_4 \mid (W_i)_{i=1}^n )+ Cr_4\\
& \leq_{(5')} \P( \inf_{\theta \in \widehat\Theta_n}\max_{j\in \hat \Psi_\theta} \hat v_{\theta,j}^* \geq t - C\delta_8 \mid (W_i)_{i=1}^n )+ Cr_8
\end{align*}
where (4') follows from Theorem \ref{thm:clt:minmax:Gaussian} since both $(v^*_{\theta,j})$ and  $(\tilde G_{\theta,j})$ are Gaussian processes ($\delta_4$ and $r_4$ are defined in the proof of Theorem \ref{thm:MSB:inferenceSubvector}), and (5') follows by the similar arguments as in the inequalities (5)-(8) of of the proof of Theorem \ref{thm:MSB:inferenceSubvector}.

To show (ii) we start from the proof of Theorem \ref{thm:PB:inferenceSubvector} which yields the first inequality below
\begin{align*}
	&\P( T_n(\eeta) \geq t ) \\
	& \leq_{(5)} \P( \inf_{\theta \in \Theta(\eeta)}\max_{j\in [p]} \hat v_{\theta,j}^* +\sqrt{n}\kappa_n^{-1}\bar m_{\theta,j}/\hat\sigma_{\theta,j} \geq t - \delta_5')+r_5'\\
& \leq_{(6)} \P( \inf_{\theta \in \Theta(\eeta)}\max_{j\in [p]} \hat v_{\theta,j}^* +\sqrt{n}\kappa_n^{-1}\Ep[m_j(W,\theta)]/\sigma_{\theta,j} \geq t - \delta_6')+r_6'\\
& \leq_{(7)} \P( \inf_{\theta \in \Theta(\eeta)}\max_{j\in [p]} G_{\theta,j} +\sqrt{n}\kappa_n^{-1}\Ep[m_j(W,\theta)]/\sigma_{\theta,j} \geq t - \delta_7')+r_7'\\
& = \P( \inf_{\theta \in \Theta(\eeta)}\max_{j\in [p]} \tilde G_{\theta,j} +\sqrt{n}\kappa_n^{-1}\Ep[m_j(W,\theta)]/\sigma_{\theta,j} \geq t - \delta_7'\mid (W_i)_{i=1}^n)+r_7'
\end{align*}
where (6) holds with $\delta_6' = \delta_5'+ \kappa_n^{-1}\bar w_n (1+t_n^\sigma)/(1-t_n^\sigma)$ and $r_6' = r_5'+\gamma_n$, (7) holds by  Theorem \ref{thm:clt:minmax:Gaussian}. Again, the last step holds by Theorem \ref{thm:clt:minmax:Gaussian} which asserts the existence of a Gaussian process with the same distribution conditional on the data.

Next we condition on the event $E' = E \cap E_1\cap E_2$, with $E$ as defined in (\ref{eq:defE}), $E_1 = \{ \sup_{\theta\in\Theta(\eeta),j\in[p]}|v_{\theta,j}|\leq \bar w\}$ and $E_2 =  \{\sup_{\theta\in\Theta(\eeta),j\in[p]}|\hat\sigma_{\theta,j}/\sigma_{\theta,j}-1|\leq t_n^\sigma\}$ which occurs with probability $1-3\gamma_n-n^{-1}$. Thus with probability $1-3\gamma_n-n^{-1}$ we have
\begin{align*}
	&\P( T_n(\eeta) \geq t ) \\
	& \leq_{(8)} \P( \inf_{\theta \in \Theta(\eeta)}\max_{j\in [p]} \hat v_{\theta,j}^* +\sqrt{n}\kappa_n^{-1}\Ep[m_j(W,\theta)]/\sigma_{\theta,j} \geq t - \delta_8' \mid (W_i)_{i=1}^n)+r_8'\\
	& \leq_{(9)} \P( \inf_{\theta \in \Theta(\eeta)}\max_{j\in [p]} \hat v_{\theta,j}^* +\sqrt{n}\kappa_n^{-1}\bar m_{\theta,j}/\hat\sigma_{\theta,j} \geq t - \delta_9' \mid (W_i)_{i=1}^n)+r_9'\\
\end{align*}
where (8) holds by Theorem \ref{thm:clt:minmax:Gaussian} since the associated covariance matrices satisfy (\ref{bound:CovInfty}) under the event $E$, and (9) holds by events $E_1\cap E_2$ with $\delta_9' = \delta_8'+ \kappa_n^{-1}\bar w_n(1+t_n^\sigma)/(1-t_n^\sigma) + t_n^\sigma \bar w_n$.
\end{proof}

\begin{proof}[Proof of Theorem \ref{thm:ANTI-consistency}]
For every $\epsilon\geq \delta_n$ we have with probability $1-\gamma_n$
\begin{equation}\label{def:CouplingANTI}
 \begin{array}{rl}
 \P( |R_n^* - c_n(\eeta,\alpha) | \leq \epsilon ) & \leq \P( |R_n^* - c_n(\eeta,\alpha) | \leq \epsilon + \tilde \delta_n \mid (W_i)_{i=1}^n ) + \gamma_n \\
 & \leq (\epsilon + \tilde \delta_n)\mathcal{A}_n(W) + \gamma_n
\end{array}
\end{equation}
Therefore, for any $\epsilon > \delta_n$, with the same probability we have
$$ \frac{1}{\epsilon} \P( |R_n^* - c_n(\eeta,\alpha) | \leq \epsilon ) \leq \mathcal{A}_n(W)(1+\tilde\delta_n/\epsilon) + \gamma_n/\epsilon$$
Taking the sup over $\epsilon \geq \delta_n$ we have
$$  \mathcal{A}_n \leq \mathcal{A}_n(W) (1+\tilde\delta_n/\delta_n)+\gamma_n/\delta_n$$
\end{proof}

\begin{lemma}\label{lem:density}
Let $f$ denote the density function of $\min_{k\in[N]}\max_{j\in[p]} W_{kj}$, and $f_k$ denote the density function of $\max_{j\in[p]} W_{kj}$. Provided that $-1<{\rm corr}(W_{kj},W_{k'j'})<1$,
$$\begin{array}{rl}
f(t) & \displaystyle = \phi(t)\sum_{k=1}^N \sum_{j=1}^p \P( \max_{\ell\in[p]} W_{m\ell} \geq t, \ \forall m \neq k, \  \max_{\ell\in[p]}W_{k\ell}\leq t \mid W_{kj} = t)  \\
& \displaystyle = \sum_{k=1}^N \P( \max_{j\in[p]} W_{mj} \geq t, \forall m \mid \max_{j\in[p]}W_{kj}=t) f_k(t) \\
\end{array}$$
where $f_k(t) = \phi(t)\sum_{j=1}^p\P( W_{k\ell} \leq t \mid W_{kj}=t)$.
\end{lemma}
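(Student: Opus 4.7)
The plan is to derive both expressions for $f$ as a double application of the standard identity for the density of a minimum of continuous random variables, applied first across the outer index $k$ and then across the inner index $j$. Set $Y_k = \max_{j\in[p]} W_{kj}$ so that $Z = \min_{k\in[N]} Y_k$. The assumption $-1 < \mathrm{corr}(W_{kj}, W_{k'j'}) < 1$ implies that every two-dimensional marginal of $(W_{kj})$ is a nondegenerate Gaussian, so almost surely each $Y_k$ is attained at a unique $j$ and $Z$ is attained at a unique $k$; in particular, the tie events $\{Y_k = Y_{k'}\}$ and $\{W_{kj} = W_{k'\ell}\}$ for $(k,j)\neq (k',\ell)$ have probability zero.

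First I would establish the decomposition over $k$. Writing
$$\{Z \in [t, t+dt]\} = \bigcup_{k=1}^N \bigl\{Y_k \in [t, t+dt],\ Y_m \geq t\ \forall m \neq k\bigr\},$$
and noting that the probability of pairwise intersections is $O(dt^2)$ by the no-ties property, one obtains
$$f(t) = \sum_{k=1}^N f_{Y_k}(t)\, \P(Y_m \geq t\ \forall m \neq k \mid Y_k = t).$$
Since $Y_k = t$ trivially satisfies $Y_k \geq t$, the restriction $m \neq k$ can be dropped in the condition, giving the second displayed expression in the lemma.

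Next I would apply the same argument one level deeper to compute $f_{Y_k}$. By a partition over which $j$ achieves the maximum within row $k$, and using that $W_{kj}\sim N(0,1)$ has density $\phi$,
$$f_{Y_k}(t) = \sum_{j=1}^p \phi(t)\, \P(W_{k\ell} \leq t\ \forall \ell \neq j \mid W_{kj} = t),$$
which matches the stated formula for $f_k$ (with the summand read as the joint conditional probability over $\ell\neq j$; the condition at $\ell=j$ is automatic since $W_{kj}=t$). Substituting this into the formula for $f(t)$ and observing that on $\{W_{kj} = t\}$ the events $\{W_{k\ell} \leq t\ \forall \ell \neq j\}$ and $\{\max_\ell W_{k\ell} \leq t\}$ coincide yields the first displayed formula.

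The only delicate step is justifying that simultaneous-hit and tie events are negligible so that the union bound becomes equality in the density limit. Under the strict correlation hypothesis any pair of the involved linear combinations (including the maxima) has a jointly absolutely continuous law, so coincidences have probability zero and the heuristic $dt$-expansion can be made rigorous either by a direct Lebesgue-differentiation argument from the CDF or by integrating the explicit joint density of $W_{kj}$ and the other coordinates against the indicator of the relevant event.
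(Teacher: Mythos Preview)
The paper states this lemma without proof, so there is no reference argument to compare against. Your approach is the natural one and is correct: decompose first over which row $k$ attains the minimum to obtain $f(t)=\sum_k f_k(t)\,\P(Y_m\ge t\ \forall m\neq k\mid Y_k=t)$, then decompose $f_k$ over which column $j$ attains the row maximum, and finally recombine. The strict-correlation hypothesis guarantees that all two-dimensional Gaussian marginals are nondegenerate, so tie events have probability zero and the unions are essentially disjoint, which is exactly what you use. The only cosmetic point is that in the stated formula for $f_k$ the event $\{W_{k\ell}\le t\}$ should be read as holding for all $\ell$ (equivalently all $\ell\neq j$, since $W_{kj}=t$ makes $\ell=j$ automatic), which you already note; your substitution step handles this correctly.
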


\section{Proofs of Section \ref{SEC:PROCESSES}}

\begin{proof}[Proof of Theorem \ref{thm:clt:minmax}]
The proof proceeds in steps.

{\it Step 1.} (Main Step.) To show the result we will (suitably) discretize the sets $\Lambda$ and each $\mathcal{F}_\theta$ and set $N = |\widehat{\Lambda}|$ and $p = \max_{\theta \in \widehat{\Lambda} } |\widehat{\mathcal{F}}_\theta|$. For $\varepsilon>0$, define $T^\varepsilon = \min_{\theta\in \widehat{\Lambda}} \max_{f \in \widehat{\mathcal{F}}_\theta} B(f)+\Gn(f)$ and $\widetilde T^\varepsilon = \min_{\theta\in \widehat{\Lambda}} \max_{f \in \widehat{\mathcal{F}}_\theta} B(f)+G_P(f)$ where the sets $\widehat{\Lambda}$ and $\widehat{\mathcal{F}}_\theta$ are such that $|T-T^\varepsilon| \leq \varepsilon$ and $ |\widetilde T - \widetilde T^\varepsilon| \leq \bar \varepsilon$ with probability exceeding $1-\rho(\varepsilon)$.

By Step 2 equation (\ref{res:Step2}) below we can take $\bar \varepsilon = \sigma/(bn^{1/2})+\eta_{\mathcal{F},\gamma_n}+\eta+ C\sqrt{\sigma^2 K_n/n} + CbK_n/(\gamma_n^{1/q}n^{1/2-1/q})$ and $\rho(\varepsilon)=2n^{-1}+3\gamma_n$. Therefore, we have
with probability exceeding $1-r_{1n}(\delta)-C\{\gamma_n+n^{-1}\}$,
$$
\begin{array}{rl}
|T-\widetilde T| & \leq |T-T^\varepsilon|+|T^\varepsilon - \widetilde T^\varepsilon| + |\widetilde T^\varepsilon - \widetilde T| \\
& \leq |T-T^\varepsilon|+ |\widetilde T^\varepsilon - \widetilde T| + C\delta\\
& \leq 2\sigma/(bn^{1/2}) +2\eta_{\mathcal{F},\gamma_n} + 2\eta \\
& + C\sqrt{\sigma^2 K_n/n} + CbK_n/(\gamma_n^{1/q}n^{1/2-1/q}) + C\delta,
\end{array}
$$
 where the first line holds by the triangle inequality, the second holds by Step 3 with probability exceeding $1-r_{1n}(\delta)$, the third inequality holds by Step 2 as stated.

To establish the result recall that $\log(Np) \leq C K_n$ (by Step 2) so that setting $$\delta = C' \left\{ \frac{(b\sigma^2 K_n^2)^{1/3}}{\gamma_n^{1/3}n^{1/6}} + \frac{bK_n}{\gamma_n^{1/q} n^{1/2-1/q}}\right\}$$
yields $r_{1n}(\delta) \leq C \gamma_n$ and the result follows.

{\it Step 2.} (Controlling Discretization Error). To discretize the empirical and Gaussian processes we set $\varepsilon = \sigma/(bn^{1/2})$ and $p = 2 \max_{\theta \in \Lambda} N(\mathcal{F}_\theta,d,\varepsilon b) \cdot N_B(\eta)$.
 Note that $\mathcal{F}$ is VC type with constants $\bar A\geq e$ and $v>1$ so that $N(\mathcal{F}_\theta,d,\varepsilon b) \leq N(\mathcal{F},d,\varepsilon b) \leq (4\bar A/\varepsilon)^v$. Moreover, we will choose a $\varepsilon_\Lambda$-cover $\widehat \Lambda$ of $\Lambda$, $\widehat \Lambda \subset \Lambda$, with cardinality bounded by $ N = (12n/\varepsilon_\Lambda)^{d_\theta}$ with $\varepsilon_\Lambda=
 \{\sigma/(C_{\mathcal{F},\gamma_n} bn^{1/2})\}^{1/\bar \chi}$.

Let $K_n = \log N_B(\eta) + v(\log n \vee \log( \bar Ab/\sigma) ) + (d_\theta/\bar \chi)\log( nC_{\mathcal{F},\gamma_n}b/\sigma )$ and $\widetilde K_n =  v\{\log n \vee \log( \bar Ab/\sigma) \}$. Note that $\widetilde K_n \leq K_n$ and  $\log (Np) \leq C K_n$.

Letting $\mathcal{F}^{\varepsilon} = \{ f - g : f,g \in \mathcal{F}, d(f,g) <\varepsilon b\}$,  $\psi(\theta):=\sup_{f\in \mathcal{F}_\theta} (B(f) + \Gn(f) )$ and $\hat\psi(\theta):=\sup_{f\in \widehat{\mathcal{F}}_\theta} (B(f) + \Gn(f) )$  we have that
$$ \begin{array}{rl}
T - T^\varepsilon & = \displaystyle \inf_{\theta \in \Lambda} \psi(\theta) - \min_{\theta \in \widehat\Lambda} \hat \psi(\theta) \\
& \leq  \inf_{\theta \in \widehat \Lambda} \psi(\theta) - \min_{\theta \in \widehat\Lambda} \hat \psi(\theta) \\
& \displaystyle \leq  \max_{\theta \in \widehat \Lambda} \left| \psi(\theta)- \hat \psi(\theta)\right| \\
& \leq \eta + \sup_{ h \in \mathcal{F}^\varepsilon} |\Gn(h)|.
\end{array}
$$
Moreover, we have
$$ \begin{array}{ll}
T - T^\varepsilon & = \displaystyle \inf_{\theta \in \Lambda} \psi(\theta) - \min_{\theta \in \widehat\Lambda} \hat \psi(\theta) \\
& \displaystyle \geq  \inf_{\theta \in \Lambda} \psi(\theta) - \min_{\theta \in \widehat\Lambda} \psi(\theta) -\eta -  \sup_{ h \in \mathcal{F}^\varepsilon} |\Gn(h)| \\
&\displaystyle \geq  -\sup_{\theta,\tilde \theta \in \Lambda, \|\theta-\tilde\theta\|\leq \varepsilon_\Lambda} \left| \psi(\theta) - \psi(\tilde \theta)\right|  -\eta -  \sup_{ h \in \mathcal{F}^\varepsilon} |\Gn(h)| ,
\end{array}
$$
where we used that $\widehat \Lambda$ is an $\varepsilon_\Lambda$-cover of $\Lambda$.
Similarly we have
{\small $$ \begin{array}{ll}
\widetilde T - \widetilde T^\varepsilon & = \displaystyle \inf_{\theta \in \Lambda} \sup_{f\in \mathcal{F}_\theta} (B(f) + G_P(f) ) - \min_{\theta \in \widehat\Lambda} \max_{f\in \widehat{\mathcal{F}}_\theta} (B(f) + G_P(f) )\\
&\displaystyle   \leq \eta +  \sup_{ h \in \mathcal{F}^\varepsilon} |G_P(h)|\\
\widetilde T - \widetilde T^\varepsilon & =\displaystyle \inf_{\theta \in \Lambda} \sup_{f\in \mathcal{F}_\theta} (B(f) + G_P(f) ) - \min_{\theta \in \widehat\Lambda} \max_{f\in \widehat{\mathcal{F}}_\theta} (B(f) + G_P(f) ) \\
& \displaystyle \geq   - \sup_{\theta,\tilde \theta \in \Lambda, \|\theta-\tilde\theta\|\leq \varepsilon_\Lambda} \left| \sup_{f\in \mathcal{F}_\theta} (B(f) + G_P(f) ) - \sup_{f\in {\mathcal{F}}_{\tilde \theta}} (B(f) + G_P(f) )\right|\\
&  -\eta -  \sup_{ h \in \mathcal{F}^\varepsilon} |G_P(h)| .
\end{array}
$$}

By Steps 2 and 3 in the proof of Theorem 2.1 of \citet{chernozhukov2015noncenteredprocesses}, we have
$$\begin{array}{c}
 \P( \sup_{ h \in \mathcal{F}^\varepsilon} |G_P(h)| > C\sqrt{\sigma^2 \widetilde K_n/n} ) \leq 2n^{-1} \ \ \mbox{and} \\ \P( \sup_{ h \in \mathcal{F}^\varepsilon} |\Gn(h)| > Cb\widetilde K_n/(\gamma_n^{1/q}n^{1/2-1/q}) ) \leq \gamma_n. \end{array}$$

Note that by Condition B we have
$$ P\left( \sup_{\theta,\tilde \theta \in \Lambda, \|\theta-\tilde\theta\|\leq \varepsilon_\Lambda} \left| \psi(\theta) -   \psi(\tilde \theta)\right|  > C_{\mathcal{F},\gamma_n} \varepsilon_\Lambda^{\alpha} + \eta_{\mathcal{F},\gamma_n} \right)\leq \gamma_n,$$
where by definition of $\varepsilon_\Lambda$ we have  $C_{\mathcal{F},\gamma_n} \varepsilon_\Lambda^{\bar \chi} \leq \sigma/(bn^{1/2})$.

Similarly, again by Condition B, we have
{\small $$\P\left(  \sup_{\theta,\tilde \theta \in \Lambda, \|\theta-\tilde\theta\|\leq \varepsilon_\Lambda} \left| \sup_{f\in \mathcal{F}_\theta} B(f) + G_P(f)  -   \sup_{f\in \mathcal{F}_{\tilde \theta}} B(f) + G_P(f) \right| > \frac{\sigma}{bn^{1/2}} + \eta_{\mathcal{F},\gamma_n} \right) \leq \gamma_n.$$}
Therefore, we have
{\small \begin{equation}\label{res:Step2} \P\left(|T-T^\varepsilon|\vee |\widetilde T - \widetilde T^\varepsilon | >\frac{\sigma}{bn^{1/2}}+\eta_{\mathcal{F},\gamma_n}+\eta+\frac{C\sigma K_n^{1/2}}{n^{1/2}}+\frac{CbK_n}{\gamma_n^{1/q}n^{1/2-1/q}}\right)\leq \frac{2}{n}+3\gamma_n.\end{equation}}
{\it Step 3.} (CLT for Discretized Process). In this step we show that $$\P(|T^\varepsilon-\widetilde T^\varepsilon| >  C \{ \eta + \delta_{n,\eta,\gamma_n}\} ) \leq C'\{\gamma_n + n^{-1}\}.$$ For that we will apply Theorem \ref{thm:clt:minmax:discrete} for $T^\varepsilon = \min_{\theta \in \hat\Lambda}\max_{f\in \widehat{\mathcal{F}}_\theta} B(f) + \Gn(f)$ and $\widetilde T^\varepsilon = \min_{\theta \in \hat\Lambda}\max_{f\in \widehat{\mathcal{F}}_\theta} B(f) + G_P(f)$ where $N=|\hat\Lambda|=(12n^{\frac{3}{2}}C_{\mathcal{F},\gamma_n} b/\sigma)^{\frac{d_\theta}{\bar \chi}}$ and $p\leq (4\bar A n^{1/2}b/\sigma)^v$. This will show that for every Borel subset $A$ of $\mathbb{R}$  we have
$$ \P(T^\varepsilon \in A) \leq \P( \widetilde T^\varepsilon \in A^{C \{ \eta + \delta_{n,\eta,\gamma_n} \}} ) +  C'\{ \gamma_n + n^{-1}\},$$
and the result follows from Strassen's theorem (see e.g.\ Lemma 4.1 in \citet{chernozhukov2015noncenteredprocesses}).

Under Condition A,
by letting $f_i = X_i(f)$ and $\widetilde X_i = f_i - \Ep[f_i]$, we have
$$L_n=\max_{\theta \in \widehat{\Lambda}, f \in \widehat{\mathcal{F}}_\theta}\Ep[\mbox{$\frac{1}{n}\sum_{i=1}^n$}|\widetilde X_{if}|^3] \leq 8 \max_{\theta \in \widehat{\Lambda}, f \in \widehat{\mathcal{F}}_\theta}\Ep[\mbox{$\frac{1}{n}\sum_{i=1}^n$}|f_i|^3] \leq 8\sigma^2b,$$
{\small $$\begin{array}{rl}
M_{n,\widetilde X}(\delta) &\displaystyle = \frac{1}{n}\sum_{i=1}^n\Ep\left[ \max_{\theta \in \widehat{\Lambda}, f \in \widehat{\mathcal{F}}_\theta}|\widetilde X_{if}|^3 1\left\{ \max_{\theta \in \widehat{\Lambda}, f \in \widehat{\mathcal{F}}_\theta}|\widetilde X_{if}|> \delta \sqrt{n}/\log (Np) \right\} \right]\\
 &\displaystyle \leq \frac{1}{n}\sum_{i=1}^n\Ep[ \max_{\theta \in \widehat{\Lambda}, f \in \widehat{\mathcal{F}}_\theta}|\widetilde X_{if}|^3 \max_{\theta \in \widehat{\Lambda}, f \in \widehat{\mathcal{F}}_\theta}|\widetilde X_{if}|^{q-3}/ \{ \delta \sqrt{n}/\log (Np) \}^{q-3} ]\\
 &\displaystyle \leq \frac{2^{q-1}}{n}\sum_{i=1}^n\Ep[ \max_{\theta \in \widehat{\Lambda}, f \in \widehat{\mathcal{F}}_\theta}|f_i|^{q} ]/ \{ \delta \sqrt{n}/\log(Np)\}^{q-3}\\
 &\displaystyle \leq 2^{q-1}b^q \{\log(Np) / (\delta \sqrt{n})\}^{q-3}.
 \end{array}$$}
Next note that we can assume $\delta^3 \geq C\sigma^2 b n^{-1/2} \log^{2}(Np)$, or equivalently $\delta \geq C'\sigma n^{-1/6}\log^{2/3}(Np)$ (otherwise the result is trivial as $r_{1n}(\delta) \geq 1$).
Then, for $Y_{if} \sim N(\Ep[f_i], {\rm var}(f_i) )$,  $\widetilde Y_{if} = Y_{if} - \Ep[f_i]$,
  by Lemma 6.6 in \citet{chernozhukov2015noncenteredprocesses} we have
{\small $$\begin{array}{rl}
M_{n,\widetilde Y}(\delta) & \displaystyle= \frac{1}{n}\sum_{i=1}^n\Ep[ \max_{\theta \in \widehat{\Lambda}, f \in \widehat{\mathcal{F}}_\theta}|\widetilde Y_{if}|^3 1\{ \max_{\theta \in \widehat{\Lambda}, f \in \widehat{\mathcal{F}}_\theta}|\widetilde Y_{if}|> \delta \sqrt{n}/\log (Np) \} ]\\
 & \displaystyle \leq  12(\delta \sqrt{n}/\log (Np) +  c\sigma\sqrt{\log(Np)})^3\exp(-\delta\sqrt{n}/\{c\sigma\log^{3/2}(Np)\}) \\
 & \displaystyle \leq  C\{\delta \sqrt{n}/\log (Np)\}^3\exp(-\delta\sqrt{n}/\{c\sigma\log^{3/2}(Np)\}) \\
&  \leq C n^{-2}\sigma^2b,
 \end{array}$$}
where the second and third inequalities follow from under $C\log(Np)\leq K_n \leq n^{1/3}$ (and the lower bound on $\delta$).

Therefore, by Theorem \ref{thm:clt:minmax:discrete} and Strassen's theorem we have
$$\P( |T^\varepsilon - \widetilde T^\varepsilon| > C\delta ) \leq r_{1n}(\delta) := C'_q \frac{\log^2(Np)}{\delta^3 n^{1/2}}\left\{\sigma^2 b + \frac{\log^{q-3}(Np) b^q }{\{\delta \sqrt{n}\}^{q-3}}\right\} .$$
\end{proof}

\begin{proof}[Proof of Theorem \ref{thm:clt:minmax:Gaussian}]

Similar to the proof of Theorem \ref{thm:clt:minmax}, we proceed in steps.
By a conditional version of Strassen's theorem (see, e.g., Lemma 4.2 of \citet{chernozhukov2015noncenteredprocesses}), since $\sigma(X_i:i=1,\ldots,n)$ is countably generated, it suffices to show that there is an event $E \in \sigma(X_i:i=1,\ldots,n)$ such that $\P(E) \geq 1 - \gamma_n - n^{-1}$ and on the event $E$,
$$ \P(S \in  A \mid X_1,\ldots,X_n) \leq \P( \widetilde S \in A^{C(\eta+\bar\delta_{n,\eta,\gamma_n})}) + C(\gamma_n+ n^{-1})$$
for every Borel subset $A$ of $\mathbb{R}$. We will specify an event $E$ as the intersection of the following events:
\begin{equation}\label{eq:defE}\begin{array}{l}
(i)   \sup_{f \in \mathcal{F}}|\Gn(f)| \leq C \sigma \widetilde K_n^{1/2} / \gamma_n^{1/q} + Cb \widetilde K_n/(\gamma_n^{1/q}n^{1/2-1/q}) \\
(ii) \sup_{f,g \in \mathcal{F}}|\Gn(fg)| \leq C b\sigma \widetilde K_n^{1/2}/\gamma_n^{2/q} + C b^2 \widetilde  K_n/(\gamma_n^{2/q}n^{1/2-2/q})\\
(iii) \|F\|_{P_n,2} \leq n^{1/2}\|F\|_{P,2},
\end{array}\end{equation}
where $\widetilde K_n = v\{\log n \vee \log (\bar A b /\sigma)\}$. Step 0 in the proof of Theorem 2.2 in \citet{chernozhukov2015noncenteredprocesses} established that $\P(E) \geq 1-\gamma_n-n^{-1}$.

{\it Step 1.} (Main Step.) We discretize the sets $\Lambda$ and each $\mathcal{F}_\theta$ and set $N = |\widehat{\Lambda}|$ and $p = \max_{\theta \in \widehat{\Lambda} } |\widehat{\mathcal{F}}_\theta|$. For $\varepsilon>0$, define $S^\varepsilon = \min_{\theta\in \widehat{\Lambda}} \max_{f \in \widehat{\mathcal{F}}_\theta} B(f)+\Gn^\xi(f)$ and $\widetilde S^\varepsilon = \min_{\theta\in \widehat{\Lambda}} \max_{f \in \widehat{\mathcal{F}}_\theta} B(f)+G_P(f)$ where the sets $\widehat{\Lambda}$ and $\widehat{\mathcal{F}}_\theta$ are such that $|S-S^\varepsilon| \leq \varepsilon$ and $ |\widetilde S - \widetilde S^\varepsilon| \leq \bar \varepsilon$ with probability exceeding $1-\rho(\varepsilon)$.

By Step 2 below we can take $\bar \varepsilon = \sigma/(bn^{1/2})+\eta_{\mathcal{F},\gamma_n}+\eta+ C\sqrt{\sigma^2 K_n/n} +  (b\sigma K_n^{3/2})^{1/2}/(\gamma_n^{1/q}n^{1/4})+ CbK_n/(\gamma_n^{1/q}n^{1/2-1/q})$ and $\rho(\varepsilon)=5n^{-1}+3\gamma_n$. Then we have
$$
\begin{array}{rl}
|S-\widetilde S| & \leq |S-S^\varepsilon|+|S^\varepsilon - \widetilde S^\varepsilon| + |\widetilde S^\varepsilon - \widetilde S| \\
& \leq |S-S^\varepsilon|+ |\widetilde S^\varepsilon - \widetilde S| + C\delta\\
& \leq 2\sigma/(bn^{1/2}) +2\eta_{\mathcal{F},\gamma_n}+ 2\eta+ C\sqrt{\sigma^2 K_n/n}\\
&  + (b\sigma K_n^{3/2})^{1/2}/(\gamma_n^{1/q}n^{1/4})+ CbK_n/(\gamma_n^{1/q}n^{1/2-1/q}) + C\delta,
\end{array}
$$
 where the first line holds by the triangle inequality, the second holds by Step 3 with probability exceeding $1-\bar r_{1n}(\delta)$, the third inequality holds by Step 2 as stated before. Then, setting $ \delta = \bar\delta _{n,\eta,\gamma_n}$ the result follows by noting that $\log(Np) \leq CK_n$ by (\ref{def:Kn}) and $\bar r_{1n}(\bar\delta _{n,\eta,\gamma_n})\leq C\gamma_n$.

{\it Step 2.} (Controlling Discretization Error). Using the same notation as in the proof of Theorem \ref{thm:clt:minmax}, and defining $\psi^\xi(\theta):=\sup_{f\in \mathcal{F}_\theta} (B(f) + \Gn^\xi(f) )$ and $\hat\psi(\theta):=\sup_{f\in \widehat{\mathcal{F}}_\theta} (B(f) + \Gn^\xi(f) )$ we have
$$ \begin{array}{rl}
\displaystyle S - S^\varepsilon & \leq \eta + \sup_{ h \in \mathcal{F}^\varepsilon} |\Gn^\xi(h)| \\
\displaystyle S - S^\varepsilon &
\displaystyle  \geq  -\sup_{\theta,\tilde \theta \in \Lambda, \|\theta-\tilde\theta\|\leq \varepsilon_\Lambda} \left| \psi^\xi(\theta) - \psi^\xi(\tilde \theta)\right|  -\eta -  \sup_{ h \in \mathcal{F}^\varepsilon} |\Gn^\xi(h)|,
\end{array}
$$
where we used that $\widehat \Lambda$ is an $\varepsilon_\Lambda$-cover of $\Lambda$.

Similarly we have
$$ \begin{array}{ll}
\displaystyle \widetilde S - \widetilde S^\varepsilon & \leq \eta +  \sup_{ h \in \mathcal{F}^\varepsilon} |G_P(h)|\\
\displaystyle \widetilde S - \widetilde S^\varepsilon  &   \geq   -\eta -  \sup_{ h \in \mathcal{F}^\varepsilon} |G_P(h)| \\
& \displaystyle  -\sup_{\theta,\tilde \theta \in \Lambda, \|\theta-\tilde\theta\|\leq \varepsilon_\Lambda} \left| \sup_{f\in \mathcal{F}_\theta} (B(f) + G_P(f) ) - \sup_{f\in {\mathcal{F}}_{\tilde \theta}} (B(f) + G_P(f) )\right|.
\end{array}
$$

By Step 2 in the proof of Theorem 2.1 of \citet{chernozhukov2015noncenteredprocesses}, and Step 2 in the proof of Theorem 2.2 of \citet{chernozhukov2015noncenteredprocesses}, we have
$$\begin{array}{c}
 \P( \sup_{ h \in \mathcal{F}^\varepsilon} |G_P(h)| > C\sqrt{\sigma^2 K_n/n} ) \leq 2n^{-1} \ \ \mbox{and} \\
 \P\left( {\displaystyle \sup_{ h \in \mathcal{F}^\varepsilon}} |\Gn^\xi(h)| > C\left\{ \frac{(b\sigma K_n^{3/2})^{1/2}}{\gamma_n^{1/q}n^{1/4}}+\frac{bK_n}{\gamma_n^{1/q}n^{1/2-1/q}} \right\} \mid X_1,\ldots,X_n \right) \leq 2n^{-1}, \end{array}$$
provided that the event $E$ as defined in (\ref{eq:defE}) occurs.

Using Condition B as in Step 2 in the proof of Theorem \ref{thm:clt:minmax} we have
$$\P\left(  \sup_{\theta,\tilde \theta \in \Lambda, \|\theta-\tilde\theta\|\leq \varepsilon_\Lambda} \left| \psi^\xi(\theta) -   \psi^\xi(\tilde \theta)\right| > \sigma / (bn^{1/2}) +\eta_{\mathcal{F},\gamma_n}\right) \leq \gamma_n.$$
{\small $$\P\left(  \sup_{\substack{\theta,\tilde \theta \in \Lambda,\\ \|\theta-\tilde\theta\|\leq \varepsilon_\Lambda}} \left| \sup_{f\in \mathcal{F}_\theta} (B(f) + G_P(f) ) -   \sup_{f\in \mathcal{F}_{\tilde \theta}} (B(f) + G_P(f) )\right| > \frac{\sigma}{bn^{1/2}} +\eta_{\mathcal{F},\gamma_n} \right) \leq \gamma_n.$$}

{\it Step 3.} (CLT for Discretized Gaussian Process)
Next we will establish that in the event $E$ we have
{\small \begin{equation}\label{eq:ttt} \P( S^\varepsilon \in A\mid X_1,\ldots, X_n) \leq \P( \widetilde S^\varepsilon \in A^{5\delta}) + \bar r_{1n}(\delta)\end{equation}}
\noindent for every $\delta>0$ and Borel subset $A$ of $\mathbb{R}$ where $\log(Np) \leq CK_n$ and
 $$\bar r_{1n}(\delta):=\frac{CK_n}{\delta^2 \gamma_n^{2/q}}\left\{ \frac{b\sigma K_n^{1/2}}{n^{1/2}}+\frac{b^2K_n}{n^{1-2/q}}  \right\}. $$
We define
 $$\begin{array}{rl}
 \| \Sigma^{S^\varepsilon} - \Sigma^{\widetilde S^\varepsilon}\|_\infty  := {\displaystyle \max_{(k,j)\in[N]^2\times[p]^2}} & |  \En[f_{k_1j_1}f_{k_2j_2}]-\En[f_{k_1j_1}]\En[f_{k_2j_2}] \\
& - \{\Ep[f_{k_1j_1}f_{k_2j_2}]-\Ep[f_{k_1j_1}]\Ep[f_{k_2j_2}]\}|.
\end{array}$$
It follows that
\begin{equation}\label{eq:ControlDelta} \begin{array}{rl}
| \En[f_{k_1j_1}f_{k_2j_2}] - \Ep[f_{k_1j_1}f_{k_2j_2}] | & \leq n^{-1/2} \sup_{f,g \in \mathcal{F}}|\Gn(fg)|\\
| \En[f_{k_1j_1}]\En[f_{k_2j_2}] - \Ep[f_{k_1j_1}]\Ep[f_{k_2j_2}] | & \leq n^{-1} \sup_{f \in \mathcal{F}}|\Gn(f)|^2 \\
& + \sigma n^{-1/2} \sup_{f \in \mathcal{F}}|\Gn(f)|,
 \end{array}
 \end{equation} and conditional on $E$, we have
\begin{equation}\label{bound:CovInfty} \| \Sigma^{S^\varepsilon} - \Sigma^{\widetilde S^\varepsilon}\|_\infty \leq  \frac{Cb\sigma K_n^{1/2}}{\gamma_n^{2/q}n^{1/2}}+\frac{Cb^2K_n}{\gamma_n^{2/q}n^{1-2/q}}.\end{equation}
 Therefore, by Theorem \ref{thm:clt:minmax:Gaussian:discrete}
 we have that (\ref{eq:ttt}) holds.

\end{proof}

\section{Proofs of Section \ref{SEC:DISCRETIZED}}

\begin{proof}[Proof of Theorem \ref{thm:clt:minmax:discrete}]
The proof follows similar steps to the proof of Lemma 5.1 in \citet{chernozhukov2014clt}. For a Borel set $A \subset \mathbb{R}$, define its $\epsilon$-enlargement as $A^\epsilon=\{ t \in \mathbb{R} : {\rm dist}(t,A) \leq \epsilon\}$.  Define $\bar \mu_{kj} = \frac{1}{\sqrt{n}}\sum_{i=1}^n \Ep[X_{i,kj}]$, $\bar \mu_k = (\bar \mu_{kj})_{j=1}^p$, and for a vector $v \in \mathbb{R}^d$ we let $$F_{\beta,\bar \mu_k}(v)=\beta^{-1}\log\left(\sum_{j=1}^p\exp(\beta \{v_j+\bar \mu_{kj}\})\right).$$ For a $N\times p$ matrix $W=[W_1,\ldots,W_N]'$, let $$F_{\beta,\bar \mu}(W) = (F_{\beta,\bar \mu_1}(W_1),\ldots,F_{\beta,\bar \mu_N}(W_N))'\in \mathbb{R}^N.$$
In order to approximate the $\min\max$ operator we will consider the function $G_\beta: \mathbb{R}^{N\times p}\to \mathbb{R}$ defined as $$ G_\beta(W)=-F_{\beta,0}(-F_{\beta,\bar \mu}(W)).$$

It follows by Lemma \ref{aux:Gbeta} that
$$ -\beta^{-1}\log N \leq G_\beta(W-\bar\mu) - \min_{1\leq k\leq N}\max_{1\leq j \leq p} W_{kj} \leq \beta^{-1} \log p, \ \ \mbox{for all} \ W \in \mathbb{R}^{N\times p}. $$

We choose $\beta$ so that $\delta = \beta^{-1}\log (Np)$. By Lemma 5.1 in \citet{chernozhukov2015noncenteredprocesses}, for each Borel set $A\subset \mathbb{R}$ and $\delta>0$, there exists a function $g \in C^3$, satisfying $\|g'\|_\infty \leq \delta^{-1}$, $\|g''\|_\infty \leq \delta^{-2}K$, $\|g'''\|_\infty\leq \delta^{-3}K$ for a universal constant $K$, such that
$ 1_{A}(t) \leq g(t) \leq 1_{A^{3\delta}}(t)$ for all $t\in \mathbb{R}$.

To proceed define the composition $m=g\circ G_\beta$. Let $$\begin{array}{rl} Z & = {\displaystyle \min_{k\in [N]}\max_{j\in [p]}}\frac{1}{\sqrt{n}}\sum_{i=1}^nX_{i,kj}= {\displaystyle \min_{k\in [N]}\max_{j\in [p]}}\frac{1}{\sqrt{n}}\sum_{i=1}^n\wX_{i,kj}+\bar\mu_{kj},\\
\widetilde{Z} & = {\displaystyle \min_{k\in [N]}\max_{j\in [p]}}\frac{1}{\sqrt{n}}\sum_{i=1}^nY_{i,kj}= {\displaystyle \min_{k\in [N]}\max_{j\in [p]}}\frac{1}{\sqrt{n}}\sum_{i=1}^n\wY_{i,kj}+\bar\mu_{kj}.
\end{array}$$
Since $\delta \geq \beta^{-1}\log(Np)$, Lemma \ref{aux:Gbeta} implies that
$$ \begin{array}{rl}
\P(Z \in A ) & \leq \P( G_\beta(\wX) \in A^{\delta}) \leq \Ep[ m(\wX) ]\\
 &  = \Ep[ m(\wY) ] + \Ep[ m(\wX) ]- \Ep[ m(\wY) ]  \\
 & \leq \P( G_\beta(\wY) \in A^{4\delta}) + \Ep[ m(\wX) ]- \Ep[ m(\wY) ]  \\
 & \leq \P( \widetilde Z \in A^{5\delta}) + \Ep[ m(\wX) ]- \Ep[ m(\wY) ]  \\
  \end{array}$$
We will proceed to bound $\Ep[ m(\wX) ]- \Ep[ m(\wY) ]$.

Let $\wW$ be a copy of $\wY$ and we can assume that $\wX, \wY, \wW$ are independent. It suffices to bound $\Ep[\mathcal{I}_n]$ where for $v \in [0,1]$
$$ \mathcal{I}_n = m(\sqrt{v}S_n^{\wX}+\sqrt{1-v}S_n^{\wY})-m(S_n^{\wW}), \ \ S_n^Q = n^{-1/2}\sum_{i=1}^n Q_i, \ \ Q = \wX,\wY,\wW.$$
Our case corresponds to $v=1$. For a constant $\bar A\geq 6$ (independent of $n$), and for any $w \in \mathbb{R}^{N\times p}$ and $ t>0$, define
$$ h(w,t) \equiv 1\left\{ \min_{k\in [N]}\max_{j\in [p]} (w_{kj}+\bar\mu_{kj}) \in A^{\bar A \delta + t/\beta} \right\}. \ \ $$
For any $t\in(0,1)$, define $\omega(t)\equiv\frac{1}{\sqrt{t}\wedge \sqrt{1-t}}.$.

For any $t\in [0,1]$, Define the Slepian interpolation $Z(t) \equiv \sum_{i=1}^n Z_i(t)$, where
$$ Z_i(t) \equiv \frac{1}{\sqrt{n}}\left\{ \sqrt{t}(\sqrt{v}\wX_i + \sqrt{1-v}\wY_i)+\sqrt{1-t}\wW_i\right\} .$$
so that $Z(1) = \sqrt{v} S_n^{\wX} + \sqrt{1-v}S_n^{\wY}$ and $Z(0)=S_n^{\wW}$. It follows that
$$\mathcal{I}_n = m(Z(1))-m(Z(0)) = \int_0^1 \frac{d}{dt}m(Z(t))dt .$$

Define the Stein leave-one-out term of $Z(t)$ as $Z^{(i)}(t)\equiv Z(t)-Z_i(t)$ and
$$ \dZ_i(t)= \frac{1}{\sqrt{n}}\left\{ \frac{1}{\sqrt{t}}(\sqrt{v}\wX_i+\sqrt{1-v}\wY_i)-\frac{1}{\sqrt{1-t}}\wW_i\right\}/$$

By a Taylor expansion,
$$
\begin{array}{rl}
\Ep[\mathcal{I}_n] & = \Ep[ \int_0^1 \frac{d}{dt}m(Z(t))dt] = \frac{1}{2} \Ep[ \int_0^1 \sum_{k\in[N]}\sum_{j\in[p]} m_{(k,j)}'(Z(t))\dZ_{kj}(t)dt]\\
& =  \frac{1}{2}\sum_{(k,j)\in[N]\times[p]}\sum_{i=1}^n \int_0^1\Ep[  m_{(k,j)}'(Z(t))\dZ_{i,kj}(t)]dt\\
& = I + II + III, \\
\end{array}
$$
where
$$
\begin{array}{rl}
I & \equiv \frac{1}{2}\sum_{(k,j)\in[N]\times[p]}\sum_{i=1}^n \int_0^1\Ep[  m_{(k,j)}'(Z^{(i)}(t))\dZ_{i(kj)}(t)]dt\\
II & \equiv \frac{1}{2}\sum_{(k,j)\in[N]^2\times[p]^2}\sum_{i=1}^n \int_0^1\Ep[  m_{(k,j)}''(Z^{(i)}(t))\dZ_{i(k_1j_1)}(t)Z_{i(k_2j_2)}(t)]dt\\
III & \equiv \frac{1}{2}\sum_{(k,j)\in[N]^3\times[p]^3}\sum_{i=1}^n\int_0^1\int_0^1(1-\tau)\Ep[H_{(k,j)}(i,t,\tau)]dtd\tau
\end{array}
$$ and $H_{(k,j)}(i,t,\tau) \equiv m'''_{(kj)}(Z^{(i)}(t)+\tau Z_i(t))\dZ_{i(k_1j_1)}(t)Z_{i(k_2j_2)}(t)Z_{i(k_3j_3)}(t)$.

By independence between $Z^{(i)}(t)$ and $Z_i(t)$, for $(k,j)\in[N]\times[p]$ we have $$\Ep[  m_{(k,j)}(Z^{(i)}(t))\dZ_{i,kj}(t)] =\Ep[  m_{(k,j)}'(Z^{(i)}(t))]\Ep[\dZ_{i(k_1j_1)}(t)]=0,$$ since $\Ep[\dZ_{i(k_1j_1)}(t)]=0$ which in turn implies $I=0$. Similarly, by independence between $Z^{(i)}(t)$ and $Z_i(t)$, for $(k,j)\in [N]^2\times[p]^2$ we have
$$\begin{array}{c}\Ep[  m_{(k,j)}''(Z^{(i)}(t))\dZ_{i(k_1j_1)}(t)Z_{i(k_2j_2)}(t)]\\
=\Ep[  m_{(k,j)}''(Z^{(i)}(t))]\Ep[\dZ_{i(k_1j_1)}(t)Z_{i(k_2j_2)}(t)]\end{array}$$ and note that $\Ep[\dZ_{i(k_1j_1)}(t)Z_{i(k_2j_2)}(t)]=0$ by expanding and using independence between $\wX$ and $\wY$. Therefore, $II=0$.

To bound $III$ let $\chi_i = 1\{ \max_{k\in [N],j\in [p]} |\wX_{ikj}|\vee |\wY_{ikj}| \vee |\wW_{ikj}| \leq \sqrt{n}/(4\beta)\}$ and $III=\frac{1}{2}III_1 + \frac{1}{2}III_2$ where
$$
\begin{array}{rl}
III_1 & = \sum_{(k,j)\in[N]^3\times[p]^3}\sum_{i=1}^n\int_0^1\int_0^1(1-\tau)\Ep[ \chi_i H_{(k,j)}(i,t,\tau)]d\tau dt\\
III_2 & = \sum_{(k,j)\in[N]^3\times[p]^3}\sum_{i=1}^n\int_0^1\int_0^1(1-\tau)\Ep[ (1-\chi_i) H_{(k,j)}(i,t,\tau)]d\tau dt.
\end{array}
$$

Using $U_{(k,j)}(x)$ as defined in relation (\ref{Def:Ukj}), by Lemma \ref{lemma:Der3}, we have for $(k,j)\in[N]^3\times[p]^3$ that
\begin{equation}\label{eq:rel_m_U}
\begin{array}{c} |m'''_{(k,j)}(x)|\leq U_{(k,j)}(x), \ \ \sum_{(k,j)\in[N]^3\times[p]^3}U_{(k,j)}(x) \lesssim \delta^{-1}\beta^2 \\
 U_{(k,j)}(x)\lesssim U_{(k,j)}(x+\tilde x) \lesssim U_{(k,j)}(x) \end{array}\end{equation}
for all $x, \tilde x \in \mathbb{R}^{N\times p}$ with $\beta\|\tilde x\|_\infty = \beta \max_{1\leq m \leq N,1\leq \ell \leq p}|\tilde x_{m\ell}|\leq 1$. Moreover we have that
$$ \begin{array}{rl}
|\dZ_{i(k_1j_1)}(t)Z_{i(k_2j_2)}(t)Z_{i(k_3j_3)}(t)| & \leq 3\frac{w(t)}{n^{3/2}} \max_{\ell=1,2,3}\{ |\wX_{i,k_\ell,j_\ell}|, \ |\wY_{i,k_\ell,j_\ell}|, \ |\wW_{i,k_\ell,j_\ell}|\} \\
& \leq  3\frac{w(t)}{n^{3/2}}{\displaystyle \max_{m\in [N], \ell \in [p]}}|\wX_{i,m\ell}|^3\vee |\wY_{i,m\ell}|^3\vee |\wW_{i,m\ell}|^3
 \\
& =  \frac{3w(t)}{n^{3/2}} M_i^3
\end{array} $$ where we define $M_i \equiv \max_{m\in [N], \ell \in [p]}|\wX_{i,m\ell}|\vee |\wY_{i,m\ell}|\vee |\wW_{i,m\ell}|$.

Therefore we bound $III_2$ as follows
{\small $$
\begin{array}{rl}
|III_2|& {\displaystyle\leq  \sum_{(k,j)\in[N]^3\times[p]^3} \sum_{i=1}^n\int_0^1\int_0^1}\Ep[(1-\chi_i)U_{(k,j)}(Z^{(i)}(t)+\tau Z_i(t)) \frac{3w(t)}{n^{3/2}} M_i^3  ]d\tau dt\\
& {\displaystyle\leq  \sum_{i=1}^n\int_0^1\int_0^1}\Ep[\frac{3w(t)}{n^{3/2}} M_i^3 (1-\chi_i){\displaystyle\sum_{(k,j)\in[N]^3\times[p]^3}} U_{(k,j)}(Z^{(i)}(t)+\tau Z_i(t))  ]d\tau dt\\
& \lesssim \delta^{-1}\beta^2  {\displaystyle\sum_{i=1}^n\int_0^1\int_0^1}\Ep[\frac{3w(t)}{n^{3/2}} M_i^3 (1-\chi_i) ]d\tau dt.\\
\end{array}
$$}
As in \citet{chernozhukov2014clt} we define $\mathcal{T}=\sqrt{n}/(4\beta)$ and using the union bound we have
$$1-\chi_i \leq 1\{\|\wX_i\|_\infty > \mathcal{T}\}+ 1\{\|\wY_i\|_\infty > \mathcal{T}\}+1\{\|\wW_i\|_\infty > \mathcal{T}\}.
$$ Using a variant of the Chebyshev's association inequality\footnote{This is used to show that $\Ep[ 1\{A>t\}B^3] \leq \Ep[1\{A>t\}A^3]+1\{B>t\}B^3]$ for positive random variables $A,B$.} (see Lemma B.1 in \citet{chernozhukov2014clt}) we have
$$\begin{array}{rl}
 \Ep[ M_i^3 (1-\chi_i)] & \leq 5\Ep[ 1\{\|\wX_i\|_\infty\geq \mathcal{T}\} \|\wX_i\|_\infty^3 ]+10\Ep[ 1\{\|\wY_i\|_\infty\geq \mathcal{T}\} \|\wY_i\|_\infty^3 ].\\
\end{array}$$
where we used that $\wY_i =_d \wW_i$. Therefore, since $\int_0^1w(t)dt =\int_0^1 1/\{\sqrt{t}\wedge\sqrt{1-t}\} dt = 2 \int_0^{1/2} (1/\sqrt{t})dt = 4\sqrt{2}$,
$$
\begin{array}{rl}
|III_2|& \displaystyle \lesssim \frac{\delta^{-1}\beta^2 }{n^{3/2}}\sum_{i=1}^n\Ep[ \|\wX_i\|_\infty^3 1\{\|\wX_i\|_\infty > \mathcal{T} \}] + \Ep[ \|\wY_i\|_\infty^3 1\{\|\wY_i\|_\infty > \mathcal{T} \}] \\
& \displaystyle \lesssim \frac{\delta^{-1}\beta^2 }{n^{1/2}}\{M_{n,\wX}(\delta/4)+M_{n,\wY}(\delta/4)\}
\end{array}$$
where $M_{n,\wX}$ and $M_{n,\wY}$ are defined in the statement of the theorem.

Next we turn to $III_1$. By Step 2 below we have that \begin{equation}\label{eq:help04a} \chi_i|m_{(k,j)}'''(Z^{(i)}(t)+\tau Z_i(t))|= h(Z^{(i)},1)\chi_i|m_{(k,j)}'''(Z^{(i)}(t)+\tau Z_i(t))|.\end{equation}

By definition we have

{\small $$
\begin{array}{ll}
III_1  = \sum_{(k,j)\in[N]^3\times[p]^3}\sum_{i=1}^n\int_0^1\int_0^1(1-\tau)\Ep[ \chi_i H_{(k,j)}(i,t,\tau)]d\tau dt\\
 \leq_{(1)} \sum_{(k,j)\in[N]^3\times[p]^3}\sum_{i=1}^n\int_0^1\int_0^1(1-\tau)\\
 \Ep[ \chi_i |m_{(k,j)}'''(Z^{(i)}(t)+\tau Z_i(t))| |\dZ_{i(k_1,j_1)}(t)Z_{i(k_2,j_2)}(t)Z_{i(k_3,j_3)}(t)|]d\tau dt\\
 \leq_{(2)} \sum_{(k,j)\in[N]^3\times[p]^3}\sum_{i=1}^n\int_0^1\int_0^1(1-\tau)\\
 \Ep[ h(Z^{(i)}(t),1)\chi_i |m_{(k,j)}'''(Z^{(i)}(t)+\tau Z_i(t))| |\dZ_{i(k_1,j_1)}(t)Z_{i(k_2,j_2)}(t)Z_{i(k_3,j_3)}(t)|]d\tau dt\\
 \leq_{(3)} \sum_{(k,j)\in[N]^3\times[p]^3}\sum_{i=1}^n\int_0^1\int_0^1(1-\tau)\\
 \Ep[ \chi_i h(Z^{(i)},1)U_{(k,j)}(Z^{(i)}(t)+\tau Z_i(t))| |\dZ_{i(k_1,j_1)}(t)Z_{i(k_2,j_2)}(t)Z_{i(k_3,j_3)}(t)|]d\tau dt\\
 \lesssim_{(4)} \sum_{(k,j)\in[N]^3\times[p]^3}\sum_{i=1}^n\int_0^1\int_0^1(1-\tau)\\
 \Ep[ \chi_i h(Z^{(i)},1)U_{(k,j)}(Z^{(i)}(t))| |\dZ_{i(k_1,j_1)}(t)Z_{i(k_2,j_2)}(t)Z_{i(k_3,j_3)}(t)|]d\tau dt,
\end{array}
$$} where (1) follows from the definition of $H_{(k,j)}$, (2) follows from (\ref{eq:help04a}), and (3) follows from the definition of $U_{(k,j)}$. Relation (4) follows since whenever $\chi_i=1$ we have $\|Z_i(t)\|_\infty \leq 3/(4\beta)$ we have $\beta\|\tau Z_i(t)\|_\infty \leq 3/4 < 1$ as required in Lemma \ref{lemma:Der3} so that
$U_{(k,j)}(Z^{(i)}(t)+\tau Z_i(t)) \lesssim U_{(k,j)}(Z^{(i)}(t)).$

Therefore
$$
\begin{array}{rl}
III_1 & \lesssim \sum_{(k,j)\in[N]^3\times[p]^3}\sum_{i=1}^n\int_0^1 \Ep[  h(Z^{(i)},1)U_{(k,j)}(Z^{(i)}(t)) ]\\
& \cdot \Ep[ \chi_i  |\dZ_{i(k_1,j_1)}(t)Z_{i(k_2,j_2)}(t)Z_{i(k_3,j_3)}(t)|] dt\\
& = \sum_{(k,j)\in[N]^3\times[p]^3}\sum_{i=1}^n\int_0^1 \Ep[ \{\chi_i+(1-\chi_i)\} h(Z^{(i)},1)U_{(k,j)}(Z^{(i)}(t)) ]\\
& \cdot \Ep[ \chi_i  |\dZ_{i(k_1,j_1)}(t)Z_{i(k_2,j_2)}(t)Z_{i(k_3,j_3)}(t)|] dt\\
& \lesssim \sum_{i=1}^n\int_0^1 \Ep[ (1-\chi_i)]\Ep[\sum_{(k,j)\in[N]^3\times[p]^3} h(Z^{(i)},1)U_{(k,j)}(Z^{(i)}(t)) ]\\
& \cdot \max_{(k,j)\in[N]^3\times[p]^3}\Ep[ \chi_i  |\dZ_{i(k_1,j_1)}(t)Z_{i(k_2,j_2)}(t)Z_{i(k_3,j_3)}(t)|] dt\\
& +\sum_{i=1}^n\int_0^1 \sum_{(k,j)\in[N]^3\times[p]^3} \Ep[ \chi_i h(Z^{(i)},1)U_{(k,j)}(Z^{(i)}(t)) ]\\
& \cdot \Ep[ \chi_i  |\dZ_{i(k_1,j_1)}(t)Z_{i(k_2,j_2)}(t)Z_{i(k_3,j_3)}(t)|] dt\\
&= III_{1a} + III_{1b}.
\end{array}
$$

Since $\sum_{(k,j)\in[N]^3\times[p]^3} U_{(k,j)}(Z^{(i)}(t)) \lesssim \delta^{-1} \beta^2$ and $h(Z^{(i)},1) \in \{0,1\}$ we have
$$ III_{1a} \lesssim \{M_{n,\widetilde X}(\delta/4)\}+M_{n,\widetilde Y }(\delta/4)\}\delta^{-1}\beta^2/n^{1/2}.$$
To bound $III_{1b}$ note that if $h(Z^{(i)}(t)+Z_i(t),2)=0$ and $\chi_i=1$, we have $h(Z^{(i)}(t),1)=0$. Therefore,
\begin{equation}\label{eq:help06} \chi_i h(Z^{(i)}(t),1) = \chi_i h(Z^{(i)}(t),1)h(Z^{(i)}(t)+ Z_i(t),2) \leq h(Z(t),2).\end{equation}

Moreover since $U_{(k,j)}(Z^{(i)}(t)) \lesssim U_{(k,j)}(Z(t))$ (by Lemma \ref{lemma:Der3} if $\chi_i=1$) and (\ref{eq:help06}) hold we have
$$
\begin{array}{rl}
III_{1b} & \lesssim \sum_{i=1}^n\sum_{(k,j)\in[N]^3\times[p]^3} \int_0^1  \Ep[ h(Z(t),2)U_{(k,j)}(Z(t)) ]\\
& \cdot \Ep[ \chi_i  |\dZ_{i(k_1,j_1)}(t)Z_{i(k_2,j_2)}(t)Z_{i(k_3,j_3)}(t)|] dt\\
& = \sum_{(k,j)\in[N]^3\times[p]^3} \int_0^1  \Ep[  h(Z(t),2)U_{(k,j)}(Z(t)) ]\\
& \cdot \sum_{i=1}^n \Ep[ \chi_i  |\dZ_{i(k_1,j_1)}(t)Z_{i(k_2,j_2)}(t)Z_{i(k_3,j_3)}(t)|] dt\\
& \leq  \int_0^1  \Ep[ h(Z(t),2) \sum_{(k,j)\in[N]^3\times[p]^3} U_{(k,j)}(Z(t)) ]\\
& \cdot \max_{(k,j)\in[N]^3\times[p]^3} \sum_{i=1}^n \Ep[ \chi_i  |\dZ_{i(k_1,j_1)}(t)Z_{i(k_2,j_2)}(t)Z_{i(k_3,j_3)}(t)|] dt\\
&\lesssim \delta^{-1} \beta^2 \frac{L_n}{n^{1/2}} \int_0^1 \Ep[ h(Z(t),2)w(t)] dt \\
&\lesssim \delta^{-1} \beta^2 L_n/n^{1/2}.
\end{array}
$$

Thus we obtain the stated bound (by redefining $\delta$ by a multiplicative factor of $4$).

{\it Step 2.} In this step we show that
\begin{equation}\label{eq:help04} \chi_i|m_{(k,j)}'''(Z^{(i)}(t)+\tau Z_i(t))|= h(Z^{(i)},1)\chi_i|m_{(k,j)}'''(Z^{(i)}(t)+\tau Z_i(t))|.\end{equation}
Note that if $\chi_i=0$ or $h(Z^{(i)}(t),1)=1$ the statement is trivial. So we assume that
 $h(Z^{(i)}(t),1)=0$ and $\chi_i=1$ occur.
Recall that  $h(Z^{(i)}(t),1)=0$ is equivalent to 
$$1\left\{ \min_{m\in[N]}\max_{\ell\in [p]} (Z^{(i)}_{m\ell}(t)+\bar\mu_{m\ell}) \in A^{\bar A\delta+1/\beta}\right\} = 0,$$
and $\chi_i=1$ is equivalent to $$1\left\{ \max_{m\in [N],\ell \in [p]} |\wX_{i,m\ell}|\vee |\wY_{i,m\ell}|\vee |\wW_{i,m\ell}| \leq \sqrt{n}/(4\beta)\right\}=1.$$
First we note that by definition, $h(Z^{(i)}(t),1)=0$ implies
\begin{equation}\label{eq:help1}
 \min_{m\in[N]}\max_{\ell\in [p]} (Z^{(i)}_{m\ell}(t)+\bar\mu_{m\ell}) \not\in  A^{\bar A\delta+\beta^{-1}}.
\end{equation}
Moreover, $\chi_i=1$ implies
\begin{equation}\label{eq:help2}
\begin{array}{rl}
|Z_{i,m\ell}(t)| & = \frac{1}{\sqrt{n}}\left|\sqrt{t}(\sqrt{v}\wX_{i,m\ell}+\sqrt{1-v}\wY_{i,m\ell})+\sqrt{1-t}\wW_{,i,m\ell}\right|\\
& \leq \frac{1}{\sqrt{n}}\left\{ |\wX_{i,m\ell}|+|\wY_{i,m\ell}|+|\wW_{i,m\ell}|\right\}\\
& \leq \frac{1}{\sqrt{n}}3\sqrt{n}/(4\beta)= (3/4)\beta^{-1}.
\end{array}
\end{equation}
Therefore, when $h(Z^{(i)}(t),1)=0$ and $\chi_i=1$, (\ref{eq:help1}) and (\ref{eq:help2}) we have
\begin{equation}\label{eq:help3}
\begin{array}{ll}
 \min_{m\in[N]}\max_{\ell\in [p]} (Z^{(i)}_{m\ell}(t)+\tau Z_{i,m\ell}(t)+\bar\mu_{m\ell})
\not\in A^{\bar A \delta + \frac{1}{4}\beta^{-1}} \\
\end{array}\end{equation} for all $\tau \in [0,1]$, since $\max_{m\in [N],j\in[p]} |Z_{i,m\ell}(t)| \leq (3/4)\beta^{-1}$. In turn, by definition (\ref{eq:help3}) implies $h(Z^{(i)}(t),0)=0$.

Moreover, (\ref{eq:help3}) and
$$-\delta \leq G_\beta(Z^{(i)}(t)+\tau Z_i(t)) - \min_{m\in [N]}\max_{j\in [p]} (Z^{(i)}(t)+\tau Z_i(t)+\bar\mu)_{m\ell} \leq \delta$$
implies that $G_\beta(Z^{(i)}(t)+\tau Z_i(t)) \not\in A^{\bar A\delta - \delta}$.

Therefore by definition of $g$, which implies $1_{A^{\delta}}(t) \leq g(t) \leq 1_{A^{4\delta}}(t)$, if $G_\beta(Z^{(i)}(t)+\tau Z_i(t)) \not\in A^{\bar A\delta - \delta}$, it follows that $m(Z^{(i)}(t)+\tau Z_i(t))=(g\circ G_\beta)(Z^{(i)}(t)+\tau Z_i(t))=0$ for $\bar A \geq 6$ so that $m_{(k,j)}'''(Z^{(i)}(t)+\tau Z_i(t))=0$. Therefore, if $h(Z^{(i)}(t),1)=0$ and $\chi_i=1$ we have (\ref{eq:help04}).
 \end{proof}

\begin{proof}[Proof of Theorem \ref{thm:clt:minmax:Gaussian:discrete}]
Similar to the proof of Theorem \ref{thm:clt:minmax:discrete} we define $m=g\circ G_\beta$, $\wX = X - \mu$, and $\wY = Y-\mu$ where we can assume $\wX$ and $\wY$ to be independent. Recall that $|G_\beta(\wX)- T| \leq \beta^{-1}\log(Np)$ by Lemma \ref{lemma:Der1} and we can take $g \in C^3(\mathbb{R})$ such that $1_A(t)\leq g(t) \leq 1_{A^{3\delta}}(t)$. Therefore we have
$$ \begin{array}{rl}
\P(T \in A ) & \leq \P( G_\beta(\wX) \in A^{\beta^{-1}\log(Np)}) \leq \Ep[ m(\wX) ]\\
 &  = \Ep[ m(\wY) ] + \Ep[ m(\wX) ]- \Ep[ m(\wY) ]  \\
 & \leq \P( G_\beta(\wY) \in A^{\beta^{-1}\log(Np)+3\delta}) + \Ep[ m(\wX) ]- \Ep[ m(\wY) ]  \\
 & \leq \P( \widetilde T \in A^{2\beta^{-1}\log(Np)+3\delta}) + \Ep[ m(\wX) ]- \Ep[ m(\wY) ] .
  \end{array}$$

We will proceed to bound $\Ep[ m(\wX) ]- \Ep[ m(\wY) ]$. Defining $Z(t)= \sqrt{t}\wX + \sqrt{1-t}\wY$ we have
$$ \Ep[ m(\wX) ]- \Ep[ m(\wY) ] = \frac{1}{2}\sum_{(k,j)\in [N]^2\times[p]^2} (\Sigma^X_{(k_1j_1,k_2j_2)}-\Sigma^Y_{(k_1j_1,k_2j_2)})\Ep[m_{(k,j)}''(Z(t))],$$
which follows from Stein's identity (see Lemma 2 in \citet{chernozhukov2012comparison}). Therefore, we have
$$\begin{array}{rl}
 |\Ep[ m(\wX) ]- \Ep[ m(\wY) ]| & \leq \|\Sigma^X-\Sigma^Y\|_\infty \Ep\left[ \sum_{(k,j)\in [N]^2\times[p]^2} |m_{(k,j)}''(Z(t))|\right] \\
 & \leq \|\Sigma^X-\Sigma^Y\|_\infty \{\|g''\|_\infty+4\beta\|g'\|_\infty\},
 \end{array}$$
where the last step follows by Lemma \ref{lemma:Der2}.

Since $\|g''\|_\infty\leq \delta^{-2}K$ and $\|g'\|_\infty \leq \delta^{-1}$ for some universal constant $K$, and setting $\beta = \delta^{-1}\log(Np)$ we obtain
$$ \begin{array}{rl}
\P(T \in A ) & \leq \P( \widetilde T \in A^{2\beta^{-1}\log(Np)+3\delta}) + C\|\Sigma^X-\Sigma^Y\|_\infty \{ \delta^{-2} + \delta^{-1} \beta \}\\
& \leq \P( \widetilde T \in A^{5\delta})    + 2C \delta^{-2}\|\Sigma^X-\Sigma^Y\|_\infty \log(Np).\\
\end{array}
$$ 
\end{proof}

\section{Technical Lemmas}

\begin{lemma}\label{aux:Gbeta}
Let the function $G_\beta: \mathbb{R}^{N\times p}\to \mathbb{R}$ be defined by $ G_\beta(W)=-F_\beta(-F_\beta(W))$. Then,
\begin{align*}
	&|G_\beta(W)-G_\beta(\widetilde W)| ~\leq~ \| W-\widetilde W\|_\infty\\
	 &-\beta^{-1}\log p ~\leq~ \min_{k\in [N]}\max_{j\in [p]} W_{kj} - G_\beta(W) ~\leq~ \beta^{-1}\log N.
\end{align*}
\end{lemma}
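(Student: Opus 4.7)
My plan is to reduce everything to two standard one-dimensional facts about the log-sum-exp functional $F_\beta(v) = \beta^{-1}\log\sum_i \exp(\beta v_i)$ and then apply them twice, once for the inner and once for the outer application.

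First, I will record the two base facts. (a) \emph{Sandwich bound:} for any $v\in\mathbb{R}^d$, $\max_i v_i \le F_\beta(v) \le \max_i v_i + \beta^{-1}\log d$; the lower bound is immediate and the upper bound follows by factoring out $\exp(\beta\max_i v_i)$ from the sum. (b) \emph{Lipschitz property:} $|F_\beta(v) - F_\beta(\tilde v)| \le \|v-\tilde v\|_\infty$; this follows because $\partial F_\beta/\partial v_i = \exp(\beta v_i)/\sum_k \exp(\beta v_k)\ge 0$ sums to $1$ over $i$, so by the mean-value theorem the gradient is in the simplex and the map is $1$-Lipschitz in the $\ell_\infty$ norm.

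For the Lipschitz claim on $G_\beta$, I will apply (b) once to the outer $F_\beta$ and once rowwise to the inner $F_\beta$:
\[
|G_\beta(W)-G_\beta(\widetilde W)| = |F_\beta(-F_\beta(W))-F_\beta(-F_\beta(\widetilde W))| \le \|F_\beta(W)-F_\beta(\widetilde W)\|_\infty,
\]
and then $\|F_\beta(W)-F_\beta(\widetilde W)\|_\infty = \max_k |F_\beta(W_k)-F_\beta(\widetilde W_k)| \le \max_k \|W_k-\widetilde W_k\|_\infty = \|W-\widetilde W\|_\infty$.

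For the sandwich bound on $\min_k\max_j W_{kj} - G_\beta(W)$, set $b_k = \max_j W_{kj}$ and $a_k = F_\beta(W_k)$. Applying (a) rowwise gives $b_k \le a_k \le b_k + \beta^{-1}\log p$, hence $\exp(-\beta a_k)\in [\exp(-\beta b_k)/p,\exp(-\beta b_k)]$. Summing over $k$ and taking $\beta^{-1}\log$ yields
\[
F_\beta(-b) - \beta^{-1}\log p \;\le\; F_\beta(-a) \;\le\; F_\beta(-b).
\]
Applying (a) again to the outer $F_\beta(-b)$ with $N$ entries gives $-\min_k b_k \le F_\beta(-b) \le -\min_k b_k + \beta^{-1}\log N$. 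Chaining the two displays and negating (recalling $G_\beta(W) = -F_\beta(-a)$) produces exactly $\min_k b_k - \beta^{-1}\log N \le G_\beta(W) \le \min_k b_k + \beta^{-1}\log p$, which is the stated two-sided bound.

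I do not anticipate any real obstacle here: once facts (a) and (b) are in hand, the lemma is a matter of keeping track of signs across the two-level composition and remembering that negation swaps $\max$ with $\min$ and adds the outer $\log N$ error on the minimization side (contributing $-\beta^{-1}\log N$) while the inner softmax-over-$p$ error enters on the other side (contributing $+\beta^{-1}\log p$). The only mild care needed is to verify that the inner overestimate by up to $\beta^{-1}\log p$ propagates through $F_\beta(-\cdot)$ as a monotone (decreasing) perturbation of at most the same magnitude; this is precisely what the factorization $\exp(-\beta a_k)=\exp(-\beta b_k)\exp(-\beta\epsilon_k)$ with $\epsilon_k\in[0,\beta^{-1}\log p]$ accomplishes.
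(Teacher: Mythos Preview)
Your proposal is correct and follows essentially the same route as the paper: both proofs rest on the two standard facts about $F_\beta$ (the sandwich $\max_i v_i \le F_\beta(v)\le \max_i v_i + \beta^{-1}\log d$ and the $\ell_\infty$-Lipschitz property via $\|\nabla F_\beta\|_1\le 1$) and chain them through the inner and outer applications. The only cosmetic difference is that for the sandwich you pass through the explicit inequality $\exp(-\beta a_k)\in[\exp(-\beta b_k)/p,\exp(-\beta b_k)]$ before summing, whereas the paper applies the one-dimensional sandwich twice directly; the content is the same.
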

\begin{proof} It is well know that $\|\nabla F_\beta(v)\|_1 \leq 1$ so that  $\| F_\beta(W)-F_\beta(\widetilde W)\|_\infty \leq \|W-\widetilde W\|_\infty$. Thus we have
$$
\begin{array}{rl}
|G_\beta(W)-G_\beta(\widetilde W)| & = |F_\beta(-F_\beta(\widetilde W))-F_\beta(-F_\beta(W))|\\
& \leq \| F_\beta(W)-F_\beta(\widetilde W)\|_\infty\\
& \leq \| W-\widetilde W\|_\infty.
\end{array}
$$
Next note that
$$
\begin{array}{rl}
\min_{k\in[N]}\max_{j\in[p]} W_{kj} & \geq \min_{k\in[N]} F_\beta(W_k)-\beta^{-1}\log p \\
& = -\max_{k\in[N]} -F_\beta(W_k)-\beta^{-1}\log p \\
& \geq -F_\beta(-F_\beta(W_k))-\beta^{-1}\log p ,
\end{array}
$$ where we used that for a vector $v \in R^p$, $\max_{j\in[p]} v_j \leq F_\beta(v) \leq \max_{j\in[p]} v_j + \beta^{-1}\log p$. Similarly, we have
$$
\begin{array}{rl}
\min_{k\in[N]}\max_{j\in[p]} W_{kj} & \leq \min_{k\in[N]} F_\beta(W_k)\\
& = -\max_{k\in[N]} -F_\beta(W_k) \\
& \leq -F_\beta(-F_\beta(W_k))+\beta^{-1}\log N .
\end{array}
$$
\end{proof}

Following similar notation in \citet{chernozhukov2014clt}, we let $\delta_{a b} = 1\{a=b\}$ and define
\begin{align*}
	\pi_{k}(v) &= \exp(\beta v_k)/\sum\nolimits_{\ell=1}^N\{\exp(\beta v_\ell)\},\\
	w_{k_1k_2}(v) &= (\pi_{k_1}\delta_{k_1k_2}-\pi_{k_1}\pi_{k_2})(v)\\
	q_{k_1k_2k_3}&=(\pi_{k_1}\delta_{k_1k_3}\delta_{k_1k_2}-\pi_{k_1}\pi_{k_3}\delta_{k_1k_2}-\pi_{k_1}\pi_{k_2}(\delta_{k_1k_3}+\delta_{k_2k_3})+2\pi_{k_1}\pi_{k_2}\pi_{k_3})(v).
\end{align*}
We also define
\begin{align*}
	\pi_{j}^{\bar \mu_k}(v) &= \exp(\beta (v_j+\bar\mu_{kj}))/\sum_{\ell=1}^p\{\exp(\beta (v_\ell+\bar \mu_{k\ell}))\}, \\
	w_{j_1j_2}^{\bar \mu_k}(v) &= (\pi_{j_1}^{\bar \mu_k}\delta_{j_1j_2}-\pi_{j_1}^{\bar \mu_k}\pi_{j_2}^{\bar \mu_k})(v)\\
	q_{j_1j_2j_3}^{\bar \mu_k}&=(\pi_{j_1}^{\bar \mu_k}\delta_{j_1j_3}\delta_{j_1j_2}-\pi_{j_1}^{\bar \mu_k}\pi_{j_3}^{\bar \mu_k}\delta_{j_1j_2}-\pi_{j_1}^{\bar \mu_k}\pi_{j_2}^{\bar \mu_k}(\delta_{j_1j_3}+\delta_{j_2j_3})+2\pi_{j_1}^{\bar \mu_k}\pi_{j_2}^{\bar \mu_k}\pi_{j_3}^{\bar \mu_k})(v),
\end{align*}
where those are tailored to the matrix structure.

In what follows we denote  $\partial_{X_{kj}}m(X)  = m'_{(k,j)}(X)$,  $\partial_{X_{k_2j_2}}\partial_{X_{k_1j_1}}m(X)  = m''_{(k,j)}(X)$,
$\partial_{X_{k_3j_3}}\partial_{X_{k_2j_2}}\partial_{X_{k_1j_1}}m(X)  = m'''_{(k,j)}(X)$

\begin{lemma}\label{lemma:Der1}
Consider $m(X)=g \circ G_\beta(X)$. Then,
\begin{itemize}
	\item[(1)] for any $(k,j)\in [N]\times[p]$,
$$
 m'_{kj}(X) = g'(G_\beta(X))\pi_k(-F_\beta(X))\pi_j^{\bar\mu_k}(X_{k\cdot}).
$$
\item[(2)] for any $(k,j)\in [N]^2\times[p]^2$,
$$
\begin{array}{rl}
 m''_{(k,j)}(X)  &= g''(G_\beta(X))\pi_{k_2}(-F_\beta(X))\pi_{j_2}^{\bar\mu_{k_2}}(X_{k_2\cdot})\pi_{k_1}(-F_\beta(X))\pi_{j_1}^{\bar\mu_{k_1}}(X_{k_1\cdot})\\
& - g'(G_\beta(X))\beta w_{k_1k_2}(-F_\beta(X)) \pi_{j_2}^{\bar\mu_{k_2}}(X_{k_2\cdot})\pi_{j_1}^{\bar\mu_{k_1}}(X_{k_1\cdot})\\
&+ g'(G_\beta(X))\pi_{k_1}(-F_\beta(X)) \delta_{k_1k_2} \beta w_{j_1j_2}^{\bar\mu_{k_1}}(X_{k_1\cdot}).
\end{array}
$$
\item[(3)] for any $(k,j)\in [N]^3\times[p]^3$,
{\small $$
\begin{array}{rl}
 m'''_{(k,j)}(X)  &=
 g'''(G_\beta(X))\prod_{\ell=1}^3\pi_{k_\ell}(-F_\beta(X))\pi_{j_\ell}^{\bar\mu_{k_\ell}}(X_{k_\ell\cdot})\\
& -  g''(G_\beta(X))\beta w_{k_2k_3}(-F_\beta(X))\pi_{k_1}(-F_\beta(X))\prod_{\ell=1}^3\pi_{j_\ell}^{\bar\mu_{k_\ell}}(X_{k_\ell\cdot})\\ & + g''(G_\beta(X))\pi_{k_2}(-F_\beta(X)) \delta_{k_2k_3}\beta w_{j_2j_3}^{\bar\mu_{k_2}}(X_{k_2\cdot})\pi_{k_1}(-F_\beta(X))\pi_{j_1}^{\bar\mu_{k_1}}(X_{k_1\cdot})\\  & - g''(G_\beta(X))\pi_{k_2}(-F_\beta(X))\beta w_{k_1k_3}(-F_\beta(X))\prod_{\ell=1}^3\pi_{j_\ell}^{\bar\mu_{k_\ell}}(X_{k_\ell\cdot})\\
& +g''(G_\beta(X))\pi_{k_2}(-F_\beta(X))\pi_{j_2}^{\bar\mu_{k_2}}(X_{k_2\cdot})\pi_{k_1}(-F_\beta(X)) \delta_{k_1k_3}\beta w_{j_1 j_3}^{\bar\mu_{k_1}}(X_{k_1\cdot})\\
& - g''(G_\beta(X))\pi_{k_3}(-F_\beta(X))\beta w_{k_1k_2}(-F_\beta(X)) \prod_{\ell=1}^3\pi_{j_\ell}^{\bar\mu_{k_\ell}}(X_{k_\ell\cdot})\\
& + g'(G_\beta(X))\beta^2 q_{k_1k_2k_3}(-F_\beta(X)) \prod_{\ell=1}^3\pi_{j_\ell}^{\bar\mu_{k_\ell}}(X_{k_\ell\cdot})\\
& - g'(G_\beta(X))\beta w_{k_1k_2}(-F_\beta(X)) \delta_{k_2k_3}\beta w_{j_2j_3}^{\bar\mu_{k_2}}(X_{k_2\cdot})\pi_{j_1}^{\bar\mu_{k_1}}(X_{k_1\cdot})\\
& - g'(G_\beta(X))\beta w_{k_1k_2}(-F_\beta(X)) \pi_{j_2}^{\bar\mu_{k_2}}(X_{k_2\cdot})\delta_{k_1k_3}\beta w_{j_1j_3}^{\bar\mu_{k_1}}(X_{k_1\cdot})\\
&+ g''(G_\beta(X))\pi_{k_3}(-F_\beta(X))\pi_{j_3}^{\bar\mu_{k_3}}(X_{k_3\cdot})\pi_{k_1}(-F_\beta(X)) \delta_{k_1k_2} \beta w_{j_1j_2}(X_{k_1\cdot})\\
&- g'(G_\beta(X))\beta w_{k_1k_3}(-F_\beta(X)) \pi_{j_3}^{\bar\mu_{k_3}}(X_{k_3\cdot}) \delta_{k_1k_2} \beta w_{j_1j_2}^{\bar\mu_{k_1}}(X_{k_1\cdot})\\
&+ g'(G_\beta(X))\pi_{k_1}(-F_\beta(X)) \delta_{k_1k_2k_3} \beta^2 q_{j_1j_2j_3}^{\bar\mu_{k_1}}(X_{k_1\cdot}).
\end{array}
$$}
\end{itemize}
\end{lemma}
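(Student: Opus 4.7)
The proof proposal is a systematic chain-rule computation. The backbone is the standard set of identities for the log-sum-exp: for any $v \in \mathbb{R}^d$ and any shift $\mu$, if we set $F_\beta^\mu(v) = \beta^{-1}\log\sum_\ell \exp(\beta(v_\ell+\mu_\ell))$ with associated softmax weights $\pi^\mu_\ell(v)$, then $\partial_\ell F_\beta^\mu = \pi^\mu_\ell$, $\partial_{\ell_1\ell_2}F_\beta^\mu = \beta(\pi^\mu_{\ell_1}\delta_{\ell_1\ell_2}-\pi^\mu_{\ell_1}\pi^\mu_{\ell_2})= \beta w^\mu_{\ell_1\ell_2}$, and $\partial_{\ell_1\ell_2\ell_3}F_\beta^\mu = \beta^2 q^\mu_{\ell_1\ell_2\ell_3}$. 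The key structural observation is that $G_\beta(X) = -F_{\beta,0}\bigl(-F_{\beta,\bar\mu}(X)\bigr)$ is a composition in which the outer function acts on the vector $-F_{\beta,\bar\mu}(X)\in\mathbb{R}^N$, while the $k$-th component $F_{\beta,\bar\mu_k}(X_{k\cdot})$ of the inner function depends only on row $k$ of $X$. Consequently, $\partial_{X_{kj}}F_{\beta,\bar\mu_\ell}(X_{\ell\cdot}) = \delta_{k\ell}\,\pi^{\bar\mu_k}_j(X_{k\cdot})$, and the outer chain rule produces the compact first-derivative formula in part (1).

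For part (2), I would differentiate $m'_{(k_1,j_1)}$ with respect to $X_{k_2 j_2}$, treating it as a product of three factors: $g'(G_\beta(X))$, $\pi_{k_1}(-F_\beta(X))$, and $\pi^{\bar\mu_{k_1}}_{j_1}(X_{k_1\cdot})$. The first gives the leading $g''$ term. The second factor, differentiated, uses $\partial_{X_{k_2j_2}}\pi_{k_1}(-F_\beta(X)) = -\beta w_{k_1k_2}(-F_\beta(X))\,\pi^{\bar\mu_{k_2}}_{j_2}(X_{k_2\cdot})$ (noting again that only row $k_2$ of the inner argument moves). The third factor contributes only when $k_2=k_1$, giving the Kronecker $\delta_{k_1k_2}$ and the soft-max Hessian in the $j$-index, $\beta w^{\bar\mu_{k_1}}_{j_1j_2}$. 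Summing these three contributions yields exactly the stated expression.

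For part (3), I would differentiate the six-term sum of part (2) with respect to $X_{k_3j_3}$ using the product rule on each summand. Each summand is a product of a $g$-derivative and several ``$\pi$/$w$/$q$'' factors, and each differentiation step reuses one of three atomic identities:
\begin{align*}
\partial_{X_{k_3j_3}}\pi_{k}(-F_\beta(X)) &= -\beta w_{k k_3}(-F_\beta(X))\,\pi^{\bar\mu_{k_3}}_{j_3}(X_{k_3\cdot}),\\
\partial_{X_{k_3j_3}}w_{k k'}(-F_\beta(X)) &= -\beta q_{k k' k_3}(-F_\beta(X))\,\pi^{\bar\mu_{k_3}}_{j_3}(X_{k_3\cdot}),\\
\partial_{X_{k_3j_3}}\pi^{\bar\mu_{k}}_{j}(X_{k\cdot}) &= \delta_{k k_3}\,\beta w^{\bar\mu_{k}}_{j j_3}(X_{k\cdot}),
\end{align*}
together with the analogous rule for $w^{\bar\mu_k}_{j_1 j_2}\mapsto \delta_{k k_3}\beta q^{\bar\mu_k}_{j_1 j_2 j_3}$. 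Each $g$-derivative is itself bumped up by the chain rule and generates a leading $g'''$ term on the topmost summand.

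The main obstacle is bookkeeping rather than mathematical difficulty: one must enumerate all twelve terms in the stated expression for $m'''_{(k,j)}$, match each to a product-rule contribution, and verify the signs (each $\partial\pi_k$ of $-F_\beta$ carries a minus) and the $\delta$ patterns. I would organize the proof as (i) state the atomic identities above as a short preliminary, (ii) give the first derivative by one line of chain rule, (iii) give the second derivative as three product-rule summands, and (iv) list the third derivative systematically, one line per differentiation target in part (2), collecting the resulting twelve terms. No further analytic arguments are needed.
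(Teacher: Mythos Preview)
Your proposal is correct and is precisely the approach the paper takes: the paper's proof consists of the single sentence ``The results follows from direct calculations,'' and your outline is exactly the systematic chain-rule computation that phrase refers to, organized around the standard log-sum-exp derivative identities and the row-wise dependence of the inner map.
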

\begin{proof}
The results follows from direct calculations.
\end{proof}

\begin{lemma}\label{lemma:Der2}
For any $\beta>0$, $g \in C^3(\mathbb{R})$ and $m=g\circ G_\beta$ we have
$$ \begin{array}{rl}
\displaystyle \sum_{(k,j)\in [N]\times[p]} |m'_{kj}(X)| & \leq \|g'\|_\infty\\
\displaystyle \sum_{(k,j)\in [N]^2\times[p]^2} |m''_{(k,j)}(X)| & \leq  \|g''\|_\infty + 4\beta\|g'\|_\infty\\
\displaystyle \sum_{(k,j)\in [N]^3\times[p]^3} |m'''_{(k,j)}(X)| & \leq  \|g'''\|_\infty+16\beta\|g''\|_\infty+24\beta^2\|g'\|_\infty.
\end{array}$$
\end{lemma}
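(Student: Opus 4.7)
The plan is to apply the triangle inequality to each explicit expression for $m'_{(k,j)}$, $m''_{(k,j)}$, $m'''_{(k,j)}$ obtained in Lemma \ref{lemma:Der1} and then factor the resulting multi-index sums into single-index pieces that can be bounded using the simplex structure of the weights. The fundamental book-keeping inputs are the elementary identities
\begin{equation*}
\sum_{k=1}^{N}\pi_k(v)=1,\qquad \sum_{j=1}^{p}\pi_j^{\bar\mu_k}(v)=1,
\end{equation*}
together with
\begin{equation*}
\sum_{k_1,k_2}|w_{k_1k_2}(v)|\le 2,\qquad \sum_{k_1,k_2,k_3}|q_{k_1k_2k_3}(v)|\le 6,
\end{equation*}
and the analogous bounds for the $\bar\mu_k$-shifted weights $w^{\bar\mu_k}$ and $q^{\bar\mu_k}$. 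Each of these follows from the defining formulas by expanding, applying triangle inequality and using $\sum_k\pi_k=1$; I would first dispose of them in a short preliminary.

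With these in hand the first-derivative bound is immediate: from Lemma \ref{lemma:Der1}(1),
\begin{equation*}
\sum_{(k,j)\in[N]\times[p]}|m'_{(k,j)}(X)|\le\|g'\|_\infty\sum_{k}\pi_k(-F_\beta(X))\sum_{j}\pi_j^{\bar\mu_k}(X_{k\cdot})=\|g'\|_\infty.
\end{equation*}
For the second derivative I would treat the three terms in Lemma \ref{lemma:Der1}(2) separately. The $g''$-term factors as a product of two marginals and sums to $1$, contributing $\|g''\|_\infty$. The $-g'\beta w_{k_1k_2}\prod_\ell\pi_{j_\ell}^{\bar\mu_{k_\ell}}$-term sums the inner probabilities to $1$ for each fixed $(k_1,k_2)$ and then uses $\sum_{k_1,k_2}|w_{k_1k_2}|\le 2$, contributing $2\beta\|g'\|_\infty$. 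The last term carries a $\delta_{k_1k_2}$ which collapses the row sum to $\sum_{k_1}\pi_{k_1}=1$, and the inner $w^{\bar\mu_{k_1}}$ sum is again $\le 2$, giving another $2\beta\|g'\|_\infty$. Adding gives $\|g''\|_\infty+4\beta\|g'\|_\infty$.

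The third-derivative bound is structurally the same: I would walk through the twelve terms of Lemma \ref{lemma:Der1}(3), classify them by the power of $\beta$ they carry (the unique pure-$g'''$ term contributes $\|g'''\|_\infty$; each term carrying one $w$-weight contributes $2\beta\|g''\|_\infty$; each term carrying either one $q$-weight or two $w$-weights contributes a multiple of $\beta^2\|g'\|_\infty$ with coefficient $6$ or $4$ respectively), and in each case use the Kronecker deltas to collapse one or more indices before invoking the simplex bound. Summing the resulting constants should reproduce the claimed coefficients $16$ and $24$. The main obstacle, and the only step requiring real care, will be this bookkeeping for the twelve terms of $m'''$ — in particular verifying that the Kronecker-delta collapses combine correctly with the $w$ and $q$ bounds to give exactly $16\beta\|g''\|_\infty+24\beta^2\|g'\|_\infty$ and not a slightly larger constant. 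Everything else is mechanical.
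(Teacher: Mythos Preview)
Your proposal is correct and follows exactly the same route as the paper's proof: both invoke Lemma \ref{lemma:Der1}, apply the triangle inequality term by term, and reduce each piece via the simplex identities $\sum_k\pi_k=1$, $\sum_j\pi_j^{\bar\mu_k}=1$ together with $\sum|w|\le 2$ and $\sum|q|\le 6$. One minor remark on the bookkeeping you flagged: a careful tally of the six single-$w$ terms in the third derivative actually yields $12\beta\|g''\|_\infty$ rather than $16$ (the paper appears to use a loose $\sum|w|\le 6$ on one term), but of course this only strengthens the stated inequality.
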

\begin{proof}
To prove the first result we will use that $\sum_{k\in [N]}\pi_k(v) = 1$ and $\sum_{j\in [p]} \pi_j^{\bar\mu_k}(w)=1$.
Since $\pi_k \geq 0$ and $\pi_j^{\bar\mu_k}\geq 0$ we have that
$$\begin{array}{rl}
 \sum_{(k,j)\in [N]\times[p]} |m'_{kj}(X)| & \leq \|g'\|_\infty \sum_{(k,j)\in [N]\times[p]} \pi_k(-F_\beta(X))\pi_j^{\bar\mu_k}(X_{k\cdot}) \\
 & = \|g'\|_\infty \sum_{k\in [N]} \sum_{j\in [p]} \pi_k(-F_\beta(X)) \pi_j^{\bar\mu_k}(X_{k\cdot}) \\
 & = \|g'\|_\infty \sum_{k\in [N]} \pi_k(-F_\beta(X)) \sum_{j\in [p]} \pi_j^{\bar\mu_k}(X_{k\cdot}) \\
 & = \|g'\|_\infty \sum_{k\in [N]} \pi_k(-F_\beta(X)) \\
 & = \|g'\|_\infty.
\end{array}$$

To show the second relation we will use that $\sum_{k_1,k_2\in [N]}|w_{k_1,k_2}(v)|\leq 2$ and $\sum_{j_1,j_2\in [p]}|w_{j_1,j_2}^{\bar\mu_k}(w)|\leq 2$. Therefore we have
{\small $$
\begin{array}{ll}
\sum_{(k,j)\in [N]^2\times[p]^2}  |m''_{(k,j)}(X)|  = \sum_{k_1\in [N]} \sum_{k_2\in [N]}\sum_{j_1\in [p]}\sum_{j_2\in [p]} |m''_{(k_1j_1,k_2j_2)}(X)| \\
\displaystyle \leq \|g''\|_\infty\sum_{k_1\in [N]} \sum_{k_2\in [N]}\sum_{j_1\in [p]}\sum_{j_2\in [p]} \pi_{k_2}(-F_\beta(X))\pi_{j_2}^{\bar\mu_{k_2}}(X_{k_2\cdot})\pi_{k_1}(-F_\beta(X))\pi_{j_1}^{\bar\mu_{k_1}}(X_{k_1\cdot})\\
\displaystyle + \|g'\|_\infty\sum_{k_1\in [N]} \sum_{k_2\in [N]}\sum_{j_1\in [p]}\sum_{j_2\in [p]} \beta |w_{k_1k_2}(-F_\beta(X))| \pi_{j_2}^{\bar\mu_{k_2}}(X_{k_2\cdot})\pi_{j_1}^{\bar\mu_{k_1}}(X_{k_1\cdot})\\
\displaystyle + \|g'\|_\infty\sum_{k_1\in [N]} \sum_{k_2\in [N]}\sum_{j_1\in [p]}\sum_{j_2\in [p]} \pi_{k_1}(-F_\beta(X)) \delta_{k_1k_2} \beta |w_{j_1j_2}^{\bar\mu_{k_1}}(X_{k_1\cdot})|\\
\displaystyle \leq \|g''\|_\infty \sum_{k_2\in [N]} \pi_{k_2}(-F_\beta(X))\sum_{k_1\in [N]}\pi_{k_1}(-F_\beta(X))\sum_{j_2\in [p]} \pi_{j_2}^{\bar\mu_{k_2}}(X_{k_2\cdot})\sum_{j_1\in [p]}\pi_{j_1}^{\bar\mu_{k_1}}(X_{k_1\cdot})\\
\displaystyle + \|g'\|_\infty\sum_{k_1,k_2\in [N]}\beta |w_{k_1k_2}(-F_\beta(X))| \sum_{j_2\in [p]}  \pi_{j_2}^{\bar\mu_{k_2}}(X_{k_2\cdot})\sum_{j_1\in [p]}\pi_{j_1}^{\bar\mu_{k_1}}(X_{k_1\cdot})\\
\displaystyle + \|g'\|_\infty\sum_{k_1\in [N]}\pi_{k_1}(-F_\beta(X)) \sum_{j_1,j_2\in [p]} \beta |w_{j_1j_2}^{\bar\mu_{k_1}}(X_{k_1\cdot})|\\
\displaystyle \leq \|g''\|_\infty + 4\beta\|g'\|_\infty.
\end{array}$$}

Finally, the third result follows also using that $\sum_{k_1,k_2,k_3\in [N]} |q_{k_1k_2k_3}(v)|\leq 6$ and $\sum_{j_1,j_2,j_3\in [p]} |q_{j_1j_2j_3}^{\bar\mu_k}(v)|\leq 6$. Indeed
$$
\displaystyle \sum_{(k,j)\in [N]^3\times[p]^3}  |m'''_{(k,j)}(X)|\leq \sum_{(k,j)\in [N]^3\times[p]^3} A_{(k,j)}^1(X) + \cdots +  A_{(k,j)}^{12}(X), $$
where $A^m_{(k,j)}(X)$ corresponds to the $m$th term in the expression of $m'''_{(k,j)}$ in the statement of Lemma \ref{lemma:Der1} part (3), $m=1,\ldots,12$.

We proceed to bound each term  $A^m_{(k,j)}(X)$, $m=1,\ldots,12$. It is convenient to note that $\sum_{j_1,j_2,j_3 \in [p]}\prod_{\ell=1}^3\pi_{j_\ell}^{\bar\mu_{k_\ell}}(X_{k_\ell\cdot}) =1$. We have that
$$
\begin{array}{ll}
\sum_{(k,j)\in [N]^3\times[p]^3} A^1_{(k,j)}(X)
\leq  \|g'''\|_\infty,
\end{array}
$$
where we used the following relations  $\sum_{j_1,j_2,j_3 \in [p]}\prod_{\ell=1}^3\pi_{j_\ell}^{\bar\mu_{k_\ell}}(X_{k_\ell\cdot}) =1$ and $\sum_{k_1,k_2,k_3 \in [p]}\prod_{\ell=1}^3\pi_{k_\ell}(-F_\beta(X)) =1$.
For the second term, we have
$$
\begin{array}{ll}
\sum_{(k,j)\in [N]^3\times[p]^3} A^2_{(k,j)}(X)
 \\
 \leq_{(1)}  \|g''\|_\infty \beta \sum_{k_2,k_3\in[p]} |w_{k_2k_3}(-F_\beta(X))| \sum_{k_1\in [N]} \pi_{k_1}(-F_\beta(X))\\
 \leq_{(2)} \|g''\|_\infty \beta \sum_{k_2,k_3\in[p]} |w_{k_2k_3}(-F_\beta(X))| \\
 \leq_{(3)} 6\beta \|g''\|_\infty,
\end{array}
$$ where (1) follows from $\sum_{j_1,j_2,j_3 \in [p]}\prod_{\ell=1}^3\pi_{j_\ell}^{\bar\mu_{k_\ell}}(X_{k_\ell\cdot}) =1$, (2) follows from  $\sum_{k_1\in [N]} \pi_{k_1}(-F_\beta(X))=1$, and (3) from  $\sum_{k_2,k_3\in[p]} |w_{k_2k_3}(-F_\beta(X))| \leq 6$.

The third term is bounded as
$$
\begin{array}{ll}
\sum_{(k,j)\in [N]^3\times[p]^3} A^3_{(k,j)}(X) \\
\leq_{(1)} \| g''\|_\infty  \sum_{k_2, k_3 \in [N]} \pi_{k_2}(-F_\beta(X)) \delta_{k_2k_3}\sum_{j_2,j_3\in [p]}\beta |w_{j_2j_3}^{\bar\mu_{k_2}}(X_{k_2\cdot})|\\
=_{(2)} \| g''\|_\infty  \sum_{k_2 \in [N]} \pi_{k_2}(-F_\beta(X)) \sum_{j_2,j_3\in [p]}\beta |w_{j_2j_3}^{\bar\mu_{k_2}}(X_{k_2\cdot})|\\
\leq _{(3)} 2\beta\| g''\|_\infty  \sum_{k_2 \in [N]} \pi_{k_2}(-F_\beta(X)) \\
= 2\beta\| g''\|_\infty,
\end{array}
$$
where (1) follows from $\sum_{k_1,j_1} \pi_{k_1}(-F_\beta(X))\pi_{j_1}^{\bar\mu_{k_1}}(X_{k_1\cdot}) = 1$, (2) by applying $\delta_{k_2k_3}=1\{k_2=k_3\}$, (3) from $\sum_{j_2,j_3\in [p]}  |w_{j_2j_3}^{\bar\mu_{k_2}}(X_{k_2\cdot})|\leq 2$, and the last line from $\sum_{k_2 \in [N]} \pi_{k_2}(-F_\beta(X))=1$.

For the fourth term, we have
$$
\begin{array}{ll}
\sum_{(k,j)\in [N]^3\times[p]^3} A^4_{(k,j)}(X) \\
   \leq \|g''\|_\infty \sum_{(k,j)\in [N]^3\times[p]^3} \pi_{k_2}(-F_\beta(X))\beta |w_{k_1k_3}(-F_\beta(X))|\prod_{\ell=1}^3\pi_{j_\ell}^{\bar\mu_{k_\ell}}(X_{k_\ell\cdot})\\
   =_{(1)} \|g''\|_\infty \sum_{k_1,k_2,k_3 \in [N]} \pi_{k_2}(-F_\beta(X))\beta |w_{k_1k_3}(-F_\beta(X))|\\
   =_{(2)} \|g''\|_\infty \sum_{k_1,k_3 \in [N]} \beta |w_{k_1k_3}(-F_\beta(X))|\\
   \leq_{(3)} 2 \beta\|g''\|_\infty,
\end{array}
$$ where (1) follows from $\sum_{j_1,j_2,j_3 \in [p]}\prod_{\ell=1}^3\pi_{j_\ell}^{\bar\mu_{k_\ell}}(X_{k_\ell\cdot})=1$, (2) follows from $\sum_{k_2\in [N]}\pi_{k_2}(-F_\beta(X))=1$, and (3) from $\sum_{k_1,k_3 \in [N]} |w_{k_1k_3}(-F_\beta(X))|\leq 2$.

For the fifth term, we have
{\small $$
\begin{array}{ll}
\sum_{(k,j)\in [N]^3\times[p]^3} A^5_{(k,j)}(X) \\
   \leq \|g''\|_\infty \sum_{(k,j)\in [N]^3\times[p]^3} \pi_{k_2}(-F_\beta(X))\pi_{j_2}^{\bar\mu_{k_2}}(X_{k_2\cdot})\pi_{k_1}(-F_\beta(X)) \delta_{k_1k_3}\beta |w_{j_1 j_3}^{\bar\mu_{k_1}}(X_{k_1\cdot})| \\
\leq_{(1)} 2\beta \|g''\|_\infty \sum_{k_1,k_2,k_3 \in [N], j_2 \in [p]} \pi_{k_2}(-F_\beta(X))\pi_{j_2}^{\bar\mu_{k_2}}(X_{k_2\cdot})\pi_{k_1}(-F_\beta(X)) \delta_{k_1k_3} \\
=_{(2)} 2\beta \|g''\|_\infty \sum_{k_1,k_2 \in [N], j_2 \in [p]} \pi_{k_2}(-F_\beta(X))\pi_{j_2}^{\bar\mu_{k_2}}(X_{k_2\cdot})\pi_{k_1}(-F_\beta(X)) \\
=_{(3)} 2\beta \|g''\|_\infty \sum_{k_2 \in [N], j_2 \in [p]} \pi_{k_2}(-F_\beta(X))\pi_{j_2}^{\bar\mu_{k_2}}(X_{k_2\cdot}) \\
=_{(4)} 2\beta \|g''\|_\infty,
\end{array}
$$} where (1) follows from $ \sum_{j_1,j_3 \in [p]}|w_{j_1 j_3}^{\bar\mu_{k_1}}(X_{k_1\cdot})| \leq 2$, (2) by using that $\delta_{k_1k_3} = 1\{k_1 = k_3\}$, (3) from $\sum_{k_1\in [N]}\pi_{k_1}(-F_\beta(X))=1$, and (4) from $$\sum_{k_2 \in [N], j_2 \in [p]} \pi_{k_2}(-F_\beta(X))\pi_{j_2}^{\bar\mu_{k_2}}(X_{k_2\cdot}) = \sum_{k_2 \in [N]}\pi_{k_2}(-F_\beta(X)) \sum_{j_2 \in [p]} \pi_{j_2}^{\bar\mu_{k_2}}(X_{k_2\cdot}) = 1.$$

For the sixth term, we have
$$
\begin{array}{ll}
\sum_{(k,j)\in [N]^3\times[p]^3} A^6_{(k,j)}(X) \\
   \leq \|g''\|_\infty \sum_{(k,j)\in [N]^3\times[p]^3} \pi_{k_3}(-F_\beta(X))\beta |w_{k_1k_2}(-F_\beta(X))| \prod_{\ell=1}^3\pi_{j_\ell}^{\bar\mu_{k_\ell}}(X_{k_\ell\cdot})\\
  =_{(1)}  \|g''\|_\infty \sum_{k_1,k_2,k_3 \in [N]}  \pi_{k_3}(-F_\beta(X))\beta |w_{k_1k_2}(-F_\beta(X))| \\
  =_{(2)}  \|g''\|_\infty \sum_{k_1,k_2\in [N]}  \beta |w_{k_1k_2}(-F_\beta(X))| \\
  \leq_{(3)}  2\beta\|g''\|_\infty,
 \end{array}
$$ where (1) follows from $ \sum_{j_1,j_2,j_3 \in [p]}\prod_{\ell=1}^3\pi_{j_\ell}^{\bar\mu_{k_\ell}}(X_{k_\ell\cdot})=1$, (2) follows from $\sum_{k_3 \in [N]}\pi_{k_3}(-F_\beta(X))=1$, and (3) from $\sum_{k_1,k_2\in [N]} |w_{k_1k_2}(-F_\beta(X))| \leq 2$.

For the seventh term, we have
$$
\begin{array}{ll}
\sum_{(k,j)\in [N]^3\times[p]^3} A^7_{(k,j)}(X) \\
\leq \|g'\|_\infty \sum_{(k,j)\in [N]^3\times[p]^3} \beta^2 |q_{k_1k_2k_3}(-F_\beta(X))| \prod_{\ell=1}^3\pi_{j_\ell}^{\bar\mu_{k_\ell}}(X_{k_\ell\cdot})\\
=_{(1)} \|g'\|_\infty \beta^2 \sum_{k_1,k_2,k_3 \in [N]} |q_{k_1k_2k_3}(-F_\beta(X))| \\
\leq_{(2)} 6\beta^2\|g'\|_\infty,
 \end{array}
$$ where (1) follows from $ \sum_{j_1,j_2,j_3 \in [p]}\prod_{\ell=1}^3\pi_{j_\ell}^{\bar\mu_{k_\ell}}(X_{k_\ell\cdot})=1$, and (2) follows from $\sum_{k_1,k_2,k_3 \in [N]} |q_{k_1k_2k_3}(-F_\beta(X))|\leq 6$.

For the eighth term, we have
$$
\begin{array}{ll}
\sum_{(k,j)\in [N]^3\times[p]^3} A^8_{(k,j)}(X) \\
\leq \|g'\|_\infty \sum_{(k,j)\in [N]^3\times[p]^3} \beta |w_{k_1k_2}(-F_\beta(X)) | \delta_{k_2k_3}\beta |w_{j_2j_3}^{\bar\mu_{k_2}}(X_{k_2\cdot})|\pi_{j_1}^{\bar\mu_{k_1}}(X_{k_1\cdot})\\
=_{(1)} \beta^2\|g'\|_\infty \sum_{k_1,k_2,k_3 \in [N]}  |w_{k_1k_2}(-F_\beta(X))| \delta_{k_2k_3} \sum_{j_2,j_3\in[p]}  |w_{j_2j_3}^{\bar\mu_{k_2}}(X_{k_2\cdot})|\\
\leq_{(2)} 2\beta^2\|g'\|_\infty \sum_{k_1,k_2,k_3 \in [N]}  |w_{k_1k_2}(-F_\beta(X))| \delta_{k_2k_3}\\
=_{(3)} 2\beta^2\|g'\|_\infty \sum_{k_1,k_2 \in [N]}  |w_{k_1k_2}(-F_\beta(X))| \\
\leq_{(4)} 4\beta^2\|g'\|_\infty,
 \end{array}
$$ where (1)  from $\sum_{j_1\in[p]}\pi_{j_1}^{\bar\mu_{k_1}}(X_{k_1\cdot})=1$, (2) from $ \sum_{j_2,j_3\in[p]}  |w_{j_2j_3}^{\bar\mu_{k_2}}(X_{k_2\cdot})|\leq 2$, (3) from $\delta_{k_2k_3} = 1\{k_2=k_3\}$, and (4) from $\sum_{k_1,k_2 \in [N]}  |w_{k_1k_2}(-F_\beta(X))|\leq 2$.

For the ninth term, we have
$$
\begin{array}{ll}
\sum_{(k,j)\in [N]^3\times[p]^3} A^9_{(k,j)}(X) \\
\leq \|g'\|_\infty \sum_{(k,j)\in [N]^3\times[p]^3} \beta |w_{k_1k_2}(-F_\beta(X))| \pi_{j_2}^{\bar\mu_{k_2}}(X_{k_2\cdot})\delta_{k_1k_3}\beta |w_{j_1j_3}^{\bar\mu_{k_1}}(X_{k_1\cdot})|\\
\leq_{(1)} 2\beta^2\|g'\|_\infty \sum_{k_1,k_2,k_3\in [N]} |w_{k_1k_2}(-F_\beta(X))| \delta_{k_1k_3}\sum_{j_2\in[p]}\pi_{j_2}^{\bar\mu_{k_2}}(X_{k_2\cdot}) \\
=_{(2)} 2\beta^2\|g'\|_\infty \sum_{k_1,k_2,k_3\in [N]} |w_{k_1k_2}(-F_\beta(X))| \delta_{k_1k_3} \\
=_{(3)} 2\beta^2\|g'\|_\infty \sum_{k_1,k_2\in [N]} |w_{k_1k_2}(-F_\beta(X))| \\
\leq_{(4)} 4\beta^2\|g'\|_\infty,
 \end{array}
$$ where (1)  from $\sum_{j_1,j_3\in[p]}|w_{j_1j_3}^{\bar\mu_{k_1}}(X_{k_1\cdot})|\leq 2$, (2) from $\sum_{j_2\in[p]}\pi_{j_2}^{\bar\mu_{k_2}}(X_{k_2\cdot})=1$, (3) from $\delta_{k_1k_3}=1\{k_1=k_3\}$, and (4) from $\sum_{k_1,k_2\in [N]} |w_{k_1k_2}(-F_\beta(X))| \leq 2$.

For the tenth term, we have
{\small $$
\begin{array}{ll}
\sum_{(k,j)\in [N]^3\times[p]^3} A^{10}_{(k,j)}(X) \\
\leq \|g''\|_\infty \sum_{(k,j)\in [N]^3\times[p]^3} \pi_{k_3}(-F_\beta(X))\pi_{j_3}^{\bar\mu_{k_3}}(X_{k_3\cdot})\pi_{k_1}(-F_\beta(X)) \delta_{k_1k_2}|w_{j_1j_2}(X_{k_1\cdot})| \\
\leq_{(1)} 2\beta \|g''\|_\infty\sum_{k_1,k_2,k_3\in [N]} \pi_{k_3}(-F_\beta(X))\pi_{k_1}(-F_\beta(X)) \delta_{k_1k_2}\sum_{j_3\in[p]}\pi_{j_3}^{\bar\mu_{k_3}}(X_{k_3\cdot})\\
=_{(2)} 2\beta \|g''\|_\infty\sum_{k_1,k_2,k_3\in [N]} \pi_{k_3}(-F_\beta(X))\pi_{k_1}(-F_\beta(X)) \delta_{k_1k_2}\\
=_{(3)} 2\beta \|g''\|_\infty\sum_{k_1,k_3\in [N]} \pi_{k_3}(-F_\beta(X))\pi_{k_1}(-F_\beta(X))\\
=_{(4)} 2\beta \|g''\|_\infty,
 \end{array}
$$} where (1)  from $\sum_{j_1,j_2\in [p]} |w_{j_1j_2}(X_{k_1\cdot})|\leq 2$, (2) from $\sum_{j_3\in[p]}\pi_{j_3}^{\bar\mu_{k_3}}(X_{k_3\cdot})=1$, (3) from $\delta_{k_1k_2} = 1\{k_1=k_2\}$, and (4) from {\small $$\sum_{k_1,k_3\in [N]} \pi_{k_3}(-F_\beta(X))\pi_{k_1}(-F_\beta(X)) = \sum_{k_1\in [N]} \pi_{k_1}(-F_\beta(X)) \sum_{k_1\in [N]} \pi_{k_3}(-F_\beta(X))=1.$$}

For the eleventh term, we have
$$
\begin{array}{ll}
\sum_{(k,j)\in [N]^3\times[p]^3} A^{11}_{(k,j)}(X) \\
\leq \|g'\|_\infty \sum_{(k,j)\in [N]^3\times[p]^3} \beta |w_{k_1k_3}(-F_\beta(X))| \pi_{j_3}^{\bar\mu_{k_3}}(X_{k_3\cdot}) \delta_{k_1k_2} \beta |w_{j_1j_2}^{\bar\mu_{k_1}}(X_{k_1\cdot})| \\
\leq_{(1)} 2\beta^2\|g'\|_\infty \sum_{k_1,k_2,k_3 \in [N], j_3 \in [p]} |w_{k_1k_3}(-F_\beta(X))| \pi_{j_3}^{\bar\mu_{k_3}}(X_{k_3\cdot}) \delta_{k_1k_2}  \\
=_{(2)} 2\beta^2\|g'\|_\infty \sum_{k_1,k_3 \in [N], j_3 \in [p]} |w_{k_1k_3}(-F_\beta(X))| \pi_{j_3}^{\bar\mu_{k_3}}(X_{k_3\cdot})  \\
=_{(3)} 2\beta^2\|g'\|_\infty \sum_{k_1,k_3 \in [N]} |w_{k_1k_3}(-F_\beta(X))| \\
\leq_{(4)} 4\beta^2\|g'\|_\infty,
 \end{array}
$$ where (1)  from $\sum_{j_1,j_2\in[p]}|w_{j_1j_2}^{\bar\mu_{k_1}}(X_{k_1\cdot})|\leq 2$,
(2) from $ \delta_{k_1k_2}=1\{k_1=k_2\}$, (3) from $\sum_{j_3\in[p]} \pi_{j_3}^{\bar\mu_{k_3}}(X_{k_3\cdot}) = 1$, and (4) from $\sum_{k_1,k_3 \in [N]} |w_{k_1k_3}(-F_\beta(X))|\leq 2.$

Finally, for the twelfth term, we have
$$
\begin{array}{ll}
\sum_{(k,j)\in [N]^3\times[p]^3} A^{12}_{(k,j)}(X) \\
\leq \|g'\|_\infty \sum_{(k,j)\in [N]^3\times[p]^3} \pi_{k_1}(-F_\beta(X)) \delta_{k_1k_2k_3} \beta^2 |q_{j_1j_2j_3}^{\bar\mu_{k_1}}(X_{k_1\cdot})|\\
=_{(1)} \beta^2\|g'\|_\infty \sum_{k_1\in [N]} \pi_{k_1}(-F_\beta(X)) \sum_{j_1,j_2,j_3 \in [p]} |q_{j_1j_2j_3}^{\bar\mu_{k_1}}(X_{k_1\cdot})|\\
\leq_{(2)} 6\beta^2\|g'\|_\infty \sum_{k_1\in [N]} \pi_{k_1}(-F_\beta(X))\\
=_{(3)} 6\beta^2\|g'\|_\infty,
\end{array}
$$ where relation (1) follows  from $\delta_{k_1k_2k_3}=1\{k_1=k_2=k_3\}$, (2) follows from $\sum_{j_1,j_2,j_3 \in [p]} |q_{j_1j_2j_3}^{\bar\mu_{k_1}}(X_{k_1\cdot})|\leq 6$, and (3) from $\sum_{k_1\in [N]} \pi_{k_1}(-F_\beta(X))=1$.
Collecting all the twelve terms we have
$$
\displaystyle \sum_{(k,j)\in [N]^3\times[p]^3}  |m'''_{(k,j)}(X)|\leq \|g'''\|_\infty+16\beta\|g''\|_\infty+24\beta^2\|g'\|_\infty .$$
\end{proof}

\begin{lemma}\label{lemma:Der3}
For any $\beta>0$, $g \in C^3(\mathbb{R})$, $m=g\circ G_\beta$, and $(k,j)\in [N]^3\times[p]^3$, define:
\begin{equation}\label{Def:Ukj}
U_{(k,j)}(x)=\sup\{ |m'''_{(k,j)}(x+y)| : y \in \mathbb{R}^{N\times p}, \|y\|_\infty \leq \beta^{-1}\}.
\end{equation}
Then,
$$  \sum_{(k,j)\in [N]^3\times[p]^3} U_{(k,j)}(x) \leq e^{12} \{ \|g'''\|_\infty+16\beta\|g''\|_\infty+24\beta^2\|g'\|_\infty\}$$
\end{lemma}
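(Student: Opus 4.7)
The plan is to mirror the twelve-term analysis from the proof of Lemma \ref{lemma:Der2}, but to replace each factor in $m'''_{(k,j)}$ by its maximum over the perturbation $y$ with $\|y\|_\infty\leq\beta^{-1}$. The main tool I will prove is a stability bound for the softmax map: if $a,\tilde a \in \mathbb{R}^d$ with $\|\tilde a\|_\infty \leq \beta^{-1}$, then
$$e^{-2}\pi_k(a) \leq \pi_k(a+\tilde a) \leq e^2\pi_k(a), \qquad k\in[d].$$
This is immediate from writing $\pi_k(a+\tilde a)/\pi_k(a)=e^{\beta\tilde a_k}/(\sum_\ell e^{\beta\tilde a_\ell}\pi_\ell(a))$ and bounding each $e^{\beta\tilde a_\ell}\in[e^{-1},e]$.

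Second, I will invoke the standard fact that $F_\beta$ is $1$-Lipschitz in the sup-norm (already used in Lemma \ref{aux:Gbeta} via $\|\nabla F_\beta\|_1\leq 1$), so that $\|y\|_\infty\leq\beta^{-1}$ yields both $\|{-}F_\beta(x+y)-({-}F_\beta(x))\|_\infty\leq\beta^{-1}$ and $\|(x+y)_{k\cdot}-x_{k\cdot}\|_\infty\leq\beta^{-1}$. Applying the softmax stability bound, every factor of the form $\pi_{k_\ell}(-F_\beta(x+y))$ or $\pi_{j_\ell}^{\bar\mu_{k_\ell}}((x+y)_{k_\ell\cdot})$ appearing in Lemma \ref{lemma:Der1}(3) is bounded by $e^2$ times its evaluation at $x$. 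For the $w_{\cdot\cdot}$ and $q_{\cdot\cdot\cdot}$ factors, I will expand them into their defining sums of products of $\pi$'s, apply softmax stability to each monomial, and record enlargements of at most $e^4$ (two $\pi$'s per monomial) and $e^6$ (three $\pi$'s per monomial) respectively.

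With these tools in hand, the argument follows the proof of Lemma \ref{lemma:Der2} almost verbatim. For each of the twelve summands $A^{\ell}_{(k,j)}$ in the expansion of $m'''_{(k,j)}$, I pull the supremum over $y$ inside the absolute value, substitute the perturbed-factor bounds, and sum over $(k,j)$ using the identities $\sum_k\pi_k=1=\sum_j\pi_j^{\bar\mu_k}$ (and $\sum|w|\leq 2$, $\sum|q|\leq 6$ at the base point $x$) exactly as in Lemma \ref{lemma:Der2}. A monomial count shows that after fully expanding any $w$'s and $q$'s, each summand contains at most six factors of the form $\pi$ or $\pi^{\bar\mu}$, so the total multiplicative inflation is at most $e^{12}$, uniformly in $\ell$. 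Summing the twelve bounds reproduces the inequality of Lemma \ref{lemma:Der2} inflated by $e^{12}$, giving the claim.

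The main obstacle is the bookkeeping: one must verify term by term that the number of $\pi$-type factors after expanding all $w$'s and $q$'s never exceeds six, so that the inflation stays within $e^{12}$. The extremal terms are $A^1$, which already contains six pure $\pi$'s, and $A^7$, whose factor $q_{k_1k_2k_3}$ expands into monomials with three $\pi$'s that combine with the three $\pi_{j_\ell}^{\bar\mu_{k_\ell}}$'s to again give six; both saturate the bound $e^{12}$. The other ten terms produce strictly fewer $\pi$-factors and are dominated.
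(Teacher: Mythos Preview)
Your proposal is correct and follows essentially the same approach as the paper's proof: both rest on the softmax stability bound $\pi_k(a+\tilde a)\leq e^2\pi_k(a)$ for $\|\tilde a\|_\infty\leq\beta^{-1}$, propagate it through the $w$ and $q$ factors (giving $e^4$ and $e^6$), and then rerun the twelve-term summation of Lemma~\ref{lemma:Der2} with the $e^{12}$ inflation coming from at most six $\pi$-type factors per monomial. Your write-up is more explicit about the $1$-Lipschitz step for $-F_\beta$ and the per-term monomial count; one small inaccuracy is that terms $A^2$, $A^4$, and $A^6$ also saturate the six-factor bound (not only $A^1$ and $A^7$), but this does not affect the argument.
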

\begin{proof}
As argued in page 1586 of \citet{chernozhukov2012Gaussian}, $\|y\|_\infty \leq \beta^{-1}$ implies $\pi_k(x+y)\leq e^2\pi_k(x)$. Therefore,
$$\begin{array}{rl}
\sum_{k\in[N]}\sup_{\|y\|_\infty \leq \beta^{-1}} \pi_k(x+y) & \leq e^2\\
\sum_{k_1,k_2 \in[N]}\sup_{\|y\|_\infty \leq \beta^{-1}} |w_{k_1,k_2}(x+y)|&  \leq 2 e^4\\
\sum_{k_1,k_2,k_3 \in[N]}\sup_{\|y\|_\infty \leq \beta^{-1}} |q_{k_1,k_2,k_3}(x+y)|& \leq 6e^6.\end{array}$$
In turn, we have that
$$  \sum_{(k,j)\in [N]^3\times[p]^3} U_{(k,j)}(x) \leq e^{12} \{ \|g'''\|_\infty+16\beta\|g''\|_\infty+24\beta^2\|g'\|_\infty\}, $$ where the factor $e^{12}$ accounts for the potential product of six factors of $e^2$, i.e., one such factor for each index of the sum.
\end{proof}

\end{appendix}

\bibliography{PIbib}
\bibliographystyle{plainnat}

\end{document}